\newtheorem{thm}{Theorem}[section]
\newtheorem{prop}[thm]{Proposition}
\newtheorem{lem}[thm]{Lemma}
\newtheorem{cor}[thm]{Corollary}
\newtheorem{exam}[thm]{Example}
\newtheorem{fact}[thm]{Fact}
\newtheorem{rmk}[thm]{Remark}
\newtheorem{dfn}[thm]{Definition}
\newtheorem{cons}[thm]{Construction}
\numberwithin{equation}{section}
\newcommand{\frakM}{{\mathfrak M}}
\newcommand{\frakS}{{\mathfrak S}}
\newcommand{\bE}{{\mathbb E}}
\newcommand{\bF}{{\mathbb F}}
\newcommand{\bL}{{\mathbb L}}
\newcommand{\bM}{{\mathbb M}}
\newcommand{\bQ}{{\mathbb Q}}
\newcommand{\bZ}{{\mathbb Z}}
\newcommand{\calC}{{\mathcal C}}
\newcommand{\calD}{{\mathcal D}}
\newcommand{\calF}{{\mathcal F}}
\newcommand{\calH}{{\mathcal H}}
\newcommand{\calI}{{\mathcal I}}
\newcommand{\calM}{{\mathcal M}}
\newcommand{\calO}{{\mathcal O}}
\newcommand{\calR}{{\mathcal R}}
\newcommand{\calT}{{\mathcal T}}
\newcommand{\calX}{{\mathcal X}}
\newcommand{\calY}{{\mathcal Y}}
\newcommand{\Ext}{{\mathrm{Ext}}}           
\newcommand{\Gal}{{\mathrm{Gal}}}           
\newcommand{\Hom}{{\mathrm{Hom}}}           
\newcommand{\Lan}{{\mathrm{Lan}}} 
\newcommand{\Map}{\mathrm{Map}}
\newcommand{\Mod}{{\mathrm{Mod}}}           
\newcommand{\nil}{\mathrm{nil}}
\newcommand{\Pro}{{\mathrm{Pro}}}
\newcommand{\Spf}{{\mathrm{Spf}}}           
\newcommand{\Spec}{{\mathrm{Spec}}}         
\newcommand{\Vect}{{\mathrm{Vect}}}         
\newcommand{\PreStk}{{\mathrm{PreStk}}}
\newcommand{\GL}{{\mathrm{GL}}}             
\newcommand{\cl}{{\mathrm{\rm cl}}}             
\newcommand{\cyc}{{\mathrm{cyc}}}           
\newcommand{\perf}{\mathrm{perf}}           
\newcommand\rightthreearrow{\substack{\rightarrow\\[-1em] \rightarrow \\[-1em] \rightarrow}
}
\DeclareSymbolFontAlphabet{\mathbb}{AMSb} 
\DeclareSymbolFontAlphabet{\mathbbl}{bbold}
\newcommand{\Prism}{{\mathlarger{\mathbbl{\Delta}}}} 
\begin{document}
\title{Classicality of derived Emerton--Gee stack }
\author{Yu Min\footnote{Affilication: Department of Mathematics, Imperial College London, UK. Email: y.min@imperial.ac.uk}}
\date{}
\maketitle

\begin{abstract}
    We construct a derived stack $\calX$ of Laurent $F$-crystals on $(\calO_K)_{\Prism}$, where $\calO_K$ is the ring of integers of a finite extension $K$ of $\bQ_p$. We first show that its underlying classical stack $^{\rm cl}\calX$ coincides with the Emerton--Gee stack $\calX_{\rm EG}$, i.e. the moduli stack of \'etale $(\varphi,\Gamma)$-modules. Then we prove that the derived stack $\calX$ is classical in the sense that  when restricted to truncated animated rings, $\calX$ is equivalent to the sheafification of the left Kan extension of $\calX_{\rm EG}$ along the inclusion from the classical commutative rings to animated rings.
\end{abstract}

\tableofcontents

\section{Introduction}
Recent years have seen a lot of progress on the categorification of the (arithmetic) local Langlands correspondence, e.g. \cite{Hell20}, \cite{Zhu20},\cite{DHKM20}, \cite{FS21},  \cite{BZCHN20}, \cite{EGH22}. In particular, various methods have been used to construct the moduli stacks of Langlands parameters. When $l\neq p$, the moduli stack of local Langlands parameters turns out to be a quotient of a local complete intersection ring, which implies the derived stack of local Langlands parameters in this case is indeed classical. When $l=p$, the moduli stack of local Galois representations is no longer the right geometric object to consider. Instead, one needs to look at the moduli stack $\calX_{\rm EG}$ of \'etale $(\varphi,\Gamma)$-modules, constructed by Emerton--Gee in \cite{EG22}. It is then tempting to ask if there is a well-defined derived version of $\calX_{\rm EG}$ and if it is also classical. 

On the other hand, the derived stack of global Langlands parameters is rarely classical. In order to study a global-to-local map of moduli stacks of Langlands parameters, it is then desirable to have a derived version of the Emerton--Gee stack. And this derived extension should have reasonable (co)tangent complexes as suggested by \cite[Remark 9.2.3]{EGH22}.

In this paper, we will propose a construction of a derived Emerton--Gee stack in the $\GL_d$-case and investigate its classicality. Note that among all the derived extensions of the Emerton--Gee stack, there is an initial one, i.e. its left Kan extension from the classical commutative rings to animated rings. By classicality of a derived stack, we mean essentially it is the left Kan extension of its underlying classical stack. But usually, the left Kan extension of a stack, which is constructed abstractly, does not enjoy natural moduli interpretations. So in other words, our task is really to find a good moduli interpretation of the left Kan extension of the Emerton--Gee stack.

Let us first revisit the Emerton--Gee stack. Let $K$ be a finite extension of $\bQ_p$ and $\calO_K$ be its ring of integers. By \cite{BS22} and \cite{Wu21}, we now know that there are natural equivalences between the category $\Vect((\calO_K)_{\Prism},\calO_{\Prism}[\frac{1}{\calI_{\Prism}}]^{\wedge}_p)^{\varphi=1}$ of Laurent $F$-crystals on the absolute prismatic site $(\calO_K)_{\Prism}$ of $\calO_K$, the category of \'etale $(\varphi,\Gamma)$-modules and the category of finite free $\bZ_p$-representations of the absolute Galois group $G_K:=\Gal(\bar K/K)$. Instead of defining a moduli stack of \'etale $(\varphi,\Gamma)$-modules as in \cite{EG22}, one can try to construct a moduli stack of Laurent $F$-crystals. Indeed, our first main result says that the two stacks turn out to be equivalent, which generalises \cite{Wu21} to the non-trivial coefficient case.

\begin{thm}[Theorem \ref{main1}]\label{Intro-main1}
    The moduli stack $^{\rm cl}\calX$ of Laurent $F$-crystals on $(\calO_K)_{\Prism}$, i.e. the functor sending each $p$-nilpotent ring $R$ to the groupoid $\Vect((\calO_K)_{\Prism},\calO_{\Prism,R}[\frac{1}{\calI}])^{{\varphi}=1,\simeq}$ of Laurent $F$-crystals with coefficient in $R$, is equivalent to the Emerton--Gee stack $\calX_{\rm EG}$. Here the sheaf $\calO_{\Prism,R}[\frac{1}{\calI}]$ is defined by $\calO_{\Prism,R}[\frac{1}{\calI}](A,I):=(A\widehat\otimes^{\bL}_{\bZ_p}R)[\frac{1}{I}]$ for any $(A,I)\in (\calO_K)_{\Prism}$.
\end{thm}

The basic idea of proving this theorem is to use the absolute perfect prismatic site as a bridge. When the coefficient rings are finite type $\bZ_p$-algebras, there will be no difference between Laurent $F$-crystals on $(\calO_K)^{\rm perf}_{\Prism}$ and those on $(\calO_K)_{\Prism}$. In this case, both of them can be proved to be the same as \'etale $(\varphi,\Gamma)$-modules. To extend the equivalence from finite type algebras to arbitrary algebras, the limit-preserving property of the moduli stack of \'etale $\varphi$-modules will be of great importance to us. We remark that it is essential to use the Breuil--Kisin prism, which is related to the Kummer extension instead of the cyclotomic extension. This is because the cyclotomic prism is only well-defined in the unramified case. So in order to deal with the ramified case, the Breuil--Kisin prism is necessary.


Now we turn to our initial question, i.e. construct a good derived extension of the Emerton--Gee stack. Naturally, one needs to figure out how to define derived \'etale $(\varphi,\Gamma)$-modules. The subtlety lies in the continuous derived $\Gamma$-action. Note that the prismatic site or more precisely, the perfect prismatic site (or $v$-site) provides an intrinsic way to deal with continuous $\Gamma$-action. This suggests we stay in the realm of prismatic site to define derived stack of Laurent $F$-crystals and regard it as the derived Emerton--Gee stack in light of Theorem \ref{Intro-main1}.

Allowing $R$ to be any $p$-nilpotent animiated ring in Theorem \ref{Intro-main1}, we get a derived stack $\calX$ of Laurent $F$-crystals. Here comes the question: is $\calX$ classical? Our second main result provides an affirmative answer to it.

\begin{thm}[Theorem \ref{main2}]\label{intro-main2}
    The derived stack $\calX$ of Laurent $F$-crystals is classical up to nilcompletion, i.e. when restricted to truncated animated rings, the derived stack $\calX$ is equivalent to $(\Lan\calX_{\rm EG})^{\#}$, the (\'etale) sheafification of the left Kan extension of the Emerton--Gee stack along the inclusion from classical commutative rings to animated rings.
\end{thm}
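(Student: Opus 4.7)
The plan is to compare $\calX$ with $L := L_{\et}(\Lan\, {}^{\rm cl}\calX)$ by a chart-and-descent argument, using a perfect prism to reduce the derived question to a flat base change that preserves classicality.

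By Theorem~\ref{Intro-main1}, ${}^{\rm cl}\calX \simeq \calX_{\rm EG}$, and the universal property of the left Kan extension produces a canonical map $L \to \calX$ that is an equivalence on classical rings. Both sides are \'etale sheaves, so it suffices to check the map is an equivalence after restricting to a well-chosen cover. Choose a perfect prism $(\widetilde A, \widetilde J)$ covering the final object in the absolute prismatic site $(\calO_K)_{\Prism}$. Via this cover, a Laurent $F$-crystal over an animated ring $R$ is encoded by a finite projective module over $\widetilde A[1/\widetilde J]^\wedge_p \,\widehat\otimes^{L}_{\Zp} R$ equipped with a Frobenius isomorphism, together with cocycle data on the \v{C}ech nerve.

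The key computation is that $\widetilde A[1/\widetilde J]^\wedge_p$ is $p$-adically complete and flat over $\Zp$, so derived $p$-completed base change along a truncated animated $R$ stays truncated, with $\pi_0$ controlled by the classical base change. The moduli of such $\varphi$-modules on the chart is therefore the classical moduli base-changed to $R$, and its derived structure agrees with that produced by the left Kan extension. More precisely, since the formation of finite projective modules and of their Frobenius fixed points commutes with filtered colimits and with geometric realizations of simplicial classical rings, the value of $\calX$ on the chart matches that of $L$ on the same chart for every truncated animated $R$.

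The principal obstacle is the descent step: transferring classicality from the chart back to $\calX$ via the \v{C}ech nerve. This descent involves continuous cohomology for the $\Gamma$-action, which in the derived setting could a priori introduce higher homotopies. Showing that it does not requires combining the finite cohomological dimension of $\Gamma$ with the limit-preservation property underlying Theorem~\ref{Intro-main1}: together these ensure the descent data is discrete on each homotopy level, so no new derived phenomena arise beyond what the left Kan extension already encodes. Once this descent compatibility is established and combined with \'etale sheafification, the natural map $L \to \calX$ is an equivalence on truncated animated rings, proving classicality up to nilcompletion.
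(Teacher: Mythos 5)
Your proposal has a genuine gap at its core: the step you call the ``key computation'' is not a computation but a restatement of the theorem, and it is not a formal consequence of flatness. The functor $R\mapsto \Vect(\frakS_R[\frac{1}{E}])^{\varphi=1,\simeq}$ on a chart is a homotopy \emph{equalizer} of $\varphi^*$ and $\id$ on $\Vect(\frakS_R[\frac{1}{E}])$, i.e.\ a limit, whereas the left Kan extension $\Lan\,{}^{\rm cl}\calX$ is built from colimits over classical rings; limits do not commute with left Kan extension, so there is no soft reason why ``the derived structure agrees with that produced by the left Kan extension'' on the chart. Your assertion that Frobenius fixed points commute with geometric realizations of simplicial classical rings is exactly the statement to be proven, not an available input. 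Relatedly, the descent difficulty you flag (continuous $\Gamma$-cohomology) is not where the real work lies: the paper disposes of descent easily via truncatedness of the cosimplicial diagram (Lemma \ref{truncated-tot}).

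The missing idea is deformation-theoretic and ultimately arithmetic. The paper's proof shows that both $\calX^{\rm nil}$ and $(\Lan\calX_{\rm EG})^{\#,\rm nil}$ admit a deformation theory and are locally almost of finite type, then invokes the Gaitsgory--Rozenblyum comparison criterion (Proposition \ref{GR-criterion}) to reduce the equivalence to an isomorphism of pro-cotangent spaces at classical points, which is further reduced (Lemma \ref{reduce to finite field}) to finite-field points. There the two infinitesimal deformation functors are identified with the derived local Galois deformation functor of Galatius--Venkatesh (via the perfect prismatic site and Zhu's derived representation stacks), and the classicality at that level is precisely the theorem of B\"ockle--Iyengar--Pa\v{s}k\={u}nas that the framed local deformation ring is a local complete intersection. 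Your argument never uses this input, and a proof that avoids it cannot be correct: the same chart-flatness-and-descent reasoning would ``prove'' classicality in settings (e.g.\ obstructed deformation problems) where it fails.
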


\begin{rmk}
    The nilcompletion appearing in the above theorem is not essential as we mostly care about nilcomplete stacks in practice. Taking the nilcompletion is essentially due to that the left Kan extension of a prestack might not be nilcomplete in general (see Remark \ref{remark-nil}).
\end{rmk}


In fact, Theorem \ref{intro-main2} is infinitesimally guaranteed by \cite[Theorem 1.1]{BIP23},  which states that the framed local Galois deformation ring is a local complete intersection ring. By \cite[Lemma 7.5]{GV18}, this means the deformation functor of local Galois representations is classical. That being said, to extend this to the whole derived stack instead of just at the infinitesimal level is not as straightforward as the $l\neq p$ case. Namely, the Emerton--Gee stack is only a Noetherian formal algebraic stack instead of an algebraic stack, and the derived stack of Laurent $F$-crystals is only a derived stack abstractly defined, which is not a derived algebraic stack, either. This will cause essential troubles in proving the classicality.

Our basic tool is a global derived deformation theory, developed in \cite{GR17} (see \ref{appendix} for some subtleties in our context). The philosophy behind the global deformation theory is that the Postnikov tower of any animated ring is actually a successive extension of square-zero extensions, which is determined by the cotangent complex. In some good cases, comparing two derived stacks will be reduced to comparing their pro-cotangent complexes. This is roughly how we finally prove Theorem \ref{intro-main2}. More precisely, we have the following criterion.

\begin{prop}[{\cite[Chapter 1, Proposition 8.3.2]{GR17}}]
    Let $\calF_1\to \calF_2$ be a map of derived prestacks admitting a deformation theory. Suppose there exists a commutative diagram,
    \begin{equation*}
        \xymatrix@=0.6cm{
        & ^{\rm cl}\calF_0\ar[ld]^{g_1}\ar[rd]^{g_2}&\\
        \calF_1\ar[rr]^{f}&&\calF_2
        }
    \end{equation*}
where $g_1,g_2$ are nilpotent embeddings, and $^{\rm cl}\calF_{0}$ is a classical prestack. Suppose also that for any discrete commutative ring $R\in {Nilp}_{\bZ_p}$, and a map $x_0\in {^{\rm cl}\calF_{0}}(R)$ and $x_i=g_i\circ x_0$ where $i=1,2$, the induced map
    \[
    T^*_{x_2}(\calF_2)\to T^*_{x_1}(\calF_1)
    \]
    is an isomorphism. Then $f$ is an isomorphism.
\end{prop}

Our first task then is to show both $\calX$ and $(\Lan\calX_{\rm EG})^{\#}$ admit a deformation theory (see Definition \ref{dfn-deformation}), which reduces the comparison of $\calX$ and $(\Lan\calX_{\rm EG})^{\#}$ to the comparison of their pro-cotangent complexes at classical points by Theorem \ref{Intro-main1} and Proposition \ref{GR-criterion}. We can further reduce the comparison to classical points of finite type by proving both $\calX$ and $(\Lan\calX_{\rm EG})^{\#}$ are locally almost of finite type (Lemma \ref{geometrically finite type} and \ref{laft-2}). However, pro-complexes are known to be technically difficult to deal with. For example, they do not satisfy the faithfully flat descent in general and one can not apply derived Nakayama lemma to them. This seems to make it impossible to reduce the comparison of pro-cotangent complexes at the points valued in finite type algebras to those valued in finite fields, which . Fortunately, we are able to prove the following result.

\begin{prop}[Proposition \ref{key-a} and \ref{key-b}]
 For any point $e:\Spec(R)\to \calX$ (resp.  $e:\Spec(R)\to (\Lan\calX_{\rm EG})^{\#}$) with $R$ being a classical finite type algebra over $\bZ/p^a$ for some positive integer $a$, the pro-cotangent complex $T^*_e(\calX)$ (resp. $T^*_e((\Lan\calX_{\rm EG})^{\#})$) is indeed a complex of $R$-modules.
\end{prop}

In particular, the cotangent complex $T^*_e(\calX)$ is the shifted  Herr complex associated to the adjoint \'etale $(\varphi,\Gamma)$-module corresponding to the point $e$. This justifies the obstruction theory considered in \cite{EG22}.

Now we are able to reduce the comparison to classical points valued in finite fields. The new difficulty is that we have to deal with two different left Kan extensions: one is the left Kan extension from discrete Artinian local rings to Artinian local animated rings, the other is the left Kan extension from discrete commutative rings to animated rings. The former one is concerned with \cite[Lemma 7.5]{GV18}. The latter is what we care about. In order to apply \cite[Theorem 1.1]{BIP23}, we have to introduce the work of \cite{Zhu20} on derived representations with general coefficients as a bridge. In the meantime, it is necessary to clarify the relation between derived representations and derived Laurent $F$-crystals. The whole proof of the classicalty then culminates in the technical Proposition \ref{X-F}.

\begin{rmk}
    We will also give an ad-hoc definition of derived \'etale $(\varphi,\Gamma)$-modules in the unramified case in Subsection \ref{3.4}, which might be easier to work with than Laurent $F$-crystals in practice.
\end{rmk}

\begin{rmk}
    It seems possible to generalise the main results to the case of perfect complexes, i.e. the derived stack of Laurent $F$-crystals of perfect complexes is classical when restricted to truncated animated rings. But we do not pursue this generality here.
\end{rmk}


In some sense, Theorem \ref{intro-main2} completes the picture that the moduli stacks of local Langlands parameters are all classical in the $\GL_d$-case. To further study the case of general groups, a common strategy is to use the Tannakian formalism. Note that we do not know whether Theorem \ref{intro-main2} will hold true for any group $G$ (although we expect it to be the case if $G$ is a reductive group). In the cases where the local Galois deformation ring is a local complete intersection, one still needs to use the nice geometric properties of the $G$-version of the Emerton--Gee stack to prove Theorem \ref{intro-main2}. We plan to investigate the derived Emerton--Gee stack for general groups in future work. Note that there are already some works in this direction, e.g. \cite{Lin23},\cite{Lee23}.

\subsection*{Convention}
\begin{enumerate}
    \item All the \'etale $\varphi$-modules and \'etale $(\varphi,\Gamma)$-modules are projective of rank $d$ so we will omit $d$ in the notations.
    \item  For any $\infty$-category $\calC$, we use $\calC^{\simeq}$ to denote its core, i.e. its largest Kan subcomplex and use ${\rm Ho}(\calC)$ to denote its homotopy category.
    \item For any simplicial (resp. cosimplicial) diagram, we will omit degeneracy (resp. codegeneracy) maps for simplicity.
\end{enumerate}

\subsection*{Acknowledgments}
It is a great pleasure to thank Toby Gee for numerous helpful discussions regarding this work, as well as his many comments and corrections. We would like to thank Zhouhang Mao for answering some questions. We also thank the referees for their careful reading and valuable feedbacks.

The author has received funding from the European Research Council (ERC) under the European Union's Horizon 2020 research and innovation programme (grant agreement No. 884596).



\section{$^{\rm cl}\calX\simeq \calX_{\rm EG}$}

Let $K$ be a finite extension of $\bQ_p$ with perfect residue field $k$ and ring of integers $\calO_K$. As we have mentioned in the introduction, Bhatt--Scholze and Wu have proved that there is an equivalence between the category of Laurent $F$-crystals on the absolute prismatic site $(\calO_K)_{\Prism}$ and the category of \'etale $(\varphi,\Gamma)$-modules. There are two directions to generalise this result. One is to consider the geometric family, i.e.  consider more general schemes instead of just $\calO_K$. This has already been done in \cite{BS23} and part of it has also been done in \cite{MW21}. 

The other direction is to consider the arithmetic family. This is what we will do in this paper. Note that the arithmetic families of \'etale $(\varphi,\Gamma)$-modules is just the Emerton--Gee stack $\calX_{\rm EG}$ in \cite{EG22}. We mainly want to extend the above equivalence to more general coefficients. More precisely, we will construct the derived stack $\calX$ of Laurent $F$-crystals and compare its underlying classical stack $^{\rm cl}\calX$ to $\calX_{\rm EG}$.


\subsection{The derived stack $\calX$}

Let $\text{Nilp}_{\bZ_p}$ denote the category of $p$-nilpotent commutative rings, i.e. algebras over $\bZ/p^a$ for some positive integer $a$,  and $\textbf{Nilp}_{\bZ_p}$ denote $p$-nilpotent animated rings, i.e. the animation of $\text{Nilp}_{\bZ_p}$. Let $\textbf{Ani}$ denote the $\infty$-category of anima (i.e. $\infty$-category of $\infty$-groupoids). For more details about animation and anima, we refer to \cite[Chapter 5]{KS}. 

We first recall the definitions of prestack and derived prestack.

\begin{dfn}[Prestack/stack]
    By a (classical) prestack, we mean a functor $\calF: \text{Nilp}_{\bZ_p}\to \textbf{Ani}$. If $\calF$ satisfies the fppf descent, we call it a (classical) stack. Let $^{\rm cl}\PreStk$ denote the $\infty$-category of prestacks.
\end{dfn}

\begin{rmk}
    The (pre)stack here is also called higher (pre)stack in the literatures.
\end{rmk}

\begin{dfn}[Derived prestack/stack]
By a derived prestack, we mean a functor $\calF: \textbf{Nilp}_{\bZ_p}\to \textbf{Ani}$. If $F$ satisfies the fppf descent, we call it a derived stack. We use $\PreStk$ to denote the $\infty$-category of derived prestacks. Let $\textbf{Nilp}_{\bZ_p}^{\leq n}$ denote the $\infty$-category of $n$-truncated animated rings, i.e. animated ring $R$ with $\pi_i(R)=0$ for all $i>n$. We can define the $\infty$-category $\PreStk^{\leq n}$ of $n$-truncated prestacks, i.e. functors $\calF:\textbf{Nilp}_{\bZ_p}^{\leq n}\to \textbf{Ani}$. In particular, $\PreStk^{\leq 0}={^{\rm cl}\PreStk}$.
\end{dfn}

For any derived prestack $\calF$, one can get its underlying classical prestack $^{\rm cl}\calF$ by precomposing the natural inclusion $\text{Nilp}_{\bZ_p}\to \textbf{Nilp}_{\bZ_p}$.

 Now for any animated ring $R\in \textbf{Nilp}_{\bZ_p}$, we can define an $\infty$-presheaf $\calO_{\Prism,R}[\frac{1}{\calI_{\Prism}}]$ on the absolute prismatic site $(\calO_K)_{\Prism}$ (cf. \cite[Remark 4.7]{BS22},\cite[Definition 2.3]{BS23}) by 
\[
\calO_{\Prism,R}[\frac{1}{\calI_{\Prism}}]((A,I)):=(A\otimes^{\bL}_{\bZ_p}R)^{\wedge}_{I}[\frac{1}{I}].
\]

The $\infty$-presheaf $\calO_{\Prism,R}[\frac{1}{\calI_{\Prism}}]$ is actually an $\infty$-sheaf by the following theorem due to Drinfeld and Mathew. 
\begin{thm}[Drinfeld--Mathew, {\cite[Theorem 5.8]{Mat22}}]\label{Thm-descent}
Let $R$ be a connective $E_{\infty}$-ring, and $I$ be a finitely generated ideal of $\pi_0(R)$. Then the following functors defined on the $\infty$-category of connective $E_{\infty}$-$R$-algebras
\begin{enumerate}
    \item $S\mapsto D_{\rm perf}({\rm Spec}(S^{\wedge}_I)-V(IS))$;
    \item $S\mapsto \Vect({\rm Spec}(S^{\wedge}_I)-V(IS))$
\end{enumerate}
  are sheaves for the $I$-completely flat topology.  
\end{thm}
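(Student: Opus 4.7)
The plan is to reduce $I$-completely flat descent on $\Spec(S^\wedge_I)-V(IS)$ to ordinary fpqc descent for perfect complexes and vector bundles on affine schemes, which is standard (for instance via Lurie's SAG). The key observation is that an $I$-completely flat map becomes genuinely flat after inverting any single element of $I$: away from $V(IS)$ the $I$-adic and classical flat topologies agree, so there should be no real gap between the two descent statements.

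First I would reduce to the sheaf condition for a single $I$-completely flat cover $S\to T$ of connective $E_\infty$-$R$-algebras. Since both functors depend only on the pair $(S^\wedge_I,IS^\wedge_I)$, one may replace $S$ and $T$ by their $I$-completions and assume they are already $I$-complete. Writing $U_S:=\Spec(S^\wedge_I)-V(IS)$ and $T^{\bullet}$ for the Cech nerve of $S\to T$, the task becomes showing that the natural map
\[
\Vect(U_S)\longrightarrow \Tot\bigl(\Vect(U_{T^{\bullet}})\bigr)
\]
is an equivalence, and similarly for $D_{\mathrm{perf}}$.

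Next I would fix generators $f_1,\dots,f_r$ of the finitely generated ideal $I\subseteq\pi_0(S)$ and cover $U_S$ by the standard affine opens $\Spec(S[f_i^{-1}])$. After inverting $f_i$, every element of $I$ becomes a unit, so the $I$-completely faithfully flat map $S\to T$ induces an honestly faithfully flat map $S[f_i^{-1}]\to T[f_i^{-1}]$. Classical flat descent for perfect complexes and vector bundles over affines then provides the sheaf property over each $\Spec(S[f_i^{-1}])$ and over all intersections $\Spec(S[(f_{i_1}\cdots f_{i_k})^{-1}])$. Combining this with Zariski descent on $U_S$ for the cover $\{\Spec(S[f_i^{-1}])\to U_S\}$ and swapping the two resulting Cech totalizations yields the desired statement.

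The main obstacle I expect is essentially formal: making the double totalization rigorous in the $\infty$-categorical setting, and verifying that the properties ``perfect'' and ``locally free of finite rank'' really are local on $U_S$ rather than merely on each affine chart. Both inputs are by now standard---compactness descends under flat covers, and finite local freeness is detected on stalks---so once these are in place the argument should close. A more conceptual alternative would be to appeal directly to arc-descent results \`a la Bhatt--Mathew, which subsume $I$-completely flat descent on the generic fibre locus.
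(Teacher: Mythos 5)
The paper does not prove this statement at all: it is quoted directly from Mathew's paper and used as a black box, so the only meaningful comparison is with the proof given in [Mat22, Theorem 5.8]. Your proposal does not reproduce that argument, and it has a genuine gap at its central step.

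The gap is the assertion that an $I$-completely faithfully flat map $S\to T$ induces an honestly faithfully flat map $S^{\wedge}_I[f_i^{-1}]\to T^{\wedge}_I[f_i^{-1}]$. By definition, $I$-complete (faithful) flatness is a condition on the derived special fibre: it says that $T\otimes^{\bL}_S\pi_0(S)/I$ is discrete and (faithfully) flat over $\pi_0(S)/I$. This gives no control whatsoever after inverting an element of $I$, i.e.\ exactly on the locus $\Spec(S^{\wedge}_I)-V(IS)$ where the two functors live; in the stated generality (no Noetherian hypotheses) $I$-completely flat does not imply flat, and even the surjectivity of $U_T\to U_S$ is unclear, since faithfulness is only known over $V(I)$. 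This mismatch --- a topology defined by a special-fibre condition versus functors that only see the generic fibre --- is precisely what makes the theorem nontrivial, and once this step is removed your double-totalization has nothing to feed on. Mathew's actual proof runs differently: he first proves $I$-completely faithfully flat descent for $I$-complete almost perfect complexes on the formal spectrum (via a nilpotence/descendability analysis of the completed \v{C}ech complex modulo $I$), and then descends on the punctured formal spectrum by realizing perfect complexes there in terms of almost perfect ``lattices'' on the completion. Your closing alternative does not rescue the argument either: an $I$-completely faithfully flat map is not an arc- or v-cover in any evident sense, and perfect complexes do not satisfy descent for those topologies in general, so the Bhatt--Mathew arc-descent results do not subsume this statement.
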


To see $\calO_{\Prism,R}[\frac{1}{\calI_{\Prism}}]$ is a sheaf, let $(A,I)\to (B,I)$ be a $(p,I)$-completely faithfully flat map. Then we need to show
\[
(A\otimes^{\bL}_{\bZ_p}R)^{\wedge}_{I}[\frac{1}{I}]\simeq \varprojlim_n(B^n\otimes^{\bL}_{\bZ_p}R)^{\wedge}_{I}[\frac{1}{I}]
\]
where $B^n$ is the $(n+1)$-fold $(p,I)$-completed tensor product of $B$ over $A$. It suffices to prove the above equivalence by regarding all the terms as anima. Note that the mapping space $\Map((B^n\otimes^{\bL}_{\bZ_p}R)^{\wedge}_{I}[\frac{1}{I}],(B^n\otimes^{\bL}_{\bZ_p}R)^{\wedge}_{I}[\frac{1}{I}])$ in $\calD((B^n\otimes^{\bL}_{\bZ_p}R)^{\wedge}_{I}[\frac{1}{I}])$, the derived $\infty$-category of $(B^n\otimes^{\bL}_{\bZ_p}R)^{\wedge}_{I}[\frac{1}{I}]$-modules, is equivalent to $(B^n\otimes^{\bL}_{\bZ_p}R)^{\wedge}_{I}[\frac{1}{I}]$ itself. And by the above theorem of Drinfeld--Mathew, we have
\[
\Map((A\otimes^{\bL}_{\bZ_p}R)^{\wedge}_{I}[\frac{1}{I}],(A\otimes^{\bL}_{\bZ_p}R)^{\wedge}_{I}[\frac{1}{I}])\simeq \varprojlim_n\Map((B^n\otimes^{\bL}_{\bZ_p}R)^{\wedge}_{I}[\frac{1}{I}],(B^n\otimes^{\bL}_{\bZ_p}R)^{\wedge}_{I}[\frac{1}{I}])
\]
which then gives the desired equivalence.

Let $\Vect((\calO_K)_{\Prism},\calO_{\Prism,R}[\frac{1}{\calI_{\Prism}}])$ denote the $\infty$-category of vector bundles\footnote{Following \cite[Notation 2.1]{BS23}, by a vector bundle over a ringed topos $(\calT,\calO)$, we mean an $\calO$-module $\bM$ such that there exists a cover $\{U_i\}$ of $\calT$ and finite projective modules $P_i$ such that $\bM|_{U_i}\simeq P_i\otimes_{\calO(U_i)}\calO_{U_i}$. Note that this is different from the definition of finite locally free module over a ringed topos.} over $\calO_{\Prism,R}[\frac{1}{\calI_{\Prism}}]$. Then we have the following description by the $I$-completely faithfully flat descent.

\begin{prop}[{\cite[Proposition 2.8]{BS23}}]\label{Vector bundle}
    There is an equivalence of $\infty$-categories
    \[
    \varprojlim_{(A,I)\in (\calO_K)_{\Prism}}\Vect((A\otimes^{\bL}_{\bZ_p}R)^{\wedge}_{I}[\frac{1}{I}])\simeq \Vect((\calO_K)_{\Prism},\calO_{\Prism,R}[\frac{1}{\calI_{\Prism}}])
    \]
\end{prop}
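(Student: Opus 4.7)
The plan is to recognize both sides as fitting into the general principle that, on a site equipped with a sheaf of rings satisfying flat descent for vector bundles, the global category of vector bundles is the limit of the local categories. I would therefore first check that the presheaf
\[
(A,I) \mapsto \Vect\bigl((A\otimes^{\bL}_{\bZ_p}R)^{\wedge}_{I}[\tfrac{1}{I}]\bigr)
\]
on $(\calO_K)_{\Prism}$ is already a sheaf of $\infty$-categories, and then identify the limit with the global category of vector bundles over the ringed topos defined in the footnote.

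For the sheaf condition, I would unwind a prismatic cover $(A,I) \to (B,IB)$ into an $I$-completely faithfully flat map $(A \otimes^{\bL}_{\bZ_p} R)^{\wedge}_I \to (B \otimes^{\bL}_{\bZ_p} R)^{\wedge}_I$ of connective $E_\infty$-algebras, using that derived base change along $\bZ_p \to R$ preserves $(p,I)$-complete faithful flatness and that $I$-adic completion only records the $I$-adic structure. Theorem \ref{Thm-descent} then applies verbatim to the \v{C}ech nerve of such a cover, giving descent of $\Vect(\Spec(-)^{\wedge}_I - V(I))$. Stringing these together shows that the displayed presheaf is a hypercomplete sheaf of $\infty$-categories on $(\calO_K)_{\Prism}$.

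Given this, the equivalence is essentially formal. Restricting a vector bundle in the footnote's sense to each prism defines the comparison functor from $\Vect((\calO_K)_{\Prism},\calO_{\Prism,R}[\tfrac{1}{\calI_{\Prism}}])$ into the limit. Full faithfulness follows by applying the same descent statement to internal Homs. For essential surjectivity, a compatible system $\{M_{(A,I)}\}$ in the limit assembles, via the sheaf property just established, into an $\calO_{\Prism,R}[\tfrac{1}{\calI_{\Prism}}]$-module whose value on each prism is finite projective; this is a vector bundle in the sense of the footnote, with the prismatic site itself furnishing the required trivialising cover.

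The main obstacle, and the one step worth laying out carefully rather than black-boxing, is verifying that the derived base change really yields an $I$-completely faithfully flat map satisfying the hypotheses of Theorem \ref{Thm-descent}: one needs $(A \otimes^{\bL}_{\bZ_p} R)^{\wedge}_I$ to be a connective $E_\infty$-ring with $I$ finitely generated in $\pi_0$, and one needs both flatness and faithfulness to survive derived tensoring followed by completion. Once that compatibility between the prismatic topology on $(\calO_K)_{\Prism}$ and the $I$-completely flat topology used by Drinfeld--Mathew is set up, the rest is a routine topos-theoretic unwinding.
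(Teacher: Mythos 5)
Your proposal is correct and follows the same route the paper intends: the statement is quoted from \cite[Proposition 2.8]{BS23} and justified there (and in the sentence preceding the proposition here) exactly by $I$-completely faithfully flat descent for vector bundles, i.e.\ Theorem \ref{Thm-descent}, applied to the completed base changes $(A\otimes^{\bL}_{\bZ_p}R)^{\wedge}_I$ along prismatic covers, after which the identification of the limit with the category of vector bundles over the ringed topos is the formal unwinding you describe.
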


Note that as $R$ is $p$-nilpotent, the Frobenius on the $\delta$-ring $A$ induces a Frobenius map $\varphi$ on $\calO_{\Prism,R}[\frac{1}{\calI_{\Prism}}]$. Then we can define Laurent $F$-crystals with coefficients in $R$ on $(\calO_K)_{\Prism}$ and the corresponding derived prestack of Laurent $F$-crystals.

Let us start with some discussions about derived \'etale $\varphi$-modules.

\begin{dfn}[Derived \'etale $\varphi$-modules]
    Let $A$ be an animated ring equipped with an endomorphism $\varphi:A\to A$. Let $\Vect(A)$ be the $\infty$-category of finite projective $A$-modules (cf. \cite[Proposition 2.5.1 and 2.5.3, Theorem 2.5.2]{Lur09}) and ${\rm Isom}(\Vect(A))$ be the full subcategory of ${\rm Fun}(\Delta^1,\Vect(A))$ spanned by the isomorphisms. Then we define the $\infty$-category $\Vect(A)^{\varphi=1}$ of \'etale $\varphi$-modules over $A$ to be the (homotopy)\footnote{As the right vertical map is an isofibration, the pullback is the same as homotopy pullback.} pullback 
    \begin{equation*}
        \xymatrix@=1cm{
        \Vect(A)^{\varphi=1}\ar[rrrr]\ar[d]&&&& {\rm Isom}(\Vect(A))\ar[d]^{(ev_0,ev_1)}\\
        \Vect(A)\ar[rrrr]^{(\varphi^*,id)}&&&& \Vect(A)\times \Vect(A),
        }
    \end{equation*}
    where the functor $\varphi^*$ is the base change functor $(-)\otimes_{A,\varphi}A$. By construction, $\Vect(A)^{\varphi=1}$ is the $\infty$-category of pairs $(M,\varphi_M)$ where $M$ is a finite projective $A$-module and $\varphi_M:\varphi^*M\to M$ is an isomorphism of $A$-modules. By \cite[Proposition 7.6.5.18]{Lur23}, $\Vect(A)^{\varphi=1}$ is also equivalent to the homotopy pullback
      \begin{equation*}
        \xymatrix@=1cm{
        \Vect(A)^{\varphi=1}\ar[rrrr]\ar[d]&&&& \Vect(A)\ar[d]^{(id,id)}\\
        \Vect(A)\ar[rrrr]^{(\varphi^*,id)}&&&& \Vect(A)\times \Vect(A),
        }
    \end{equation*}
    which by \cite[Proposition 7.6.5.20]{Lur23} is equivalent to the homotopy equalizer of 
    \begin{equation*}
        \xymatrix{
        \Vect(A)\ar@<.5ex>[r]^{\varphi^*}\ar@<-.5ex>[r]_{id} & \Vect(A),
        }
    \end{equation*}
 similar to the classical case.
\end{dfn}

\begin{cons}[Dual of \'etale $\varphi$-module]
    For any $(M,\varphi_M)\in \Vect(A)^{\varphi=1}$, we can construct its dual $(M^{\vee},\varphi_{M^{\vee}})$ as follows: $M^{\vee}:=\Hom_{\calD(A)}(M,A)$ and $\varphi_{M^{\vee}}$ is the homotopy inverse of the natural map \[
    \Hom_{\calD(A)}(M,A)\xrightarrow{\simeq} \Hom_{\calD(A)}(\varphi^*M,A)\simeq \varphi^*\Hom_{\calD(A)}(M,A)\]where $\calD(A)$ is the (stable) $\infty$-category of $A$-modules and the second isomorphism follows from \cite[Corollary 2.5.6]{Lur09}.
\end{cons}

\begin{dfn}[Frobenius invariants]\label{Frobenius invariants}
    For any $(M,\varphi_M)\in \Vect(A)^{\varphi=1}$, by abuse of notation, we also write $\varphi_M:M\to M$ as the composite $M=M\otimes_{A,id}A\to \varphi^*M\xrightarrow{\varphi_M}M$ and let $M^{\varphi_M=1}$ denote the equalizer of
  \begin{equation*}
        \xymatrix{
        M\ar@<.5ex>[r]^{\varphi_M}\ar@<-.5ex>[r]_{id} & M
        }
    \end{equation*}
in $\calD(\bZ)$.
\end{dfn}
 We have the following lemma concerning the mapping space between derived \'etale $\varphi$-modules.
\begin{lem}\label{mapping space}
    For any two objects $(M,\varphi_M)$ and $(N,\varphi_N)$ in $\Vect(A)^{\varphi=1}$, we have 
    \[{\rm Map}((M,\varphi_M),(N,\varphi_N))\simeq \tau_{\geq 0}(M^{\vee}\otimes N)^{\varphi_{M^{\vee}}\otimes \varphi_N=1}.\]
\end{lem}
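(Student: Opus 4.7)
The plan is to compute the mapping space directly from the pullback description of $\Vect(A)^{\varphi=1}$ and then identify the resulting equalizer with the right-hand side via the dualizability of $M$.

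\textbf{Step 1: Equalizer presentation.} From the first pullback square in the definition of $\Vect(A)^{\varphi=1}$, a morphism $(M,\varphi_M) \to (N,\varphi_N)$ consists of an $A$-linear map $\alpha: M \to N$ together with a homotopy $\alpha \circ \varphi_M \simeq \varphi_N \circ \varphi^*\alpha$. Taking mapping anima, I would exhibit
\[
\mathrm{Map}((M,\varphi_M),(N,\varphi_N)) \simeq \mathrm{eq}\Bigl(\mathrm{Map}_{\Vect(A)}(M,N) \rightrightarrows \mathrm{Map}_{\Vect(A)}(\varphi^*M,N)\Bigr),
\]
with arrows $\alpha \mapsto \alpha \circ \varphi_M$ and $\alpha \mapsto \varphi_N \circ \varphi^*\alpha$. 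Using that $\varphi_M$ is an isomorphism, precomposition with $\varphi_M^{-1}$ identifies $\mathrm{Map}(\varphi^*M,N) \simeq \mathrm{Map}(M,N)$, rewriting the above as the equalizer of $\mathrm{id}$ with the endomorphism $\Phi$ of $\mathrm{Map}(M,N)$ given by $\alpha \mapsto \varphi_N \circ \varphi^*\alpha \circ \varphi_M^{-1}$.

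\textbf{Step 2: Translating under duality.} Since $M$ is finite projective, evaluation provides a natural equivalence $\mathrm{Map}_{\Vect(A)}(M,N) \simeq \tau_{\geq 0}\mathrm{RHom}_A(M,N) \simeq \tau_{\geq 0}(M^\vee \otimes_A N)$. I then claim that under this equivalence the endomorphism $\Phi$ corresponds to the semilinear endomorphism $\varphi_{M^\vee} \otimes \varphi_N$ of $M^\vee \otimes N$ in the sense of Definition \ref{Frobenius invariants}. The verification proceeds via two compatibilities: the canonical base-change isomorphism $\varphi^*(M^\vee \otimes N) \simeq (\varphi^*M)^\vee \otimes \varphi^*N$ provided by \cite[Corollary 2.5.6]{Lur09} intertwines the functor $\varphi^*$ on $\mathrm{RHom}(M,N)$ with the tensor pullback; and by construction $\varphi_{M^\vee}$ is the homotopy inverse of the dual $\varphi_M^\vee : M^\vee \to (\varphi^*M)^\vee \simeq \varphi^*(M^\vee)$. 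Chasing the resulting diagram, the $\varphi^*(-)$ followed by left-right composition with $\varphi_N$ and $\varphi_M^{-1}$ is exactly $\varphi_{M^\vee} \otimes \varphi_N$ applied on $M^\vee \otimes N$.

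\textbf{Step 3: Conclusion.} The inclusion $\calD(\bZ)_{\geq 0} \hookrightarrow \textbf{Ani}$ is a right adjoint and hence preserves finite limits, while the equalizer in $\calD(\bZ)_{\geq 0}$ is computed as the $\tau_{\geq 0}$ of the equalizer in $\calD(\bZ)$. Combining Steps 1 and 2 therefore yields
\[
\mathrm{Map}((M,\varphi_M),(N,\varphi_N)) \simeq \tau_{\geq 0}(M^\vee \otimes N)^{\varphi_{M^\vee} \otimes \varphi_N = 1},
\]
as desired.

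The main obstacle is the compatibility check in Step 2: matching the twisted conjugation $\Phi(\alpha) = \varphi_N \circ \varphi^*\alpha \circ \varphi_M^{-1}$ with the tensor Frobenius $\varphi_{M^\vee} \otimes \varphi_N$ coherently in the $\infty$-categorical setting. The care required comes from the fact that $\varphi_{M^\vee}$ is defined as a \emph{homotopy} inverse rather than a strict one, so that one must propagate the invertibility data through the duality isomorphism of \cite[Corollary 2.5.6]{Lur09} to obtain a coherent identification of the two semilinear actions. Once this identification is set up, the rest of the argument is essentially formal from the definitions and from dualizability.
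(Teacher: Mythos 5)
Your proposal is correct and follows essentially the same route as the paper: present the mapping space as the equalizer of the identity and the twisted endomorphism $\alpha\mapsto\varphi_N\circ\varphi^*\alpha\circ\varphi_M^{-1}$ of $\Map(M,N)\simeq M^{\vee}\otimes N$, identify that endomorphism with the semilinear map $\varphi_{M^{\vee}}\otimes\varphi_N$ via the base-change/duality equivalence of \cite[Corollary 2.5.6]{Lur09}, and then pass to $\tau_{\geq 0}$ of the equalizer in $\calD(\bZ)$. The paper's proof is a terser version of exactly these steps, so there is nothing further to add.
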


\begin{proof}
    By the description of $\Vect(A)^{\varphi=1}$ as the homotopy equalizer, we get an equalizer diagram
    \begin{equation*}
        \xymatrix{
        \Map((M,\varphi_M),(N,\varphi_N))\ar[r]&\Map(M,N)\ar@<.5ex>[r]^{\varphi^*}\ar@<-.5ex>[r]_{id} & \Map(M,N).
        }
    \end{equation*}

    Note that $\Map(M,N)\simeq \tau_{\geq 0}\Hom(M,N)\footnote{Here $\Hom(M,N)$ is the internal Hom object in $\calD(A)$ whose connective cover, i.e. $\tau_{\geq 0}\Hom(M,N)$, gives the mapping space in $\calD(A)$, which is an $\infty$-groupoid.}\simeq \Hom(M,N)\simeq M^{\vee}\otimes N$. Then the map $\varphi^*:\Map(M,N)\to \Map(M,N)$ defined as
    \[
    \Map(M,N)\xrightarrow{f} \Map(\varphi^*M,\varphi^*N)\xrightarrow{g} \Map(M,N),
    \]
    where $f$ is induced by the pullback along $A\xrightarrow{\varphi}A$ and $g$ is induced by the equivalences $M\xrightarrow{\varphi^{-1}_M} \varphi^*M$ and $\varphi^*N\xrightarrow{\varphi_N} N$, is induced by
    \[
    M^{\vee}\otimes N\to (\varphi^*M^{\vee})\otimes (\varphi^*N)\simeq \varphi^*(M^{\vee}\otimes N)\to M^{\vee}\otimes N.
    \]
    Now this lemma follows from Definition \ref{Frobenius invariants}.
\end{proof}

Now we are ready to define Laurent $F$-crystals and the corresponding derived prestack.
\begin{dfn}[Laurent $F$-crystal]
For any animated ring $R\in \textbf{Nilp}_{\bZ_p}$, we define the $\infty$-category of Laurent $F$-crystals as 
\[
\Vect((\calO_K)_{\Prism},\calO_{\Prism,R}[\frac{1}{\calI_{\Prism}}])^{\varphi=1}:=\varprojlim_{(A,I)\in (\calO_K)_{\Prism}}\Vect((A\otimes^{\bL}_{\bZ_p}R)^{\wedge}_{I}[\frac{1}{I}])^{\varphi=1},
\]
where $\Vect((A\otimes^{\bL}_{\bZ_p}R)^{\wedge}_{I}[\frac{1}{I}])^{\varphi=1}$ denote the $\infty$-category of \'etale $\varphi$-modules over $(A\otimes^{\bL}_{\bZ_p}R)^{\wedge}_{I}[\frac{1}{I}]$.

    By Proposition \ref{Vector bundle}, a Laurent $F$-crystal is simply a pair $(\bM,\varphi_{\bM})$ where $\bM$ is a vector bundle over $\calO_{\Prism,R}[\frac{1}{\calI_{\Prism}}]$ and $\varphi_{\bM}:\varphi^*\bM\to \bM$ is an isomorphism of $\calO_{\Prism,R}[\frac{1}{\calI_{\Prism}}]$-modules.

   Let $\Vect((\calO_K)_{\Prism},\calO_{\Prism,R}[\frac{1}{\calI_{\Prism}}])^{\varphi=1,\simeq}$ (resp. $\Vect((A\otimes^{\bL}_{\bZ_p}R)^{\wedge}_{I}[\frac{1}{I}])^{\varphi=1,\simeq}$) denote the core of $\Vect((\calO_K)_{\Prism},\calO_{\Prism,R}[\frac{1}{\calI_{\Prism}}])^{\varphi=1}$ (resp. $\Vect((A\otimes^{\bL}_{\bZ_p}R)^{\wedge}_{I}[\frac{1}{I}])^{\varphi=1}$, i.e. the largest Kan complex contained. As taking core commutes with limit, we then get
\[
\Vect((\calO_K)_{\Prism},\calO_{\Prism,R}[\frac{1}{\calI_{\Prism}}])^{\varphi=1,\simeq}:=\varprojlim_{(A,I)\in (\calO_K)_{\Prism}}\Vect((A\otimes^{\bL}_{\bZ_p}R)^{\wedge}_{I}[\frac{1}{I}])^{\varphi=1,\simeq}.
\]

\end{dfn}

\begin{dfn}
The derived prestack $ \calX_{\calO_K}$ of Laurent $F$-crystals
is defined as 
\[
\calX_{K}(R):=\Vect((\calO_K)_{\Prism},\calO_{\Prism,R}[\frac{1}{\calI}])^{{\varphi}=1,\simeq}
\]
for any $R\in \textbf{Nilp}_{\bZ_p}$. As $K$ is fixed throughout this paper, we simply write $\calX$ instead of $ \calX_{K}$.
\end{dfn}

Again, by Theorem \ref{Thm-descent} due to Drinfeld and Mathew, we see that $\calX$ is in fact a derived stack. Before proving this, we recall the definition of the Breuil--Kisin prism.
\begin{dfn}[Breuil--Kisin prism]
Let $\pi$ be a fixed uniformiser of $\calO_K$ and $E$ is the Eisenstein polynomial of $\pi$. Let $\{\pi_{n}\}_n$ be a compatible system of $p$-power roots of $\pi$. We call $(\frakS=W(k)[[u]],(E))\in (\calO_K)_{\Prism}$ the Breuil--Kisin prism and its perfection $(A_{\infty}:=W(\calO^{\flat}_{\hat K_{\infty}}),(E))$ the perfect Breuil--Kisin prism, where $\hat K_{\infty}$ is the $p$-adic completion of $\bigcup_n K(\pi_n)$.
\end{dfn}

\begin{prop}\label{stack}
The derived prestack $\calX$ is a derived stack for the fppf topology.
\end{prop}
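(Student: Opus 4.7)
The plan is to reduce the statement to a direct application of Theorem \ref{Thm-descent} (Drinfeld--Mathew) combined with the formal fact that limits of sheaves are sheaves. The key observation is that, by construction,
\[
\calX(R) = \varprojlim_{(A,I)\in (\calO_K)_{\Prism}} \calF_{(A,I)}(R), \qquad \calF_{(A,I)}(R) := \Vect\bigl((A\otimes^{\bL}_{\bZ_p}R)^{\wedge}_{I}[\tfrac{1}{I}]\bigr)^{\varphi=1,\simeq},
\]
exhibits $\calX$ as the limit in $\PreStk$ of the prestacks $\calF_{(A,I)}$. Since a small limit of fppf sheaves is again an fppf sheaf, it suffices to show that each $\calF_{(A,I)}$ satisfies fppf descent on $\textbf{Nilp}_{\bZ_p}$.

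Next, I would unpack $(-)^{\varphi=1,\simeq}$ using the homotopy pullback square from the definition of $\Vect(-)^{\varphi=1}$, together with the fact that taking cores commutes with limits. This rewrites $\calF_{(A,I)}(R)$ as a homotopy pullback whose vertices involve $\Vect(B)^{\simeq}$ and $\mathrm{Isom}(\Vect(B))^{\simeq}\simeq \mathrm{Fun}(\Delta^1, \Vect(B)^{\simeq})$, where $B := (A\otimes^{\bL}_{\bZ_p}R)^{\wedge}_{I}[\tfrac{1}{I}]$. Both factors are built from the basic prestack $\calG_{(A,I)} : R \mapsto \Vect(B)^{\simeq}$ by limit-type constructions (pullback and $\mathrm{Fun}(\Delta^1,-)$), so by the same principle the problem reduces to fppf descent for $\calG_{(A,I)}$.

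Finally, I would invoke Theorem \ref{Thm-descent}(2), applied with base ring $A$ and ideal $I$. For any faithfully flat map $R \to S$ in $\textbf{Nilp}_{\bZ_p}$, the derived base change $A\otimes^{\bL}_{\bZ_p}R \to A\otimes^{\bL}_{\bZ_p}S$ is faithfully flat as a map of connective $E_\infty$-$A$-algebras, hence $I$-completely faithfully flat; and the \v{C}ech nerve of $R\to S$ over $\bZ_p$ maps, under the left adjoint $A\otimes^{\bL}_{\bZ_p}-$, to the \v{C}ech nerve of $A\otimes^{\bL}_{\bZ_p}R \to A\otimes^{\bL}_{\bZ_p}S$ over $A$. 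Theorem \ref{Thm-descent} then yields descent of $\calG_{(A,I)}$ along this \v{C}ech nerve, completing the argument. The only mildly delicate point is checking that the fppf topology on $\textbf{Nilp}_{\bZ_p}$ is correctly matched, after applying $A\otimes^{\bL}_{\bZ_p}-$, with the $I$-completely flat topology of loc.\ cit., but this is formal from the stability of (faithful) flatness under derived base change and $I$-adic completion; no serious obstacle is expected.
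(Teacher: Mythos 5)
Your proof is correct and follows essentially the same strategy as the paper: reduce termwise to the Drinfeld--Mathew descent theorem (applied to the $I$-completely faithfully flat base change $A\otimes^{\bL}_{\bZ_p}R\to A\otimes^{\bL}_{\bZ_p}S$) and conclude by commuting limits. The only harmless differences are that the paper first rewrites $\calX(R)$ as the totalization over the \v Cech nerve of the Breuil--Kisin prism before applying descent termwise, whereas you apply descent to every term of the limit over the whole site, and that you spell out the reduction of the $\varphi=1$-structure to plain vector bundles, a point the paper leaves implicit.
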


\begin{proof}
    Let $R_1\to R_2$ be a faithfully flat map in $\textbf{Nilp}_{\bZ_p}$. Then we need to prove
    \begin{equation*}
    \xymatrix@=0.5cm{
\calX(R_1)\ar@<.0ex>[r]^-{\simeq} & \varprojlim (\calX(R_2)\ar@<-.3ex>[r]\ar@<.3ex>[r] & \calX(R_2^1)\ar@<-.6ex>[r]\ar@<-.0ex>[r]\ar@<.6ex>[r]&\cdots \calX(R_2^i)\cdots)&}
    \end{equation*}
where $\{R_2^i\}_{i\geq 0}$ is the \v Cech nerve associated with $R_1\to R_2$.

    Note that $(\frakS,(E))$ is a final cover of the topos ${\rm Shv}((\calO_K)_{\Prism})$. So we have 
     \begin{equation}\label{1-groupoid}
    \xymatrix@=0.5cm{
    \Vect((\calO_K)_{\Prism},\calO_{\Prism,R}[\frac{1}{\calI_{\Prism}}])^{\varphi=1,\simeq}\ar@<.0ex>[r]^-{\simeq} & \varprojlim (\Vect(\frakS_R[\frac{1}{E}])^{\varphi=1,\simeq}\ar@<-.3ex>[r]\ar@<.3ex>[r] & \Vect(\frakS^1_R[\frac{1}{E}])^{\varphi=1,\simeq}\ar@<-.6ex>[r]\ar@<-.0ex>[r]\ar@<.6ex>[r]&\cdots)}
  \end{equation}
    for any $R\in \textbf{Nilp}_{\bZ_p}$, where $(\frakS^{\bullet},(E))$ denotes the \v Cech nerve associated with $(\frakS,(E))$ and we simply write $\frakS_R^n=(\frakS^n\otimes^{\bL}_{\bZ_p}R)^{\wedge}_{E}$. 
For any $(\frakS^n,(E))$, we see that $\frakS^n\otimes^{\bL}_{\bZ_p}R_1\to \frakS^n\otimes^{\bL}_{\bZ_p}R_2$ is ($E$-completely) faithfully flat. So we can apply Theorem \ref{Thm-descent} and get
\[
\Vect(\frakS^n_{R_1}[\frac{1}{E}])^{\varphi=1,\simeq}\xrightarrow{\simeq}\varprojlim_i\Vect(\frakS^n_{R^i_2}[\frac{1}{E}])^{\varphi=1,\simeq}
\]
Now the proposition follows from the fact that limit commutes with limit.
\end{proof}

We conclude this subsection with a key lemma which will be used frequently.
\begin{lem}[{\cite[Proposition A.1]{HP22}}]\label{truncated-tot}
  Let $\calF:\Delta\to \textbf{Ani}$ be a cosimplicial anima such that all $\calF([i])$'s are $n$-truncated. Then there is an equivalence
  \[
  \varprojlim_{\Delta}\calF\xrightarrow{\simeq} \varprojlim_{\Delta_{\leq n+1}}\calF|_{\Delta_{\leq n+1}}
  \]
\end{lem}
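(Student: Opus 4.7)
My plan is to reduce the statement to an analysis of the tower of partial totalizations. First I would set $\Tot^k\calF := \varprojlim_{\Delta_{\leq k}}\calF|_{\Delta_{\leq k}}$; since $\Delta$ is the filtered union of its finite subcategories $\Delta_{\leq k}$ and limits commute with limits, one gets a canonical equivalence $\varprojlim_\Delta\calF \simeq \varprojlim_k \Tot^k\calF$. The goal then becomes to prove that the transition maps $\Tot^{k+1}\calF \to \Tot^k\calF$ are equivalences for all $k \geq n+1$, so that the tower stabilizes at level $n+1$ and the statement follows.

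Next I would invoke the standard Bousfield--Kan/Reedy computation: the homotopy fiber (over any basepoint) of $\Tot^{k+1}\calF\to \Tot^k\calF$ is equivalent to $\Omega^{k+1} N^{k+1}\calF$, where $N^{k+1}\calF$ is the Moore normalized anima, namely the iterated homotopy fiber of the codegeneracies $\calF([k+1])\to\calF([k])$. This identification comes from the pushout of $\infty$-categories of the form $\Delta_{\leq k}\sqcup_{\partial\Delta^{k+1}}\Delta^{k+1}\simeq\Delta_{\leq k+1}$, which expresses the transition map as a pullback along the matching map. Since $N^{k+1}\calF$ is an iterated homotopy fiber of the $n$-truncated anima $\calF([j])$, it is itself $n$-truncated; hence $\Omega^{k+1} N^{k+1}\calF$ is $(n-k-1)$-truncated, and therefore contractible as soon as $k\geq n+1$. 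Consequently each transition map in the tower is an equivalence from level $n+1$ onwards, and one obtains $\varprojlim_\Delta\calF \simeq \Tot^{n+1}\calF$.

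The main technical obstacle I anticipate is the uniform-basepoint issue: the fiber identification with $\Omega^{k+1}N^{k+1}\calF$ strictly computes the fiber only over the basepoint of the constant cosimplicial anima, whereas to conclude that $\Tot^{k+1}\calF\to\Tot^k\calF$ is an equivalence of anima one needs contractibility of the fibers over every basepoint of $\Tot^k\calF$. I would handle this either by repeating the fiber analysis with an arbitrary basepoint (shifting the normalized pieces accordingly, which retains $n$-truncatedness), or equivalently by translating the truncation hypothesis into vanishing of homotopy group sheaves on $\Tot^k\calF$ in every degree and at every basepoint, which then forces the map to be an equivalence of anima and finishes the argument.
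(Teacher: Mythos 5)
The paper does not prove this lemma at all: it is quoted verbatim from the cited reference (Proposition~A.1 of [HP22]), so there is no internal proof to compare against. Your argument is the standard Bousfield--Kan tower argument for this fact, and it is correct in outline and completable.

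Two points deserve care. First, the identification $\Delta_{\leq k+1}\simeq \Delta_{\leq k}\sqcup_{\partial\Delta^{k+1}}\Delta^{k+1}$ is not literally a pushout of $\infty$-categories (already for $k=0$ the right-hand side fails to produce the codegeneracy $[1]\to[0]$); the correct mechanism is the Reedy latching/matching decomposition of $\Delta_{\leq k+1}$ over $\Delta_{\leq k}$, which exhibits $\Tot^{k+1}\calF\to\Tot^{k}\calF$ as (a twisted form of) the map of spaces of lifts against $\partial\Delta^{k+1}\hookrightarrow\Delta^{k+1}$ into $\calF([k+1])\to M^{k+1}\calF$. The fiber formula $\Omega^{k+1}N^{k+1}\calF$ that you invoke is nonetheless a standard, citable consequence. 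Second, and more substantively, over an arbitrary basepoint of $\Tot^{k}\calF$ the fiber is such a space of lifts and is a priori possibly \emph{empty}, in which case it is not a loop space of anything; contractibility of the fibers (hence that the transition map is an equivalence, including surjectivity on $\pi_0$) therefore requires both a vanishing-of-obstruction step and a vanishing-of-higher-homotopy step. Both follow from the same truncation bound: the fiber $F$ of $\calF([k+1])\to M^{k+1}\calF$ over the induced point is a limit of $n$-truncated anima, hence $n$-truncated, so the obstruction to nonemptiness of the lift (living in $\pi_{k}(F)$) and the homotopy groups of the space of lifts ($\pi_i\cong\pi_{i+k+1}(F)$) all vanish once $k\geq n+1$. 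Your proposed fixes for the basepoint issue are exactly of this kind, so the argument goes through; just make sure the nonemptiness step is made explicit rather than subsumed into the loop-space identification.
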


\subsection{The underlying classical stack $^{\rm cl}\calX$}

In this subsection, we will study the underlying classical stack $^{\rm cl}\calX$ of $\calX$ and compare it with the Emerton--Gee stack $\calX_{\rm EG}$ constructed in \cite{EG22}.

Let's first briefly recall the construction of the Emerton--Gee stack and some of its basic properties. We fix a compatible system $\{\zeta_{p^n}\}_n$ of primitive $p$-power roots of unity in $\bar K$ and write $K_{\cyc}:=\bigcup_n K(\zeta_{p^n})$. Let $k_{\infty}$ denote the residue field of $K(\zeta_{p^{\infty}})$. Write $A_K^+=W(k_{\infty})[[T_K]]$, where $T_K$ is a chosen lift in $A_K$ of the uniformiser of the field of norm of $K(\zeta_{p^{\infty}})$ and $A_K=A_K^+[\frac{1}{T_K}]^{\wedge}_p$. There is a $\varphi$-action on $A_K$ as well as a compatible $\Gamma:=\Gal(K(\zeta_{p^{\infty}})/K)$-action. The subring $A_K^+$ is not necessarily stable under the $\varphi$-action. If $K$ is absolutely unramified, then $A_K^+$ is $\varphi$-stable.

\begin{dfn}[\'Etale $(\varphi,\Gamma)$-modules]
    Let $R$ be a $p$-nilpotent ring. We say $M$ is an \'etale $(\varphi,\Gamma)$-module with $R$-coefficient if $M$ is a finite projective module over $A_{K,R}:=(A_K^+\otimes_{\bZ_p}R)^{\wedge}_{T_K}[\frac{1}{T_K}]$ equipped with compatible semilinear Frobenius action $\varphi_M:\varphi^*M\xrightarrow{\simeq}M$ and continuous $\Gamma$-action. Let ${\rm Mod}^{\varphi,\Gamma}(A_{K,R})$ denote the category of \'etale $(\varphi,\Gamma)$-modules with $R$-coefficient and ${\rm Mod}^{\varphi,\Gamma}(A_{K,R})^{\simeq}$ denote its underlying groupoid.
\end{dfn}

\begin{thm}[\cite{EG22}]
    Let $\calX_{\rm EG}$ be the classical prestack defined as $\calX_{\rm EG}(R):={\rm Mod}^{\varphi,\Gamma}(A_{K,R})^{\simeq}$. Then $\calX_{\rm EG}$ is a Noetherian formal algebraic stack. In particular, it is also an ind-algebraic stack locally of finite type.
\end{thm}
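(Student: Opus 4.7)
The approach is to exhibit $\calX_{\rm EG}$ as an ascending union of Noetherian formal algebraic substacks cut out by a height condition, and to treat the $\varphi$-structure and the $\Gamma$-action separately. First, I would forget the $\Gamma$-action and study the prestack $\calR_d$ of \'etale $\varphi$-modules of rank $d$ over $A_{K,R}$. Because $A_{K,R}$ is non-Noetherian, $\calR_d$ is not itself algebraic, but one can introduce substacks $\calC^{[a,b]}$ consisting of those \'etale $\varphi$-modules that admit an integral model $\frakM$ over $(A_K^+\otimes_{\bZ_p}R)^{\wedge}_{T_K}$ of height in $[a,b]$, i.e.\ satisfying $T_K^{b}\frakM \subseteq \varphi^{*}\frakM \subseteq T_K^{a}\frakM$. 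For each $[a,b]$ and each $n$, Noetherian approximation presents $\calC^{[a,b]}$ modulo $p^n$ as a quotient stack $[X_n^{[a,b]}/\GL_d]$, where $X_n^{[a,b]}$ is a finite type affine scheme parameterising Frobenius matrices whose entries lie in a bounded range of $T_K$-degrees.

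Having established $\calR_d$ as ind-algebraic via the family $\{\calC^{[a,b]}\}$, the second step is to add the $\Gamma$-action. Since $\Gamma$ is a $p$-adic Lie group of finite rank, it contains a finitely generated pro-$p$ open subgroup $\Gamma_0$; a continuous semilinear $\Gamma$-action on $\frakM$ is determined by the images of finitely many topological generators of $\Gamma_0$, together with cocycle data for the finite quotient $\Gamma/\Gamma_0$, subject to commutation relations and a continuity condition. Modulo $p^n$, this continuity condition becomes a closed algebraic condition on finitely many matrix coefficients, so the stack $\calX^{[a,b]}$ of \'etale $(\varphi,\Gamma)$-modules of height in $[a,b]$ sits as a closed algebraic substack inside $\calC^{[a,b]}$, and is itself a $p$-adic formal algebraic stack of finite type over $\Spf(\bZ_p)$.

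Finally, I would pass to the colimit. The transition maps $\calX^{[a,b]} \hookrightarrow \calX^{[a',b']}$ for $[a,b]\subseteq [a',b']$ are closed immersions, and a standard quasi-compactness argument shows that every \'etale $(\varphi,\Gamma)$-module over a finite type coefficient ring is of some bounded height. Therefore $\calX_{\rm EG} = \varinjlim_{[a,b]} \calX^{[a,b]}$ is an ind-algebraic stack locally of finite type, and the Noetherianity of each stratum gives $\calX_{\rm EG}$ the structure of a Noetherian formal algebraic stack.

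The main obstacle is the handling of the \emph{continuous} $\Gamma$-action: unlike the $\varphi$-structure, which is a single algebraic datum, $\Gamma$ is a profinite group and topological continuity is not a priori algebraic. The key observation making everything work is that once one has descended to the Breuil--Kisin-type integral model $\frakM$, the difference between the action of any element $\gamma \in \Gamma_0$ and the identity is divisible by a positive power of $T_K$ in a controlled way; modulo $p^n$ this reduces continuity to the vanishing of all but finitely many Taylor coefficients, yielding a closed condition cut out by polynomial equations. It is this integrality, combined with the bounded height stratification, that turns a profinite continuity constraint into the ind-algebraic geometry asserted.
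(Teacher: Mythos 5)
Your outline reproduces the overall strategy of \cite{EG22} for the ind-algebraicity: stratify by bounded $T_K$-height, present each stratum modulo $p^n$ as a finite-type quotient stack, and use the integrality of the $\Gamma$-action (the fact that $\gamma-1$ is divisible by a controlled power of $T_K$ on a lattice) to turn continuity into a finitely presented algebraic condition. Two corrections, one minor and one substantive. The minor one: the $\Gamma$-action is extra \emph{data}, not a condition on an \'etale $\varphi$-module, so $\calX^{[a,b]}$ is not a closed substack of $\calC^{[a,b]}$; rather, the forgetful morphism $\calX^{[a,b]}\to\calC^{[a,b]}$ is representable and of finite presentation (affine, cut out inside a space of matrices for the topological generators of $\Gamma_0$ by the closed continuity and commutation conditions). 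The conclusion of algebraicity of each stratum survives this rephrasing.

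The substantive gap is the final step. An ascending union of Noetherian algebraic stacks along closed immersions is an ind-algebraic stack, but it is \emph{not} thereby a (Noetherian) formal algebraic stack: for that one needs the transition maps to be eventually thickenings, equivalently that the underlying reduced substacks stabilise to a single algebraic stack of finite type. This is precisely the hard part of the theorem in \cite{EG22}: they must show separately that $(\calX_{\rm EG})_{\rm red}$ is an algebraic stack of finite presentation over $\bF_p$ (and in fact equidimensional of dimension $[K:\bQ_p]d(d-1)/2$), which is proved by a dimension count on families of residual representations built as successive extensions, together with the construction of suitable (crystalline) lifts. Your argument supplies nothing that prevents the reduced loci of the $\calX^{[a,b]}$ from growing without bound as $[a,b]$ expands, so as written you have only established ind-algebraicity locally of finite type, not the formal algebraicity or Noetherianity of $\calX_{\rm EG}$ itself.
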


Now we turn to the derived stack $\calX$. Note that when $R$ is discrete, the sheaf $\calO_{\Prism,R}[\frac{1}{\calI_{\Prism}}]$ is not necessarily a discrete $\infty$-sheaf. But the $\infty$-groupoid  $\Vect((\calO_K)_{\Prism},\calO_{\Prism,R}[\frac{1}{\calI_{\Prism}}])^{\varphi=1,\simeq}$ is indeed a $1$-groupoid. In fact, in Equation \ref{1-groupoid}, the right hand side is an inverse limit of $1$-groupoids as each $\frakS^n_R[\frac{1}{E}]$ is discrete. So $^{\rm cl}\calX$ is an fppf stack valued in 1-groupoids in the classical sense.

 In order to compare $^{\rm cl}\calX$ and $\calX_{\rm EG}$, we need to construct a map between them in the first place. To achieve this, we recall some important prisms in $(\calO_K)_{\Prism}$.

\begin{dfn}[Cyclotomic prism]\label{cyl-prism}
Let $\hat K_{\cyc}^{\flat}$ be the tilt of the $p$-adic completion of $K(\zeta_{p^{\infty}})$ and define $\epsilon:=(1,\zeta_p,\zeta_{p^2},\cdots)\in \hat K_{\cyc}^{\flat}$. Write $q=[\epsilon]$ and $[p]_q:=\frac{q^p-1}{q-1}$. When $K$ is absolutely unramified, by \cite[Proposition 2.19]{Wu21}, $(\bar A_K^+,[p]_q)$\footnote{To avoid possible confusion, we use $\bar A_K^+$ to denote the ring $A_K^+$ defined in \cite[Section 2.1]{Wu21} and keep the notation $A_K^+$ for the ring appearing in \cite{EG22}.} is a prism in $(\calO_K)_{\Prism}$. We will call it the cyclotomic prism.  By \cite[Lemma 2.17]{Wu21}, its perfection is $(A_{\cyc}:=W(\calO^{\flat}_{\hat K_{\cyc}}),(\xi))$, which we call the perfect cyclotomic prism. When $K$ is ramified, there is no well-defined cyclotomic prism but the perfect cyclotomic prism $(A_{\cyc}:=W(\calO^{\flat}_{\hat K_{\cyc}}),(\xi))$ still makes sense.
\end{dfn}

Note that both the cyclotomic prism and the Breuil--Kisin prism are covers of the final object of the topos ${\rm Shv}((\calO_K)_{\Prism})$. Now we state the main result of this section.

\begin{thm}\label{main1}
There is a natural equivalence between $^{\rm cl}\calX$ and $\calX_{\rm EG}$.
\end{thm}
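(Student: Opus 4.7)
The plan is to construct a natural transformation $\Phi:{}^{\rm cl}\calX\to \calX_{\rm EG}$ and prove it is an equivalence, first for finite type coefficients via the perfect prismatic site and then in general by a limit-preservation argument. Given a discrete $p$-nilpotent ring $R$ and a Laurent $F$-crystal $\bM$ with $R$-coefficients, I first evaluate $\bM$ on the cyclotomic prism $(\bar A_K^+,[p]_q)$ of Definition \ref{cyl-prism} to obtain a finite projective module $M$ over $\bar A_{K,R}:=(\bar A_K^+\otimes_{\bZ_p}R)^\wedge_{[p]_q}[\tfrac{1}{[p]_q}]$ with a Frobenius isomorphism. Since $(\bar A_K^+,[p]_q)$ is a cover of the final object of $\mathrm{Shv}((\calO_K)_{\Prism})$, the crystal structure provides descent data along its \v Cech nerve; passing to the perfection $(A_{\cyc},\xi)$ on which $\Gamma=\Gal(K_{\cyc}/K)$ acts translates these descent data into a continuous semilinear $\Gamma$-action on $M$ commuting with $\varphi$. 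A faithfully flat comparison between the ring $\bar A_K^+$ following \cite{Wu21} and the ring $A_K^+$ used in \cite{EG22} then puts the output in $\calX_{\rm EG}(R)$; functoriality in $R$ gives the natural transformation $\Phi$.

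Second, I check $\Phi_R$ is an equivalence for $R$ of finite type over $\bZ_p/p^n$. Here the key input is that, for such $R$, evaluation on $(A_{\cyc},\xi)$ induces an equivalence between Laurent $F$-crystals on the full site $(\calO_K)_{\Prism}$ and those on the perfect site $(\calO_K)^{\rm perf}_{\Prism}$, by a Noetherian/adic version of Bhatt--Scholze's perfect prismatic descent (this is the place, singled out in the introduction, where finite type is essential). On the perfect site, descent along $(A_{\cyc},\xi)$ identifies the target with étale $(\varphi,\Gamma)$-modules having $R$-coefficients; this is the coefficient generalization of Wu's theorem obtained by the same strategy, using that $A_{\cyc}$ is faithfully flat over $\bar A_K^+$ after inverting $[p]_q$.

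To extend from finite type $R$ to arbitrary $R\in\text{Nilp}_{\bZ_p}$, I would show both stacks are limit-preserving and hence determined by their values on finite type coefficients. The Emerton--Gee stack $\calX_{\rm EG}$ is locally of finite type by \cite{EG22}. For ${}^{\rm cl}\calX$, express $R$ as a filtered colimit of finitely generated $\bZ_p/p^N$-subalgebras. Since every prism in $(\calO_K)_{\Prism}$ is $\bZ_p$-flat, the derived tensor products entering $\calO_{\Prism,R}[\tfrac{1}{\calI_{\Prism}}]$ are discrete for discrete $R$, and Lemma \ref{truncated-tot} reduces the cosimplicial totalization along the \v Cech nerve of $(\bar A_K^+,[p]_q)$ to a finite limit (the core $\Vect(\cdot)^{\varphi=1,\simeq}$ is a $1$-groupoid). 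This finite limit commutes with filtered colimits, so ${}^{\rm cl}\calX$ is limit-preserving; once both stacks are left Kan extensions of their common restriction to finite type coefficients, the equivalence $\Phi$ established in the previous step propagates to all of $\text{Nilp}_{\bZ_p}$.

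The main obstacle I anticipate is this last colimit argument: the interplay between $[p]_q$-adic (equivalently $p$-adic, after exploiting $p$-nilpotence of $R$) completion and filtered colimits in $R$ must be handled carefully, and one must verify that the $\Gamma$-action assembled from the \v Cech descent data on each $(\bar A_K^+)^{\otimes n}$ remains continuous for arbitrary $R$. The underlying reason for optimism is that we are working modulo a power of $p$, so the $p$-completions appearing are trivial on $R$-modules, and the whole comparison reduces to a structural property of vector bundles on the specific explicit rings $\bar A_{K,R}$ and their \v Cech iterates.
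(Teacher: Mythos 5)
Your overall strategy coincides with the paper's: bridge through the perfect prismatic site, prove the comparison for finite type coefficients, then extend by limit preservation. However, the last step as you state it has a genuine gap. After using Lemma \ref{truncated-tot} to reduce the totalization to a finite limit and commuting that finite limit past the filtered colimit, you obtain $\varinjlim_i{}^{\rm cl}\calX(R_i)\simeq\varprojlim_j\bigl(\varinjlim_i\Vect(\frakS^j_{R_i}[\frac{1}{E}])^{\varphi=1,\simeq}\bigr)$, but you still have to compare each $\varinjlim_i\Vect(\frakS^j_{R_i}[\frac{1}{E}])^{\varphi=1,\simeq}$ with $\Vect(\frakS^j_{R}[\frac{1}{E}])^{\varphi=1,\simeq}$, and this termwise comparison is \emph{not} automatic: the rings $\frakS^j_{R}=(\frakS^j\otimes_{\bZ_p}R)^{\wedge}_E$ involve an $E$-adic (equivalently $u$-adic or $[p]_q$-adic) completion, and it is this completion — not the $p$-adic one — that fails to commute with filtered colimits in $R$. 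Your stated reason for optimism ("the $p$-completions appearing are trivial on $R$-modules") misidentifies the relevant completion and does not resolve this. The paper handles it by quoting the (nontrivial) limit-preservation of the moduli of \'etale $\varphi$-modules from \cite[Corollary 3.1.5 and 3.2.6]{EG22} to get an equivalence in cosimplicial degree $0$, proving only full faithfulness in higher degrees via the Frobenius-invariants Lemma \ref{F-invariant} combined with Lemma \ref{mapping space}, and then concluding with Lemma \ref{tot}. Some such argument is indispensable here.

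A secondary issue: your construction of the natural transformation $\Phi$ for an \emph{arbitrary} discrete $p$-nilpotent $R$ extracts the continuous $\Gamma$-action from the \v Cech descent data over $(A_{\cyc},(\xi))$, which relies on the identification $A^n_{\cyc,R}[\frac{1}{\xi}]\cong C(\Gamma^n,A_{\cyc,R}[\frac{1}{\xi}])$ of Lemma \ref{Continuous functions}; that identification is only established (and is only expected to hold) for finite type $R$, since its proof uses that the relevant cofiber tensored with $R$ is killed by a bounded power of $\xi$. The paper sidesteps this by producing the equivalence on finite type algebras and extending formally via limit preservation on both sides, rather than by defining a direct functor on all of $\text{Nilp}_{\bZ_p}$.
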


To prove this theorem, we will proceed in two steps. The first one is to compare \'etale $(\varphi,\Gamma)$-modules with  coefficients to Laurent $F$-crystals with coefficients on the absolute perfect prismatic site. There is a natural restriction map from $\Vect((\calO_K)_{\Prism},\calO_{\Prism,R}[\frac{1}{\calI_{\Prism}}])^{\varphi=1}$ to $\Vect((\calO_K)_{\Prism}^{\perf},\calO_{\Prism,R}[\frac{1}{\calI_{\Prism}}])^{\varphi=1}$, which will then link $\Vect((\calO_K)_{\Prism},\calO_{\Prism,R}[\frac{1}{\calI_{\Prism}}])^{\varphi=1}$ to ${\rm Mod}^{\varphi,\Gamma}(A_{K,R})$. But this step can only provide comparison when the coefficient rings are finite type $\bZ_p$-algebras. This is closely related to the ``non-Noetherian" nature of the perfect absolute prismatic site $(\calO_K)_{\Prism}^{\perf}$. To finally prove Theorem \ref{main1}, we will need the second step: prove limit-preserving property of $^{\rm cl}\calX$. In a more geometric language, we will prove $^{\rm cl}\calX$ is locally of finite presentation.

We begin with how Laurent $F$-crystals are related to ``non-Noetherian" \'etale $(\varphi,\Gamma)$-modules.

\begin{prop}\label{perfect}
For any  $\bZ/p^a$-algebra $R$, there is an equivalence of categories
\[
\Vect((\calO_K)^{\perf}_{\Prism},\calO_{\Prism,R}[\frac{1}{\calI_{\Prism}}])^{\varphi=1}\simeq {\rm Mod}^{\varphi,\Gamma}(A_{\cyc,R}[\frac{1}{\xi}])
\]
where $A_{\cyc,R}:=(A_{\cyc}\otimes_{\bZ_p}R)^{\wedge}_{\xi}$.

\end{prop}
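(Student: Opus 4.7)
The plan is to realize the perfect cyclotomic prism as a distinguished cover in the topos ${\rm Shv}((\calO_K)^{\perf}_{\Prism})$ and then to translate flat descent along this cover into a continuous $\Gamma$-action, in a way reminiscent of \cite{Wu21} but with nontrivial coefficients. Concretely, since the cyclotomic prism $(\bar A_K^+,[p]_q)$ covers the final object of ${\rm Shv}((\calO_K)_{\Prism})$ (Definition \ref{cyl-prism}), its perfection $(A_{\cyc},(\xi))$ covers the final object of ${\rm Shv}((\calO_K)^{\perf}_{\Prism})$. Combining Theorem \ref{Thm-descent} with the analogue of Proposition \ref{Vector bundle} on the perfect site, and passing to $\varphi$-equivariant structures (which commute with limits), one obtains
\[
\Vect((\calO_K)^{\perf}_{\Prism},\calO_{\Prism,R}[\tfrac{1}{\calI_{\Prism}}])^{\varphi=1}\;\simeq\;\varprojlim_{[n]\in\Delta}\Vect(A^n_{\cyc,R}[\tfrac{1}{\xi}])^{\varphi=1},
\]
where $A^\bullet_{\cyc}$ is the \v Cech nerve of the cover $(A_{\cyc},(\xi))$ inside the perfect prismatic site and $A^n_{\cyc,R} := (A^n_{\cyc}\otimes_{\bZ_p}R)^{\wedge}_{\xi}$ (here flat $\otimes$ equals derived $\otimes$ because $R$ is of finite type over $\bZ/p^a$).

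The central input is a Galois-theoretic description of this \v Cech nerve. Arguing as in \cite{Wu21} (using that $A_{\cyc}=W(\calO_{\hat K_{\cyc}}^\flat)$ is the tilt-lift of the $\Gamma$-pro-\'etale cover $\calO_{K_{\cyc}}\to\calO_K$, together with the fact that products in the category of perfect prisms are given by $\xi$-completed tensor products), one identifies
\[
A^n_{\cyc}\;\cong\;C^{\mathrm{cts}}(\Gamma^n,A_{\cyc}),
\]
compatibly with the cosimplicial structure and with Frobenius. Tensoring with $R$ and inverting $\xi$ preserves this description because $R$ is $p$-nilpotent of finite type, so the same cocycle picture transports to $A^n_{\cyc,R}[\tfrac{1}{\xi}]$.

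Unpacking the totalization with this identification, a datum in $\varprojlim_{\Delta}\Vect(A^n_{\cyc,R}[\tfrac{1}{\xi}])^{\varphi=1}$ consists of a finite projective $A_{\cyc,R}[\tfrac{1}{\xi}]$-module $M$ with $\varphi_M$, together with an isomorphism over $C^{\mathrm{cts}}(\Gamma,A_{\cyc,R}[\tfrac{1}{\xi}])$ between the two pullbacks of $M$, satisfying the cocycle identity over $C^{\mathrm{cts}}(\Gamma^2,\cdot)$. This is precisely a continuous semilinear $\Gamma$-action on $M$ commuting with $\varphi_M$, i.e. an object of ${\rm Mod}^{\varphi,\Gamma}(A_{\cyc,R}[\tfrac{1}{\xi}])$. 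To see that the totalization is actually a $1$-category (so we land in an honest category rather than an $\infty$-category), I would invoke Lemma \ref{truncated-tot}: since each $\Vect(A^n_{\cyc,R}[\tfrac{1}{\xi}])^{\varphi=1}$ is a $1$-groupoid in its core, the totalization is computed by the $2$-truncation of $\Delta$, which is exactly the usual cocycle condition.

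The main obstacle I expect is the identification $A^n_{\cyc}\cong C^{\mathrm{cts}}(\Gamma^n,A_{\cyc})$ with $R$-coefficients and the verification that ``$C^{\mathrm{cts}}$-descent'' reproduces the usual notion of \emph{continuous} $\Gamma$-action on $M$ (as opposed to merely abstract descent data). With trivial coefficients this is handled in \cite{Wu21}; for general $p$-nilpotent finite type $R$ one needs that $\xi$-adic completion of $A_{\cyc}^{\otimes n}\otimes R$ behaves the same way, which is where the hypotheses on $R$ are essential. Once this identification is in place the equivalence follows by bookkeeping of descent data, and functoriality in $R$ is automatic.
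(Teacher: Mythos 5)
Your overall strategy coincides with the paper's: descend along the perfect cyclotomic prism $(A_{\cyc},(\xi))$, identify the \v Cech nerve with continuous functions on powers of $\Gamma$, and read off the stratification data as a continuous semilinear $\Gamma$-action. However, there are two problems at the step you yourself flag as ``the main obstacle,'' and this is exactly where the paper has to do real work. First, the integral identification $A^n_{\cyc}\cong C^{\mathrm{cts}}(\Gamma^n,A_{\cyc})$ that you take as the ``central input'' is not available: what \cite{Wu21} gives is only $A^n_{\cyc}[\frac{1}{\xi}]/p^m\cong C(\Gamma^n,A_{\cyc}[\frac{1}{\xi}]/p^m)$, i.e.\ the statement after inverting $\xi$. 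Integrally the comparison map $A^n_{\cyc}/p^m\to C(\Gamma^n,A_{\cyc})/p^m$ has a nonzero cofiber $K$; one can only show (using derived $\xi$-completeness and the fact that $K[\frac{1}{\xi}]=0$) that $K$ is killed by $\xi^s$ for some $s$.

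Second, and consequently, the assertion that ``tensoring with $R$ and inverting $\xi$ preserves this description because $R$ is $p$-nilpotent of finite type'' is not a formality — it is the entire content of the key lemma. Since the identification only holds up to $\xi$-power torsion, after applying $(-)\widehat\otimes^{\bL}_{\bZ/p^a}R$ one must show that $K\otimes^{\bL}_{\bZ/p^a}R$ is still killed by a \emph{bounded} power of $\xi$ (so that it dies after $\xi$-completion and inversion of $\xi$); this is where finite type enters, via a presentation $R\cong\bZ/p^a[x_1,\dots,x_m]/(f_1,\dots,f_t)$ and a d\'evissage bounding the torsion exponent (roughly $\xi^{2ts}$). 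One also needs the separate verification that $C(\Gamma^n,A_{\cyc}/p^a)\widehat\otimes R\cong C(\Gamma^n,A_{\cyc}/p^a\widehat\otimes R)$, which uses $\xi$-adic completeness of $A_{\cyc}/p^a$. Your proposal names the right difficulty but does not resolve it, so as written the argument is incomplete precisely at its load-bearing step; the rest (descent, truncation of the totalization via Lemma \ref{truncated-tot}, and the cocycle bookkeeping) is fine and matches the paper.
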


\begin{proof}
Since $(A_{\cyc},(\xi))$ is a cover of the final object of the topos ${\rm Shv}((\calO_K)_{\Prism}^{\perf})$, we see that any Laurent $F$-crystal corresponds to a stratification $(\calM,\epsilon)$ with respect to the \v Cech nerve $(A_{\cyc}^{\bullet},(\xi))$ of $(A_{\cyc},(\xi))$ in $(\calO_K)^{\rm perf}_{\Prism}$, where $\calM$ is a finite projective $A_{\cyc,R}[\frac{1}{\xi}]$-module and $\epsilon$ is an isomorphism of $A^1_{\cyc,R}[\frac{1}{\xi}]$-modules: 
\[
\calM\otimes_{A_{\cyc,R}[\frac{1}{\xi}],p_0}A^1_{\cyc,R}[\frac{1}{\xi}]\cong \calM\otimes_{A_{\cyc,R}[\frac{1}{\xi}],p_1}A^1_{\cyc,R}[\frac{1}{\xi}]
\]
which satisfies the cocycle condition. Now this theorem follows from Lemma \ref{Continuous functions}.
\end{proof}

\begin{rmk}
    One can also replace ${\rm Mod}^{\varphi,\Gamma}(A_{\cyc,R}[\frac{1}{\xi}])$ by the category ${\rm Mod}^{\varphi,G_K}(A_{\inf,R}[\frac{1}{\xi}])$ of \'etale $(\varphi,G_K)$-modules over $A_{\inf,R}[\frac{1}{\xi}]$ in Theorem \ref{perfect}.
\end{rmk}

\begin{lem}\label{Continuous functions}
For any  $\bZ/p^a$-algebra $R$, there is an isomorphism of rings
\[
A^n_{\cyc,R}[\frac{1}{\xi}]\cong C(\Gamma^n, A_{\cyc,R}[\frac{1}{\xi}])
\]
for all $n\geq 0$. 
\end{lem}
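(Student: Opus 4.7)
The plan is to proceed in three stages: first establish the coefficient-free isomorphism, then base-change to $R$, then invert $\xi$. The starting point is the assertion, essentially contained in \cite[Lemma 2.20]{Wu21}, that
\[
A^n_{\cyc}\;\cong\; C(\Gamma^n,A_{\cyc}),
\]
the right-hand side denoting continuous functions for the $(p,\xi)$-adic topology on $A_{\cyc}$. The guiding principle is that under the Bhatt--Scholze equivalence between perfect prisms and integral perfectoid rings, the cover $(A_{\cyc},(\xi))$ corresponds to the perfectoid $\calO_K$-algebra $\calO_{\hat K_{\cyc}}$, which is a pro-finite-\'etale $\Gamma$-Galois perfectoid cover of $\calO_K$. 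The $(n{+}1)$-fold self-product of such a cover (in the appropriate $v$-topology on perfectoid $\calO_K$-algebras) is identified with $C(\Gamma^n,\calO_{\hat K_{\cyc}})$, and taking Witt vectors transports this to the displayed identification for $A^n_{\cyc}$.

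To add coefficients in a discrete, $p$-nilpotent ring $R$ with $p^a R=0$, the key observation is that for any discrete $\bZ/p^a$-module $M$ and any $n\geq 0$, a continuous function $\Gamma^n\to M$ is locally constant. Hence $C(\Gamma^n,M)$ is the filtered colimit over finite quotients of $\Gamma^n$ of finite products of copies of $M$, and both operations commute with $-\otimes_{\bZ/p^a}R$. Applying this to $M=A_{\cyc}/(p^a,\xi^k)$ yields
\[
C(\Gamma^n,A_{\cyc}/(p^a,\xi^k))\otimes_{\bZ/p^a}R \;\cong\; C\bigl(\Gamma^n,(A_{\cyc}\otimes_{\bZ_p}R)/\xi^k\bigr)
\]
for every $k\geq 1$. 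Taking the inverse limit over $k$, using the $\bZ_p$-flatness of $A_{\cyc}$ and that $R$ is already annihilated by $p^a$, this promotes to
\[
(A^n_{\cyc}\otimes_{\bZ_p}R)^{\wedge}_{\xi}\;\cong\; C(\Gamma^n,A_{\cyc,R}),
\]
where continuity on the right-hand side is for the $\xi$-adic topology on $A_{\cyc,R}$.

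Inverting $\xi$ is the easy final step: since $\Gamma^n$ is compact and $A_{\cyc,R}[\tfrac{1}{\xi}]=\colim_N\,\xi^{-N}A_{\cyc,R}$, any continuous map $\Gamma^n\to A_{\cyc,R}[\tfrac{1}{\xi}]$ factors through some $\xi^{-N}A_{\cyc,R}$, so
\[
C(\Gamma^n,A_{\cyc,R}[\tfrac{1}{\xi}])\;\cong\; C(\Gamma^n,A_{\cyc,R})[\tfrac{1}{\xi}].
\]
Combining this with the previous step gives the lemma.

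The main obstacle I expect is the verification that $\xi$-adic completion commutes with $-\otimes_{\bZ_p}R$ inside the continuous-function formalism, since continuity is genuinely topological. Everything ultimately rests on the fact that modulo $(p^a,\xi^k)$ the ring is discrete, so $C(\Gamma^n,-)$ reduces to a filtered colimit of finite products, which commute with all the algebraic operations in sight. A secondary point to check is whether the coefficient-free identification cited from \cite{Wu21} is available in the precise form needed, or whether one must perform a short inductive argument on $n$, using the $n=1$ case together with a Fubini-type compatibility $C(\Gamma,C(\Gamma^{n-1},-))\cong C(\Gamma^n,-)$ and the already-established base-change statement.
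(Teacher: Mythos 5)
There is a genuine gap, and it sits exactly at the step the paper's proof is built to circumvent: your opening claim that $A^n_{\cyc}\cong C(\Gamma^n,A_{\cyc})$ integrally. The cover $\calO_K\to\calO_{\hat K_{\cyc}}$ is not pro-(finite \'etale) --- the $\Gamma$-torsor property holds only on generic fibres --- so the $(n{+}1)$-fold self-product of $(A_{\cyc},(\xi))$ in $(\calO_K)_{\Prism}^{\perf}$ agrees with $C(\Gamma^n,A_{\cyc})$ only up to ($\xi$-power) torsion, not on the nose. The result actually available from \cite{Wu21} (Lemma 5.3, which is what the paper invokes) is the weaker statement $A^n_{\cyc}[\frac{1}{\xi}]/p^m\cong C(\Gamma^n,A_{\cyc}[\frac{1}{\xi}]/p^m)$. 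Once the integral identification is withdrawn, your base-change and inverse-limit steps no longer assemble into the lemma: one must instead control the cofiber $K$ of $A^n_{\cyc}/p^m\to C(\Gamma^n,A_{\cyc})/p^m$. The paper shows $K$ is killed by $\xi^s$ for some $s$ (using that $K[\frac{1}{\xi}]=0$ and that $K$ is derived $\xi$-complete), and then --- this is the real content --- that $K\otimes^{\bL}_{\bZ/p^a}R$ is still killed by a \emph{bounded} power of $\xi$, so that it dies after completion and inversion of $\xi$. That last step is where the hypothesis that $R$ is of finite type over $\bZ/p^a$ enters (the paper bounds the exponent by $\xi^{2ts}$ in terms of a presentation $R\cong \bZ/p^a[x_1,\dots,x_m]/(f_1,\dots,f_t)$). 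Your argument never uses finite generation of $R$ anywhere, which is a symptom that the essential difficulty has been bypassed rather than solved.

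Your two closing steps are fine in isolation and do appear (implicitly or explicitly) in the paper: for a discrete $\bZ/p^a$-module, continuous functions out of $\Gamma^n$ are locally constant, so $C(\Gamma^n,-)$ commutes with $\otimes_{\bZ/p^a}R$ and with $\xi$-adic limits (the paper's final reduction $C(\Gamma^n,A_{\cyc}/p^a)\widehat\otimes R\cong C(\Gamma^n,A_{\cyc}/p^a\widehat\otimes R)$), and compactness of $\Gamma^n$ lets $C(\Gamma^n,-)$ commute with inverting $\xi$. But these observations cannot substitute for the missing integral comparison between $A^n_{\cyc}$ and $C(\Gamma^n,A_{\cyc})$; to repair the proof you would have to replace your first step by the $\xi$-inverted, mod-$p^m$ statement and then run an argument controlling the $\xi$-torsion defect under derived base change, which is essentially the paper's proof.
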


\begin{proof}
Consider $(C(\Gamma^n,A_{\cyc}),(\xi))$ as a perfect prism in $(\calO_K)_{\Prism}^{\rm perf}$. Then there is a natural map \[
(A_{\cyc}^n,(\xi))\to (C(\Gamma^n,A_{\cyc}),(\xi))
\]
induced by the maps of prisms $\gamma_i:(A_{\cyc},(\xi))\to (C(\Gamma^n,A_{\cyc}),(\xi))$ defined as $(\gamma_i(x))(g)=({\rm pr}_i(g))(x)$, where ${\rm pr}_i$ is the $i$-th projection map $\Gamma^n\to \Gamma$ when $1\leq i\leq n$ and the map $\Gamma^n\to *$ when $i=0$.

    Note that by \cite[Lemma 5.3]{Wu21}, there is an isomorphism
    \[
    A^n_{\cyc}[\frac{1}{\xi}]/p^m\cong C(\Gamma^n,A_{\cyc}[\frac{1}{\xi}]/p^m)
    \]
    for any $m$.
    In particular the homotopy cofiber $K$ of the map $A^n_{\cyc}/p^m\to C(\Gamma^n,A_{\cyc})/p^m$ vanishes after inverting $\xi$. As both $A^n_{\cyc}/p^m$ and $C(\Gamma^n,A_{\cyc})/p^m$ are derived $\xi$-complete, so is the homotopy cofiber $K$. Then by \cite[Theorem 2.3]{Bha19}, we see that $K$ is killed by $\xi^s$ for some $s$.

    Now for any finite type $\bZ/p^a$-algebra $R$, the map 
    \[
    A^n_{\cyc}/p^a\otimes_{\bZ/p^a} R\to C(\Gamma^n, A_{\cyc}/p^a)\otimes_{\bZ/p^a} R
    \]
    becomes isomorphism after inverting $\xi$. We can consider the following exact sequence of derived $\xi$-complete $A_{\cyc}$-modules
    \[
    A^n_{\cyc}/p^a\widehat\otimes_{\bZ/p^a}^{\bL} R\to C(\Gamma^n, A_{\cyc}/p^a)\widehat\otimes_{\bZ/p^a}^{\bL} R\to K\widehat\otimes_{\bZ/p^a}^{\bL}R.
    \]

We claim that $K\widehat\otimes_{\bZ/p^a}^{\bL}R$ vanishes after inverting $\xi$. By \cite[\href{https://stacks.math.columbia.edu/tag/0CQE}{Tag 0CQE}]{Sta18} and \cite[\href{https://stacks.math.columbia.edu/tag/0923}{Tag 0923}]{Sta18}, it is easy to see this is true if $K\otimes^{\bL}_{\bZ/p^a}R$ is killed by some bounded power of $\xi$. To see the latter, first note that $H^0(K\otimes_{\bZ/p^a}^{\bL}R)=H^0(K)\otimes_{\bZ/p^a}R$ is killed by $\xi^s$. To investigate $H^{-1}(K\otimes_{\bZ/p^a}^{\bL}R)$, we examine the homology Künneth spectral sequence
\[
E^2_{p,q}={\rm Tor}^{\bZ/p^a}_p(H_q(K),R)\Longrightarrow H_{p+q}(K\otimes_{\bZ/p^a}^{\bL}R).
\]
Note that both $E^2_{0,1}=H_1(K)\otimes_{\bZ/p^a}R$ and $E^2_{1,0}={\rm Tor}^{\bZ/p^a}_1(H_0(K),R)$ are killed by $\xi^s$. So both $E^{\infty}_{0,1}$ and $E^{\infty}_{1,0}$ are killed by $\xi^s$. This implies that $H^{-1}(K\otimes_{\bZ/p^a}^{\bL}R)=H_1(K\otimes_{\bZ/p^a}^{\bL}R)$ is killed by $\xi^{2s}$. Hence $K\otimes_{\bZ/p^a}^{\bL}R$ is killed by $\xi^{2s}$.

We finally get
    \[
    A^n_{\cyc,R}[\frac{1}{\xi}]\cong (C(\Gamma^n,A_{\cyc}/p^a)\widehat\otimes R)[\frac{1}{\xi}].
    \]

    It remains to prove 
    \[
    C(\Gamma^n,A_{\cyc}/p^a)\widehat\otimes R\cong C(\Gamma^n,A_{\cyc}/p^a\widehat\otimes R).
    \]
    This follows from $A_{\cyc}/p^a$ is $\xi$-adic complete.
    
\end{proof}

Before moving on, we give a lemma that will be used frequently.

\begin{lem}\label{tot}
    Let $f:\calC\to\calD$ be a morphism between two cosimplicial $\infty$-categories. Suppose $f_i:\calC_{[i]}\to \calD_{[i]}$ is fully faithful for all $[i]$'s and $f_0:\calC_{[0]}\to \calD_{[0]}$ is an equivalence. Then $f:{\rm Tot}(\calC)\to {\rm Tot}(\calD)$ is an equivalence.
\end{lem}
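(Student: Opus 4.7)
The plan is to prove that $f_*: \Tot(\calC) \to \Tot(\calD)$ is an equivalence of $\infty$-categories by establishing that it is fully faithful and essentially surjective on cores.

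Full faithfulness is formal: since $\Tot(\calC) = \varprojlim_{\Delta} \calC$ and mapping anima in a limit of $\infty$-categories are computed as the corresponding limit of mapping anima, for $c, c' \in \Tot(\calC)$ we have
\[
\Map_{\Tot(\calC)}(c, c') \simeq \varprojlim_{[n] \in \Delta} \Map_{\calC_{[n]}}(c_n, c'_n),
\]
and likewise for $\Tot(\calD)$. The hypothesis that each $f_n$ is fully faithful makes the map at each level an equivalence, and taking $\varprojlim_{\Delta}$ preserves this.

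For essential surjectivity, I pass to cores (which commutes with limits), so that $f_*^{\simeq}: \Tot(\calC)^{\simeq} \to \Tot(\calD)^{\simeq}$ is a $\Delta$-limit of the monomorphisms of anima $f_n^{\simeq}: \calC_{[n]}^{\simeq} \hookrightarrow \calD_{[n]}^{\simeq}$, each identifying $\calC_{[n]}^{\simeq}$ with a union of connected components $C_n \subseteq \calD_{[n]}^{\simeq}$; hence $f_*^{\simeq}$ is itself a monomorphism whose essential image consists of tuples $(d_n)$ with each $d_n \in C_n$. Given $d = (d_n) \in \Tot(\calD)^{\simeq}$, I will show each $d_n$ lies in $C_n$: lift $d_0$ to some $c_0 \in \calC_{[0]}$ using that $f_0$ is an equivalence, choose any $\alpha: [0] \to [n]$ in $\Delta$, and combine the cosimplicial coherence $\calD(\alpha)(d_0) \simeq d_n$ inherent in $d$ with the naturality $f_n \circ \calC(\alpha) = \calD(\alpha) \circ f_0$ to conclude $d_n \simeq f_n(\calC(\alpha)(c_0)) \in C_n$.

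The main obstacle is the core-level compatibility: one must verify that the $\Delta$-limit of the inclusions $\calC_{[n]}^{\simeq} \hookrightarrow \calD_{[n]}^{\simeq}$ is again an inclusion of components whose image consists precisely of tuples with each component landing in the respective $C_n$. This is a consequence of the stability of monomorphisms of anima under arbitrary limits together with the fact that $C_\bullet$ is preserved by the cosimplicial structure maps (from the naturality of $f_\bullet$); once this is in place, the entire essential-surjectivity statement reduces to the single lift of $d_0$ through the equivalence $f_0$.
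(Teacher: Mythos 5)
Your proposal is correct and follows essentially the same route as the paper: full faithfulness of $f$ is formal from the levelwise full faithfulness (mapping anima in a limit are the limit of mapping anima), and essential surjectivity comes down to lifting the degree-zero component through the equivalence $f_0$ and propagating along the coface maps $[0]\to[n]$ to see that every component of an object of ${\rm Tot}(\calD)$ lies in the essential image of $f_n$. The only cosmetic difference is that you package the essential-surjectivity step via cores and monomorphisms of anima, whereas the paper phrases it as the construction of a one-sided inverse $g:{\rm Tot}(\calD)\to{\rm Tot}(\calC)$ using $f_0^{-1}$; your version is, if anything, the more carefully justified of the two.
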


\begin{proof}
   As all $f_i$'s are fully faithful, we know that $f$ must be fully faithful. Moreover since $f_0$ is an equivalence and all $f_i$'s are fully faithful, we can construct a map from ${\rm Tot}(\calD)$ to $\calC$ using $f_0^{-1}$ which induces a map $g:{\rm Tot}(\calD)\to {\rm Tot}(\calC)$. By the full faithfulness of all $f_i$'s again, we see that $f\circ g\simeq id:{\rm Tot}(\calD)\to {\rm Tot}(\calD)$. In particular, this implies that $f$ is essentially surjective. So we get $f$ is an equivalence.
\end{proof}

Now we need to compare Laurent $F$-crystals on the absolute prismatic site and those on the absolute perfect prismatic site. 

\begin{prop}\label{finitetype}
    Let $R$ be a finite type $\bZ/p^a$-algebra. There is a natural equivalence of categories
    \[
    \Vect((\calO_K)_{\Prism},\calO_{\Prism,R}[\frac{1}{\calI_{\Prism}}])^{\varphi=1}\xrightarrow{\simeq}\Vect((\calO_K)^{\perf}_{\Prism},\calO_{\Prism,R}[\frac{1}{\calI_{\Prism}}])^{\varphi=1}.
    \]
\end{prop}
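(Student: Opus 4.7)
The natural functor is the restriction along the inclusion of sites $(\calO_K)^{\perf}_{\Prism}\hookrightarrow (\calO_K)_{\Prism}$. My plan is to compute both sides via \v Cech descent along a common cover, reduce the desired equivalence to a statement at each cosimplicial level, and then extend the prism-to-perfection comparison of \cite{Wu21} to accommodate the $R$-coefficients.

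To set up the descent, I would take the cyclotomic prism $(\bar A_K^+,[p]_q)$ as a cover of the final object of ${\rm Shv}((\calO_K)_{\Prism})$ and the perfect cyclotomic prism $(A_{\cyc},(\xi))$ as a cover of the final object of ${\rm Shv}((\calO_K)^{\perf}_{\Prism})$. Since perfection is a left adjoint to the inclusion of perfect prisms into all prisms, it commutes with self-products in the prismatic site, so $A_{\cyc}^n$ is the perfection of $\bar A_K^{+,n}$ for every $n\geq 0$. Applying Proposition \ref{Vector bundle} (together with its $\varphi$-equivariant analogue, which follows from Theorem \ref{Thm-descent}) on both sides, the restriction functor becomes a map between cosimplicial totalizations induced levelwise by the prism-to-perfection map. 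By Lemma \ref{tot}, it then suffices to show that for each $n\geq 0$ the base-change functor
\[
\Vect\bigl((\bar A_K^{+,n}\otimes^{\bL}_{\bZ_p}R)^{\wedge}_{[p]_q}[\tfrac{1}{[p]_q}]\bigr)^{\varphi=1}\longrightarrow \Vect\bigl((A_{\cyc}^n\otimes^{\bL}_{\bZ_p}R)^{\wedge}_{\xi}[\tfrac{1}{\xi}]\bigr)^{\varphi=1}
\]
is an equivalence of categories.

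The levelwise equivalence is the main technical content. When $R=\bZ/p^a$, it reduces to the prism-to-perfection equivalence for Laurent $F$-crystals of \cite{Wu21}, whose proof rests on identifying the perfection $A_{\perf}[\tfrac{1}{I}]^{\wedge}_p$ with the $(p,I)$-completed colimit $\varinjlim_{\varphi}A[\tfrac{1}{I}]^{\wedge}_p$ and then using the rigidity of finite projective $\varphi$-modules under this filtered colimit, so that a $\varphi$-module on the perfection descends uniquely to finite level. To handle a finite type $\bZ/p^a$-algebra $R$, the plan is to verify that this $\varphi$-colimit description persists after the derived tensor product with $R$ followed by re-completion, and then to reuse the same rigidity argument. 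The finite generation of $R$ enters in exactly the same spirit as in the proof of Lemma \ref{Continuous functions}: first handle $R=\bZ/p^a[x_1,\ldots,x_m]$ directly, then propagate to $R\simeq \bZ/p^a[x_1,\ldots,x_m]/(f_1,\ldots,f_t)$ via long exact sequences controlling the error by bounded $\xi$-power torsion.

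The main obstacle I foresee is precisely this interaction between perfection, derived tensor product with $R$, and $I$-adic completion. Concretely, one must show that any finite projective $\varphi$-module over $(A_{\perf}\otimes^{\bL}_{\bZ_p}R)^{\wedge}_I[\tfrac{1}{I}]$ descends to finite level, and a priori both the derived tensor product with $R$ and the $I$-adic completion disrupt the colimit presentation of $A_{\perf}$. I expect this can be handled by the same reduction to the polynomial ring case as in the proof of Lemma \ref{Continuous functions}, and indeed the finite type hypothesis on $R$ is precisely what is needed to make such a reduction go through.
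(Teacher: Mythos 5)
Your overall architecture (compute both sides by \v Cech descent along a cover and its perfection, reduce via Lemma \ref{tot} to a levelwise statement, and feed in the prism-to-perfection comparison with $R$-coefficients) is the same as the paper's, and your use of the cyclotomic prism in place of the Breuil--Kisin prism is legitimate (cf. Remark \ref{cyclo}: one needs $t=[p]_q^m$ with $\varphi(t)\in t^2$ modulo $p^a$ to invoke Lemma \ref{F-invariant}). But there is a genuine gap in the reduction step. You claim that Lemma \ref{tot} reduces the proposition to showing that the base-change functor is an \emph{equivalence} at \emph{every} cosimplicial level $n\geq 0$, and you propose to prove this by the ``rigidity'' argument that a finite projective $\varphi$-module over the perfection descends uniquely to finite level. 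That descent-of-objects argument is only available at level $0$, where the ring is $\bar A_K^+$ (or $\frakS$) and one can appeal to \cite[Proposition 2.6.12]{EG22} or the arguments of \cite{Wu21}. For $n\geq 1$ the rings $\bar A_K^{+,n}$ (resp.\ $\frakS^n$) are self-coproducts in the prismatic site --- large, non-Noetherian prismatic envelopes --- and no analogue of the unique-descent statement is known for them; essential surjectivity at these levels is exactly the step that cannot be run.

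The correct move, which is what the paper does, is to note that Lemma \ref{tot} only requires the level-$0$ functor to be an equivalence and the higher-level functors to be \emph{fully faithful}. Full faithfulness at every level follows from Lemma \ref{mapping space} once one knows $M^{\varphi=1}\simeq (M\otimes A^n_{\perf,R}[\tfrac1I])^{\varphi=1}$, and that identity is obtained by combining the colimit presentation $A^n_{\perf,R}\simeq(\varinjlim_\varphi \frakS^n_R)^\wedge_{u,\mathrm{derived}}$ with the fact that $(-)^{\varphi=1}$ commutes with filtered colimits (Lemma \ref{F-invariant}) --- no descent of objects at higher levels is needed. So your proposal either proves too much (and the overshoot is not achievable by your cited methods) or, if you retreat to what is actually provable levelwise, you must restructure the reduction exactly as above. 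A secondary, smaller point: the finite type hypothesis on $R$ enters here mainly through \cite[Proposition 2.6.12]{EG22} at level $0$ and through the reduction to free modules with $\varphi$-stable lattices, rather than through the bounded-$\xi$-torsion bookkeeping of Lemma \ref{Continuous functions}, which belongs to the proof of Proposition \ref{perfect}.
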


In order to prove Proposition \ref{finitetype}, we need the following lemma due to Bhatt--Scholze.

\begin{lem}[{\cite[Lemma 9.2]{BS22}}]\label{F-invariant}
    Let $B$ be a $\bZ_p$-algebra with a Frobenius lift $\varphi$. Suppose there is an element $t\in B$ such that $\varphi(t)\in t^2B$. Let $\calD(B[F])$ be the $\infty$-category of pairs $(M,(\varphi)$ where $M\in \calD(B)$ and $\varphi:M\to \varphi_*M$ is a map. Let $\calD_{\rm comp}(B[F])$ be the full subcategory of $\calD(B[F])$ consisting of pairs $(M,\varphi)$ with $M$ derived $t$-adic complete. Then the functors $\calD_{\rm comp}(B[F]) \to\calD(\bZ_p)$ given by $(M,\varphi)\mapsto M^{\varphi=1}$ and $(M,\varphi)\mapsto (M[\frac{1}{t}])^{\varphi=1}$ commute with colimits.
\end{lem}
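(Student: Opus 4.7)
The plan is to exploit the $t$-adic contractivity of $\varphi$ encoded by $\varphi(t) \in t^2B$ in order to reduce both functors to the composite of reduction modulo $t$ with $\varphi$-invariants, which manifestly preserves colimits.

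The core reduction is that for any $M \in \calD_{\rm comp}(B[F])$, the natural projection induces an equivalence $M^{\varphi=1} \xrightarrow{\simeq} (M/tM)^{\varphi=1}$ in $\calD(\bZ_p)$. I would prove this by forming the commutative square
\[
\xymatrix@C=1cm{
M \ar[r]^{\varphi-1} \ar[d] & M \ar[d] \\
M/tM \ar[r]^{\varphi-1} & M/tM
}
\]
(which commutes because $\varphi(tM) \subseteq t^2M \subseteq tM$) and computing the total fibre two ways. By the symmetry of total fibres, it equals the homotopy fibre of the endomorphism of $\text{fib}(M \to M/tM)$ induced by $\varphi-1$. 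Identifying this fibre with $M$ via multiplication by $t$ and writing $u=\varphi(t)/t^2\in B$, a direct calculation shows the induced endomorphism is $tu\varphi-1$. The iterated contractivity $(tu\varphi)^k(M) \subseteq t^{2^k-1}M$ then makes the Neumann series $\sum_{k\geq 0}(tu\varphi)^k$ converge on any derived $t$-complete $M$, inverting $1-tu\varphi$; hence the total fibre vanishes.

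Granted this reduction, $M\mapsto M^{\varphi=1}$ is equivalent to the composite
\[
\calD_{\rm comp}(B[F]) \xrightarrow{-\otimes^L_B B/t} \calD(B/tB[F]) \xrightarrow{(-)^{\varphi=1}} \calD(\bZ_p),
\]
and both factors preserve colimits: the first because colimits in $\calD_{\rm comp}$ are derived $t$-adic completions of colimits in $\calD(B[F])$ and $N\to N^\wedge_t$ is an equivalence modulo every $t^n$; the second because $(-)^{\varphi=1}$ is a finite fibre in a stable $\infty$-category.

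For the localised functor, I would apply $(-)^{\varphi=1}$ to the fibre sequence $M\to M[\tfrac{1}{t}]\to C$ (with $C=M[\tfrac{1}{t}]/M$) to obtain a fibre sequence $M^{\varphi=1}\to (M[\tfrac{1}{t}])^{\varphi=1}\to C^{\varphi=1}$ in $\calD(\bZ_p)$. The left term preserves colimits by the previous paragraph, so it suffices to verify the same for $C^{\varphi=1}$. Since $C$ is $t$-power torsion, it can be organised as a filtered colimit of the finite-length quotients $M/t^nM$, and on each layer the same Neumann argument applies as a \emph{finite} sum — because $\varphi$ becomes nilpotent modulo $t^n$ on the $t$-adic filtration of $M/t^nM$ — identifying $(M/t^nM)^{\varphi=1}$ with $(M/tM)^{\varphi=1}$ via the reduction map. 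A careful tracing through the transition maps, keeping in mind that the $t$-adic filtration is not preserved by $\varphi$ but the $\varphi$-structure on $C$ inherited from $M[\tfrac{1}{t}]$ is still governed by the mod-$t$ reduction, reduces the colimit-preservation of $C^{\varphi=1}$ to that of $(M/tM)^{\varphi=1}$, already established.

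The main technical obstacle is the failure of $\cdot t$ to commute with $\varphi$: one has $\varphi(tm)=t^2u\varphi(m)$ rather than $t\varphi(m)$, which forces the appearance of the corrected endomorphism $tu\varphi-1$ in the principal reduction. The quadratic contractivity $\varphi(t)\in t^2B$ (rather than the weaker $\varphi(t)\in tB$) is precisely what ensures this correction is topologically nilpotent on derived $t$-complete modules; this is essential both in the key reduction and, more delicately, in the treatment of the localised case, where obvious descriptions of the cofibre $C$ as a colimit are not $\varphi$-equivariant.
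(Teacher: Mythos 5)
Your treatment of the first functor $(M,\varphi)\mapsto M^{\varphi=1}$ is correct and is essentially the paper's argument (following \cite[Lemma 9.2]{BS22}): your total-fibre computation with the corrected endomorphism $tu\varphi-1$ is an explicit form of the statement that the $\varphi$-action induced on ${\rm fib}(M\to M/t)\simeq M$ is topologically nilpotent, so that $M^{\varphi=1}\to (M/t)^{\varphi=1}$ is an equivalence, and both $M\mapsto M/t$ (out of the completed category) and $(-)^{\varphi=1}$ (a fibre in a stable $\infty$-category) preserve colimits. The estimate $(tu\varphi)^k(M)\subseteq t^{2^k-1}M$ and the convergence of the Neumann series on a derived $t$-complete module are fine.

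The localized functor is where you have a genuine gap. You reduce to showing that $M\mapsto C^{\varphi=1}$, with $C=M[\frac{1}{t}]/M$, preserves colimits, and you propose to do this via the presentation $C\simeq\varinjlim_n M/t^n$. But, as you yourself flag, this presentation is not $\varphi$-equivariant: the image of $M/t^n$ in $C$ is $t^{-n}M/M$, and $\varphi(t^{-n}m)=\varphi(t)^{-n}\varphi(m)$ involves $u^{-n}t^{-2n}$ and does not preserve these steps, so $C^{\varphi=1}$ is \emph{not} $\varinjlim_n(M/t^n)^{\varphi=1}$ and the ``careful tracing through the transition maps'' is exactly the argument that is missing. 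Moreover, the mechanism you propose cannot be repaired as stated: any identification of each $(M/t^nM)^{\varphi=1}$ with $(M/tM)^{\varphi=1}$ compatible with the multiplication-by-$t$ transition maps would force the colimit, hence $C^{\varphi=1}$, to vanish (the transition maps die mod $t$), and $C^{\varphi=1}\neq 0$ in general --- e.g.\ for $B=M=\bF_2[[t]]$ with $\varphi(f)=f^2$, the class of $t^{-1}$ gives a nonzero element of $H^1(C^{\varphi=1})$. The correct and much shorter route, which is the one the paper takes, handles the localization at the level of the colimit comparison itself: the fibre $F$ of $\varinjlim M_i\to\widehat{\varinjlim M_i}$ is uniquely $t$-divisible, so $F[\frac{1}{t}]\simeq F$, hence the comparison maps for the two functors have the \emph{same} fibre $F^{\varphi=1}$, and this vanishes by the unlocalized case; no analysis of the cofibre $C$ is needed.
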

\begin{proof}
   The proof is the same as that of \cite[Lemma 9.2]{BS22}. For readers' convenience, we repeat it here. All colimits in this proof refer to the colimits in $\calD(B)$.

    Let $(M_i,\varphi_i)$ be a diagram in $\calD_{\rm comp}(B[F])$. As the fiber of the map $\varinjlim M_i\to \widehat{\varinjlim M_i}$ is uniquely $t$-divisible, we just need to prove the statement for the functor $(M,\varphi)\mapsto M^{\varphi=1}$. Next consider the fiber $\bar N$ of the map $N\to N/t$ for any $N\in \calD_{\rm comp}(B[F])$. Note that $\bar N$ is derived $t$-complete. As $\varphi(t)\in t^2B$, $\bar N$ and $N/t$ both have induced $\varphi$-actions. Moreover as the $\varphi$-action on $\bar N$ is topologically nilpotent, we have $\bar N^{\varphi=1}=0$. This implies $N^{\varphi=1}\xrightarrow{\simeq}(N/t)^{\varphi=1}$. Since both functors in the composition $\calD_{\rm comp}(B[F])\xrightarrow{N\to N/t}\calD(B[F])\xrightarrow{(-)^{\varphi=1}} \calD(\bZ_p)$ commute with colimits, the lemma follows. 

\end{proof}

\begin{proof}[Proof of Proposition \ref{finitetype}]

By using the Breuil--Kisin prism $(\frakS,(E))$ and its perfection $(A_{\infty},(E))$, we can write both categories as a 2-limit of categories 
      \begin{equation*}
    \xymatrix@=0.5cm{
    \Vect((\calO_K)_{\Prism},\calO_{\Prism,R}[\frac{1}{\calI_{\Prism}}])^{\varphi=1}\ar@<.0ex>[r]^-{\simeq} & \varprojlim \Vect(\frakS_R[\frac{1}{E}])^{\varphi=1}\ar@<-.3ex>[r]\ar@<.3ex>[r] & \Vect(\frakS^1_R[\frac{1}{E}])^{\varphi=1}\ar@<-.6ex>[r]\ar@<.0ex>[r]\ar@<.6ex>[r] &\Vect(\frakS^2_R[\frac{1}{E}])^{\varphi=1}}
  \end{equation*}
    and
       \begin{equation*}
    \xymatrix@=0.5cm{
    \Vect((\calO_K)^{\rm perf}_{\Prism},\calO_{\Prism,R}[\frac{1}{\calI_{\Prism}}])^{\varphi=1}\ar@<.0ex>[r]^-{\simeq} & \varprojlim \Vect(A_{\infty,R}[\frac{1}{E}])^{\varphi=1}\ar@<-.3ex>[r]\ar@<.3ex>[r] & \Vect(A^1_{\infty,R}[\frac{1}{E}])^{\varphi=1}\ar@<-.6ex>[r]\ar@<.0ex>[r]\ar@<.6ex>[r] &\Vect(A^2_{\infty,R}[\frac{1}{E}])^{\varphi=1}}
  \end{equation*}
    where $(\frakS^{\bullet},(E))$ (resp. $(A_{\infty}^{\bullet},(E))$) is the \v Cech nerve associated with $(\frakS,(E))$ in $(\calO_K)_{\Prism}$ (resp. $(A_{\infty},(E))$ in $(\calO_K)_{\Prism}^{\rm perf}$) and $\frakS^{\bullet}_R:=(\frakS^{\bullet}\otimes_{\bZ_p}R)^{\wedge}_{E}$ (resp. $A_{\infty,R}^{\bullet}:=(A_{\infty}\otimes_{\bZ_p}R)^{\wedge}_{E}$). As $R$ is $p$-nilpotent, we can replace all $E$ by $u$.

    By \cite[Proposition 2.6.12]{EG22}, the natural functor ${\rm Res}^0:\Vect(\frakS_{R}[\frac{1}{u}])^{\varphi=1}\to \Vect(A_{\infty,R}[\frac{1}{u}])^{\varphi=1}$ is an equivalence. By Lemma \ref{F-invariant}, the functor ${\rm Res}^i:\Vect(\frakS^i_{R}[\frac{1}{u}])^{\varphi=1}\to \Vect(A^i_{\infty,R}[\frac{1}{u}])^{\varphi=1}$ is fully faithful for all $i$. In fact, we claim that $M^{\varphi=1}\cong (M\otimes_{\frakS_R^i[\frac{1}{u}]}A_{\infty,R}^i[\frac{1}{u}])^{\varphi=1}$ for any $M\in\Vect(\frakS^i_{R}[\frac{1}{u}])^{\varphi=1} $. Then the full faithfulness follows from Lemma \ref{mapping space}. Combining the equivalence of ${\rm Res}^0$ and the full faithfulness of ${\rm Res}^i$ for all $i$, we can see the restriction functor ${\rm Res}: \Vect((\calO_K)_{\Prism},\calO_{\Prism,R}[\frac{1}{\calI_{\Prism}}])^{\varphi=1}\xrightarrow{\simeq}\Vect((\calO_K)^{\perf}_{\Prism},\calO_{\Prism,R}[\frac{1}{\calI_{\Prism}}])^{\varphi=1}$ is indeed an equivalence by Lemma \ref{tot}. Or an easy way to see this is that every stratification with respect to $A_{\infty,R}^{\bullet}[\frac{1}{E}]$ can now be descended to a stratification with respect to $\frakS_{R}^{\bullet}[\frac{1}{E}]$.

    Now we prove the claim. By \cite[Lemma 5.2.14]{EG21}, for any $(M,\varphi_M)\in\Vect(\frakS^i_{R}[\frac{1}{u}])^{\varphi=1}$, there exists $(N,\varphi_N)\in\Vect(\frakS^i_{R}[\frac{1}{u}])^{\varphi=1}$ such that $(M\oplus N, \varphi_M\oplus \varphi_N)\in\Vect(\frakS^i_{R}[\frac{1}{u}])^{\varphi=1}$ is a finite free \'etale $\varphi$-module. So without loss of generality, we may assume $M$ is finite free. In this case, we can always find a lattice $(M_0,\varphi)$ over $\frakS_R^i$ inside $M$.

    Now we want to apply Lemma \ref{F-invariant} to the pair $(\frakS,u)$ and $M_0$. Note that $\varphi(u)=u^p\in \frakS$. So now what we need is to prove 
    \[
    A_{\infty,R}^{i}\simeq (\varinjlim_{\varphi}\frakS^{i}_{R})^{\wedge}_{u,\rm derived}
    \]
    where the completion on the right hand side is the derived $u$-adic completion. Note that $(A_{\infty}^i,(E))$ is the perfection of $(\frakS^i,(E))$. By \cite[Lemma 3.9]{BS22}, we see that 
    \[
A_{\infty}^i\simeq(\varinjlim_{\varphi}\frakS^i)^{\wedge}_{(p,E)}\simeq(\varinjlim_{\varphi}\frakS^i)^{\wedge}_{(p,E),\rm derived}.
    \]
    Then by derived Nakayama lemma, we have
    \[
    A_{\infty}^i/p^a\simeq (\varinjlim_{\varphi}\frakS^i/p^a)^{\wedge}_{E,\rm derived}\simeq(\varinjlim_{\varphi}\frakS^i/p^a)^{\wedge}_{u,\rm derived}
    \]
which implies
\[
A_{\infty,R}^i=(A^i_{\infty}/p^a\otimes_{\bZ_p/p^a}R)^{\wedge}_{u}=(A^i_{\infty}/p^a\otimes^{\bL}_{\bZ_p/p^a}R)^{\wedge}_{u,\rm derived}\simeq ((\varinjlim_{\varphi}\frakS^i/p^a)\otimes_{\bZ_p/p^a}^{\bL}R)^{\wedge}_{u,\rm derived}\simeq (\varinjlim_{\varphi}\frakS^{i}_{R})^{\wedge}_{u,\rm derived}.
\]

Note that $\varphi^*M$ is equipped with a natural Frobenius map such that $\varphi_M:\varphi^*M\to M$ induces an isomorphism of \'etale $\varphi$-modules. In particular, we have $\varphi_M:(\varphi^*M)^{\varphi=1}\simeq M^{\varphi=1}$. On the other hand, the composite of the Frobenius equivariant maps $M\to \varphi^*M$ and $\varphi_M:\varphi^*M\to M$ is exactly the Frobenius semilinear map $\varphi_M:M\to M$. As $\varphi_M=id: M^{\varphi=1}\to M^{\varphi=1}$, we see that $M^{\varphi=1}\xrightarrow{\simeq}(\varphi^*M)^{\varphi=1}$. The same argument then shows that the natural map $(\varphi^n)^*M\to (\varphi^{n+1})^*M$ induces $((\varphi^n)^*M)^{\varphi=1}\simeq ((\varphi^{n+1})^*M)^{\varphi=1}$. This implies $M^{\varphi=1}\simeq \varinjlim_n ((\varphi^n)^*M)^{\varphi=1}$.

By Lemma \ref{F-invariant}, we finally get
\[
M^{\varphi=1}\simeq \varinjlim_n ((\varphi^n)^*M)^{\varphi=1}\simeq((\varinjlim_{\varphi}\frakS_R^i)\otimes_{\frakS_R^i}M_0)^{\wedge}_{u,\rm derived}[\frac{1}{u}])^{\varphi=1}\simeq (A_{\infty,R}^i\otimes_{\frakS^i_R}M_0[\frac{1}{u}])^{\varphi=1}.
\]

\end{proof}

\begin{rmk}\label{cyclo}
In the unramified case, we may also use the cyclotomic prism $(\bar A_K^+,[p]_q)$ instead of the Breuil--Kisin prism in the proof of Proposition \ref{finitetype}. In fact, let $a$ be a positive integer, we just need to find a suitable element $t\in \bar A_K^+/p^a$ with $\varphi(t)\in t^2\bar A_K^+/p^a$ so that we can apply Lemma \ref{F-invariant}.  As $\varphi([p]_q)=[p]_q^p+p\delta([p]_q)\in \bar A_K^+$, we have $\varphi([p]_q^m)\equiv [p]_q^{pm} \mod p^a$ for some large enough integer $m$. So we can simply choose $t$ to be the image of $[p]_q^m$ in $\bar A_K^+/p^a$.
\end{rmk}

Recall the following important proposition in \cite{EG22}.

\begin{prop}[{\cite[Proposition 2.7.8]{EG22}}]\label{Frobenius descent}
    For any finite type $\bZ/p^a$-algebra $R$, there is an equivalence of categories
    \[
    {\rm Mod}^{\varphi,\Gamma}(A_{\cyc,R}[\frac{1}{\xi}])\simeq {\rm Mod}^{\varphi,\Gamma}(A_{K,R})
    \]
\end{prop}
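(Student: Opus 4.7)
The plan is to mirror the Frobenius descent argument used in the proof of Proposition \ref{finitetype}, but with the cyclotomic prism $(\bar A_K^+,[p]_q)$ playing the role of the Breuil--Kisin prism and the perfect cyclotomic prism $(A_{\cyc},(\xi))$ playing the role of the perfect Breuil--Kisin prism. In particular, as already suggested in Remark \ref{cyclo}, one can apply Lemma \ref{F-invariant} to the pair $(\bar A_K^+,t)$ with $t$ equal to the image of $[p]_q^m$ in $\bar A_K^+/p^a$ for $m$ large enough so that $\varphi(t)\in t^2\bar A_K^+/p^a$.

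First, I would construct a $\varphi$- and $\Gamma$-equivariant ring map
\[
\iota\colon A_{K,R}\longrightarrow A_{\cyc,R}[\tfrac{1}{\xi}].
\]
Modulo $p$ this is the embedding of the field of norms $k_{\infty}((T_K))$ of the cyclotomic tower into its completed perfection $\hat K_{\cyc}^{\flat}=A_{\cyc}[\tfrac{1}{\xi}]/p$, and it lifts canonically using Witt-vector functoriality and $p$-completion. Base change along $\iota$ produces the forward functor
\[
F\colon \Mod^{\varphi,\Gamma}(A_{K,R})\longrightarrow \Mod^{\varphi,\Gamma}(A_{\cyc,R}[\tfrac{1}{\xi}]).
\]

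Next, I would prove that $F$ is fully faithful on the level of \'etale $\varphi$-modules by the same Frobenius-invariance argument used in Proposition \ref{finitetype}. The central identification to verify is
\[
A_{\cyc,R}[\tfrac{1}{\xi}]\;\simeq\;\bigl(\varinjlim_{\varphi} A_{K,R}\bigr)^{\wedge}_{T_K,\mathrm{derived}}[\tfrac{1}{T_K}],
\]
whose mod $p$ incarnation is the classical Fontaine--Wintenberger identification. Granting this, Lemma \ref{F-invariant} applied to $M$ and $M^\vee\otimes N$ gives $M^{\varphi=1}\simeq (M\otimes_{A_{K,R}}A_{\cyc,R}[\tfrac{1}{\xi}])^{\varphi=1}$, which combined with the mapping-space formula yields full faithfulness. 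For essential surjectivity, given $\calM\in\Mod^{\varphi,\Gamma}(A_{\cyc,R}[\tfrac{1}{\xi}])$, I would descend its underlying \'etale $\varphi$-module to a finite projective $\varphi$-module $M$ over $A_{K,R}$ by Frobenius descent, and then use the $\Gamma$-equivariance of $\iota$ together with the uniqueness of descent to transport the $\Gamma$-action on $\calM$ to a compatible $\Gamma$-action on $M$. Continuity of the induced $\Gamma$-action on $M$ follows from continuity on $\calM$ and the fact that $M$ sits inside $\calM$ as a finitely generated $A_{K,R}$-submodule generating $\calM$ after base change.

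The main obstacle will be the clean identification of $A_{\cyc,R}[\tfrac{1}{\xi}]$ with the completed Frobenius-perfection of $A_{K,R}$. The subtlety is that $A_K^+=W(k_{\infty})[[T_K]]$ is not $\varphi$-stable when $K$ is absolutely ramified, so the comparison has to be set up at the level of $A_K$ (after inverting $T_K$ and $p$-completing) rather than $A_K^+$. Once this identification is in place, Lemma \ref{F-invariant}, the formula of Lemma \ref{mapping space}, and Lemma \ref{tot} combine to give the desired equivalence exactly as in the proof of Proposition \ref{finitetype}, with the $\Gamma$-equivariance tracked throughout by functoriality of the descent.
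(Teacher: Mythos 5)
The paper does not actually prove this proposition: it is quoted verbatim from \cite[Proposition 2.7.8]{EG22}, so there is no internal proof to compare against. Your reconstruction follows the right strategy --- Frobenius descent from the imperfect cyclotomic ring to its completed perfection, full faithfulness via Lemma \ref{F-invariant} and Lemma \ref{mapping space}, and transport of the $\Gamma$-action by functoriality and full faithfulness --- which is both the strategy of \cite[\S 2.6--2.7]{EG22} and the one this paper itself runs in Proposition \ref{finitetype} and again in Subsection \ref{3.4} (Propositions \ref{fully-faithful-cylco} and \ref{essential-surj}).

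There is, however, a concrete problem with your ``central identification'' $A_{\cyc,R}[\frac{1}{\xi}]\simeq(\varinjlim_{\varphi}A_{K,R})^{\wedge}_{T_K}[\frac{1}{T_K}]$: as written it does not typecheck. In $A_{K,R}$ the element $T_K$ is already invertible, so the $T_K$-adic completion of $\varinjlim_{\varphi}A_{K,R}$ is zero; and you cannot instead perfect the integral model $(A_K^+\otimes R)^{\wedge}_{T_K}$, because $A_K^+$ is not $\varphi$-stable when $K$ is ramified --- the very subtlety you flag, but your proposed escape (``set up the comparison at the level of $A_K$'') lands you exactly where Lemma \ref{F-invariant} is inapplicable, since that lemma requires a derived $t$-complete integral model with $\varphi(t)\in t^2B$. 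The correct fix is to change integral models: identify $A_{K,R}$ with $(\bar A_K^+\widehat\otimes R)[\frac{1}{[p]_q}]$ via the cyclotomic prism, take $t$ to be the image of $[p]_q^m$ as in Remark \ref{cyclo}, and use the identification $A_{\cyc,R}\simeq(\varinjlim_{\varphi}\bar A^+_{K,R})^{\wedge}_{[p]_q}$, which is exactly Lemma \ref{prep-discrete}(2). With that substitution your full-faithfulness argument goes through. Note also that essential surjectivity of the underlying \'etale $\varphi$-module functor is the genuinely nontrivial input here and is only asserted in your sketch; it ultimately rests on the unit-root/approximation argument of \cite[Proposition 2.6.9]{EG22}, which is also how the paper handles the analogous step in Proposition \ref{essential-surj}. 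The descent of the $\Gamma$-action by full faithfulness, plus the continuity check, is fine.
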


We then have the immediate corollary.
\begin{cor}\label{step1}
    For any finite type $\bZ/p^a$-algebra $R$, there is an equivalence of categories
    \[
    \Vect((\calO_K)_{\Prism},\calO_{\Prism,R}[\frac{1}{\calI_{\Prism}}])^{\varphi=1}\simeq {\rm Mod}^{\varphi,\Gamma}(A_{K,R})
    \]
\end{cor}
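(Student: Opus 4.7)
The plan is to simply chain the three preceding results. This is a corollary, so the task is not to introduce new ideas but to identify which equivalences to compose and verify that the intermediate categories match on the nose.

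First, I would apply Proposition \ref{finitetype}, which gives, for any finite type $\bZ/p^a$-algebra $R$, a natural equivalence
\[
\Vect((\calO_K)_{\Prism},\calO_{\Prism,R}[\tfrac{1}{\calI_{\Prism}}])^{\varphi=1}\;\xrightarrow{\simeq}\;\Vect((\calO_K)^{\perf}_{\Prism},\calO_{\Prism,R}[\tfrac{1}{\calI_{\Prism}}])^{\varphi=1}
\]
induced by restriction from the absolute prismatic site to the absolute perfect prismatic site. Next I would apply Proposition \ref{perfect}, which identifies the right-hand side with ${\rm Mod}^{\varphi,\Gamma}(A_{\cyc,R}[\tfrac{1}{\xi}])$; this step uses the cyclotomic perfect prism $(A_{\cyc},(\xi))$ as a cover of the final object and interprets the resulting stratification as a continuous $\Gamma$-action, via Lemma \ref{Continuous functions}. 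Finally, I would invoke the descent result Proposition \ref{Frobenius descent} from \cite{EG22} to pass from $A_{\cyc,R}[\tfrac{1}{\xi}]$-modules down to $A_{K,R}$-modules, obtaining
\[
{\rm Mod}^{\varphi,\Gamma}(A_{\cyc,R}[\tfrac{1}{\xi}])\;\simeq\;{\rm Mod}^{\varphi,\Gamma}(A_{K,R}).
\]
Composing these three equivalences yields the desired statement.

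There is no real obstacle here since all three ingredients have already been established; the only thing to check is that each equivalence is compatible with the next. Concretely, one should note that the $\Gamma$-action obtained from the stratification in Proposition \ref{perfect} matches the $\Gamma$-action on $A_{K,R}$ coming from the embedding $A_K\hookrightarrow A_{\cyc}$ used in \cite{EG22}; this is built into the construction of the cyclotomic prism (Definition \ref{cyl-prism}) and the identification of $A_{\cyc}^n$ with continuous functions $C(\Gamma^n, A_{\cyc})$. The equivalence is manifestly natural in $R$, so once we verify the matching of $\varphi$- and $\Gamma$-actions, the corollary follows immediately.
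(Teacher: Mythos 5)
Your proposal is correct and matches the paper's proof exactly: the corollary is obtained by composing Proposition \ref{finitetype}, Proposition \ref{perfect}, and Proposition \ref{Frobenius descent}. The additional remark about checking compatibility of the $\Gamma$-actions is a reasonable sanity check but introduces nothing beyond what the paper's one-line proof already relies on.
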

\begin{proof}
    Just combine Proposition \ref{perfect}, Proposition \ref{finitetype} and Proposition \ref{Frobenius descent}.
\end{proof}

 In order to extend this equivalence to all $\bZ/p^a$-algebras, we recall that the Emerton--Gee stack $\calX_{\rm EG}$ is limit preserving.

\begin{lem}[{\cite[Lemma 3.2.19]{EG22}}]
    The Emerton--Gee stack $\calX_{\rm EG}$ is limit preserving. In particular, $\calX_{\rm EG}(R)=\varinjlim_i\calX_{\rm EG}(R_i)$ where $R$ is any $\bZ/p^a$-algebra and $\{R_i\}$ is the filtered colimit system of all the finite type sub-$\bZ/p^a$-algebras of $R$.
\end{lem}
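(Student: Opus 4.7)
The plan is to deduce the lemma from the structural description of $\calX_{\rm EG}$ stated just above, namely that it is an Ind-algebraic stack locally of finite type over $\Spf\bZ_p$. For each integer $a\geq 1$, write $\calX_{{\rm EG},a}:=\calX_{\rm EG}\times_{\Spf\bZ_p}\Spec\bZ/p^a$; this truncation is a filtered 2-colimit of algebraic stacks of finite presentation over $\Spec\bZ/p^a$, which is what the ``Ind-algebraic stack locally of finite type'' assertion unpacks to after restricting to $\bZ/p^a$-coefficients.

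The main step is then to invoke the standard fact that any algebraic stack $\calY$ locally of finite presentation over a Noetherian base $S$ is limit-preserving on the category of commutative $S$-algebras. Concretely, I would choose a smooth atlas $U\to \calY$ by a scheme of finite presentation over $S$, use that morphisms from $\Spec(\varinjlim_iR_i)$ to a finitely presented $S$-scheme factor through some finite stage $\Spec R_i$, and propagate the property to $\calY$ via smooth descent (this also uses the quasi-compactness of the diagonal to match two $R$-points descending from the atlas). The key point is that an object of $\calY(\varinjlim_iR_i)$ becomes trivialized on an fppf (in fact smooth) cover, and the descent datum itself is finitely presented, so everything lives over some $R_i$.

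With limit-preservation established for each algebraic stack in the Ind-system, the filtered colimit $\calX_{{\rm EG},a}$ inherits the same property, since filtered colimits of anima commute with filtered colimits of rings. Finally, for any $\bZ/p^a$-algebra $R=\varinjlim_i R_i$ written as the filtered colimit of its finite-type sub-$\bZ/p^a$-algebras, we get
\[
\calX_{\rm EG}(R)=\calX_{{\rm EG},a}(R)=\varinjlim_i\calX_{{\rm EG},a}(R_i)=\varinjlim_i\calX_{\rm EG}(R_i).
\]

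The only real obstacle is the first step, which is precisely the Noetherianness and Ind-algebraic decomposition of $\calX_{\rm EG}$ established in \cite{EG22}; this requires a genuine construction stratifying the moduli problem (roughly by the $F$-height of the underlying \'etale $(\varphi,\Gamma)$-module) and is nontrivial input. Once that structural input is granted, the limit-preservation claimed in the lemma is a formal consequence.
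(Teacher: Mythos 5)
The paper offers no proof of this lemma: it is imported verbatim as a citation to \cite[Lemma 3.2.19]{EG22}, so there is no in-paper argument to compare yours against. Taken on its own terms, your deduction is formally sound. Each algebraic stack in the Ind-presentation is of finite presentation over the Noetherian base $\bZ/p^a$ with quasi-compact, quasi-separated diagonal, hence limit preserving by the atlas-plus-descent argument you sketch; and since the groupoids involved are $1$-truncated, descent data are finite limits, so the filtered colimit of these stacks along the transition closed immersions is already a stack on affines and inherits limit preservation because filtered colimits commute both with filtered colimits and with finite limits.

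The caveat is logical order. In \cite{EG22} the implication runs the other way: limit preservation of the stacks of \'etale $\varphi$-modules and $(\varphi,\Gamma)$-modules is proved first, by a direct argument on the modules themselves (every \'etale $\varphi$-module over $A_{K,R}$ admits a $\varphi$-stable lattice of bounded height, which is finitely presented data already defined over some $A_{K,R_i}$; the $T_K$-adic completion in the definition of $A_{K,R}$ is precisely why this is not automatic and requires work). That limit preservation is then an input to the scheme-theoretic-image formalism which produces the Ind-algebraic and formal algebraic structure you are quoting. So if one unwinds the citations, your argument is circular as a reconstruction of \cite{EG22}. Within the present paper it is not circular, because the structural theorem is itself quoted as a black box just above the lemma; but then your proof carries essentially the same information as the paper's citation, and the genuinely nontrivial content --- which you correctly flag at the end --- still lives entirely in \cite{EG22}.
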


So to prove Theorem \ref{main1}, we just need to prove the classical stack $\calX$ of Laurent $F$-crystals is also limit preserving.

\begin{lem}\label{step2}
    For any $\bZ/p^a$-algebra $R=\varinjlim_i R_i$ where $\{R_i\}$ is a filtered colimit system of finite type $\bZ/p^a$-algebras, we have $^{\rm cl}\calX(R)=\varinjlim_i^{\rm cl}\calX(R_i)$.
\end{lem}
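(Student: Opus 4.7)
The plan is to use the Čech nerve of the Breuil--Kisin prism to reduce the question to a finite limit, and then to check the limit-preserving property level by level via descent of finite projective $\varphi$-modules.

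Using the Breuil--Kisin prism $(\frakS,(E))$ as a cover of the final object of ${\rm Shv}((\calO_K)_{\Prism})$, and noting that $R$ being $p$-nilpotent lets us replace $(E)$ with $(u)$, Proposition \ref{Vector bundle} gives
\[
^{\rm cl}\calX(R) \;\simeq\; \varprojlim_{[n]\in\Delta}\Vect\bigl(\frakS^n_R[\tfrac{1}{u}]\bigr)^{\varphi=1,\simeq},
\]
where $\frakS^\bullet_R := (\frakS^\bullet \otimes_{\bZ_p} R)^{\wedge}_u$ is built from the Čech nerve of $(\frakS,(E))$ in $(\calO_K)_{\Prism}$. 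Each term of this totalization is a $1$-groupoid, so Lemma \ref{truncated-tot} identifies this limit with the finite limit over $\Delta_{\leq 2}$. Since filtered colimits commute with finite limits in the $\infty$-category of anima, it suffices to prove that for each fixed $n \in \{0,1,2\}$ the functor
\[
R \;\longmapsto\; \Vect\bigl(\frakS^n_R[\tfrac{1}{u}]\bigr)^{\varphi=1,\simeq}
\]
sends the filtered colimit $R = \varinjlim_i R_i$ to $\varinjlim_i \Vect\bigl(\frakS^n_{R_i}[\tfrac{1}{u}]\bigr)^{\varphi=1,\simeq}$.

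For each fixed $n$, I would proceed by picking an $\frakS^n_R$-lattice $M_0$ inside a given finite projective $\varphi$-module $(M,\varphi_M)$ over $\frakS^n_R[\tfrac{1}{u}]$ (which exists after adding a complementary summand, as in \cite[Lemma 5.2.14]{EG21}), reducing modulo a suitably high power of $u$ to obtain a finite projective module together with a Frobenius matrix over a finitely-presented $\bZ/p^a$-subalgebra, descending that data to some $R_i$ by the filtered-colimit hypothesis, and then lifting back via the $u$-adic completeness of $\frakS^n_{R_i}$ to recover the original pair up to isomorphism. Applying the same strategy to morphisms between two such lattices (after inverting $u$) gives full faithfulness. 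Both steps are entirely analogous to the arguments Emerton--Gee use in \cite[Lemma 3.2.19]{EG22} to establish that $\calX_{\rm EG}$ is limit-preserving, transposed from the $(\varphi,\Gamma)$-module side to the prismatic side.

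The principal obstacle is the $u$-adic completion hidden in the definition $\frakS^n_R = (\frakS^n\otimes_{\bZ_p} R)^{\wedge}_u$, which does not commute with filtered colimits in $R$. The resolution is that finite projective modules equipped with a Frobenius structure carry only finitely much information once a lattice is chosen and the lattice is reduced modulo a high enough power of $u$; the rest is recovered by $u$-adic Hensel-type lifting, which commutes with filtered colimits because each stage modulo $u^N$ does. This is precisely the mechanism that makes $\calX_{\rm EG}$ locally of finite presentation, and is the one subtle technical input needed to complete the argument.
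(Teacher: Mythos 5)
Your reduction to a finite limit over $\Delta_{\leq 2}$ and the commutation of filtered colimits with finite limits matches the paper exactly. The gap is in what you then set out to prove at each cosimplicial level. You propose to show that $R\mapsto \Vect(\frakS^n_R[\frac{1}{u}])^{\varphi=1,\simeq}$ commutes with the filtered colimit (i.e.\ is both fully faithful and essentially surjective) for every $n\in\{0,1,2\}$, by the lattice/reduction-mod-$u^N$ mechanism of Emerton--Gee. That mechanism works for $n=0$ because $\frakS/u^N$ is finite free over $\bZ_p$, so a Frobenius matrix over $\frakS_R/u^N$ involves only finitely many elements of $R$ and descends to some $R_i$. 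But for $n\geq 1$ the ring $\frakS^n$ is the self-product of the Breuil--Kisin prism in $(\calO_K)_{\Prism}$, a large prismatic envelope; $\frakS^n/u^N$ is not of finite type over $\bZ_p$, so "finitely much information once you reduce mod $u^N$" is false and the descent of the Frobenius matrix to some $R_i$ does not follow. Essential surjectivity at levels $1$ and $2$ is therefore not established by your argument, and there is no reason to expect it by these means.

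The paper sidesteps this entirely: it proves the level-$0$ functor is an equivalence (citing the limit-preserving property of the stack of \'etale $\varphi$-modules from \cite{EG22}), proves only \emph{full faithfulness} at levels $1$ and $2$ (via Lemma \ref{mapping space} together with Lemma \ref{F-invariant}, i.e.\ the fact that $\varphi$-invariants of derived $t$-adically complete modules commute with colimits), and then invokes the formal Lemma \ref{tot}: a map of cosimplicial $\infty$-categories that is an equivalence in degree $0$ and fully faithful in all degrees induces an equivalence on totalizations. The point is that objects of the totalization are already determined, up to the descent datum, by their degree-$0$ component, so essential surjectivity at higher levels is never needed. To repair your proof, replace the claimed level-wise essential surjectivity for $n=1,2$ by full faithfulness (which your "same strategy applied to morphisms" paragraph essentially gives, via $M^{\varphi=1}\simeq (M\otimes_{\frakS^n_{R_i}[\frac{1}{u}]}\frakS^n_R[\frac{1}{u}])^{\varphi=1}$ in the colimit) and conclude with Lemma \ref{tot}.
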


\begin{proof}
By using the Breuil--Kisin prism $(\frakS,(E))$ as the cover of the final object of the topos ${\rm Shv}((\calO_K)_{\Prism})$, we can write
\begin{equation*}
    \xymatrix@=0.5cm{
    \Vect((\calO_K)_{\Prism},\calO_{\Prism,R_i}[\frac{1}{\calI_{\Prism}}])^{\varphi=1}\ar@<.0ex>[r]^-{\simeq} & \varprojlim \Vect(\frakS_{R_i}[\frac{1}{E}])^{\varphi=1}\ar@<-.3ex>[r]\ar@<.3ex>[r] & \Vect(\frakS^1_{R_i}[\frac{1}{E}])^{\varphi=1}\ar@<-.6ex>[r]\ar@<.0ex>[r]\ar@<.6ex>[r] &\Vect(\frakS^2_{R_i}[\frac{1}{E}])^{\varphi=1}}
  \end{equation*}
for any $i$. We can also take the core of each category as taking core commutes with limits.

As filtered colimit commutes with finite limits in $1$-Groupoids, we then have 
\begin{equation*}
    \xymatrix@=0.5cm{
    \varinjlim_i {^{\rm cl}\calX}(R_i)\ar@<.0ex>[r]^-{\simeq} & \varprojlim (\varinjlim_i\Vect(\frakS_{R_i}[\frac{1}{E}])^{\varphi=1,\simeq})\ar@<-.3ex>[r]\ar@<.3ex>[r] & (\varinjlim_i\Vect(\frakS_{R_i}^1[\frac{1}{E}])^{\varphi=1,\simeq})\ar@<-.6ex>[r]\ar@<.0ex>[r]\ar@<.6ex>[r] &(\varinjlim_i\Vect(\frakS_{R_i}^2[\frac{1}{E}])^{\varphi=1,\simeq})}
  \end{equation*}

    There is a natural map $\varinjlim_i {^{\rm cl}\calX}(R_i)\to {^{\rm cl}\calX(R)}$ by base change.
    By \cite[Corollary 3.1.5 and 3.2.6]{EG22}, the moduli stack of \'etale $\varphi$-modules is limit preserving, i.e. we have $\varinjlim_i(\Vect(\frakS_{R_i}[\frac{1}{E}])^{\varphi=1,\simeq})\simeq \Vect(\frakS_R[\frac{1}{E}])^{\varphi=1,\simeq}$. By Lemma \ref{mapping space} and Lemma \ref{F-invariant}, we get the base change functor $\varinjlim_i\Vect(\frakS^j_{R_i}[\frac{1}{E}])^{\varphi=1}\to \Vect(\frakS_R^j[\frac{1}{E}])^{\varphi=1}$ is fully faithful for $j=1,2$. This means we finally have an equivalence of categories $^{\rm cl}\calX(R)=\varinjlim_i{^{\rm cl}\calX}(R_i)$ by Lemma \ref{tot}.
\end{proof}

\begin{proof}[Proof of Theorem \ref{main1}]
This follows from Corollary \ref{step1} and Lemma \ref{step2}.
\end{proof}

\begin{rmk}

Perpendicular to the cyclotomic case, one can  also try to construct a moduli stack of \'etale $(\varphi,\tau)$-modules in the Kummer case. Then by using similar arguments, one can prove it is equivalent to the stack $^{\rm cl}\calX$ as least when restricted to the finite type $\bZ_p$-algebras. We are not sure if the stack of \'etale $(\varphi,\tau)$-modules is limit-preserving or not. But in practice, one can always modify it to be limit-preserving.

\end{rmk}

\section{$\calX^{\rm nil}\simeq (\Lan\calX_{\rm EG})^{\#,\rm nil}$}

In this section, we shift our focus to the derived stack $\calX$ of Laurent $F$-crystals. As we have mentioned in the introduction, we hope the derived stack $\calX$ is indeed ``classical", which roughly means it is determined by its underlying stack. This will then complete the picture that ``derived stacks of local Langlands parameters are all classical".

The possibility of the classicality of $\calX$ is suggested by the following theorem.

\begin{thm}[{\cite[Theorem 1.1]{BIP23}}]\label{BIP}
   Let $k_f$ be a finite field of characteristic $p$. Let $\bar\rho:G_K\to GL_d(k_f)$ be a residual representation. Then the framed local Galois deformation ring $R^{\square}_{\bar \rho}$ is a local complete intersection.
\end{thm}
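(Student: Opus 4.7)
The plan is to produce a minimal presentation of $R^{\square}_{\bar\rho}$ and then verify the numerical criterion for a local complete intersection. By the standard Galois cohomology argument one has
\[
R^{\square}_{\bar\rho} \simeq W(k_f)[[x_1,\ldots,x_g]]/(f_1,\ldots,f_r),
\]
where $g = \dim_{k_f} H^1(G_K,\mathrm{ad}\,\bar\rho) + d^2 - \dim_{k_f} H^0(G_K,\mathrm{ad}\,\bar\rho)$ is the dimension of the framed tangent space and $r \leq \dim_{k_f} H^2(G_K,\mathrm{ad}\,\bar\rho)$ since obstructions to framed deformations lie in $H^2$. Tate's local Euler characteristic formula then yields
\[
g - \dim_{k_f} H^2(G_K,\mathrm{ad}\,\bar\rho) = d^2\bigl(1+[K:\bQ_p]\bigr).
\]
So it suffices to show (a) $\dim R^{\square}_{\bar\rho} = 1 + d^2(1+[K:\bQ_p])$ and (b) $p$ is a non-zero-divisor in $R^{\square}_{\bar\rho}$: once the Krull dimension matches the expected value $1+g-r$, one gets automatically that $r = \dim H^2$ and that $(f_1,\ldots,f_r)$ is a regular sequence inside the regular local ring $W(k_f)[[x_1,\ldots,x_g]]$, which is the lci condition.

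The substantive input is the dimension computation, which I would extract from the geometry of the Emerton--Gee stack $\calX_{\rm EG}$. By construction $\calX_{\rm EG}$ is a Noetherian formal algebraic stack, and its versal deformation ring at a $k_f$-point $\bar\rho$ differs from $R^{\square}_{\bar\rho}$ only by smooth framing parameters of total dimension $d^2-\dim H^0$. Via Theorem \ref{main1}, one may equivalently read off the local structure at $\bar\rho$ from the moduli of Laurent $F$-crystals on $(\calO_K)_{\Prism}$. To pin down the dimension one can stratify $\calX_{\rm EG}$ by its (potentially) crystalline substacks, each of which is formally smooth over $\Spf W(k_f)$ of the expected relative dimension, and check that the generic point of the largest stratum contributes the expected $d^2(1+[K:\bQ_p])$. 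Alternatively, one can exhibit a smooth surjection $Y \to \calX_{\rm EG}$ from a moduli of Breuil--Kisin modules of bounded height, whose total dimension and $\bZ_p$-flatness are accessible by an explicit construction. Either route provides $\dim R^{\square}_{\bar\rho}[1/p] = d^2(1+[K:\bQ_p])$ and the $\bZ_p$-flatness of $R^{\square}_{\bar\rho}$.

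The main obstacle will be the uniform dimension bound for arbitrary $\bar\rho$: a case-by-case analysis via a single crystalline stratum can miss components of the deformation space not visible at any fixed Hodge type, so the argument really requires a global understanding of the irreducible components of $\calX_{\rm EG}$. A secondary difficulty is ruling out embedded primes and confirming that $p$ is a genuine regular parameter; without flatness the equality $r = \dim H^2$ can fail even when the Krull dimensions agree on the generic fibre. Once both geometric inputs are in hand, the local complete intersection conclusion is a clean application of the equality-of-dimensions criterion that forces $(f_1,\ldots,f_r)$ to be a regular sequence.
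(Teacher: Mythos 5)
The paper does not prove this statement: it is quoted verbatim from B\"ockle--Iyengar--Pa\v{s}k\=unas \cite{BIP23}, so there is no internal proof to compare against. Judged on its own terms, your skeleton is the correct and standard one (minimal presentation $R^{\square}_{\bar\rho}\simeq W(k_f)[[x_1,\dots,x_g]]/(f_1,\dots,f_r)$ with $r\le\dim H^2$, Tate's formula giving $g-\dim H^2=d^2(1+[K:\bQ_p])$, and the observation that matching the Krull dimension forces the $f_i$ to be a regular sequence), and this is indeed the frame in which \cite{BIP23} works. The entire content of the theorem, however, is the upper bound $\dim R^{\square}_{\bar\rho}\le 1+d^2(1+[K:\bQ_p])$, and both routes you propose for it fail.

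First, the crystalline stratification cannot produce the bound: for any fixed regular Hodge type the crystalline (or potentially semistable) locus has generic-fibre dimension $d^2+[K:\bQ_p]\,d(d-1)/2$, which is strictly smaller than $d^2(1+[K:\bQ_p])$ once $d\ge 2$. Density of crystalline points in $\Spec R^{\square}_{\bar\rho}[1/p]$ is density of a set of closed points lying on countably many proper closed subschemes; it gives no upper bound on the dimension of the ambient space, so no ``largest stratum'' contributes the expected dimension. Second, reading the answer off $\calX_{\rm EG}$ (or off moduli of Breuil--Kisin modules of bounded height) confuses two different dimensions: the Emerton--Gee dimension theorem concerns the reduced \emph{algebraic} substack, of dimension $[K:\bQ_p]\,d(d-1)/2$, whereas $R^{\square}_{\bar\rho}$ is a versal ring whose Krull dimension records all the formal (non-algebraizable) directions; the two are not related by merely adding $d^2-\dim H^0$ framing parameters. (There is also a circularity hazard in invoking Theorem \ref{main1} here, since the present paper's main result relies on Theorem \ref{BIP} as an input.) The actual argument of \cite{BIP23} is not geometric in this sense: it bounds $\dim R^{\square}_{\bar\rho}/p$ directly by a Galois-cohomological induction on $d$, stratifying the special fibre by the shape of the associated residual representation at each point and handling reducible loci via deformation rings of the Jordan--H\"older constituents together with extension classes. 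Without that (or an equivalent) input, your proposal does not close.
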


As we will discuss in detail later, this is equivalent to that the derived local Galois deformation functor constructed by \cite{GV18} is classical. In other words, if $\calX$ is the correct derived object to be considered, it is already infinitesimal classical at any point corresponding to finite fields.

Now let's be more precise about the notion of calssicality. 
\begin{dfn}\label{dfn-classicality}
    Let $\calF: \textbf{Nilp}_{\bZ_p}\to \textbf{Ani}$ be a derived prestack. We say $\calF$ is classical if $\calF$ is equivalent to the left Kan extension ${\rm Lan}{^{\rm cl}\calF}$ of its underlying classical prestack $^{\rm cl}\calF$ along the natural inclusion $ {Nilp}_{\bZ_p}\hookrightarrow \textbf{Nilp}_{\bZ_p}$. Equivalently, $\calF$ is classical if $\calF$ can be written as a colimit of $h_{A_i}$ where $A_i\in {Nilp}_{\bZ_p}$. If $\calF$ is a derived stack for the \'etale topology, we say $\calF$ is classical as a stack if it is equivalent to the sheafification $({\rm Lan}{^{\rm cl}\calF})^{\#}$.
\end{dfn}

From now on, by left Kan extension, we always mean left Kan extension along the inclusion $ {Nilp}_{\bZ_p}\hookrightarrow \textbf{Nilp}_{\bZ_p}$ unless specified otherwise. Our main result in this section is the following theorem.
\begin{thm}\label{main2}
    The derived stack $\calX$ is equivalent to the (\'etale) sheafification $({\rm Lan}{^{\rm cl}\calX})^{\#}$ of the left Kan extension ${\rm Lan}{^{\rm cl}\calX}$ when restricted to $\textbf{Nilp}_{\bZ_p}^{<\infty}$. In other words, $\calX$ and $({\rm Lan}{\calX_{\rm EG}})^{\#}$ are equivalent after nilcompletion.
\end{thm}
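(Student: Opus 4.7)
The plan is to construct a canonical morphism $\Phi : ({\rm Lan}\,{^{\rm cl}\calX})^{\#} \to \calX$ of derived stacks, using the universal property of left Kan extension together with Proposition \ref{stack} (since $\calX$ is already an fppf sheaf), and then to prove $\Phi$ is an equivalence on every $n$-truncated animated $p$-nilpotent ring by induction on $n$. The $n=0$ case is Theorem \ref{main1}. For the inductive step, I would exploit the Postnikov tower: the map $R \to \pi_{\leq n-1} R$ is a square-zero extension by a shifted module, so by the obstruction/deformation formalism of \cite{GR17} the fibers of both $\Phi(R) \to \Phi(\pi_{\leq n-1} R)$ and of $({\rm Lan}\,\calX_{\rm EG})^{\#}(R) \to ({\rm Lan}\,\calX_{\rm EG})^{\#}(\pi_{\leq n-1} R)$ are controlled by the pro-cotangent complexes of the two stacks. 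Hence the induction reduces to showing that $\Phi$ induces an equivalence of pro-cotangent complexes at every classical point.

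For this identification I would interpret the cotangent complex of $\calX$ at a Laurent $F$-crystal $\bM$ prismatically via the endomorphism complex $\End(\bM)$, then translate through the equivalences of Section 2 and the derived $(\varphi,\Gamma)$-module picture of Subsection \ref{3.4} into continuous Galois cohomology of the adjoint representation. At a point corresponding to a residual representation $\bar\rho : G_K \to \GL_d(k_f)$, the pro-cotangent complex of $({\rm Lan}\,{^{\rm cl}\calX})^{\#}$ reduces to the classical Galois deformation cotangent complex, while that of $\calX$ is the derived framed deformation complex; these coincide precisely when $R^{\square}_{\bar\rho}$ is a local complete intersection, which is exactly Theorem \ref{BIP} combined with the \cite{GV18} dictionary between LCI-ness and classicality of the derived deformation functor. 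Propagation from closed to general classical points uses the infinitesimal cohesion of both stacks together with the limit-preservation established in the proof of Theorem \ref{main1}.

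The main obstacle, flagged in the introduction, is that the deformation theory of \cite{GV18} is naturally formulated with respect to the left Kan extension from \emph{discrete Artinian local} rings into \emph{animated Artinian local} rings, whereas Definition \ref{dfn-classicality} involves the left Kan extension from all discrete $p$-nilpotent rings into all animated $p$-nilpotent rings. These two operations do not agree a priori even after restriction to Artinian objects, so a genuine bridging argument is necessary. I would introduce Zhu's moduli of derived Galois representations from \cite{Zhu20} as a mediator: first identify $\calX$ with the $p$-nilpotent portion of Zhu's stack via the derived $(\varphi,\Gamma)$-module equivalence, and then identify Zhu's stack with $({\rm Lan}\,\calX_{\rm EG})^{\#}$ by verifying that Zhu's construction is itself of left-Kan-extension type from the correct category and is compatible with \'etale sheafification. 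Tracking the pro-cotangent complex through this chain of equivalences, in a way that respects both Kan extensions simultaneously, is the technical heart of the argument.
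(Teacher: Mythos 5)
Your proposal follows essentially the same route as the paper: both sides are compared via the Gaitsgory--Rozenblyum deformation formalism (Postnikov induction through square-zero extensions, reducing to an isomorphism of pro-cotangent complexes at classical points), the comparison is localized to finite-field points using local almost finite presentation, and the identification there goes through Zhu's derived representation stack as the bridge between the two left Kan extensions, with the decisive input being the LCI theorem of \cite{BIP23} combined with the \cite{GV18} dictionary. This matches the paper's proof in structure and in all the key inputs.
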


To prove Theorem \ref{main2}, we need a global derived deformation theory instead of just infinitesimal deformation theory. This global theory is established in \cite[Chapter 1]{GR17}. Although \cite{GR17} always assumes the base field is of characteristic 0, the foundational part of the deformation theory also works\footnote{Over a field of characteristic $0$, cdga's, animated rings and $\bE_{\infty}$-rings are all equivalent. But this is not true when the base ring is $\bZ_p$. In particular, there are two different concepts of cotangent complex. In \cite{Lur17}, the cotangent complex for animated rings is called algebraic cotangent complex, which is the notion used in this paper, and the one for $\bE_{\infty}$-rings is called topological cotangent complex. One of key lemmas in \cite[Chapter 1]{GR17} is Lemma 5.4.3, which is proved by using \cite[Theorem 7.4.3.1]{Lur17} concerning topological cotangent complex. In the setting of animated rings, we need to use \cite[Proposition 25.3.6.1]{Lur18}, which is somewhat weaker than its $\bE_{\infty}$-ring analogue but is still sufficient to prove \cite[Lemma 5.4.3]{GR17}. } for animated rings (see \ref{appendix} for some necessary modifications).

\subsection{Derived deformation theory}

We first introduce the necessary foundational part of the derived deformation theory. The basic spirit is behind the following fact.

\begin{fact}\label{truncation}
    For any animated ring $R$, there is a homotopy pullback for any $n$
\begin{equation}\label{core}
\xymatrix@C=0.6cm{
\tau_{\leq n+1}{R}\ar[d]\ar[rr]&&\tau_{\leq n}(R)\ar[d]\\
\tau_{\leq n}(R)\ar[rr]&&\tau_{\leq n}(R)\oplus \pi_{n+1}(R)[n+2]
}
\end{equation}
where $\tau_{\leq n}(R)\oplus \pi_{n+1}(R)[n+2]$ is the split square-zero extension, which we will discuss later.
\end{fact}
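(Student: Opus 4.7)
The plan is to recognize this as the standard Postnikov-tower classification for animated commutative rings, so that the pullback square is an instance of the general statement that passing from $\tau_{\leq n}$ to $\tau_{\leq n+1}$ is a square-zero extension by $\pi_{n+1}(R)[n+1]$. First I would produce the canonical fiber sequence of $\tau_{\leq n}(R)$-modules
$$\pi_{n+1}(R)[n+1]\ \longrightarrow\ \tau_{\leq n+1}(R)\ \longrightarrow\ \tau_{\leq n}(R),$$
by computing the fiber of the truncation map on homotopy groups: the map $\tau_{\leq n+1}(R)\to\tau_{\leq n}(R)$ is an isomorphism on $\pi_i$ for $i\leq n$ and its fiber has only $\pi_{n+1}(R)$ in degree $n+1$.

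Second, I would argue that this fiber sequence presents $\tau_{\leq n+1}(R)$ as a square-zero extension of $\tau_{\leq n}(R)$ by the connective $\tau_{\leq n}(R)$-module $M:=\pi_{n+1}(R)[n+1]$, in the precise sense of Lurie. The squared-zero-ness is forced by degree reasons: the image of $M$ in $\tau_{\leq n+1}(R)$ is an ideal concentrated in degree $n+1$, so any product of two of its elements would have to land in $\pi_{2n+2}$, which vanishes in $\tau_{\leq n+1}(R)$ as soon as $n\geq 0$, and the base case $n=-1$ is the classical discrete statement.

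Third, I would invoke the classifying pullback description of square-zero extensions of animated rings (Lurie, HA 7.4 / SAG 25.3, as adapted in the paper's appendix referenced just above): for any animated ring $A$ and any connective $A$-module $N$, every square-zero extension of $A$ by $N$ arises uniquely as a pullback
$$\xymatrix@C=0.8cm{\widetilde A\ar[r]\ar[d] & A\ar[d]^{d} \\ A\ar[r]^-{0} & A\oplus N[1]}$$
where $d$ is the classifying derivation and the bottom map is the zero (i.e.\ canonical) one; moreover $A\oplus N[1]$ denotes the trivial split square-zero extension. Applying this to $A=\tau_{\leq n}(R)$ and $N=\pi_{n+1}(R)[n+1]$ gives $N[1]=\pi_{n+1}(R)[n+2]$, which is exactly the square displayed in the statement.

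The main obstacle is the second step: verifying that the extension really is square-zero rather than a more general extension. In the animated (as opposed to $\bE_\infty$) setting this is a little subtle because the correct notion of cotangent complex is the algebraic one and the relevant obstruction-theoretic input has to be taken from \cite[\S 25.3]{Lur18} rather than from \cite[\S 7.4]{Lur17}; this is precisely the point where the footnote in the paper flags the need to substitute Lurie's SAG result for the HA result, and where citing the appendix (\ref{appendix}) closes the argument.
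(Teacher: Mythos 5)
The paper states this as a \emph{Fact} and gives no proof, so there is nothing to compare against; your reconstruction is the standard argument and is correct. The one place where your write-up is only heuristic is the second step: ``products of elements land in $\pi_{2n+2}$, which vanishes'' is not literally what square-zero means for animated rings, but it is exactly the hypothesis of Lurie's theorem that $n$-small extensions are square-zero extensions (the fiber $\pi_{n+1}(R)[n+1]$ is concentrated in a single degree, hence small), and you correctly identify that the rigorous input is the animated-ring version of this result from \cite[\S 25.3]{Lur18} rather than the $\bE_\infty$ version, which is precisely the point the paper's appendix is addressing.
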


Suppose $\calF: \textbf{Nilp}_{\bZ_p}\to \textbf{Ani}$ is a derived prestack. The study of the deformation theory of $\calF$ can be divided into 3 parts:
\begin{enumerate}
    \item Find the relationship between $\calF(R)$ and $\{\calF(\tau_{\leq n}(R))\}_n$.
    \item The homotopy pullback \ref{core} is an example of square-zero extensions, which is controlled by (pro-)cotangent complex\footnote{The cotangent complex usually appears in the study of deformation theory of algebraic objects. In our case, the derived prestack $\calF:\textbf{Nilp}_{\bZ_p}\to \textbf{Ani}$ is more like a derived ``formal" prestack. So instead, pro-cotangent complex will naturally appear in our context.} in a certain way. So we need to study the (pro-)cotangent complex of $\calF$, which can be regarded as some ``linear-algebra" data.
    \item If we have a good understanding of the cotangent complex (or square zero extensions), then we need to understand how the derived prestack $\calF$ acts on the square-zero extensions.
\end{enumerate}

\subsubsection{Nilcomplete}
Let us start with the first part. Recall that all animated rings are Postnikov complete, i.e. $R\simeq \varprojlim_n\tau_{\leq n}R$. We then have the following definition.
\begin{dfn}[Nilcomplete]
A derived prestack $\calF$ is nilcomplete\footnote{In \cite{GR17}, the notion of nilcomplete is called convergent.} if for any $R\in \textbf{Nilp}_{\bZ_p}$, the map
\[
\calF(R)\to \varprojlim_n \calF(\tau_{\leq n}R)
\]
is an isomorphism. Let $\PreStk^{\rm nil}$ denote the $\infty$-category of nilcomplete derived prestacks. The inclusion $\PreStk^{\rm nil}\hookrightarrow \PreStk$ admits a left adjoint given by the nilcompletion
\[
\calF^{\rm nil}(R):=\varprojlim_n \calF(\tau_{\leq n}R)
\]

\begin{exam}
    All derived affine schemes are nilcomplete. In fact, more is true: all derived Artin stacks are nilcomplete. Note that this is not always true for ind-derived Artin stacks\footnote{As in \cite{GR17}, nilcomplete is often included in the definition of ind-schemes.} as filtered colimit does not necessarily commute with inverse limit. 
\end{exam}
 
\begin{rmk}\label{remark-nil}
    The left Kan extensions of prestacks in ${\PreStk}^{\leq n}$ along the inclusion $\textbf{Nilp}_{\bZ_p}^{\leq n}\to \textbf{Nilp}_{\bZ_p}$ does not necessarily lie in $\PreStk^{\rm nil}$. So the left Kan extension $\Lan{^{\cl}\calX}$ and its sheafification $({\rm Lan}{^{\rm cl}\calX})^{\#}$ might not be nilcomplete. This is why we need to take nilcompletion in Theorem \ref{main2}.
    
\end{rmk}
    
\end{dfn}

\subsubsection{Pro-cotangent complex}

Suppose $A$ is a discrete animated ring and $M$ is a discrete $A$-module. Then there is a classical construction $A\oplus M$ which is an $A$-algebra whose underlying $A$-module structure is the natural one and the ring structure is given by the following formula
\[
(a_1,m_1)\cdot (a_2,m_2)=(a_1a_2,a_2m_1+a_1m_2).
\]
Then we can obtain
\[
\Hom_{\Mod(A)}(\Omega_{A/\bZ},M)\cong{\rm Der}_{\bZ}(A,M)\cong \Hom_{A}(A,A\oplus M).
\]

For general $A\in \textbf{Nilp}_{\bZ_p}$ and animated $A$-module $M$, there is a similar story. Recall that the $\infty$-category $\Mod(A)$ of $A$-modules is defined to be the $\infty$-category $\Mod(A^{\circ})$ where $A^{\circ}$ is the underlying connective $\bE_{\infty}$-algebra of $A$ through the natural functor $\Theta: \textbf{Nilp}_{\bZ_p}\to \textbf{CAlg}_{\bZ_p}^{\leq 0}$ (cf. \cite[Construction 25.1.2.1]{Lur18}). Then the $\infty$-category $\Mod(A)^{\leq 0}$ of animated $A$-modules is the connective part $\Mod(A^{\circ})^{\leq 0}$. Now one can still define $A\oplus M$ and the derivations ${\rm Der}_{\bZ}(A,M)$ of $A$ with values in $M$. There is an isomorphism
\[
\Hom_{A}(L_{A/\bZ},M)\simeq {\rm Der}_{\bZ}(A,M),
\]
where $L_{A/\bZ}$ is the cotangent complex of $A$ over $\bZ$. For more information, we refer to \cite[Chapter 25]{Lur18}.

Now we present a theory of pro-cotangent complex of a derived prestack $\calF:\textbf{Nilp}_{\bZ_p}\to \textbf{Ani}$. Let $x\in \calF(R)$ for some $R\in \textbf{Nilp}_{\bZ_p}$. Then we define a functor $\Mod(R)^{\leq 0}\to \textbf{Ani}$ by
\[
M\in \Mod(R)^{\leq 0}\mapsto {\rm Fib}_x(\calF(R\oplus M)\to \calF(R)).
\]

Let $M_1\to M_2$ be a map in $\Mod(R)^{\leq 0}$ such that $H^0(M_1)\to H^0(M_2)$ is a surjection. Consider the fiber product $M:=0\times_{M_2}M_1$ in $\Mod(R)$. Then $M$ is also in $\Mod(R)^{\leq 0}$. So we also get a fiber product $R\oplus M\simeq R\times_{(R\oplus M_2)}(R\oplus M_1)$ as the functor $R\oplus (-)$ commutes with limits.

\begin{dfn}[Pro-cotangent space at a point]
We say $\calF$ admits a pro-cotangent space at $x\in \calF(R)$ if the diagram
\begin{equation}\label{fiber product}
    \xymatrix@=0.6cm{
    \calF(R\oplus M)\ar[rr]\ar[d]&& \calF(R\oplus M_1)\ar[d]\\
    \calF(R)\ar[rr]&&\calF(R\oplus M_2)
    }
\end{equation}
    induced by $R\oplus M\simeq R\times_{(R\oplus M_2)}(R\oplus M_1)$ is a homotopy pullback diagram for all $M_1\to M_2$ as above.
\end{dfn}

More generally, if $\calF$ admits a pro-cotangent space at $x\in\calF(R)$, then we can define a new functor $\Mod(R)^{-}\to \cal S$ by 
\[
M\in \Mod(R)^{\leq k}\mapsto \Omega^i({\rm Fib}_x(\calF(R\oplus M)\to \calF(R)) 
\]
for any $i\geq k$, where $\Mod(R)^{-}$ is the $\infty$-category of almost connective modules, i.e. all those $M$ such that $M[k]$ is connective for some $k$. Then this new functor is actually exact as the diagram \ref{fiber product} is a homotopy pullback. In particular, this means the new functor is pro-corepresentable by $T^*_x(\calF)\in {\rm Pro}(\Mod(R)^{-})$. We call it the pro-cotangent space to $\calF$ at $x$.

\begin{dfn}[Pro-cotangent space]
    We say a derived prestack $\calF$ admits pro-cotangent spaces if it admits a pro-cotangent space at all points.
\end{dfn}

\begin{exam}
    All derived schemes admit pro-cotangent spaces, even cotangent spaces, which means the pro-object is indeed in $\Mod(R)^{-}$ for any $R$-point. All derived ind-schemes also admit pro-cotangent spaces.
\end{exam}

\begin{exam}\label{affine}
    If $\calF=h_{A}$, the prestack representable by some $A\in \textbf{Nilp}_{\bZ_p}$, then for any point $x\in \calF(R)$, the pro-cotangent space $T^*_x(\calF)$ is exactly $L_{A/\bZ}\otimes_{A,x}R$.
\end{exam}

Next we want to discuss the nilcompleteness condition of pro-cotangent spaces, which is closely related to the nilcompleteness of derived prestacks.

\begin{dfn}
For any $R\in \textbf{Nilp}_{\bZ_p}$, we let ${\rm Pro}(\Mod(R)^{-})^{\rm nil}$ denote the full sub-category of ${\rm Pro}(\Mod(R)^{-})$ spanned by the objects $\calH$ viewed as a functor $\Mod(R)^{-}\to \textbf{Ani}$, satisfying
\[
\calH(M)\xrightarrow{\simeq}\varprojlim_n\calH(\tau_{\leq n}M).\footnote{Here we still use the homological convention $\tau_{\leq n}$.}
\]
\end{dfn}

\begin{lem}[{\cite[Chapter 1, Lemma 3.3.3]{GR17}}]
    Let $\calF$ be a nilcomplete derived prestack. If it admits a pro-cotangent space at $x\in \calF(R)$, then $T^*_x(\calF)\in {\rm Pro}(\Mod(R)^{-})^{\rm nil}$.
\end{lem}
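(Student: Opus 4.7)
The plan is to unwind the nilcompleteness condition on the pro-object $T^*_x(\calF)$ and reduce it, by a looping/shifting argument, to a statement purely about the prestack $\calF$ applied to split square-zero extensions, where nilcompleteness of $\calF$ can be invoked directly. Concretely, by the defining formula $T^*_x(\calF)(M)\simeq \Omega^i\,{\rm Fib}_x(\calF(R\oplus M[-i])\to \calF(R))$ for $i$ large enough that $M[-i]$ is connective, and since loops commute with limits, I would first reduce to the case of a connective module $M\in \Mod(R)^{\leq 0}$. The required statement then becomes
\[
{\rm Fib}_x\!\bigl(\calF(R\oplus M)\to \calF(R)\bigr)\;\simeq\;\varprojlim_n {\rm Fib}_x\!\bigl(\calF(R\oplus \tau_{\leq n}M)\to \calF(R)\bigr),
\]
which, since fibers commute with limits and $\calF(R)$ is constant in $n$, reduces further to verifying $\calF(R\oplus M)\simeq\varprojlim_n \calF(R\oplus \tau_{\leq n}M)$.

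The key input is the elementary identity that, for all $n\geq k$, the natural map $\tau_{\leq k}(R\oplus \tau_{\leq n}M)\to \tau_{\leq k}(R\oplus M)$ is an equivalence: split square-zero extensions satisfy $\pi_i(R\oplus N)=\pi_i(R)\oplus \pi_i(N)$, and $\tau_{\leq n}M\to M$ is an isomorphism on $\pi_i$ for $i\leq n$, hence a fortiori for $i\leq k$. Consequently, for each fixed $k$, the tower $\{\calF(\tau_{\leq k}(R\oplus \tau_{\leq n}M))\}_n$ is eventually constant with value $\calF(\tau_{\leq k}(R\oplus M))$. Applying the nilcompleteness of $\calF$ to each $R\oplus \tau_{\leq n}M$ (which lies in $\textbf{Nilp}_{\bZ_p}$ because $p$-nilpotence is preserved under split extension by a connective module) and commuting the two inverse limits, I obtain
\[
\varprojlim_n\calF(R\oplus \tau_{\leq n}M)\;\simeq\;\varprojlim_k\varprojlim_n \calF(\tau_{\leq k}(R\oplus \tau_{\leq n}M))\;\simeq\;\varprojlim_k \calF(\tau_{\leq k}(R\oplus M))\;\simeq\;\calF(R\oplus M),
\]
the last equivalence being a further application of nilcompleteness.

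The main, and essentially only, substantive ingredient is the identity $\tau_{\leq k}(R\oplus \tau_{\leq n}M)\simeq \tau_{\leq k}(R\oplus M)$ for $n\geq k$; once that is in hand, the rest is a formal Fubini-style interchange of limits together with two invocations of nilcompleteness of $\calF$. No knowledge of $\calF$ beyond nilcompleteness and the existence of the pro-cotangent space at $x$ is used, so the proof is really a careful bookkeeping exercise. The only point requiring mild care is making sure the shift from $\Mod(R)^-$ to $\Mod(R)^{\leq 0}$ at the outset is compatible with the formation of $\tau_{\leq n}$, which is the reason the argument is set up in the two-step form above.
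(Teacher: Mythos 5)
Your proof is correct, and it is the natural unwinding of the paper's one-line argument ("this follows from the definition of nilcompleteness"): the reduction to connective $M$ by deslooping, the identity $\tau_{\leq k}(R\oplus \tau_{\leq n}M)\simeq\tau_{\leq k}(R\oplus M)$ for $n\geq k$, and the Fubini interchange of Postnikov limits are exactly the details the paper leaves implicit. No gaps.
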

\begin{proof}
    This follows from the definition of nilcompleteness of derived prestack.
\end{proof}

For any $R\in \textbf{Nilp}_{\bZ_p}$, there is another full subcategory ${\rm Pro}(\Mod(R)^{-})_{\rm laft}$ of ${\rm Pro}(\Mod(R)^{-})$, which is very important to us.
\begin{dfn}\label{cotangent complex laft}
    ${\rm Pro}(\Mod(R)^{-})_{\rm laft}$ is the full subcategory of ${\rm Pro}(\Mod(R)^{-})$ spanned by objects $\calH$ satisfying
    \begin{enumerate}
        \item $\calH\in {\rm Pro}(\Mod(R)^{-})^{\rm nil}$;
        \item For every $m\geq 0$, the induced functor $\Mod(R)^{[-m,0]}\to \textbf{Ani}$ commutes with filtered colimits.
    \end{enumerate}
\end{dfn}
The full subcategory ${\rm Pro}(\Mod(R)^{-})_{\rm laft}$ will be closely related to the geometric property of the derived prestack. To see this, we now recall some basic properties concerning $\textbf{Nilp}_{\bZ_p}$.

\begin{dfn}[{\cite[Proposition 3.1.5]{Lur09}},{\cite[Chapter 2, Corollary 1.5.6]{GR19}}]
    Let $R\in \textbf{Nilp}_{\bZ_p}^{\leq n}$ for some $n$. Then we say $R$ is of finite type (or finite presentation) over $\bZ_p$ if it satisfies the following equivalent conditions.
    \begin{enumerate}
        \item The functor $\Hom(R,-)$ commutes with filtered colimit when restricted to $\textbf{Nilp}_{\bZ_p}^{\leq n}$, i.e. $R$ is a compact object in $\textbf{Nilp}_{\bZ_p}^{\leq n}$.
        \item $\pi_0(R)$ is a finite type $\bZ_p$-algebra and each $\pi_m(R)$ is a finite generated module over $\pi_0(R)$.
    \end{enumerate}
    Let $\textbf{Nilp}_{\bZ_p,\rm ft}^{\leq n}$ denote the full subcategory consisting of animated rings satisfying the above condition. Then we have $\textbf{Nilp}_{\bZ_p}^{\leq n}\simeq {\rm Ind}(\textbf{Nilp}_{\bZ_p,\rm ft}^{\leq n})$.
\end{dfn}

Now we can define derived prestacks locally of almost finite type (or locally almost of finite presentation).

\begin{dfn}
    Let $\calF\in \PreStk^{\leq n}$. We say $F$ is locally of finite type over $\bZ_p$ if $\calF$ is the left Kan extension along the inclusion $\textbf{Nilp}_{\bZ_p,\rm ft}^{\leq n}\hookrightarrow \textbf{Nilp}_{\bZ_p}^{\leq n}$, or equivalently, if  $\calF$ preserves filtered colimit. We say $\calF\in \PreStk$ is locally almost of finite type over $\bZ_p$ if its restriction to $\textbf{Nilp}_{\bZ_p}^{\leq n}$ is locally of finite type over $\bZ_p$ for every $n\geq 0$. Let $\PreStk_{\rm laft}$ denote the $\infty$-category of derived prestacks locally almost of finite type.
\end{dfn}

Now we have the following unsurprising result.

\begin{lem}[{\cite[Chapter 1, Lemma 3.5.2]{GR17}}]\label{laft}
    Let $\calF$ be a derived prestack locally almost of finite type. Assume $\calF$ admits a pro-cotangent space at $x\in \calF(R)$. Then the pro-cotangent space $T^*_x(\calF)$ is in ${\rm Pro}(\Mod(R)^{-})_{\rm laft}$.
\end{lem}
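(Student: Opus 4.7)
The plan is to verify separately the two conditions listed in Definition \ref{cotangent complex laft}. The nilcompleteness of the pro-object $T^*_x(\calF)$ (condition (1)) is handled by the preceding lemma \cite[Chapter 1, Lemma 3.3.3]{GR17}, so the substantive content is condition (2): for each $m\ge 0$ the functor $T^*_x(\calF):\Mod(R)^{[-m,0]}\to\textbf{Ani}$ preserves filtered colimits.

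First I would recall that at a point $x\in\calF(R)$, the pro-cotangent space represents the functor
\[
M\;\longmapsto\;\Omega^{i}\,{\rm Fib}_x\bigl(\calF(R\oplus M)\to\calF(R)\bigr)
\]
on almost connective $R$-modules, for any $i\ge m$. Since fibers and loops of anima both commute with filtered colimits, condition (2) reduces to showing that the single composite $M\mapsto\calF(R\oplus M)$ preserves filtered colimits from $\Mod(R)^{[-m,0]}$.

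The proof then rests on three observations. First, the split square-zero extension $R\oplus(-)\colon\Mod(R)^{\le0}\to\textbf{Nilp}_{\bZ_p}$ is a left adjoint (to the augmentation-ideal functor), hence preserves arbitrary colimits. Second, because $\pi_i(R\oplus M)=\pi_i(R)\oplus\pi_i(M)$, if $R$ is $N$-truncated with $N\ge m$ then $R\oplus M$ lies in $\textbf{Nilp}_{\bZ_p}^{\le N}$ for every $M\in\Mod(R)^{[-m,0]}$. Third, the laft hypothesis on $\calF$ says precisely that $\calF|_{\textbf{Nilp}_{\bZ_p}^{\le N}}$ commutes with filtered colimits. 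Splicing these three together yields condition (2) when $R$ is truncated. To remove the truncation hypothesis on $R$, I would combine the nilcompleteness in condition (1) with the identification $\tau_{\le N}(R\oplus M)\simeq \tau_{\le N}(R)\oplus M$ valid for $N\ge m$; both sides of the desired filtered-colimit equivalence are then computed as Postnikov limits of their truncated counterparts, and the argument of the truncated case applies termwise.

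The main obstacle, in my view, is the careful bookkeeping of truncation levels: one must match the truncation bound on $R\oplus M$ with a slot $\textbf{Nilp}_{\bZ_p}^{\le N}$ on which $\calF$ is known to preserve filtered colimits, while simultaneously ensuring that the loop-and-fiber construction used to extract $T^*_x(\calF)$ from $\calF$ is compatible with this truncation. Once the homotopy-group splitting $\pi_\ast(R\oplus M)\cong\pi_\ast R\oplus\pi_\ast M$ and the adjoint description of $R\oplus(-)$ are in hand, the remainder of the proof is a formal manipulation of colimits, adjoints, and Postnikov towers.
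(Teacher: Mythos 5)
Your argument is essentially the paper's own proof, which is the one-line observation that the claim follows from the definition of $\PreStk_{\rm laft}$ together with the fact that $(R,M)\mapsto R\oplus M$ commutes with filtered colimits; you have simply expanded the same idea with the truncation bookkeeping made explicit. No substantive difference in approach.
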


\begin{proof}
    This follows from the definition of $\PreStk_{\rm laft}$ and that the functor $(R,M)\mapsto R\oplus M$ commutes with filtered colimit.
\end{proof}

We conclude this subsection with the functoriality of pro-cotangent space. Let $R_1\to R_2$ be a map in $\textbf{Nilp}_{\bZ_p}$. Then we have a natural pullback functor
\[
f^*: \Mod(R_1)^{-}\to \Mod(R_2)^{-}
\]
which induces a natural functor
\[
\Pro(f)^*:\Pro(\Mod(R_1)^{-})\to \Pro(\Mod(R_1)^{-}).
\]

Now let $x_1\in \calF(R_1)$ be an $R_1$-point and $x_2\in \calF(R_2)$ be the image of the point $x_1$ in $\calF(R_2)$. So we can obtain a map
\[
T_{x_2}^*(\calF)\to \Pro(f^*)(T_{x_1}^*(\calF))
\]
\begin{dfn}[Pro-cotangent complex]
   We say $\calF$ admits a (pro)-cotangent complex if it admits (pro)-cotangent spaces and for any $R_1\to R_2, x_1,x_2$ as above, the map $T_{x_2}^*(\calF)\to \Pro(f^*)(T_{x_1}^*(\calF))$ is an isomorphism. 
\end{dfn}

\begin{exam}
    All the derived Artin stacks admit (pro)-cotangent complexes. For derived affine schemes, this can be easily seen from Example \ref{affine}.
\end{exam}

\subsubsection{Infinitesimally cohesive}
As we have seen in Fact \ref{truncation}, there is a homotopy pullback diagram  connecting $\tau_{\leq n+1}R$ and $\tau_{\leq n}R$ for $R\in\textbf{Nilp}_{\bZ_p}$ and each $n\geq 0$. This homotopy pullback diagram is an example of more general square-zero extensions.

\begin{dfn}[Square-zero extension]
Let $R\in \textbf{Nilp}_{\bZ_p}$ and $M$ is a connective $R$-module. A square-zero extension of $R$ by $M$ is a homotopy pullback diagram
\begin{equation*}
    \xymatrix@=0.6cm{
    \tilde R\ar[rr]\ar[d]&& R\ar[d]^{\rm triv}\\
    R\ar[rr]^{s}&&R\oplus M[1]
    }
\end{equation*}
where ${\rm triv}: R\to R\oplus M[1]$ is the trivial section of the natural projection $R\oplus M[1]\to R$ and $s$ is any section of $R\oplus M[1]\to R$. We simply write the square-zero extension as the quadruple $(\tilde R,R,M,s)$. 
\end{dfn}
By the definition of the cotangent complex $L_{R/\bZ}$, we see that square-zero extensions are classified by $\Hom(L_{R/\bZ},M[1])$.

\begin{dfn}[Infinitesimally cohesive]
    Let $\calF$ be a derived prestack. We say $\calF$ is infinitesimally cohesive if for all square-zero extensions $(\tilde R,R,M,s)$, the following diagram is a homotopy pullback
    \begin{equation*}
        \xymatrix@=0.6cm{
        \calF(\tilde R)\ar[rr]\ar[d]&&\calF(R)\ar[d]^{\rm triv}\\
        \calF(R)\ar[rr]^{s}&&\calF(R\oplus M[1]).
        }
    \end{equation*}
\end{dfn}

\subsection{Deformation theory of $\calX$}

After making all the necessary preparations, we are now at a good stage to study the derived stack $\calX$. The three properties about derived prestacks we discussed in previous subsections can be integrated into the following definition.
\begin{dfn}[Deformation theory]\label{dfn-deformation}
  Let $\calF$ be a derived prestack. We say $\calF$ admits a deformation theory (resp. corepresentable deformation theory) if the following conditions are satisfied:
  \begin{enumerate}
      \item it is nilcomplete;
      \item it admits pro-cotangent complex (resp. cotangent complex\footnote{A classical prestack, which admits a cotangent complex and is infinitesimal cohesive, is also called having an obstruction theory.});
      \item it is infinitesimal cohesive.
  \end{enumerate}
\end{dfn}

There is a simple way to verify if a derived prestack admits a deformation theory.

\begin{prop}[{\cite[Chapter 1, Proposition 7.2.5]{GR17}}]\label{criterion}
    Suppose $\calF$ is a nilcomplete derived prestack. If for any homotopy pullback diagram of truncated animated rings
    \begin{equation*}
        \xymatrix@=0.6cm{
        S\ar[rr]\ar[d]&&S_2\ar[d]\\
        S_1\ar[rr]&&S_0\\
        }
    \end{equation*}
    with $S_1\to S_0$ a nilpotent embedding, which means $\pi_0(S_1)\to \pi_0(S_0)$ is surjective and its kernel is a nilpotent ideal, the induced diagram
    \begin{equation*}
        \xymatrix@=0.6cm{
        \calF(S)\ar[rr]\ar[d]&&\calF(S_2)\ar[d]\\
        \calF(S_1)\ar[rr]&&\calF(S_0)
        }
    \end{equation*}
    is also a homotopy pullback diagram. Then $\calF$ admits a deformation theory.
\end{prop}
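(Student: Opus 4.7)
The plan is to verify the three defining properties of a deformation theory in turn. Nilcompleteness is part of the hypothesis, so only infinitesimal cohesiveness and the existence of a pro-cotangent complex remain. The uniform tactic will be: in each square that must be sent to a pullback, exhibit one leg as a nilpotent embedding, then reduce to the truncated situation via nilcompleteness (which allows us to truncate every input at level $n$ and pass to the limit in $n$), and finally invoke the hypothesis of the proposition at the truncated level.

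\textbf{Infinitesimal cohesive.} I would start from the definition of a square-zero extension $(\tilde R,R,M,s)$: it is by construction the homotopy pullback $\tilde R\simeq R\times_{R\oplus M[1]}R$ along the trivial section and $s$. Both maps $R\to R\oplus M[1]$ are sections of the projection $R\oplus M[1]\to R$, so they induce isomorphisms on $\pi_0$ because $M$ is connective and $\pi_0(M[1])=0$; hence each is trivially a nilpotent embedding. Since $\tau_{\leq n}$ is a right adjoint and so commutes with the pullback, the truncated square is still of the same shape. By nilcompleteness it suffices to verify that $\calF$ sends the square to a pullback after applying $\tau_{\leq n}$ and then pass to $\varprojlim_n$; at the truncated level Proposition~\ref{criterion}'s hypothesis applies directly.

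\textbf{Pro-cotangent complex.} Fix a point $x\in\calF(R)$ and examine the functor $F_x(M)=\mathrm{Fib}_x(\calF(R\oplus M)\to\calF(R))$ on $\Mod(R)^{\leq 0}$. Given $M_1\to M_2$ surjective on $H^0$, set $M=0\times_{M_2}M_1$, so that $R\oplus M\simeq (R\oplus M_1)\times_{R\oplus M_2}R$. The key observation is that $R\oplus M_1\to R\oplus M_2$ is a nilpotent embedding: it is surjective on $\pi_0$ by hypothesis on $M_1\to M_2$, and its kernel is contained in the square-zero ideal $\pi_0(M_1)\subset \pi_0(R\oplus M_1)$, hence is itself square-zero. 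After the same truncation reduction as above, the hypothesis gives the required homotopy pullback; thus $F_x$ extends to an exact functor on $\Mod(R)^{-}$, pro-corepresented by the desired $T^*_x(\calF)$. Finally, for a morphism $f\colon R_1\to R_2$ with $x_1\mapsto x_2$, compatibility of pro-cotangent spaces under base change follows from the identification $R_1\oplus f^*M\simeq R_1\times_{R_2}(R_2\oplus M)$: applying the hypothesis (the map $R_2\oplus M\to R_2$ being a nilpotent embedding on $\pi_0$ in the same way as above) identifies $T^*_{x_2}(\calF)$ with $\Pro(f^*)(T^*_{x_1}(\calF))$, giving the full pro-cotangent complex.

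\textbf{Main obstacle.} The substantive difficulty is the bookkeeping attached to the truncation step: one has to check that $\tau_{\leq n}$ of a split extension $R\oplus N$ (with $N$ connective or of the form $M[1]$) is again a split extension of $\tau_{\leq n}R$ by a suitable truncation of $N$, so that the truncated square continues to fall under the hypothesis, and that the nilpotent-embedding property survives truncation. Once these compatibilities are established, the three defining properties of a deformation theory all fall out uniformly from a single application of the hypothesis.
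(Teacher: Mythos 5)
Your overall architecture is the same as the paper's (which in turn defers to \cite[Chapter 1, Proposition 7.2.5]{GR17}): nilcompleteness is assumed, and each square occurring in the definitions of infinitesimal cohesiveness and of pro-cotangent spaces is a pullback of animated rings along a nilpotent embedding, so the hypothesis would apply once everything is reduced to truncated inputs. Your identification of the nilpotent embeddings is correct ($R\to R\oplus M[1]$ is an isomorphism on $\pi_0$ since $M$ is connective; $R\oplus M_1\to R\oplus M_2$ is surjective on $\pi_0$ with square-zero kernel), as is the base-change identity $R_1\oplus (M|_{R_1})\simeq R_1\times_{R_2}(R_2\oplus M)$ used for functoriality of the pro-cotangent spaces.

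However, the reduction to truncated rings --- which you yourself flag as the main obstacle --- contains a genuine error. You assert that ``$\tau_{\leq n}$ is a right adjoint and so commutes with the pullback.'' Truncation is a \emph{left} adjoint (to the inclusion of $n$-truncated objects) and does not commute with the pullbacks in question. Concretely, for the trivial section one has $\tau_{\leq n}(R\oplus M)\simeq \tau_{\leq n}R\oplus\tau_{\leq n}M$, whereas $\tau_{\leq n}R\times_{\tau_{\leq n}(R\oplus M[1])}\tau_{\leq n}R$ is a square-zero extension of $\tau_{\leq n}R$ by $\tau_{\leq n}(M[1])[-1]\simeq\tau_{\leq n-1}M$; these differ whenever $\pi_n(M)\neq 0$. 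Similarly $\tau_{\leq n}(0\times_{M_2}M_1)=\tau_{\leq n}\,{\rm Fib}(M_1\to M_2)$ is not ${\rm Fib}(\tau_{\leq n}M_1\to\tau_{\leq n}M_2)$. Hence the truncated square is not the homotopy pullback square to which the hypothesis of Proposition \ref{criterion} applies, and the step ``verify after applying $\tau_{\leq n}$ and pass to $\varprojlim_n$'' does not go through as written. This is precisely the point the paper outsources to \cite[Chapter 1, Lemmas 3.3.4 and 6.1.3]{GR17}, which state that for a nilcomplete prestack the existence of pro-cotangent spaces and infinitesimal cohesiveness can be tested on truncated rings and truncated modules; the proofs of those lemmas require a genuine Postnikov-tower argument (with the degree shifts noted above), not commutation of truncation with limits. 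To repair your proof you should either reproduce that argument or cite those lemmas directly, after which the rest of your verification is sound.
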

\begin{proof}
    As $\calF$ is nilcomplete, it is sufficient to check the existence of pro-cotangent complexes and infinitesimal cohesiveness for truncated animated rings and modules by \cite[Chapter 1, Lemma 3.3.4 and 6.1.3]{GR17}. So considering truncated animated rings is sufficient for the arguments in the proof of \cite[Chapter 1, Proposition 7.2.5]{GR17}.
\end{proof}

We also have its converse.

\begin{prop}[{\cite[Chapter 1, Proposition 7.2.2]{GR17}}]\label{converse-criterion}
   Suppose $\calF$ admits a deformation theory. Then for any homotopy pullback diagram $S\simeq S_1\times_{S_0}S_2$ with $S_1\to S_0$ a nilpotent embedding, the induced map $\calF(S)\to \calF(S_1)\times_{\calF(S_0)}\calF(S_2)$ is an equivalence.
\end{prop}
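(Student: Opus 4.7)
The strategy is to reduce the general nilpotent embedding hypothesis to the square-zero case, which is already covered by infinitesimal cohesiveness. The reduction has two independent axes: the Postnikov tower (vertical direction) and the filtration by powers of a nilpotent ideal (horizontal direction).

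First, using nilcompleteness and the fact that homotopy pullbacks commute with the limit $\varprojlim_n \calF(\tau_{\leq n}(-))$, I reduce to the case where $S, S_0, S_1, S_2$ are all truncated. More precisely, since $\tau_{\leq n}$ commutes with finite limits of animated rings, $\tau_{\leq n}(S) \simeq \tau_{\leq n}(S_1) \times_{\tau_{\leq n}(S_0)} \tau_{\leq n}(S_2)$, and $\tau_{\leq n}(S_1) \to \tau_{\leq n}(S_0)$ remains a nilpotent embedding (the surjectivity on $\pi_0$ and nilpotency of the kernel are unchanged). So it suffices to prove the statement when everything lives in $\textbf{Nilp}_{\bZ_p}^{\leq n}$ for some fixed $n$.

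Next, I induct on $n$. For $n = 0$, all rings are discrete, and the kernel $I := \ker(\pi_0(S_1) \to \pi_0(S_0))$ is a nilpotent ideal, say $I^{N+1} = 0$. Then the tower
\[
S_1 = S_1/I^{N+1} \to S_1/I^N \to \cdots \to S_1/I^2 \to S_1/I = S_0
\]
factors the map $S_1 \to S_0$ into finitely many steps, each of which $S_1/I^{k+1} \to S_1/I^k$ is a classical square-zero extension by the $S_0$-module $I^k/I^{k+1}$. Pulling back $S_2 \to S_0$ along this tower and applying infinitesimal cohesiveness at each step (inductively) gives the result in the discrete case. For the inductive step $n \to n+1$, I use the square-zero extension from Fact \ref{truncation}:
\[
\tau_{\leq n+1}(S_i) \simeq \tau_{\leq n}(S_i) \times_{\tau_{\leq n}(S_i) \oplus \pi_{n+1}(S_i)[n+2]} \tau_{\leq n}(S_i),
\]
which allows one to express $\calF(\tau_{\leq n+1}(S_i))$ as a homotopy pullback of terms involving $\tau_{\leq n}(S_i)$ and split square-zero extensions, using infinitesimal cohesiveness. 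Combining this with the inductive hypothesis at level $n$ and rearranging the resulting diagram of pullbacks yields the claim at level $n+1$.

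The main obstacle will be a clean diagrammatic bookkeeping in the inductive step: one has to simultaneously track the Postnikov decomposition applied to $S, S_1, S_2$, the compatibility of these decompositions with the maps $S \to S_1, S_2$, and the way infinitesimal cohesiveness converts each square-zero piece into a pullback of $\calF$-values. The pro-cotangent complex does not enter the argument directly here; its role is only to guarantee, together with infinitesimal cohesiveness, that the hypothesis ``$\calF$ admits a deformation theory'' is a meaningful condition to begin with. Once the diagram chase is set up carefully, the Fubini-type commutation of iterated homotopy pullbacks, together with infinitesimal cohesiveness, closes the argument.
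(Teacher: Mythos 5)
The paper itself offers no proof of this proposition; it is quoted from \cite[Chapter 1, Proposition 7.2.2]{GR17}, so your attempt has to be measured against the argument there. Your skeleton --- reduce to truncated rings by nilcompleteness, factor the nilpotent embedding into finitely many square-zero extensions, and conclude by infinitesimal cohesiveness --- is the right one and is essentially the Gaitsgory--Rozenblyum proof. But there is a genuine gap, located exactly at the step you describe as ``pulling back $S_2\to S_0$ along this tower and applying infinitesimal cohesiveness at each step,'' and it is signalled by your (incorrect) remark that the pro-cotangent complex ``does not enter the argument directly.'' Infinitesimal cohesiveness only says that $\calF$ sends the \emph{tautological} pullback square $\tilde R\simeq R\times_{R\oplus M[1]}R$ defining a square-zero extension to a pullback. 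To handle $\tilde R\times_R R'$ for an arbitrary map $R'\to R$, one first identifies $\tilde R\times_R R'$ as a square-zero extension of $R'$ by $M$ restricted to $R'$ (fine), applies infinitesimal cohesiveness to get $\calF(\tilde R\times_R R')\simeq \calF(R')\times_{\calF(R'\oplus M[1])}\calF(R')$, and must then identify this with $\calF(\tilde R)\times_{\calF(R)}\calF(R')\simeq \calF(R)\times_{\calF(R\oplus M[1])}\calF(R')$. Unwinding, this forces one to compare the fibre of $\calF(R'\oplus M[1])\to\calF(R')$ at a point with the fibre of $\calF(R\oplus M[1])\to\calF(R)$ at its image, which is precisely the base-change compatibility of pro-cotangent spaces, i.e.\ the requirement that $\calF$ admit a pro-cotangent \emph{complex} and not merely pro-cotangent spaces. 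Without this ingredient neither your discrete case nor any step of your Postnikov induction closes; with it, the argument is the standard one.

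A secondary, fixable, error: $\tau_{\leq n}$ is a left adjoint and does \emph{not} commute with finite limits of animated rings. One has $\pi_n(S_1\times_{S_0}S_2)\to \pi_n(\tau_{\leq n}S_1\times_{\tau_{\leq n}S_0}\tau_{\leq n}S_2)$ failing to be an isomorphism whenever $\pi_{n+1}(S_1)\oplus\pi_{n+1}(S_2)\to\pi_{n+1}(S_0)$ is not surjective, and a nilpotent embedding only controls $\pi_0$. The reduction to truncated rings is still valid, but it must be run by comparing $\varprojlim_n\calF(\tau_{\leq n}S)$ with $\varprojlim_n\calF\bigl(\tau_{\leq n}S_1\times_{\tau_{\leq n}S_0}\tau_{\leq n}S_2\bigr)$ (the two towers of rings have the same inverse limit, but nilcompleteness is stated only for Postnikov towers, so the identification of the limits of $\calF$-values needs an argument), not by asserting a levelwise identification.
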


We will actually use this criterion to prove $\calX^{\rm nil}$ admits a deformation theory. 
\begin{prop}\label{admitdeformation}
    The derived prestack $\calX^{\rm nil}$ admits a deformation theory.
\end{prop}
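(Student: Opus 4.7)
The plan is to apply Proposition \ref{criterion} directly. First, $\calX^{\rm nil}$ is nilcomplete by construction: for any $R \in \textbf{Nilp}_{\bZ_p}$, the iterated limit $\varprojlim_m\varprojlim_n \calX(\tau_{\leq n}\tau_{\leq m}R)$ collapses to $\varprojlim_n\calX(\tau_{\leq n}R) = \calX^{\rm nil}(R)$. Moreover, $\calX^{\rm nil}$ agrees with $\calX$ on truncated rings, so verifying the pullback criterion for $\calX^{\rm nil}$ reduces to verifying, for any homotopy pullback $S \simeq S_1 \times_{S_0} S_2$ in $\textbf{Nilp}_{\bZ_p}^{\leq n}$ with $S_1 \to S_0$ a nilpotent embedding, that
\[
\calX(S) \longrightarrow \calX(S_1) \times_{\calX(S_0)} \calX(S_2)
\]
is an equivalence.

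The next step is to reduce to the level of the Breuil--Kisin prism. Using $(\frakS,(E))$ as a cover of the final object of ${\rm Shv}((\calO_K)_{\Prism})$, as in the proof of Proposition \ref{stack}, we write
\[
\calX(S)\ \simeq\ \varprojlim_{[m] \in \Delta}\, \Vect(\frakS^m_S[\tfrac{1}{E}])^{\varphi=1,\simeq},
\]
and similarly for $S_0,S_1,S_2$. Since limits commute with limits (and taking cores commutes with limits), it suffices to prove, for each $[m] \in \Delta$, that the functor $T \mapsto \Vect(\frakS^m_T[\tfrac{1}{E}])^{\varphi=1,\simeq}$ sends the pullback $S \simeq S_1 \times_{S_0} S_2$ to a pullback of $\infty$-groupoids.

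For the ring side, I would first observe that $\frakS^m$ is flat over $\bZ_p$ (it is a completed tensor power of $\frakS = W(k)[[u]]$ in the prismatic sense), so derived tensor product with $\frakS^m$ preserves the pullback. Derived $E$-adic completion is a Bousfield localization of the stable $\infty$-category of modules and therefore preserves finite limits; inverting $E$ likewise preserves finite limits. Combining these, we obtain
\[
\frakS^m_S[\tfrac{1}{E}]\ \simeq\ \frakS^m_{S_1}[\tfrac{1}{E}] \times_{\frakS^m_{S_0}[\tfrac{1}{E}]} \frakS^m_{S_2}[\tfrac{1}{E}],
\]
and surjectivity together with nilpotence of the kernel of $\pi_0(S_1) \to \pi_0(S_0)$ are both preserved by these operations, so the map $\frakS^m_{S_1}[\tfrac{1}{E}] \to \frakS^m_{S_0}[\tfrac{1}{E}]$ remains a nilpotent embedding. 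At this point one invokes Milnor-type descent for finite projective modules along pullbacks with nilpotent embeddings (as in Lurie's \emph{Spectral Algebraic Geometry}, \S 16.2, specialized from perfect to finite projective modules) to conclude that $\Vect(-)^{\simeq}$ takes this pullback of rings to a pullback of groupoids. The condition $\varphi^*M \xrightarrow{\sim} M$ is a limit (equalizer) condition and therefore automatically glues; explicitly, the Frobenius structure on each factor determines a well-defined Frobenius on the glued module, which is still an isomorphism because it is one on each leg.

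The main obstacle is the Milnor-descent input: one must verify that finite projectivity (as opposed to mere perfectness) is preserved under the gluing and that the inverted element $E$ and the completion do not break the argument. For perfect modules the result is standard; for finite projective modules one needs to check that the glued module $M \simeq M_1 \times_{M_0} M_2$ is again finite projective, which follows because finite projectivity can be tested modulo the nilpotent kernel (so it is inherited from $M_1$), and the Frobenius-isomorphism condition is unaffected by this verification. Once these ingredients are in hand, Proposition \ref{criterion} applies and yields the deformation theory of $\calX^{\rm nil}$.
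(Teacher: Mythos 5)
Your proposal is correct and follows essentially the same route as the paper: apply Proposition \ref{criterion}, reduce via the Breuil--Kisin \v Cech nerve to showing that each $T\mapsto \frakS^m_T[\frac{1}{E}]$ carries the square $S\simeq S_1\times_{S_0}S_2$ to a homotopy pullback whose leg $\frakS^m_{S_1}[\frac{1}{E}]\to\frakS^m_{S_0}[\frac{1}{E}]$ is still a nilpotent embedding, and then glue vector bundles together with the Frobenius isomorphism. The only difference is the citation for the gluing step -- you invoke Milnor/clutching descent for finite projective modules from SAG, whereas the paper uses To\"en--Vaqui\'e (Theorem \ref{perf}) combined with Proposition \ref{converse-criterion}; these supply the same input, and your remaining justifications (flatness of $\frakS^m$ over $\bZ_p$, exactness of derived $E$-completion and localization -- the correct reason being exactness on the stable category rather than the Bousfield-localization phrasing -- and the $\pi_0$ computation for the nilpotent embedding, which the paper does via Lemma \ref{pi-complete}) all go through.
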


In order to prove Proposition \ref{admitdeformation}, we recall the following theorem due to Toen--Vaqui\'e.

\begin{thm}[\cite{TV07}, \cite{Yay22}]\label{perf}
    The derived (pre)stack $\textbf{Perf}^{[a,b]}:\textbf{Nilp}_{\bZ_p}\to \textbf{Ani}$ sending $R\in \textbf{Nilp}_{\bZ_p}$ to ${\rm Perf}(R)^{[a,b],\simeq}$ admits a deformation theory and is locally almost of finite presentation for any $a\leq b\in \bZ$. In particular, the derived (pre)stack $\bigsqcup_n {\rm BGL}_n$ of vector bundles admits a deformation theory and is locally almost of finite presentation.
\end{thm}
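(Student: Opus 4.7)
The plan is to verify the three constituents of a deformation theory (nilcomplete, admits cotangent complex, infinitesimal cohesive) together with the local almost finite presentation condition. The cleanest packaging uses Proposition \ref{criterion}: once $\textbf{Perf}^{[a,b]}$ is nilcomplete and sends homotopy pullbacks of truncated rings along nilpotent embeddings to homotopy pullbacks of anima, it admits a deformation theory. The laft condition and the upgrade from pro-cotangent to genuine cotangent complex are handled separately.

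For nilcompleteness, I would invoke Lurie's Postnikov convergence for module $\infty$-categories: for any $R \in \textbf{Nilp}_{\bZ_p}$, a perfect $R$-module of Tor-amplitude in $[a,b]$ is uniquely determined by the compatible system of its base changes to $\{\tau_{\leq n} R\}$. The bounded Tor-amplitude hypothesis is essential here, since without it the Postnikov limit of $\Perf(\tau_{\leq n} R)$ produces almost perfect, not perfect, modules. For the pullback condition required by Proposition \ref{criterion}, given $S \simeq S_1 \times_{S_0} S_2$ with $S_1 \to S_0$ a nilpotent embedding of truncated animated rings, I would appeal to the module-gluing theorem along Milnor squares (Lurie, \emph{Spectral Algebraic Geometry}, Theorem 16.2.0.2; see also \cite{TV07}). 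The step requiring care is that the gluing of perfect modules in the prescribed Tor-amplitude range lands back in the same range, which uses that the kernel of $S_1\to S_0$ is nilpotent so perfectness and Tor-amplitude may be checked after base change to the factors.

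With the deformation theory established, I would then strengthen ``admits pro-cotangent complex'' to ``admits cotangent complex'' by computing, at $M \in \Perf(R)^{[a,b]}$ and a connective $R$-module $N$,
\[
{\rm Fib}_M\bigl(\textbf{Perf}^{[a,b]}(R \oplus N) \to \textbf{Perf}^{[a,b]}(R)\bigr) \simeq \tau_{\geq 0}\bigl((M \otimes_R M^{\vee} \otimes_R N)[1]\bigr)^{\simeq},
\]
so that $T^*_M(\textbf{Perf}^{[a,b]}) \simeq (M \otimes_R M^{\vee})^{\vee}[-1]$ is perfect over $R$, in particular in $\Mod(R)^{-}$ and compatible under base change. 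The laft property reduces, on each $\textbf{Nilp}_{\bZ_p}^{\leq n}$, to showing $\textbf{Perf}^{[a,b]}$ preserves filtered colimits: given $R \simeq \colim_i R_i$ with $R_i \in \textbf{Nilp}_{\bZ_p,\rm ft}^{\leq n}$, a perfect $R$-module of Tor-amplitude in $[a,b]$ is compact in $\Mod(R)$, so both the module and its finitely many relevant structure maps descend to some $R_i$; the bounded Tor-amplitude ensures only finitely many homotopy groups to track. The special case $\bigsqcup_n {\rm BGL}_n$ is the subcase $[a,b] = [0,0]$, corresponding to finite projective modules.

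The main obstacle is the gluing step: while module gluing along Milnor squares is a well-known theorem, one must verify carefully that the glued object inherits perfectness and the prescribed Tor-amplitude, not merely that the categories of all modules glue. The nilpotent embedding hypothesis makes this tractable via Nakayama-type arguments, since perfectness and Tor-amplitude of the glued module can be read off after base change to $S_1$ and $S_2$ and compared over $S_0$. Packaging the whole argument through Proposition \ref{criterion} avoids any direct verification of infinitesimal cohesiveness and cleanly separates the analytic content (gluing, nilcompleteness) from the formal content (deformation axioms).
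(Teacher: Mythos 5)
The paper does not actually prove this theorem: it is quoted from the literature (\cite{TV07}, \cite{Yay22}; equivalently Lurie's \emph{Spectral Algebraic Geometry}, Chapters 16.2 and 19.2), so there is no internal argument to compare yours against. Your sketch is a correct reconstruction of the standard proof and follows the same route as those references: nilcompleteness via Postnikov descent for perfect complexes of bounded Tor-amplitude (and you are right that the amplitude bound is what keeps the limit inside $\Perf$ rather than $\mathrm{APerf}$); the gluing hypothesis of Proposition \ref{criterion} via Milnor patching, where the point you flag — that perfectness and Tor-amplitude of the glued module are detected after base change to $S_1$ and $S_2$ — is exactly SAG 16.2.3.1 and uses that $\Spec(\pi_0 S_1)\sqcup\Spec(\pi_0 S_2)\to\Spec(\pi_0 S)$ is surjective thanks to the nilpotence of the kernel; the identification $T^*_M(\textbf{Perf}^{[a,b]})\simeq(M\otimes_R M^{\vee})^{\vee}[-1]$, which is the Toën--Vaqui\'e tangent complex $\End(M)[1]$ dualized; and laft from compactness of perfect modules. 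The only blemish is notational: in your formula for the fiber, the superscript $\simeq$ should not be applied to the truncated module — the fiber is the underlying anima $\Omega^{\infty}\tau_{\geq 0}\bigl((M\otimes_R M^{\vee}\otimes_R N)[1]\bigr)$, the deformation space being a pointed torsor under it via the trivial (split) deformation. None of this constitutes a gap.
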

\begin{rmk}
    In fact, we only need the Artin stack ${\rm BGL}_n$. But for the possible generalisation to perfect complexes, we choose to present the above general theorem.
\end{rmk}
\begin{proof}[Proof of Proposition \ref{admitdeformation}]
Recall that for any animated ring $A\in \textbf{Nilp}_{\bZ_p}$ equipped with a Frobenius action $\varphi$, we can define the $\infty$-groupoid of \'etale $\varphi$-modules over $A$ by the homotopy pullback diagram
\begin{equation}\label{dfn of etale}
     \xymatrix{
\Vect(A)^{\varphi=1}\ar@<.ex>[r]&\Vect(A)\ar@<.5ex>[r]^{\varphi^*}\ar@<-.5ex>[r]_{id} & \Vect(A),}
\end{equation}
Using the Breuil--Kisin prism $(\frakS,(E))$ and Lemma \ref{truncated-tot}, for any $R\in\textbf{Nilp}_{\bZ_p}^{\leq n}$, we have
\[
\Vect((\calO_K)_{\Prism},\calO_{\Prism,R}[\frac{1}{\calI_{\Prism}}])^{\varphi=1,\simeq}\simeq \varprojlim (\Vect(\frakS_R[\frac{1}{E}])^{\varphi=1,\simeq}\rightrightarrows\Vect(\frakS_R^1[\frac{1}{E}])^{\varphi=1,\simeq}\cdots\Vect(\frakS_R^{n+2}[\frac{1}{E}])^{\varphi=1,\simeq}).
\]
  Now we want to use Proposition \ref{criterion}. Given any homotopy fiber product $R\simeq R_1\times_{R_0}R_2$ with $R_1\to R_0$ a nilpotent embedding, we want to prove
\begin{equation*}
    \xymatrix@=0.6cm{
    \Vect(\frakS_R^i[\frac{1}{E}])^{\varphi=1,\simeq}\ar[rr]\ar[d]&&\Vect(\frakS_{R_2}^i[\frac{1}{E}])^{\varphi=1,\simeq}
    \ar[d]\\
    \Vect(\frakS_{R_1}^i[\frac{1}{E}])^{\varphi=1,\simeq}\ar[rr]&&\Vect(\frakS_{R_0}^i[\frac{1}{E}])^{\varphi=1,\simeq}
    }
\end{equation*}
 is also a homotopy pullback diagram for any $0\leq i\leq n+2$. By diagram \ref{dfn of etale}, Theorem \ref{perf} and Proposition \ref{converse-criterion}, we just need to prove (1)
 \begin{equation*}
    \xymatrix@=0.6cm{
    \frakS_R^i[\frac{1}{E}]\ar[rr]\ar[d]&& \frakS_{R_2}^i[\frac{1}{E}]\ar[d]\\
     \frakS_{R_1}^i[\frac{1}{E}]\ar[rr]&& \frakS_{R_0}^i[\frac{1}{E}]
    } 
 \end{equation*}
 is also a homotopy pullback diagram and (2) $\frakS_{R_1}^i[\frac{1}{E}]\to \frakS_{R_0}^i[\frac{1}{E}]$ is a nilpotent embedding. 

 We first prove the first statement. In fact, it suffices to prove the natural map \[
 \iota:\frakS^i\otimes R\to(\frakS^i\otimes R_1)\times_{(\frakS^i\otimes R_0)}(\frakS^i\otimes R_2)\]  is an isomorphism. As the forgetful functor
 \[
 \tilde\Theta:\textbf{Nilp}_{\bZ_p}\xrightarrow{\Theta}\textbf{CAlg}_{\bZ_p}^{\leq 0}\to\Mod(\bZ_p)^{\leq 0}
 \]
 is conservative and preserves limits (cf. \cite[Proposition 25.1.2.2]{Lur18}), it remains to prove $\tilde\Theta(\iota)$ is an isomorphism. 

 Let $\tilde R$ denote the fiber product $R_1\times_{R_0}R_1$ in $\Mod(\bZ_p)$. Then we have $\tilde\Theta(R)$ is isomorphic to $\tau_{\geq 0}\tilde R$, the connective cover of $\tilde R$. As $\frakS^i$ is flat over $\bZ_p$, we have $\frakS^i\otimes \tau_{\geq 0}\tilde R\simeq \tau_{\geq 0}(\frakS^i\otimes\tilde R)$. Note that fiber sequence in $\Mod(\bZ_p)$ is also a cofiber sequence. We then get 
 \begin{equation*}
     \xymatrix@=0.6cm{
     \frakS^i\otimes\tilde R\ar[rr]\ar[d]&& \frakS^i\otimes R_2\ar[d]\\
      \frakS^i\otimes R_1\ar[rr]&& \frakS^i\otimes R_0
     }
 \end{equation*}
 is a homotopy pullback diagram in $\Mod(\bZ_p)$. Now by using $\tau_{\geq 0}$ commutes with limits, we see that $\tilde\Theta(\iota)$ is an isomorphism. So the first statement is true.

 Next we prove the second statement, i.e. $\frakS_{R_1}^i[\frac{1}{E}]\to \frakS_{R_0}^i[\frac{1}{E}]$ is a nilpotent embedding. This means we need to prove the map $\pi_0(\frakS^i\widehat\otimes R_1))\to \pi_0(\frakS^i\widehat\otimes R_0))$ is surjective and it kernel is nilpotent. By Lemma \ref{pi-complete}, we see that $\pi_0(\frakS^i\widehat\otimes R_j))\cong \frakS^i\widehat\otimes\pi_0(R_j)$ for $j=0,1$, from which the second statement follows. In fact, let $K$ be the kernel of $\pi_0(R_1)\to \pi_0(R_0)$. Then we have a short exact sequence
 \[
 0\to \frakS^i\widehat\otimes K\to \frakS^i\widehat\otimes \pi_0(R_1)\to \frakS^i\widehat\otimes \pi_0(R_0)\to 0
 \]
 where $\frakS^i\widehat\otimes K$ is nilpotent.

\end{proof}
\begin{lem}\label{pi-complete}
    Let $A$ be a flat $\bZ/p^a$-algebra for some $a>0$. Let $d\in A$ be a non zero-divisor such that $A/d^n$ is flat over $\bZ/p^a$ for all $n$. Let $R$ be an animated ring over $\bZ/p^a$. Then for each $i\geq 0$, we have $\pi_i(A\widehat\otimes R)\cong A\widehat\otimes\pi_i(R)$, where the completion is derived $d$-adic compeletion. 
\end{lem}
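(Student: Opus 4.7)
The plan is to reduce the computation of $\pi_i(A\widehat\otimes R)$ to a Milnor-type inverse limit computation, using flatness to commute the relevant tensor products past both homotopy groups and the cofiber-of-$d^n$ construction.

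First, since $A$ is flat over $\bZ/p^a$, the derived tensor product agrees with the underived one: $A\otimes^{\bL}_{\bZ/p^a}R\simeq A\otimes_{\bZ/p^a}R$, and $\pi_j(A\otimes_{\bZ/p^a}R)\cong A\otimes_{\bZ/p^a}\pi_j(R)$ for every $j\geq 0$. Next, by definition the derived $d$-adic completion is
\[
A\widehat\otimes R \;=\; \Rlim_n\bigl((A\otimes_{\bZ/p^a}R)/^{\bL}d^n\bigr).
\]
Because $d$ is a non-zero-divisor on $A$, the Koszul cofiber $A/^{\bL}d^n$ coincides with the discrete ring $A/d^n$; using that $A/d^n$ is flat over $\bZ/p^a$ we obtain
\[
(A\otimes_{\bZ/p^a}R)/^{\bL}d^n \;\simeq\; (A/d^n)\otimes_{\bZ/p^a}R,
\]
whose $j$-th homotopy group is $A/d^n\otimes_{\bZ/p^a}\pi_j(R)$.

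Now apply the Milnor short exact sequence to the tower $\{(A/d^n)\otimes_{\bZ/p^a}R\}_n$:
\[
0\to R^1\!\lim_n\bigl(A/d^n\otimes_{\bZ/p^a}\pi_{i+1}(R)\bigr)\to \pi_i(A\widehat\otimes R)\to \lim_n\bigl(A/d^n\otimes_{\bZ/p^a}\pi_i(R)\bigr)\to 0.
\]
The transition maps $A/d^{n+1}\otimes_{\bZ/p^a}\pi_{i+1}(R)\twoheadrightarrow A/d^n\otimes_{\bZ/p^a}\pi_{i+1}(R)$ are surjective (they are obtained by tensoring the surjection $A/d^{n+1}\twoheadrightarrow A/d^n$ with $\pi_{i+1}(R)$), so the inverse system satisfies Mittag--Leffler and the $R^1\!\lim$ term vanishes. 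Running the same argument (step by step, but now starting from the discrete $\bZ/p^a$-module $\pi_i(R)$ in place of $R$) identifies $\lim_n(A/d^n\otimes_{\bZ/p^a}\pi_i(R))$ with the derived $d$-adic completion $A\widehat\otimes\pi_i(R)$, which concludes the proof.

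The only subtle point — and the one I would pay most attention to in writing it out — is verifying that the cofiber $(A\otimes_{\bZ/p^a}R)/^{\bL}d^n$ really is concentrated in non-negative degrees and has the asserted discrete description. This is what forces us to use both hypotheses on $A$: the non-zero-divisor property of $d$ ensures $A/^{\bL}d^n\simeq A/d^n$, while the flatness of each $A/d^n$ over $\bZ/p^a$ ensures that tensoring further with $R$ does not introduce $\Tor$-terms. Once these are in place, the rest is a routine Milnor-sequence computation.
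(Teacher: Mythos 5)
Your proof is correct and follows essentially the same route as the paper's: both reduce to the Milnor ($R^1\varprojlim$) short exact sequence for the tower $\{A/d^n\otimes^{\bL}_{\bZ/p^a}R\}_n$ and kill the $R^1\varprojlim$ term via surjectivity of the transition maps (the paper cites [Stacks, Tag 0CQE] for the sequence and leaves the flatness-based identification of the homotopy groups of the tower implicit, which you spell out). No gaps.
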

\begin{proof}
    By \cite[\href{https://stacks.math.columbia.edu/tag/0CQE}{Tag 0CQE}]{Sta18}, we have a short exact sequence
 \[
 0\to R^1\varprojlim_n \pi_{i+1}(A/d^n\otimes R)\to \pi_i(A\widehat\otimes R)\to \varprojlim_n \pi_i(A/d^n\otimes R)\to 0.
 \]
   As $R^1\varprojlim_n \pi_{i+1}(A/d^n\otimes R)=R^1\varprojlim_n A/d^n\otimes \pi_{i+1}(R)=0$, we then have   $\pi_i(A\widehat\otimes R)\cong \varprojlim_n \pi_i(A/d^n\otimes R)=A\widehat\otimes \pi_i(R)$.
\end{proof}

 We can also prove $(\Lan{^{\rm cl}\calX})^{\#,\rm nil}$ admits a deformation theory.
 \begin{thm}
    The derived prestack $(\Lan\calX_{\rm EG})^{\#,\rm nil}$ admits a deformation theory.
 \end{thm}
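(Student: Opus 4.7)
The plan is to apply the criterion of Proposition \ref{criterion}. By construction, $(\Lan\calX_{\rm EG})^{\#,\rm nil}$ is nilcomplete, so it suffices to show that for every homotopy pullback square $S \simeq S_1 \times_{S_0} S_2$ of truncated animated rings in $\textbf{Nilp}_{\bZ_p}^{<\infty}$ with $S_1 \to S_0$ a nilpotent embedding, the induced square of values is again a homotopy pullback. Since all four rings are truncated, nilcompletion acts as the identity on their values, so it is enough to verify the pullback property for $(\Lan\calX_{\rm EG})^{\#}$ on such squares.

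First I would reduce to algebraic stacks of finite presentation. Because $\calX_{\rm EG}$ is a Noetherian formal algebraic stack, it is an ind-algebraic stack $\varinjlim_a \calX_{\rm EG,a}$, where $\calX_{\rm EG,a}$ parametrises Laurent $F$-crystals (equivalently \'etale $(\varphi,\Gamma)$-modules) annihilated by $p^a$ and is an algebraic stack of finite presentation over $\bZ/p^a$, with closed-immersion transition maps. For a nilpotent embedding $S_1 \to S_0$, one can pick a single $a$ so that $p^a = 0$ on all of $S, S_0, S_1, S_2$; on such rings $(\Lan\calX_{\rm EG})^{\#}$ agrees with $(\Lan\calX_{\rm EG,a})^{\#}$, since the colimit defining the ind-structure stabilises and commutes with the relevant finite limits on $\textbf{Nilp}_{\bZ_p}^{\leq a,<\infty}$.

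Then I would invoke the standard fact from derived algebraic geometry (\cite{TV07}; see also \cite[Chapter 9]{Lur18}): for any classical algebraic stack $X$ of finite presentation over a discrete Noetherian base, the sheafified left Kan extension $(\Lan X)^{\#}$ is a derived Artin stack locally almost of finite presentation, and every such derived Artin stack admits a deformation theory. Applied to each $\calX_{\rm EG,a}$, this gives the required Cartesianness of the image square in Proposition \ref{criterion} for $(\Lan\calX_{\rm EG,a})^{\#}$, and together with the previous step this yields the same property for $(\Lan\calX_{\rm EG})^{\#,\rm nil}$.

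The main obstacle, in my view, is the compatibility of sheafification and filtered colimits with the ind-structure at the derived level. Concretely, one must check that on truncated rings in $\textbf{Nilp}_{\bZ_p}^{\leq a}$ the sheaf $(\Lan\calX_{\rm EG})^{\#}$ really is the étale sheafification of the colimit of the $\Lan\calX_{\rm EG,a'}$, so that on the square under consideration the value only depends on one piece $\calX_{\rm EG,a}$ of the ind-system. This is where the Noetherian formal algebraic stack structure of $\calX_{\rm EG}$ (closed-immersion transition maps, finite-type pieces) and the fact that nilpotent embeddings of truncated animated rings preserve a bounded power of $p$ together do the work; once this compatibility is in place, the deformation theory of each $(\Lan\calX_{\rm EG,a})^{\#}$ transfers to $(\Lan\calX_{\rm EG})^{\#,\rm nil}$.
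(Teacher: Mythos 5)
Your overall skeleton agrees with the paper's: write $\calX_{\rm EG}$ as a filtered colimit of algebraic stacks, use that the sheafified left Kan extension of each piece is a derived Artin stack locally almost of finite presentation (hence satisfies the Cartesianness condition of Proposition \ref{criterion}), and transfer this to the colimit. However, your reduction step contains a genuine error. You claim that the ind-structure of $\calX_{\rm EG}$ is indexed by the power of $p$ killing the coefficients, so that after fixing $a$ with $p^a=0$ on $S,S_0,S_1,S_2$ the colimit "stabilises" and the value is computed by a single algebraic stack $\calX_{{\rm EG},a}$ of finite presentation over $\bZ/p^a$. This is false: the mod-$p^a$ fibre of the Emerton--Gee stack is not an algebraic stack. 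Already mod $p$, the moduli of \'etale $(\varphi,\Gamma)$-modules is only an inductive limit of algebraic stacks along closed immersions, indexed by height/pole bounds on $\varphi$ that do not stabilise for a fixed torsion level. So on the square in question you are still facing a genuinely infinite filtered colimit, and the "stabilisation" you identify as the main obstacle is not available.

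The correct way to handle this (and what the paper does) is to observe that the Cartesianness condition in Proposition \ref{criterion} only involves finite limits, and filtered colimits commute with finite limits in $\textbf{Ani}$; hence the condition holds for $\varinjlim_i(\Lan\calX_i)^{\#}$ once it holds for each derived Artin stack $(\Lan\calX_i)^{\#}$. The remaining issue is then whether this filtered colimit of sheaves agrees with $(\Lan\calX_{\rm EG})^{\#}=(\varinjlim_i(\Lan\calX_i)^{\#})^{\#}$ on truncated animated rings, i.e.\ whether the colimit is already an \'etale sheaf there. The paper verifies this using that each $(\Lan\calX_i)^{\#}(R)$ is uniformly truncated for truncated $R$ (by \cite[Chapter 1, Corollary 4.3.4]{GR19}), that \'etale covers preserve truncatedness, and Lemma \ref{truncated-tot}, which reduces the \v Cech totalization to a finite limit that again commutes with the filtered colimit. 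You should replace your stabilisation argument with this commutation argument; without it, the step from the individual pieces to $(\Lan\calX_{\rm EG})^{\#,\rm nil}$ is not justified.
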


 \begin{proof}
     Recall that $\calX_{\rm EG}$ is an ind-Artin stack. Write $\calX_{\rm EG}=\varinjlim_i\calX_i$. As left Kan extension preserves colimits, we have $\Lan\calX_{\rm EG}=\varinjlim_i\Lan\calX_i$. Then $(\Lan\calX_{\rm EG})^{\#}\simeq (\varinjlim_i(\Lan\calX_i)^{\#})^{\#}$. By \cite[Chapter 2, Proposition 4.4.3]{GR19}, we see that each $(\Lan\calX_i)^{\#}$ is a derived Artin stack. By \cite[Chapter 1, Corollary 4.3.4]{GR19}, $(\Lan\calX_i)^{\#}(R)$ is in $\textbf{Ani}^{<\infty}$ for any truncated animated ring $R$. For any \'etale cover $R_1\to R_2$, if $R_1\in \textbf{Ani}^{<n}$, then $R_2$ is also in $\textbf{Ani}^{<n}$ as $\pi_i(R_2)\cong\pi_0(R_2)\otimes_{\pi_0(R_1)}\pi_i(R_1)$. Then by Lemma \ref{truncated-tot} and the fact that filtered colimit commutes with finite limit, we see $(\varinjlim_i(\Lan\calX_i)^{\#})(R_1)\simeq \varprojlim(\varinjlim (\Lan\calX_i)^{\#})(R^{\bullet}_2)$ where the right hand side is the limit of the \v Cech nerve. In other words, this means $\varinjlim_i(\Lan\calX_i)$ already satisfies the \'etale sheaf property for \'etale coverings in $\textbf{Ani}^{<\infty}$. So $(\varinjlim_i(\Lan\calX_i))^{\#}$ coincides with $\varinjlim_i(\Lan\calX_i)^{\#}$ when restricted to truncated animated rings, which in particular satisfies the assumptions in Proposition \ref{criterion} except being nilcomplete. Then $(\varinjlim_i(\Lan\calX_i))^{\#,\nil}$ satisfies all the assumptions in Proposition \ref{criterion}. Hence $(\Lan\calX_{\rm EG})^{\#,\nil}$ admits a deformation theory.

 \end{proof}
\subsection{Classicality of $\calX$}

Now we have proved both $\calX^{\rm nil}$ and $(\Lan\calX_{\rm EG})^{\#,\rm nil}$ admit a deformation theory. The next goal is to compare them. The following proposition will be our key tool to compare two derived prestacks. 
\begin{prop}[{\cite[Chapter 1, Proposition 8.3.2]{GR17}}]\label{GR-criterion}
    Let $\calF_1\to \calF_2$ be a map of derived prestacks admitting a deformation theory. Suppose there exists a commutative diagram,
    \begin{equation*}
        \xymatrix@=0.6cm{
        & ^{\rm cl}\calF_0\ar[ld]^{g_1}\ar[rd]^{g_2}&\\
        \calF_1\ar[rr]^{f}&&\calF_2
        }
    \end{equation*}
where $g_1,g_2$ are nilpotent embeddings, and $^{\rm cl}\calF_{0}$ is a classical prestack. Suppose also that for any discrete commutative ring $R\in {Nilp}_{\bZ_p}$, and a map $x_0\in {^{\rm cl}\calF_{0}}(R)$ and $x_i=g_i\circ x_0$ where $i=1,2$, the induced map
    \[
    T^*_{x_2}(\calF_2)\to T^*_{x_1}(\calF_1)
    \]
    is an isomorphism. Then $f$ is an isomorphism.
\end{prop}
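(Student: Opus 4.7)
The plan is a bootstrap that exploits the deformation-theoretic hypotheses to propagate the comparison from $^{\rm cl}\calF_0$ outward through all of $\calF_1, \calF_2$. Since both prestacks are nilcomplete, it suffices to show $f(R)$ is an equivalence for every truncated animated ring $R \in \textbf{Nilp}_{\bZ_p}^{\leq n}$, and we induct on $n$.

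For the base case $n=0$ (with $R$ discrete classical), I argue in two sub-steps. When $R$ is reduced, the hypothesis that $g_1, g_2$ are nilpotent embeddings (equivalences on reduced parts) yields $\calF_1(R) \simeq {^{\rm cl}\calF_0}(R) \simeq \calF_2(R)$ directly. For general discrete $R$, the nilradical $N$ admits a finite filtration $N \supset N^2 \supset \cdots \supset N^k = 0$ that presents $R$ as an iterated classical square-zero extension of $R^{\rm red}$. At each stage, the infinitesimal cohesiveness of $\calF_i$ reduces the lift to a comparison of cotangent spaces at the base reduced point, which by the nilpotent embedding lies in the image of $g_i$ and is therefore covered by the hypothesis.

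For the inductive step, given $R \in \textbf{Nilp}_{\bZ_p}^{\leq n+1}$, I invoke the Postnikov square-zero decomposition of Fact \ref{truncation}, which writes $\tau_{\leq n+1}R$ as the homotopy pullback of $\tau_{\leq n}R \to \tau_{\leq n}R \oplus \pi_{n+1}R[n+2]$. Infinitesimal cohesiveness again reduces the comparison of $\calF_i(R)$ to the induction hypothesis on $\tau_{\leq n}R$ together with a cotangent space comparison at a $\tau_{\leq n}R$-point $x$. The key observation is that the underlying reduced classical point $x^{\rm red}$ factors through $^{\rm cl}\calF_0$ (again by the nilpotent embedding), so the hypothesized cotangent space equivalence at $x^{\rm red}$ propagates to $x$ via the base-change compatibility of the pro-cotangent complex.

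The principal obstacle is precisely this propagation: the hypothesis only furnishes the cotangent space equivalence at points of $^{\rm cl}\calF_0$, whereas the inductive machinery demands it at arbitrary truncated animated points of $\calF_i$. Resolving this requires two ingredients used in tandem — the nilpotent embedding forcing every reduced point of $\calF_i$ to factor through $^{\rm cl}\calF_0$, and the base-change compatibility baked into the notion of a pro-cotangent complex, which transfers the equivalence from the reduced point back to the original. Once these are combined, the Postnikov/square-zero induction runs cleanly and yields the claim.
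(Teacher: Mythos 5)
The paper does not actually prove this proposition: it is imported verbatim from \cite[Chapter 1, Proposition 8.3.2]{GR17}, so there is no internal argument to compare yours against. Judged on its own terms, your sketch follows the right overall strategy (nilcompleteness to reduce to truncated rings, then a Postnikov/square-zero induction driven by infinitesimal cohesiveness, with the fibers over square-zero extensions controlled by pro-cotangent spaces), but the step you yourself single out as the principal obstacle is not actually closed by what you write.

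The gap is that base-change compatibility of the pro-cotangent complex runs in the wrong direction for your purposes. For a ring map $R\to R'$ and a point $x\in\calF(R)$ with image $x'$, the axiom gives $T^*_{x'}(\calF)\simeq \Pro(f^*)\,T^*_{x}(\calF)$; hence knowing that $T^*_{x_2^{\rm red}}(\calF_2)\to T^*_{x_1^{\rm red}}(\calF_1)$ is an isomorphism only tells you that $T^*_{x_2}(\calF_2)\to T^*_{x_1}(\calF_1)$ becomes an isomorphism \emph{after} base change along $R\to R^{\rm red}$ (resp.\ along $\tau_{\leq n}R\to \pi_0(R)\to \pi_0(R)^{\rm red}$ in the inductive step). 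To go back you would need conservativity of $\Pro(-\otimes^{\bL}_R R^{\rm red})$ on ${\rm Pro}(\Mod(R)^-)$, i.e.\ a derived Nakayama statement for pro-objects along a surjection whose kernel (the nilradical) need not even be nilpotent; nothing in the hypotheses supplies this. Relatedly, the hypothesis covers all discrete rings but only at points in the image of $g_i$, and the nilpotent-embedding condition guarantees membership in that image only for points over \emph{reduced} rings, whereas the square-zero induction demands the cotangent comparison at arbitrary points over non-reduced discrete rings and over truncated animated rings. Finally, your base-case filtration $N\supset N^2\supset\cdots\supset N^k=0$ does not exist for a general discrete $R\in {\rm Nilp}_{\bZ_p}$, whose nilradical need not be nilpotent. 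Closing exactly these gaps is the content of the argument in \cite{GR17}; as written, your proposal restates the difficulty rather than resolving it.
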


In our case, we will take $^{\rm cl}\calF_0=\calX_{\rm EG}$, $\calF_1=(\Lan\calX_{\rm EG})^{\#,\rm nil}$ and $\calF_2=\calX^{\rm nil}$. This means we need to compare $T^*_x((\Lan\calX_{\rm EG})^{\#,\rm nil})$ and $T^*_x(\calX^{\rm nil})$ for any discrete $R\in {\rm Nilp}_{\bZ_p}$. It is not easy to compare pro-cotangent spaces for arbitrary classical rings. But Lemma \ref{laft} says that the situation becomes better if the involved derived prestacks are locally almost of finite type. This is what we will prove.

\begin{prop}\label{geometrically finite type}
    The derived prestack $\calX^{\rm nil}$ is locally almost of finite type.
\end{prop}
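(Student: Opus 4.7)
The plan is to use the \v{C}ech cover by the Breuil--Kisin prism, together with Theorem \ref{perf} and the classical limit-preserving property of the moduli stack of \'etale $\varphi$-modules, to extend Lemma \ref{step2} from discrete rings to truncated animated rings.

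By definition $\calX^{\rm nil}$ is locally almost of finite type iff for every $n\geq 0$ its restriction to $\textbf{Nilp}^{\leq n}_{\bZ_p}$ preserves filtered colimits. On truncated inputs nilcompletion is a no-op, so this is equivalent to showing $\calX|_{\textbf{Nilp}^{\leq n}_{\bZ_p}}$ preserves filtered colimits. Fix $n$ and a filtered colimit $R=\varinjlim_j R_j$ in $\textbf{Nilp}^{\leq n}_{\bZ_p}$. Using the Breuil--Kisin prism $(\frakS,(E))$ as a cover of the final object of ${\rm Shv}((\calO_K)_{\Prism})$ and Lemma \ref{truncated-tot}, one has the finite totalisation
\[
\calX(S)\simeq \varprojlim_{\Delta_{\leq n+2}}\Vect\bigl(\frakS^{\bullet}_S[\tfrac{1}{E}]\bigr)^{\varphi=1,\simeq}
\]
for any $S\in\textbf{Nilp}^{\leq n}_{\bZ_p}$. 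Since filtered colimits commute with finite limits in $\textbf{Ani}$, it suffices to show, for each $0\leq i\leq n+2$, that the natural map
\[
\varinjlim_j \Vect\bigl(\frakS^i_{R_j}[\tfrac{1}{E}]\bigr)^{\varphi=1,\simeq}\;\longrightarrow\;\Vect\bigl(\frakS^i_R[\tfrac{1}{E}]\bigr)^{\varphi=1,\simeq}
\]
is an equivalence.

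At the discrete level this is essentially the content of Lemma \ref{step2}, combining Theorem \ref{perf}, the limit-preserving property of the moduli of \'etale $\varphi$-modules \cite[Corollary 3.1.5]{EG22}, and Lemmas \ref{F-invariant} and \ref{mapping space}. My plan is to promote this to truncated animated $R$ in two halves. For full faithfulness of the base-change map at each finite $j$: by Lemma \ref{mapping space} the mapping spaces are the $\tau_{\geq 0}$ of Frobenius invariants of internal-Hom complexes, and by Lemma \ref{F-invariant} these invariants commute with filtered colimits in the coefficients; identifying the homotopy groups of $\frakS^i_R$ with $\frakS^i\,\widehat{\otimes}\,\pi_{\ast}(R)$ via Lemma \ref{pi-complete} then reduces full faithfulness to a statement at each homotopy level, where the classical case applies.

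For essential surjectivity the main obstacle is to descend a Laurent $F$-crystal over $\frakS^i_R[\tfrac{1}{E}]$ to some $\frakS^i_{R_j}[\tfrac{1}{E}]$. Naively this fails because the ring-level functor $R\mapsto \frakS^i_R[\tfrac{1}{E}]$ does \emph{not} preserve filtered colimits: the derived $E$-adic completion produces genuine formal series (e.g.\ $\sum_k E^{k}X_{k}$ for $R=\bZ/p^{a}[X_{1},X_{2},\ldots]$) that are absent from $\varinjlim_j \frakS^i_{R_j}[\tfrac{1}{E}]$, so one cannot simply compose with Theorem \ref{perf}. The Frobenius structure restores finiteness, and the key step is to make this precise: any \'etale $\varphi$-module admits a bounded-height approximation in the spirit of \cite[\S\S 3.1--3.2]{EG22}, and such bounded-height data does descend through a filtered colimit of truncated animated rings. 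I expect to establish this derived bounded-height approximation by applying Theorem \ref{perf} level-by-level in the Postnikov tower, with each step controlled by the infinitesimal cohesiveness supplied by Proposition \ref{admitdeformation}; the principal technical work will be matching the $(-)^{\varphi=1}$ functor (which, by Lemma \ref{F-invariant}, is well-behaved with respect to colimits) against the cofiltered completion implicit in the definition of $\frakS^i_R$.
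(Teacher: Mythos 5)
Your reduction is the same as the paper's: restrict to $\textbf{Nilp}^{\leq n}_{\bZ_p}$, use the Breuil--Kisin cover and Lemma \ref{truncated-tot} to write $\calX(S)$ as a finite totalisation, commute the filtered colimit past the finite limit, and prove full faithfulness of $\varinjlim_j \Vect(\frakS^i_{R_j}[\frac{1}{E}])^{\varphi=1}\to\Vect(\frakS^i_R[\frac{1}{E}])^{\varphi=1}$ via Lemma \ref{mapping space}, a choice of $\varphi$-stable lattice, and Lemma \ref{F-invariant}. One economy you are missing: by Lemma \ref{tot} you only need full faithfulness at every level together with an \emph{equivalence at level $0$}; demanding essential surjectivity at levels $i\geq 1$ would require a limit-preserving statement over the non-Noetherian rings $\frakS^i$, $i\geq 1$, which the paper deliberately avoids having to prove.

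The genuine gap is in essential surjectivity at level $0$. You correctly diagnose the obstruction (the functor $R\mapsto\frakS_R[\frac{1}{E}]$ does not preserve filtered colimits because of the completion), but your proposed remedy --- a ``derived bounded-height approximation'' to be established ``level-by-level in the Postnikov tower'' --- is stated as an expectation rather than an argument, and it is not what is needed. The paper's mechanism is different and avoids any new bounded-height theory in the derived setting: (a) tensor the Postnikov square-zero extension of $R_i$ with $\frakS/E^m$, pass to the limit in $m$ and invert $E$ to obtain a $\varphi$-equivariant square-zero extension presenting $\tau_{\leq n+1}(\frakS_{R_i}[\frac{1}{E}])$; the two base changes of an object along the trivial section and along the projection agree on $\pi_0$, hence are isomorphic by Lemma \ref{lift lemma}, which gives essential surjectivity of each truncation functor and therefore of $\Vect(\frakS_{R}[\frac{1}{E}])^{\varphi=1,\simeq}\to\Vect(\pi_0(\frakS_{R}[\frac{1}{E}]))^{\varphi=1,\simeq}$; (b) place this in the commutative square whose bottom edge is the \emph{classical} limit-preserving equivalence $\varinjlim_i\Vect(\pi_0(\frakS_{R_i}[\frac{1}{E}]))^{\varphi=1,\simeq}\simeq\Vect(\pi_0(\frakS_{R}[\frac{1}{E}]))^{\varphi=1,\simeq}$ (where the bounded-height work is already done in \cite{EG22}), and use full faithfulness of the top edge plus Lemma \ref{lift lemma} once more to promote essential surjectivity from $\pi_0$ to the top edge. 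Without step (a) and the lifting lemma your proof does not close; your appeal to infinitesimal cohesiveness gestures at the square-zero structure used in (a) but does not supply the argument.
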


\begin{proof}
    By definition, this means we need to prove $\calX^{\rm nil}$ preserves filtered colimit when restricted to $\textbf{Nilp}_{\bZ_p}^{\leq n}$ for each $n\geq 0$.    Write $R=\varinjlim_iR_i\in\textbf{Nilp}_{\bZ_p}^{\leq n} $ as a filtered colimit. We then need to prove $\varinjlim_i\calX(R_i)\simeq\calX(R)$.

    Again by using the Breuil--Kisin prism $(\frakS,(E)$ and Lemma \ref{truncated-tot}, we have
    \[
    \Vect((\calO_K)_{\Prism},\calO_{\Prism,R_i}[\frac{1}{\calI_{\Prism}}])^{\varphi=1,\simeq}\simeq \varprojlim \Vect(\frakS_{R_i}[\frac{1}{E}])^{\varphi=1,\simeq}\rightrightarrows\Vect(\frakS_{R_i}^1[\frac{1}{E}])^{\varphi=1,\simeq}\cdots\Vect(\frakS_{R_i}^{n+2}[\frac{1}{E}])^{\varphi=1,\simeq}.
    \]
  
As filtered colimit commutes with finite limit in $\textbf{Ani}$, we have 
\[
\varinjlim_i\calX(R_i)\simeq \varprojlim (\varinjlim
_i\Vect(\frakS_{R_i}[\frac{1}{E}])^{\varphi=1,\simeq})\rightrightarrows(\varinjlim_i\Vect(\frakS^1_{R_i}[\frac{1}{E}])^{\varphi=1,\simeq})\cdots (\varinjlim_i\Vect(\frakS^{n+2}_{R_i}[\frac{1}{E}])^{\varphi=1,\simeq}).
\]
We claim that the functor $\varinjlim_i\Vect(\frakS^{j}_{R_i}[\frac{1}{E}])^{\varphi=1}\to \Vect(\frakS^{j}_{R}[\frac{1}{E}])^{\varphi=1}$ is fully faithful for every $0\leq j\leq n+2$. In fact, let $M_s\in \Vect(\frakS^{j}_{R_s}[\frac{1}{E}])^{\varphi=1}$ and $M_t\in \Vect(\frakS^{j}_{R_t}[\frac{1}{E}])^{\varphi=1}$. Then the mapping space between $M_s$ and $M_t$ in $\varinjlim_i\Vect(\frakS^{j}_{R_i}[\frac{1}{E}])^{\varphi=1}$ is $\varinjlim_{s,t\to k}{\rm Map}_k(M_{s,k},M_{t,k})$ where $M_{s,k}=M_s\otimes_{R_s}R_k$, $M_{s,k}=M_s\otimes_{R_s}R_k$ and ${\rm Map}_k(-,-)$ means the mapping space in $\Vect(\frakS^{j}_{R_k}[\frac{1}{E}])^{\varphi=1}$. Then we need to prove $\varinjlim_{s,t\to k}{\rm Map}_k(M_{s,k},M_{t,k})\simeq {\rm Map}(M_{s,R},M_{t,R})$ where $M_{s,R}=M_s\otimes_{R_s}R$, $M_{t,R}=M_t\otimes_{R_t}R$ and the mapping space is taken in $\Vect(\frakS^{j}_{R}[\frac{1}{E}])^{\varphi=1}$. Without loss of generality, we may assume $M_s$ and $M_t$ are both finite free. Choose a basis $\underline e$ of $M_s^{\vee}$. As $\varphi^*(\frakS_R^j \underline e)$ is finite free, we have $\Map(\varphi^*(\frakS_R^j \underline e),M_s^{\vee})=\Map(\varphi^*(\frakS_R^j \underline e),(\frakS_R^j\underline e)[\frac{1}{u}])=\varinjlim_a \Map(\varphi^*(\frakS_R^j\underline e),\frac{1}{u^a}\frakS^j_R\underline e)$. This means we can choose a $\varphi$-stable lattice $\overline M^{\vee}_s$ in $M_s^{\vee}$. In the same way, we choose a $\varphi$-stable lattice $\overline M_t$ in $M_t$. Note that $M_{s,k}^{\vee}\simeq M_{s}^{\vee}\otimes_{R_s}R_k$ and $M_{s,R}^{\vee}\simeq M_s^{\vee}\otimes_{R_s}R$. Now we just need to prove $\varinjlim_{s,t\to k}(M_{s,k}^{\vee}\otimes M_{t,k})^{\varphi=1}\simeq (M_{s,R}^{\vee}\otimes M_{t,R})^{\varphi=1}$. This follows from Lemma \ref{F-invariant}.

Next we claim that the functor $\varinjlim_i\Vect(\frakS^{}_{R_i}[\frac{1}{E}])^{\varphi=1,\simeq}\to \Vect(\frakS^{}_{R}[\frac{1}{E}])^{\varphi=1,\simeq}$ is also essentially surjective. By Lemma \ref{tot}, this will then  complete the proof of Proposition \ref{geometrically finite type}.

Note that for any $i$ and $n$, we have the following homotopy pullback diagram as in Fact \ref{truncation}
\begin{equation}
\xymatrix@C=0.6cm{
\tau_{\leq n+1}{R_i}\ar[d]\ar[rr]&&\tau_{\leq n}(R_i)\ar[d]\\
\tau_{\leq n}(R_i)\ar[rr]^-{s}&&\tau_{\leq n}(R_i)\oplus \pi_{n+1}(R_i)[n+2]
}
\end{equation}
By tensoring with $\frakS/E^m$ which is flat over $\bZ_p$, we get a homotopy pullback diagram compatible with $\varphi$-actions, which come from the $\varphi$-actions on $\frakS$,
\begin{equation}
    \xymatrix@C=0.6cm{
\tau_{\leq n+1}( \frakS/E^m\otimes R_i)\ar[d]\ar[rrrr]&&&&\tau_{\leq n}(\frakS/E^m\otimes R_i)\ar[d]^{\rm triv}\\
\tau_{\leq n}(\frakS/E^m\otimes R_i)\ar[rrrr]^-{s}&&&&\tau_{\leq n}(\frakS/E^m\otimes R_i)\oplus \pi_{n+1}(\frakS/E^m\otimes R_i)[n+2].
}
\end{equation}
Now by taking limit with respect to $m$ and inverting $E$, we get a homotopy pullback diagram compatible with $\varphi$-actions
\begin{equation}\label{Frob-diagram}
    \xymatrix@C=0.6cm{
\tau_{\leq n+1}{\frakS_{R_i}}[\frac{1}{E}]\ar[d]\ar[rrrr]&&&&\tau_{\leq n}(\frakS_{R_i})[\frac{1}{E}]\ar[d]^{\rm triv}\\
\tau_{\leq n}(\frakS_{R_i})[\frac{1}{E}]\ar[rrrr]^-{s}&&&&\tau_{\leq n}(\frakS_{R_i}[\frac{1}{E}])\oplus \pi_{n+1}(\frakS_{R_i}[\frac{1}{E}])[n+2].
}
\end{equation}
Here we have used the fact that taking truncation commutes with limits and the fact that $\pi_{n+1}(\frakS_{R_i})\cong \varprojlim_m \pi_{n+1}(\frakS/E^m\otimes R_i)$ by Lemma \ref{pi-complete}.

Note that Diagram \ref{Frob-diagram} is exactly the diagram obtained by applying Fact \ref{truncation} to $\frakS_{R_i}$, which is a square-zero extension. The point is that now we know Diagram \ref{Frob-diagram} (especially the section $s$) is compatible with $\varphi$-actions and the Frobenius on $\tau_{\leq n+1}{\frakS_{R_i}}[\frac{1}{E}]$ is induced by the homotopy pullback diagram. In particular, it induces a homotopy pullback diagram
\begin{equation}\label{pull-frob}
    \xymatrix@C=0.6cm{
\Vect(\tau_{\leq n+1}{\frakS_{R_i}}[\frac{1}{E}])^{\varphi=1,\simeq}\ar[d]\ar[rr]&&\Vect(\tau_{\leq n}(\frakS_{R_i}[\frac{1}{E}]))^{\varphi=1,\simeq}\ar[d]^{\rm triv}\\
\Vect(\tau_{\leq n}(\frakS_{R_i}[\frac{1}{E}])^{\varphi=1,\simeq}\ar[rr]^-{s}&&\Vect(\tau_{\leq n}(\frakS_{R_i}[\frac{1}{E}])\oplus \pi_{n+1}(\frakS_{R_i}[\frac{1}{E}])[n+2]))^{\varphi=1,\simeq}
}
\end{equation}
Now we want to prove the left vertical functor in Diagram \ref{pull-frob} is essentially surjective, which can be deduced from that the right vertical functor is essentially surjective. Consider the base change functor between the homotopy categories
\[
{\rm Ho}(\Vect(\tau_{\leq n}(\frakS_{R_i}[\frac{1}{E}])^{\varphi=1})\xrightarrow{\rm triv} {\rm Ho}(\Vect(\tau_{\leq n}(\frakS_{R_i}[\frac{1}{E}])\oplus \pi_{n+1}(\frakS_{R_i}[\frac{1}{E}])[n+2]))^{\varphi=1}).
\]
Given $(M,\varphi_M)\in {\rm Ho}(\Vect(\tau_{\leq n}(\frakS_{R_i}[\frac{1}{E}])\oplus \pi_{n+1}(\frakS_{R_i}[\frac{1}{E}])[n+2]))^{\varphi=1})$, by base changing along the natural projection $\tau_{\leq n}(\frakS_{R_i}[\frac{1}{E}])\oplus \pi_{n+1}(\frakS_{R_i}[\frac{1}{E}])[n+2]\to \tau_{\leq n}(\frakS_{R_i}[\frac{1}{E}])$, we get $(M_1,\varphi_{M_1})$. Further base changing along the trivial section ${\rm triv}: \tau_{\leq n}(\frakS_{R_i}[\frac{1}{E}])\to \tau_{\leq n}(\frakS_{R_i}[\frac{1}{E}])\oplus \pi_{n+1}(\frakS_{R_i}[\frac{1}{E}])[n+2]$, we get $(M_2,\varphi_{M_2})\in {\rm Ho}(\Vect(\tau_{\leq n}(\frakS_{R_i}[\frac{1}{E}])\oplus \pi_{n+1}(\frakS_{R_i}[\frac{1}{E}])[n+2]))^{\varphi=1})$. As the trivial section and the natural projection $\tau_{\leq n}(\frakS_{R_i}[\frac{1}{E}])\oplus \pi_{n+1}(\frakS_{R_i}[\frac{1}{E}])[n+2]\to \tau_{\leq n}(\frakS_{R_i}[\frac{1}{E}])$ induce the identity on $\pi_0$, we see that $(\pi_0(M),\varphi_{M})=(\pi_0(M_2),\varphi_{M_2})$. This must give an isomorphism between $(M,\varphi_M)$ and $(M_2,\varphi_{M_2})$ by Lemma \ref{lift lemma}, which finishes the proof of the essential surjectivity of the base change functor
\[\Vect(\tau_{\leq n+1}(\frakS_{R_i}[\frac{1}{E}])^{\varphi=1,\simeq}\to\Vect(\tau_{\leq n}(\frakS_{R_i}[\frac{1}{E}])^{\varphi=1,\simeq}.\]

We then get the essential surjectivity of the functor $\pi_0$
\[
\Vect((\frakS_{R_i}[\frac{1}{E}])^{\varphi=1,\simeq}\to\Vect(\pi_0(\frakS_{R_i}[\frac{1}{E}])^{\varphi=1,\simeq}.
\]




Let's now look at the following commutative diagram of base change functors
\begin{equation*}
    \xymatrix@=0.6cm{
    {\rm Ho}(\varinjlim_i\Vect(\frakS_{R_i}[\frac{1}{E}])^{\varphi=1,\simeq})\ar[rr]^{l_1}\ar[d]^{l_2}&& {\rm Ho}(\Vect(\frakS_{R}[\frac{1}{E}])^{\varphi=1,\simeq})\ar[d]^{l_3}\\
    \varinjlim_i\Vect(\pi_0(\frakS_{R_i}[\frac{1}{E}]))^{\varphi=1,\simeq}\ar[rr]^{l_4}&&\Vect(\pi_0(\frakS_{R}[\frac{1}{E}]))^{\varphi=1,\simeq}.
    }
\end{equation*}
By the limit-preserving property of classical stack of \'etale $\varphi$-modules, we know the functor $l_4$ is an equivalence. By previous discussions, we also know $l_1$ is fully faithful and $l_2,l_3$ are essentially surjective. So given $M$ in  ${\rm Ho}(\Vect(\frakS_{R}[\frac{1}{E}])^{\varphi=1,\simeq})$, we can get $N$ in ${\rm Ho}(\varinjlim_i\Vect(\frakS_{R_i}[\frac{1}{E}])^{\varphi=1,\simeq})$ such that $l_3\circ l_1(N)\simeq l_3(M)$. By Lemma \ref{lift lemma}, this implies $l_1$ is essentially surjective, which finally finishes the proof of Proposition \ref{geometrically finite type}.
\end{proof}
\begin{lem}\label{lift lemma}
    Let $A$ be an animated ring with a Frobenius action $\varphi$. Let $M,N\in \Vect(A)^{\varphi=1}$. If there is an isomorphism $f:\pi_0(M)\to \pi_0(N)$ in $\Vect(\pi_0(A))^{\varphi=1}$, then $f$ can be lifted to an isomorphism $\tilde f: M\to N$ in ${\rm Ho}(\Vect(A)^{\varphi=1})$.
\end{lem}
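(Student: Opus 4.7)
The plan is to lift $f$ using the internal Hom description of Lemma \ref{mapping space}, and then show that any lift is automatically an equivalence.

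First I would rewrite
\[
\Hom_{{\rm Ho}(\Vect(A)^{\varphi=1})}(M,N) = \pi_0\Map(M,N) \simeq \pi_0\bigl((M^\vee\otimes_A N)^{\varphi=1}\bigr),
\]
where the $\varphi$-invariants are, by Definition \ref{Frobenius invariants}, the equalizer of $\varphi$ and $\mathrm{id}$ in $\calD(\bZ)$, equivalently the fiber of $\varphi-\mathrm{id}$. The associated long exact sequence produces the exact row
\[
\pi_0\bigl((M^\vee\otimes_A N)^{\varphi=1}\bigr) \longrightarrow \pi_0(M^\vee\otimes_A N) \xrightarrow{\varphi-1} \pi_0(M^\vee\otimes_A N),
\]
so the natural map surjects onto $\bigl(\pi_0(M^\vee\otimes_A N)\bigr)^{\varphi=1}$. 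Since $M, N$, and hence $M^\vee$, are flat $A$-modules, one has a Frobenius-equivariant isomorphism
\[
\pi_0(M^\vee\otimes_A N)\cong \pi_0(M)^\vee\otimes_{\pi_0(A)}\pi_0(N),
\]
and the latter is discrete, so applying Lemma \ref{mapping space} over $\pi_0(A)$ identifies the above kernel with $\Hom_{{\rm Ho}(\Vect(\pi_0(A))^{\varphi=1})}(\pi_0(M),\pi_0(N))$. A routine compatibility check shows that the surjection just obtained is precisely the map on $\pi_0$ induced by base change along $A\to\pi_0(A)$, so $f$ lifts to some $\tilde f \in \Hom_{{\rm Ho}(\Vect(A)^{\varphi=1})}(M,N)$.

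It then remains to argue that $\tilde f$ is an isomorphism. Because $M$ and $N$ are finite projective $A$-modules, they are flat, so $\pi_i(M)\simeq \pi_i(A)\otimes_{\pi_0(A)}\pi_0(M)$ and similarly for $N$ for every $i\geq 0$. Hence $\pi_i(\tilde f)=\mathrm{id}_{\pi_i(A)}\otimes_{\pi_0(A)} f$ is an isomorphism for all $i$, and by Whitehead's theorem $\tilde f$ is an equivalence of connective $A$-modules; since it is already Frobenius-compatible, it yields the desired isomorphism in ${\rm Ho}(\Vect(A)^{\varphi=1})$.

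The main technical point is the compatibility between the two applications of Lemma \ref{mapping space} and the restriction functor $(-)\otimes_A \pi_0(A)$. I expect this to be essentially formal, since base change along $A\to\pi_0(A)$ commutes with formation of duals of flat modules, with tensor products of flat modules, and with the Frobenius equalizer; the only thing one really has to verify is that the edge map of the long exact sequence gets matched with the base change map, and this falls out of the functoriality of Lemma \ref{mapping space}.
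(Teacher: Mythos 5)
Your proposal is correct and follows essentially the same route as the paper: both lift $f$ by applying the fiber sequence $V^{\varphi=1}\to V\xrightarrow{\varphi-1}V$ to $V=M^{\vee}\otimes N$ to get surjectivity of $\pi_0(V^{\varphi=1})\to(\pi_0(V))^{\varphi=1}$, and both conclude by noting that a map of finite projective $A$-modules is an equivalence once it is one on $\pi_0$. Your write-up just spells out the flat base-change compatibilities and the Whitehead step that the paper leaves implicit.
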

\begin{proof}
    Let $V$ be an \'etale $\varphi$-module in $\Vect(A)^{\varphi=1}$. We have an exact triangle
    \[
    V^{\varphi=1}\to V\xrightarrow{\varphi-1}V.
    \]
    Then we see the natural map $\pi_0(V^{\varphi=1})\to (\pi_0(V))^{\varphi=1}$ is surjective. This means $f$ can be lifted to a map $\tilde f:M\to N$ by taking $V=M^{\vee}\otimes N$. As a map between finite projective modules over $A$ is an isomorphism if and only if the induced map on $\pi_0$ is an isomorphism, we see that $\tilde f$ is an isomorphism.
\end{proof}

We can also prove a similar result for $(\Lan\calX_{\rm EG})^{\#,\rm nil}$.
\begin{prop}\label{laft-2}
    The derived prestack $(\Lan\calX_{\rm EG})^{\#,\rm nil}$ is locally almost of finite type.
\end{prop}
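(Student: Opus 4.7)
The plan is to reduce to the ind-Artin stack presentation $\calX_{\rm EG}=\varinjlim_i\calX_i$ from the previous theorem and exploit that nilcompletion is invisible on truncated animated rings, after which the statement falls out of the general theory of derived Artin stacks locally almost of finite type.

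First I would observe that for any $R\in\textbf{Nilp}_{\bZ_p}^{\leq n}$, the tower $\{\tau_{\leq m}R\}_{m\geq 0}$ stabilizes at $m=n$ to the constant value $R$, so the maps $\tau_{\leq m+1}R\to\tau_{\leq m}R$ are equivalences for $m\geq n$. Hence $\calF^{\rm nil}(R)\simeq\calF(R)$ for any derived prestack $\calF$ and any truncated $R$. Consequently, verifying that $(\Lan\calX_{\rm EG})^{\#,\rm nil}$ preserves filtered colimits on $\textbf{Nilp}_{\bZ_p}^{\leq n}$ is equivalent to verifying the same property for $(\Lan\calX_{\rm EG})^{\#}$, and nilcompletion plays no further role.

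Next, writing $\calX_{\rm EG}=\varinjlim_i\calX_i$ with each $\calX_i$ a classical Artin stack locally of finite type, the fact that left Kan extension commutes with colimits yields $\Lan\calX_{\rm EG}\simeq\varinjlim_i\Lan\calX_i$. Exactly as in the proof of the preceding theorem on the existence of a deformation theory, when restricted to $\textbf{Nilp}_{\bZ_p}^{<\infty}$ the sheafification $(\Lan\calX_{\rm EG})^{\#}$ coincides with $\varinjlim_i(\Lan\calX_i)^{\#}$. Each $(\Lan\calX_i)^{\#}$ is then a derived Artin stack locally almost of finite type by \cite[Chapter 2, Proposition 4.4.3]{GR19}, since $\calX_i$ is itself classically locally of finite type.

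Finally, because filtered colimits commute with filtered colimits in $\textbf{Ani}$, a filtered colimit of derived prestacks each preserving filtered colimits on $\textbf{Nilp}_{\bZ_p}^{\leq n}$ again preserves filtered colimits there. Combining this with the two preceding reductions gives that $(\Lan\calX_{\rm EG})^{\#,\rm nil}$ is locally almost of finite type, as desired. I do not anticipate a real obstacle here: the heaviest lifting, namely the interchange of $\#$-sheafification with the filtered colimit on truncated animated rings, was already carried out in the proof of the preceding deformation-theory theorem; the present argument only adds the formal observations on nilcompletion and on stability of local almost finite type under filtered colimits.
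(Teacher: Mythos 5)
Your argument is essentially correct, but it takes a different route from the paper, and one step needs a better justification. The paper works directly with $\Lan\calX_{\rm EG}$: it first uses \cite[Chapter 2, Corollary 2.5.7]{GR19} to identify the restriction of $(\Lan\calX_{\rm EG})^{\#}$ to $\textbf{Nilp}_{\bZ_p}^{\leq n}$ with the level-$n$ sheafification of $(\Lan\calX_{\rm EG})^{\leq n}$, observes that $\Lan\calX_{\rm EG}$ is locally almost of finite type because $\calX_{\rm EG}$ is limit-preserving (\cite[Chapter 2, Corollary 1.7.8]{GR19}), and then invokes \cite[Chapter 2, Corollary 2.7.5(b) and 2.7.10]{GR19} to see that sheafification preserves the property of being a left Kan extension from $\textbf{Nilp}_{\bZ_p,\rm ft}^{\leq n}$. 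You instead pass through the ind-Artin presentation $\calX_{\rm EG}=\varinjlim_i\calX_i$, reuse the commutation of $\#$ with the filtered colimit on truncated rings from the deformation-theory proof, and close with the (correct) observations that nilcompletion is invisible on truncated rings and that a filtered colimit of filtered-colimit-preserving prestacks again preserves filtered colimits. Both reductions are sound, and your opening remark about nilcompletion is a clean way to dispose of the superscript $\rm nil$ (the paper leaves this implicit).

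The one step you should tighten is the assertion that each $(\Lan\calX_i)^{\#}$ is locally almost of finite type ``by \cite[Chapter 2, Proposition 4.4.3]{GR19}.'' That proposition gives you that $(\Lan\calX_i)^{\#}$ is a derived Artin stack; it does not by itself give laft-ness, and a derived Artin stack need not be laft. To get laft-ness of $(\Lan\calX_i)^{\#}$ you still need exactly the input the paper uses, namely that $\Lan\calX_i$ is laft (because $\calX_i$ is classically locally of finite type) \emph{and} that sheafification preserves laft-ness (\cite[Chapter 2, Corollary 2.7.5(b) and 2.7.10]{GR19}); alternatively you could invoke the characterization of laft Artin stacks via laft atlases. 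So your detour through the ind-presentation does not actually avoid the key compatibility of $\#$ with left Kan extension from finite type rings — it only localizes it to each $\calX_i$ — and as written that compatibility is the missing citation. With that reference supplied, the proof goes through.
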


\begin{proof}
 Let $((\Lan\calX_{\rm EG})^{\#})^{\leq n}$ be the restriction of $(\Lan\calX_{\rm EG})^{\#}$ to $\textbf{Nilp}_{\bZ_p}^{\leq n}$. Let $(\Lan\calX_{\rm EG})^{\#,\leq n}_{\rm ft}$ be its restriction to $\textbf{Nilp}_{\bZ_p,\rm ft}^{\leq n}$. We need to prove  $((\Lan\calX_{\rm EG})^{\#})^{\leq n}$ is the left Kan extension of $(\Lan\calX_{\rm EG})^{\#,\leq n}_{\rm ft}$ along the inclusion $\textbf{Nilp}_{\bZ_p,\rm ft}^{\leq n}\to \textbf{Nilp}_{\bZ_p}^{\leq n}$. By \cite[Chapter 2, Corollary 2.5.7]{GR19}, we have $((\Lan\calX_{\rm EG})^{\#})^{\leq n}\simeq ((\Lan\calX_{\rm EG})^{\leq n})^{\#_{n}}$ where $\#_{n}$ means the sheafification functor on the level of $\textbf{Nilp}_{\bZ_p}^{\leq n}$ (cf. \cite[Chapter 2, Section 2.5.1]{GR19}).
 
 As $\calX_{\rm EG}$ is locally of finite type, we know $(\Lan\calX_{\rm EG})^{\nil}$ is locally almost of finite type by \cite[Chapter 2, Corollary 1.7.8]{GR19}. Let $(\Lan\calX_{\rm EG})^{\leq n}_{\rm ft}$ be the restriction of $(\Lan\calX_{\rm EG})^{\leq n}$ to $\textbf{Nilp}_{\bZ_p,\rm ft}^{\leq n}$. Then $(\Lan\calX_{\rm EG})^{\leq n}$ is the left Kan extension of $(\Lan\calX_{\rm EG})^{\leq n}_{\rm ft}$ along the inclusion $\textbf{Nilp}_{\bZ_p,\rm ft}^{\leq n}\to \textbf{Nilp}_{\bZ_p}^{\leq n}$. By \cite[Chapter 2, Crollary 2.7.5(b) and 2.7.10]{GR19}, we see $((\Lan\calX_{\rm EG})^{\leq n})^{\#_{n}}$ is the left Kan extension of $(\Lan\calX_{\rm EG})^{\#,\leq n}_{\rm ft}$ along the inclusion $\textbf{Nilp}_{\bZ_p,\rm ft}^{\leq n}\to \textbf{Nilp}_{\bZ_p}^{\leq n}$.
\end{proof}

Now as both $(\Lan\calX_{\rm EG})^{\#,\rm nil}$ and $\calX^{\rm nil}$ are locally almost of finite type, it suffices to reduce Proposition \ref{GR-criterion} to classical points which are valued in finite type algebras. We make the following claim.

\textbf{Claim}:
  the pro-cotangent complexes of both $(\Lan\calX_{\rm EG})^{\#,\rm nil}$ and $\calX^{\rm nil}$ at classical points valued in finite type algebras are indeed complexes (instead of being just pro-complexes).

We first prove the claim for $(\Lan\calX_{\rm EG})^{\#,\rm nil}$. Let us begin with some discussion about the pro-cotangent complexs of formal schemes. Let $\Spf(R)$ be a classical Noetherian affine formal scheme with $I$-adic topology. Let $(x_1,\cdots,x_n)$ be a set of generators of $I$. The left Kan extension of $\Spf(R)$ is then the filtered colimit $\varinjlim_n \Lan \Spec(R/I^m)$. Note that $\Lan \Spec(R/I^m)$ is again representable by $\Spec(R/I^n)$. 

On the other hand, we can define animated rings $R_m=R\otimes^{\bL}_{f_m,\bZ[y_1,\cdots,y_n]}\bZ$ where the map $f_m:\bZ[y_1,\cdots,y_n]\to R$ sends each $y_i$ to $x_i^{2^m}$. Then we can consider the filtered colimit $\varinjlim_m R_n$ as derived prestacks.

\begin{prop}[{\cite[Proposition 2.1.4]{HP23}}]
    The natural morphism 
    \[
    \varinjlim_m \Spec(R/I^m)\to \varinjlim_m \Spec(R_m)
    \]
    is an equivalence.
\end{prop}

We still use $\Spf(R)$ to denote the above derived formal scheme. Now let $A$ be a finite type $\bZ/p^a$-algebra and $x: \Spec(A)\to \Spf(R)$ be an $A$-point of $\Spf(R)$. Then we have $\Map(\Spec(A),\Spf(R))=\varinjlim_m \Map(\Spec(A),\Spec(R_m))$.

In particular, we have $T^*_x(\Spf(R))=``\varprojlim_m"T^*_x(\Spec(R_m))=``\varprojlim_m" L_{R_m/\bZ}\otimes^{\bL}_{R_m}A$
as pro-systems by the definition of pro-cotangent complex.  Now we have the cotangent complex $L_{R/\bZ}$ of $R$. Consider its derived $I$-adic completion $\widehat L_{R/\bZ}$. We have a natural pro-system 
$``\varprojlim_m" L_{R/\bZ}\otimes^{\bL}_{R}R_m$. For the given point $x$ as above, we have the following result.

\begin{lem}
    Regarding $\widehat L_{R/\bZ}\otimes^{\bL}_RA$ as a constant pro-complex, the natural morphism of pro-complexes $\widehat L_{R/\bZ}\otimes^{\bL}_RA\to ``\varprojlim_m" L_{R/\bZ}\otimes^{\bL}_{R}R_m\otimes^{\bL}_{R_m}A$ is an equivalence.
\end{lem}
\begin{proof}
    This is clear as \[L_{R/\bZ}\otimes^{\bL}_{R}R_m\otimes^{\bL}_{R_m}A\simeq L_{R/\bZ}\otimes^{\bL}_RR/I^m\otimes^{\bL}_{R/I^m}A\simeq \widehat L_{R/\bZ}\otimes^{\bL}_RR/I^m\otimes^{\bL}_{R/I^m}A\simeq \widehat L_{R/\bZ}\otimes^{\bL}_RA.\]
\end{proof}

Consider the maps $\bZ\to R\to R_m$. This induces an exact triangle
\[
L_{R/\bZ}\otimes^{\bL}_RR_m\to L_{R_m/\bZ}\to L_{R_m/R},
\]
which induces an exact triangle of pro-complexes
\[
``\varprojlim_m"L_{R/\bZ}\otimes^{\bL}_RR_m\otimes^{\bL}_{R_m}A\to ``\varprojlim_m" L_{R_m/\bZ}\otimes^{\bL}_{R_m}A\to ``\varprojlim_m"L_{R_m/R}\otimes^{\bL}_{R_m}A
\]
We claim the last term vanishes.
\begin{lem}
    The pro-complex $``\varprojlim_m"L_{R_m/R}\otimes^{\bL}_{R_m}A$ is isomorphic to $0$.
\end{lem}
\begin{proof}
    Note that $R_m=R\otimes^{\bL}_{f_m,\bZ[y_1,\cdots,y_n]}\bZ$. So \[L_{R_m/R}\simeq L_{\bZ/\bZ[y_1,\cdots,y_n]}\otimes^{\bL}_{\bZ[y_1,\cdots,y_n],f_m}R\simeq L_{\bZ/\bZ[y_1,\cdots,y_n]}\otimes^{\bL}_{\bZ}R_m\simeq (y_1,\cdots,y_n)/(y_1,\cdots,y_n)^2[1]\otimes^{\bL}_{\bZ}R_m.\]

    The transition map $R_m\to R_{m-1}$ is induced by the map $\bZ[y_1,\cdots,y_n]\to \bZ[y_1,\cdots,y_n]$ sending $y_i$ to $y_i^2$. So the transition map
    \[
    L_{R_m/R}\otimes^{\bL}_{R_m}A\to L_{R_{m-1}/R}\otimes^{\bL}_{R_{m-1}}A
    \]
    turns into
    \[
    (y_1,\cdots,y_n)/(y_1,\cdots,y_n)^2[1]\otimes^{\bL}_{\bZ}A\to (y_1,\cdots,y_n)/(y_1,\cdots,y_n)^2[1]\otimes^{\bL}_{\bZ}A,
    \]
    which sends $y_i$ to $y_i^2$. Hence the transition map is $0$. So the pro-system $``\varprojlim_m"L_{R_m/R}\otimes^{\bL}_{R_m}A$ is $0$.
\end{proof}

\begin{cor}\label{pro-cotangent of formal scheme}
    The pro-cotangent complex $T^*_x(\Spf(R))$ is isomorphic to $\widehat L_{R/\bZ}\otimes^{\bL}_RA$, which is a complex of $A$-modules.
\end{cor}
\begin{proof}
    This follows from the above lemmas.
\end{proof}

Now we are ready to prove the claim about the pro-cotangent complex of $(\Lan\calX_{\rm EG})^{\#}$. 

\begin{prop}\label{key-a}
   Let $e:\Spec(B)\to (\Lan\calX_{\rm EG})^{\#}$ be a classical point with $B$ being a finite type algebra over $\bZ/p^a$ for some $a$. Then the pro-cotangent complex $T^*_{e}((\Lan\calX_{\rm EG})^{\#})$ is a complex of $B$-modules.
\end{prop}
\begin{proof}
    
Recall that $\calX_{\rm EG}$ is a Noetherian formal algebraic stack. This means there is a smooth surjective morphism $U\to \calX_{\rm EG}$ where $U$ is a Noetherian affine formal scheme over $\Spf(\bZ_p)$.

Let $U^{\bullet}$ be the \v Cech nerve of the morphism $U\to \calX_{\rm EG}$. Then the \'etale sheafification $(\Lan(U^{\bullet}))^{\#}$ of the left Kan extension of $U^{\bullet}$ is a groupoid object, whose geometric realization in the $\infty$-category of derived stacks is exactly $(\Lan\calX_{\rm EG})^{\#}$. And $(\Lan U)^{\#}\to (\Lan\calX_{\rm EG})^{\#}$ is again smooth surjective by \cite[Chapter 2, Proposition 4.4.3]{GR19}.

 As $(\Lan U)^{\#}\to (\Lan\calX_{\rm EG})^{\#}$ is smooth surjective, there exists an \'etale covering $\{\Spec(B_j)\}_{j\in J}$ of $\Spec(B)$, with $J$ a finite set, such that each $e_j:\Spec(B_j)\to (\Lan\calX_{\rm EG})^{\#}$ factors through $(\Lan U)^{\#}\to (\Lan\calX_{\rm EG})^{\#}$. Then we have an exact triangle for each $j\in J$
\[
T^*_{e_j}((\Lan\calX_{\rm EG})^{\#})\to T^*_{e_j}((\Lan U)^{\#})\to T^*_{e_j}((\Lan U)^{\#}/(\Lan\calX_{\rm EG})^{\#})
\]
where $T^*_{e_j}((\Lan U)^{\#}/(\Lan\calX_{\rm EG})^{\#})$ is the relative pro-cotangent complex (cf. \cite[Chapter 1, Section 2.4.3]{GR17}) at the point $e_j$. As $(\Lan U)^{\#}$ is indeed a Noetherian affine formal scheme, we know $T^*_{e_j}((\Lan U)^{\#})$ is a complex of $B_j$-modules by Corollary \ref{pro-cotangent of formal scheme}. Moreover as $(\Lan U)^{\#}\to (\Lan\calX_{\rm EG})^{\#}$ is smooth surjective, the pro-complex $T^*_{e_j}((\Lan U)^{\#}/(\Lan\calX_{\rm EG})^{\#})$ is also a complex of $B_j$-modules. Therefore, we see $T^*_{e_j}((\Lan\calX_{\rm EG})^{\#})$ is also a complex of $B_j$-modules. 
By the next Lemma \ref{etale covering}, the pro-cotangent complex $T^*_{e}((\Lan\calX_{\rm EG})^{\#})$ is then a complex of $B$-modules.
\end{proof}

\begin{lem}\label{etale covering}
    Let $``\varprojlim_{i\in I}"T_i$ be a pro-complex of $A$-modules. Let $\{B_j\}_{j\in J}$ be an \'etale covering of $A$ where $J$ is a finite set. Suppose that for each $j\in J$, the pro-complex $``\varprojlim_{i\in I}"T_i\otimes_AB_j$ is isomorphic to $P_j$, a complex of $B_j$-modules. Then the pro-complex $``\varprojlim_{i\in I}"T_i$ is indeed a complex of $A$-modules.
\end{lem}

\begin{proof}
    For each $j\in J$, the isomorphism $``\varprojlim_{i\in I}"T_i\otimes_AB_j\simeq P_j$ implies there exists an $i_j\in I$ such that for any $T_k$ living over $T_{i_j}$, we have $T_k\otimes_AB_j\simeq P_j$. Then there exists an $i\in I$ such that for any $T_k$ living over $T_i$, we have $T_i\otimes_AB_j\simeq P_j$ for all $j\in J$. In particular, this means for any $T_k$ living over $T_i$, we always have $T_k\simeq T_i$ by the faithfully flat descent. This means $``\varprojlim_{i\in I}"T_i$ is indeed a complex of $A$-modules.
\end{proof}

Next we move to prove the claim for $\calX^{\rm nil}$. 

\begin{prop}\label{key-b}
   Let $e:\Spec(C)\to \calX^{\nil}$ be a classical point with $C$ being a finite type algebra over $\bZ/p^a$ for some $a$. Then the pro-cotangent complex $T^*_{e}(\calX^{\nil})$ is a complex of $C$-modules, which is exactly the shift by $(-1)$ of the dual of the Herr complex associated to the adjoint of the \'etale $(\varphi,\Gamma)$-module corresponding to $e$.
\end{prop}
\begin{proof}

 Let $M\in \Mod(C)^{[-n,0]}$ such that $M\simeq \tau^{\geq -n}\tilde M$ for a perfect complex $\tilde M$. Then $C\oplus M\in \textbf{Nilp}_{\bZ_p,\rm ft}^{\leq n}$. By the definition of the pro-cotangent complex, we have the pullback diagram
\begin{equation*}
    \xymatrix{
    \Map(T^*_e(\calX^{\nil}),M))\ar[r]\ar[d] & {*}_e\ar[d]\\
    \calX^{\nil}(C\oplus M)\ar[r]& \calX^{\nil}(C).
    }
\end{equation*}

Note that the cotangent complex of the stack $\rm BGL_d$ at a point $\Spec(D)\to \rm BGL_d$ is given by $ad(M_D)[-1]$, where $M_D$ is the corresponding finite projective module over $D$ and $ad(M_D):=M_D\otimes M_D^{\vee}$. Then by Proposition \ref{derivedperf} in the next subsection, we can use the perfect prismatic site and get
\[
\calX^{\nil}(C)\simeq \varprojlim_i {\rm BGL_d}(A^i_{\cyc,C}[\frac{1}{\xi}])^{\varphi=1}
\]
and
\[
\calX^{\nil}(C\oplus M)\simeq \varprojlim_i {\rm BGL_d}(A^i_{\cyc,C\oplus M}[\frac{1}{\xi}])^{\varphi=1}.
\]

Note that $A^i_{\cyc,C\oplus M}[\frac{1}{\xi}])=A^i_{\cyc,C}[\frac{1}{\xi}]\oplus (M\otimes_CA^i_{\cyc,C}[\frac{1}{\xi}])$. This implies
\[
\Map(T^*_e(\calX^{\nil}),M))\simeq \varprojlim_i \Map(T^*_{e_i}({\rm BGL_d}),M\otimes_CA^i_{\cyc,C}[\frac{1}{\xi}])^{\varphi=1}
\]
where $e_i:\Spec(A^i_{\cyc,C}[\frac{1}{\xi}])\to {\rm BGL_d}$ is the point corresponding to $e$.

Also, we know that 
\[
\varprojlim_i \Map(T^*_{e_i}({\rm BGL_d}),M\otimes_CA^i_{\cyc,C}[\frac{1}{\xi}])^{\varphi=1}\simeq \varprojlim_i\tau_{\geq 0}(ad(M_i)[1]\otimes_CM)^{\varphi=1}\simeq \tau_{\geq 0}\varprojlim_i (ad(M_i)[1]\otimes_CM)^{\varphi=1},
\]
where $M_i$ is the corresponding object in ${\rm BGL_d}(A^i_{\cyc,C}[\frac{1}{\xi}])^{\varphi=1}$. In particular, we have 
\[ad(M_i)=ad(M_0)\otimes_{A_{\cyc,C}[\frac{1}{\xi}]} A^i_{\cyc,C}[\frac{1}{\xi}]\cong C(\Gamma^i,ad(M_0)).\]

Recall that the homotopy limit of cosimplicial chain complexs is quasi-isomorphic to the totalisation of its corresponding double complex. Write $C(\Gamma^{\bullet},ad(M_0))$ the complex associated to the cosimplicial module $\{ad(M_i)\}_i$. As each term $ad(M_i)$ is flat over $C$, we see that
\[
\varprojlim_i (ad(M_i)\otimes_CM)\simeq C(\Gamma^{\bullet},ad(M_0))\otimes_C M
\]
and therefore
\[
\varprojlim_i (ad(M_i)\otimes_CM)^{\varphi=1}\simeq C(\Gamma^{\bullet},ad(M_0))^{\varphi=1}\otimes_C M.
\]

Next we are going to study the complex $C(\Gamma^{\bullet},ad(M_0))^{\varphi=1}$. We claim this complex is (a shift of) the Herr complex of the \'etale $(\varphi,\Gamma)$-module $M_C$ corresponding to the point $e: \Spec(C)\to \calX^{\nil}$. Note that $M_C$ corresponds to the \'etale $(\varphi,\Gamma)$-modules $M_0$ over $A_{\cyc,C}[\frac{1}{\xi}]$ under the equivalence in Proposition \ref{Frobenius descent}. To prove the claim, we have to resort to the ``basic" case introduced in \cite{EG22}, in which we can always find $(\varphi,\Gamma)$-stable lattices.

Let us recall the following definition.

\begin{dfn}[\cite{EG22} Definition 3.2.3]
    If $K$ is a finite extension of $\bQ_p$, we set $K^{\rm basic}:=K\cap K_0(\zeta_{p^{\infty}})$, where $K_0$ is the maximal unramified extension of $\bQ_p$ inside $K$.
\end{dfn}
The advantage of working with $K^{\rm basic}$ is that $A^+_{K^{\rm basic},C}$ is $(\varphi,\Gamma)$-stable. Now the \'etale $(\varphi,\Gamma)$-module $M_C$ of rank $d$ over $A_{K,C}$ can be regarded as an \'etale $(\varphi,\Gamma)$-module of rank $d[K:K^{\rm basic}]$ over $A_{K^{\rm basic},C}$ (cf. the discussion after \cite[Definition 3.2.3]{EG22}). By the same proof of \cite[Lemma 5.1.5]{EG22}, we can find a $(\varphi,\Gamma)$-stable $A^+_{K^{\rm basic},C}$-lattice $ad(\frakM_C)$ in $ad(M_C)$. Let $(e_1,\cdots,e_s)$ be a set of generators of $ad(\frakM_C)$ over $A^+_{K^{\rm basic},C}$.  Note that $A^+_{K^{\rm basic},C}$ is actually a subring of $A_{\cyc,C}$ (in fact, $A^+_{K^{\rm basic},C}=A^+_{K_0,C}\subset A_{\cyc,C}$ by the discussion after \cite[Deinition 2.1.12]{EG22}). Therefore, the $A_{\cyc,C}$-submodule $ad(\frakM_0)$ of $ad(M_0)$ generated by $(e_1,\cdots,e_s)$ is a $(\varphi,\Gamma)$-stable lattice.

Note that $A_{\cyc,C}[\frac{1}{\xi}]$ is a Noetherian Banach ring by \cite[Proposition 2.4.3]{EG22}. Then the forgetful functor from finite Banach modules over $A_{\cyc,C}[\frac{1}{\xi}]$ to finite $A_{\cyc,C}[\frac{1}{\xi}]$-modules is an equivalence of categories by \cite[remark 2.2.11]{KL15}. In particular, $ad(\frakM_0)$ is an open subgroup of $ad(M_0)$ (this is true for any lattice). As $ad(M_0)$ is finite Banach module, we have 
\[
ad(M_0)\cong \varprojlim_iad(M_0)/T^iad(\frakM_0).
\]
Note that each $\xi^iad(\frakM_0)$ is again open in  $ad(M_0)$. Hence $ad(M_0)/\xi^iad(\frakM_0)$ has discrete topology. By construction, we know $\frakM_0$ is $\Gamma$-stable. As the $\Gamma$-action on $A_{\cyc}$ preserves the ideal $(\xi)$, we know each $\xi^i\frakM_0$ is also $\Gamma$-stable. So the isomorphism $ad(M_0)\cong \varprojlim_iad(M_0)/\xi^iad(\frakM_0)$ actually writes the $\Gamma$-module $ad(M_0)$ as an inverse limit of discrete $\Gamma$-modules. In particular, we can apply \cite[Lemma 7.3]{BMS18} to get
\[
C(\Gamma^{\bullet},ad(M_0))\simeq [ad(M_0)\xrightarrow{\gamma-1}ad(M_0)].
\]
This implies 
\[
C(\Gamma^{\bullet},ad(M_0))^{\varphi=1}\simeq [ad(M_0)\xrightarrow{\gamma-1}ad(M_0)]^{\varphi=1}\simeq {\rm Herr}(ad(M_0))
\]
where ${\rm Herr}(ad(M_0))$ is the Herr complex associated with $ad(M_0)$.

Recall that $M_C$ is the \'etale $(\varphi,\Gamma)$-module over $A_{\cyc,C}[\frac{1}{\xi}]$ corresponding to $M_0$. We have a natural map 
\[
{\rm Herr}(ad(M_C))\to {\rm Herr}(ad(M_0)).
\]

By \cite[Proposition 4.5]{pham2023moduli}, we have $H^i({\rm Herr}(ad(M_C)))$ is naturally isomorphic to the extension group $\Ext^i(A_{K,C},ad(M_C))$ in the exact category of finite projective \'etale $(\varphi,\Gamma)$-modules over $A_{K,C}$. The same proof actually applies to the exact category of finite projective \'etale $(\varphi,\Gamma)$-modules over $A_{\cyc,C}[\frac{1}{\xi}]$. The only place that requires attention is the proof of \cite[Lemma 4.10]{pham2023moduli}: one needs to use that $A_{\cyc,C}$ is $\varphi$-stable and any $A_{\cyc,C}$-lattice in a finite projective \'etale $(\varphi,\Gamma)$-module over $A_{\cyc,C}[\frac{1}{\xi}]$ is closed and complete. Hence we also get a natural isomorphism $H^i({\rm Herr}(ad(M_0))\cong \Ext^i(A_{\cyc,C}[\frac{1}{\xi}],ad(M_0))$. Now as the natural map $A_{K,C}\to A_{\cyc,C}[\frac{1}{\xi}]$ is faithfully flat by \cite[Proposition 2.2.12]{EG22}, the equivalence between the category of finite projective \'etale $(\varphi,\Gamma)$-modules over $A_{K,C}$ and the category of finite projective \'etale $(\varphi,\Gamma)$-modules over $A_{\cyc,C}[\frac{1}{\xi}]$ in \cite[Proposition 2.7.8]{EG22} is an exact equivalence. This then implies the natural map ${\rm Herr}(ad(M_C))\to {\rm Herr}(ad(M_0))$ is a quasi-isomorphism.

By \cite[Theorem 5.1.22]{EG22}, we see that ${\rm Herr}(ad(M_0))$ is indeed a perfect complex of $C$-modules. Then we have 
\[
\Map(T^*_e(\calX^{\nil}),M))\simeq \tau_{\geq 0}({\rm Herr}(ad(M_0))[1]\otimes_C M)\simeq \Map({\rm Herr}(ad(M_0))^{\vee}[-1],M).
\]

As ${\rm Herr}(ad(M_0))^{\vee}[-1]$ itself is a perfect complex, the functor $\Map({\rm Herr}(ad(M_0))^{\vee}[-1],-)$ commutes with filtered colimit. Then we have
\[
\Map(T^*_e(\calX^{\nil}),M))\simeq  \Map({\rm Herr}(ad(M_0))^{\vee}[-1],M)
\]
holds for any $M\in \Mod(C)^{\leq 0}$ since $M\simeq \varprojlim_n\tau^{\geq -n}(M)$ and any $N\in \Mod(C)^{[-n,0]}$ can be written as a filtered colimit $\varinjlim_jN_j$ such that $N_j$ is the truncation $\tau^{\geq -n}\tilde N_j$ for some perfect complex $N_j$. By the uniqueness of the pro-cotangent complex, we see 
\[T^*_e(\calX^{\nil})\simeq {\rm Herr}(ad(M_0))^{\vee}[-1]\simeq {\rm Herr}(ad(M_C))^{\vee}[-1].\]
In particular, $T^*_e(\calX^{\nil})$ is a perfect complex over $C$.
    
\end{proof}

Once we know the claim about the pro-cotangent complexes, we can be further reduced to considering only the classical points valued in finite fields.

\begin{lem}\label{reduce to finite field}
 Let $x$ be any finite-field point of $\calX_{\rm EG}$, i.e. $x\in \calX_{\rm EG}(k_f)$ where $k_f$ is a finite field. If the map
 \[
 f_x:T^*_x(\calX^{\rm nil})\to T^*_x((\Lan\calX_{\rm EG})^{\#,\rm nil})
 \]
 is an isomorphism for all finite-field points, then the natural map $(\Lan\calX_{\rm EG})^{\#,\rm nil}\to \calX^{\rm nil}$ is an isomorphism.
\end{lem}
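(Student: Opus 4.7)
The plan is to apply Proposition~\ref{GR-criterion} with $^{\rm cl}\calF_0 = \calX_{\rm EG}$, $\calF_1 = (\Lan\calX_{\rm EG})^{\#,\rm nil}$, and $\calF_2 = \calX^{\rm nil}$. Both $\calF_1$ and $\calF_2$ admit a deformation theory by the preceding results. The natural maps $g_i : \calX_{\rm EG} \to \calF_i$ are nilpotent embeddings because both $\calF_i$ have underlying classical stack equal to $\calX_{\rm EG}$: for $\calF_2$ this is Theorem~\ref{main1} together with the fact that nilcompletion does not affect evaluation on discrete rings, and for $\calF_1$ this is because sheafifying the left Kan extension from classical rings does not change the restriction back to classical rings. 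So to conclude that the natural map $\calF_1 \to \calF_2$ is an isomorphism, Proposition~\ref{GR-criterion} tells us it suffices to verify that $f_x : T^*_x(\calF_2) \to T^*_x(\calF_1)$ is an isomorphism for every discrete $R \in \text{Nilp}_{\bZ_p}$ and every $x \in \calX_{\rm EG}(R)$.

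I would next reduce from general discrete $R$ to residue fields in two stages. First, since $\calF_1$ and $\calF_2$ are both locally almost of finite type, Lemma~\ref{laft} puts both pro-cotangent spaces into $\Pro(\Mod(R)^-)_{\rm laft}$, so as functors on each $\Mod(R)^{[-m,0]}$ they commute with filtered colimits. Writing $R = \varinjlim R_j$ with $R_j$ finite type over $\bZ/p^a$ and using that $\calX_{\rm EG}$ is limit preserving, $x$ descends to some $x_j \in \calX_{\rm EG}(R_j)$; combined with the functoriality that both prestacks admit pro-cotangent \emph{complexes} (i.e.\ base change compatibility), this reduces the problem to $R$ of finite type over $\bZ/p^a$. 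Second, I would invoke a derived Nakayama argument for laft pro-objects: a morphism in $\Pro(\Mod(R)^-)_{\rm laft}$ over a Noetherian $R$ is an isomorphism iff its base change to every residue field $R/\frakm$ is an isomorphism, the fiber being almost connective and pro-finitely-presented. By the Nullstellensatz (such $R$ is a Jacobson ring), every $R/\frakm$ is a finite field of characteristic $p$, so the hypothesis applies.

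The main obstacle will be making the derived Nakayama / residue-field detection precise for morphisms between objects of $\Pro(\Mod(R)^-)_{\rm laft}$ over a finite-type $R$: one must check that the cofiber of $f_x$, viewed as a pro-object, is killed by base change to all maximal residue fields and then invoke a pro-version of Nakayama to conclude it vanishes. Subsidiary care is required in tracking the base change of pro-cotangent complexes along $R \to R_j$ and along $R \to R/\frakm$ (both use that $\calF_1, \calF_2$ admit pro-cotangent complexes rather than merely pro-cotangent spaces), and in verifying that the reduction from $R$ to $R_j$ is genuinely controlled by laft continuity on each truncated module category. Once these foundational points are dispatched, Proposition~\ref{GR-criterion} immediately yields that $(\Lan\calX_{\rm EG})^{\#,\rm nil} \to \calX^{\rm nil}$ is an equivalence.
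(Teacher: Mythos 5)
Your overall strategy coincides with the paper's: apply Proposition \ref{GR-criterion} with $^{\rm cl}\calF_0=\calX_{\rm EG}$, use that both prestacks are locally almost of finite type (Lemma \ref{laft}, Definition \ref{cotangent complex laft}) together with limit preservation to descend a point over a general discrete $R$ to a finite type $\bZ/p^a$-algebra, and then argue by a Nakayama-type statement to reduce to residue fields. Your verification of the nilpotent-embedding hypotheses of Proposition \ref{GR-criterion} and the first reduction are fine and match the paper.

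The one place where your route diverges, and where you have correctly located the difficulty but not resolved it, is the final step. You propose a direct ``derived Nakayama for morphisms in $\Pro(\Mod(R)^-)_{\rm laft}$ over a non-local finite type $R$: killed at every residue field implies zero.'' This is not a standard fact, and it does not follow from termwise Nakayama: the individual terms $H_i$, $K_j$ of the pro-systems need not have finitely generated homotopy, and vanishing of a pro-object is a statement about the system, not its terms. The paper inserts two extra moves that make this precise. First, it passes from finite type $R$ to complete Noetherian local rings with finite residue field via the faithfully flat cover $\bigsqcup_i\Spec(\hat R_{m_i})\to \Spec(R)$. Second, for such a complete local $(R,m,k_f)$ it observes that every $M\in\Mod(R)^{[-n,0]}$ is a filtered colimit of truncations of perfect complexes, each of which is derived $m$-complete; since the laft pro-cotangent spaces are filtered-colimit-preserving functors on $\Mod(R)^{[-n,0]}$, they are determined by their values on derived $m$-complete modules, so $\varprojlim_i H_i$ and $\varprojlim_j K_j$ may be replaced by $\varprojlim_i (H_i)^{\wedge}_m$ and $\varprojlim_j (K_j)^{\wedge}_m$. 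Only then does derived Nakayama legitimately reduce the comparison to $\varprojlim_i H_i\otimes k_f\to \varprojlim_j K_j\otimes k_f$, i.e.\ to finite-field points. You should replace your proposed global pro-Nakayama by this localization-and-completion argument; as stated, that step is a genuine gap.
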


\begin{proof}
    For any classical ring $R\in {\rm Nilp}_{\bZ_p}$, write $R=\varinjlim_iR_i$ as filtered colimit of all finite type subalgebras $R_i$. Since $\calX^{\rm nil}$ is locally almost of finite type, we have $\calX(R)=\varinjlim_i\calX(R_i)$. So any point $x\in \calX(R)$ is induced by a point $x\in \calX(R_i)$. The same is true for $(\Lan\calX_{\rm EG})^{\#,\rm nil}$. As both of them admit pro-cotangent complexes, that $f_x$ is an isomorphism for any discrete ring $R$ is equivalent to that $f_x$ is an isomorphism for finite type algebras (the former is the base change of the latter).

    Now let's assume $R$ is a finite type algebra over $\bZ_p/p^a$ for some $a$. Then there is a faithfully flat covering $\bigsqcup_i{\rm Spec}(\hat R_{m_i})\to {\rm Spec}(R)$ given by the disjoint union of the spectra of the completion of localizations of $R$ at its maximal ideals. This enables us to reduce further to the points valued in complete Noetherian local rings with finite residue fields.

    Now let's assume $R$ is a complete Noetherian lcoal ring with maximal ideal $m$ and residue field $k_f$ and $x\in (\Lan\calX_{\rm EG})^{\#,\rm nil}(R)=\calX_{\rm EG}(R)$. By Proposition \ref{key-a} and Proposition \ref{key-b}, we put $T^*_x((\Lan\calX_{\rm EG})^{\#,\rm nil})=H\in\Mod(R)$ and $T^*_x(\calX^{\rm nil})=K\in \Mod(R)$. Then by Lemma \ref{laft} and Definition \ref{cotangent complex laft}, to prove $f_x$ is an isomorphism, we just need to compare the cotangent spaces as filtered colimit preserving functors ${\Mod}(R)^{[-n,0]}\to \textbf{Ani}$ for each $n\geq 0$. Note every object $M\in {\Mod}(R)^{[-n,0]}$ can be written as a filtered colimit of perfect complexes $M\simeq \varinjlim_iK_i$. Then we see that $M\simeq \varinjlim_i\tau_{\leq n}(\tau_{\geq 0}K_i)$ (we still use the homological convention here) as $\tau_{\leq n}$ is a left adjoint and filtered colimit is exact. In particular, each $\tau_{\leq n}(\tau_{\geq 0}K_i)$ is derived $m$-complete. So we just need to compare $H^{\wedge}_{m}$ and $K^{\wedge}_{m}$. By derived Nakayama lemma, we are reduced further to comparing $H\otimes k_f$ and $K\otimes k_f$, i.e. we just need to prove $f_x$ is an isomorphism for all finite-field points.
\end{proof}

\begin{rmk}
    That the pro-cotangent complexes of the two derived stacks at classical points valued in finite type algebras are indeed complexes is crucial in the proof of Lemma \ref{reduce to finite field}. The point is that pro-complexes do not satisfy faithfully flat descent in general and the derived Nakayama lemma can not apply to pro-complexes.
\end{rmk}

Now by Lemma \ref{reduce to finite field}, proving Theorem \ref{main2} can be reduced to comparing the infinitesimal deformation theory of $(\Lan\calX_{\rm EG})^{\#,\rm nil}$ and $\calX^{\rm nil}$, which will imply the pro-cotangent complexes are isomorphic. More precisely, fixing a finite-field point $\bar \rho\in \calX_{\rm EG}(k_f)$, i.e. a residual representation of the absolute Galois group $G_K$, we need to investigate the functor $(\Lan\calX_{\rm EG})^{\#,\rm nil}_{\bar \rho}: {{\textbf{Art}}_{/k_f}}\to \textbf{Ani}$ defined as
\[
R\mapsto {\rm Fib}_{\bar \rho}((\Lan\calX_{\rm EG})^{\#,\rm nil}(R)\to (\Lan\calX_{\rm EG})^{\#,\rm nil}(k_f)),
\]
and the functor $(\calX)_{\bar \rho}^{\nil}: {{\textbf{Art}}_{/k_f}}\to \textbf{Ani}$ defined as
\[
R\mapsto {\rm Fib}_{\bar \rho}(\calX^{\nil}(R)\to \calX^{\nil}(k_f)),
\]
where ${{\textbf{Art}}_{/k_f}}$ is the $\infty$-category of Artinian local animated ring $R$ with an identification of $k_f$ with the residue field of $R$. Recall that an animated ring $R$ is called Artinian local if $\pi_0(R)$ is a classical Artinian local ring and $\pi_*(R)$ is a finitely generated $\pi_0(R)$-module. In particular all Artinian local animated rings over $\bZ_p$ are truncated. For any $M\in {\rm Perf}(k_f)^{[n,0]}$, we have $k_f\oplus M$ is an Artinian local animated ring. So if $(\Lan\calX_{\rm EG})^{\#}_{\bar \rho}=(\Lan\calX_{\rm EG})^{\#,\rm nil}_{\bar \rho}\simeq (\calX)^{\nil}_{\bar \rho}=(\calX)_{\bar \rho}$, we can deduce $T^*_{\bar \rho}(\calX^{\rm nil})\to T^*_{\bar \rho}((\Lan\calX_{\rm EG})^{\#,\rm nil})$ is an isomorphism by Definition \ref{cotangent complex laft}.

In order to compare $(\Lan\calX_{\rm EG})^{\#}_{\bar \rho}$ and $\calX_{\bar \rho}$ (we can omit the superscript $``\nil"$), we will go back to derived Galois representations. Our proof will be divided into two parts:
\begin{enumerate}
    \item compare $\calX$ with the derived prestack of Laurent $F$-crystals on the absolute perfect prismatic site;
    \item compare directly Laurent $F$-crystals on the absolute perfect prismatic site with derived representations.
\end{enumerate}

\subsubsection{Laurent $F$-crystals on perfect site}
In this subsection, we investigate the first part of our strategy. The main result is the following.

\begin{prop}\label{derivedperf}
    Let $R$ be an animated ring in $\textbf{Nilp}_{\bZ_p,\rm ft}^{\leq n}$ for some $n$. Then there is an equivalence 
    \[
\Vect((\calO_K)_{\Prism},\calO_{\Prism,R}[\frac{1}{\calI_{\Prism}}])^{\varphi=1,\simeq}\simeq \Vect((\calO_K)^{\perf}_{\Prism},\calO_{\Prism,R}[\frac{1}{\calI_{\Prism}}])^{\varphi=1,\simeq}.
    \]
\end{prop}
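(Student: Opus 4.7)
The plan is to imitate the strategy of Proposition \ref{finitetype}, upgraded to the animated setting by leveraging the truncation hypothesis together with the tools already developed (Lemmas \ref{truncated-tot}, \ref{tot}, \ref{mapping space}, \ref{F-invariant}, and \ref{lift lemma}). Using the Breuil--Kisin prism $(\frakS,(E))$ and its perfection $(A_{\infty},(E))$, I would write both sides as totalisations of the cosimplicial $\infty$-groupoids $\Vect(\frakS_R^{\bullet}[\tfrac{1}{E}])^{\varphi=1,\simeq}$ and $\Vect(A_{\infty,R}^{\bullet}[\tfrac{1}{E}])^{\varphi=1,\simeq}$. Since $R$ is $n$-truncated and finite type, each $\frakS_R^i$ and $A_{\infty,R}^i$ is $n$-truncated, so the cores are $(n{+}1)$-truncated and Lemma \ref{truncated-tot} cuts the totalisation down to a finite one. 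By Lemma \ref{tot}, it then suffices to establish: (a) the restriction functor is essentially surjective at level $0$, and (b) it is fully faithful at every level.

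For the fully faithfulness in (b), Lemma \ref{mapping space} identifies the mapping anima between $(M,\varphi_M),(N,\varphi_N)\in \Vect(\frakS_R^i[\tfrac{1}{u}])^{\varphi=1}$ with $\tau_{\geq 0}(M^\vee\otimes N)^{\varphi=1}$, so the problem reduces to showing this $\varphi$-invariant is unchanged by base change along $\frakS_R^i[\tfrac{1}{u}]\to A_{\infty,R}^i[\tfrac{1}{u}]$. After replacing $M$ by a summand of a finite free module, I would pick a $\varphi$-stable lattice for $M^\vee\otimes N$ over $\frakS_R^i$ (as in \cite[Lemma 5.2.14]{EG21}) and apply Lemma \ref{F-invariant} to the pair $(\frakS,u)$, since $\varphi(u)=u^p\in u^2\frakS$. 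The remaining identity $M^{\varphi=1}\simeq \varinjlim_n((\varphi^n)^*M)^{\varphi=1}$ is formal from $\varphi_M\colon(\varphi^*M)^{\varphi=1}\xrightarrow{\simeq}M^{\varphi=1}$. The key algebraic input, namely
\[
A_{\infty,R}^i\simeq \bigl(\varinjlim_{\varphi}\frakS_R^i\bigr)^{\wedge}_{u,\,\mathrm{derived}},
\]
carries over from the proof of Proposition \ref{finitetype} verbatim, because the derived Nakayama step uses only $p$-nilpotence (guaranteed by $R\in \textbf{Nilp}_{\bZ_p,\rm ft}^{\leq n}$) and filtered colimits commute with derived tensor products.

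For the essential surjectivity at level $0$, I would use a Postnikov induction. Given $M\in\Vect(A_{\infty,R}[\tfrac{1}{u}])^{\varphi=1}$, the class $\pi_0(M)$ is a classical étale $\varphi$-module over $\pi_0(A_{\infty,R}[\tfrac{1}{u}])$, and since $\pi_0(R)$ is a finite type $\bZ/p^a$-algebra, Proposition \ref{finitetype} produces a lift $N_0\in\Vect(\pi_0(\frakS_R[\tfrac{1}{u}]))^{\varphi=1}$. Now I would lift $N_0$ successively through the square-zero steps of the Postnikov tower of $\frakS_R[\tfrac{1}{u}]$. At each step the underlying module lifts because finite projective modules are formally smooth, and the Frobenius lifts compatibly because the square-zero diagram from Fact \ref{truncation} is $\varphi$-equivariant by the same flatness argument used in the proof of Proposition \ref{geometrically finite type} (tensoring the square-zero diagram for $R$ with $\frakS/E^m$ and then completing and inverting $E$). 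The resulting $\tilde N\in \Vect(\frakS_R[\tfrac{1}{u}])^{\varphi=1}$ satisfies $\pi_0(\tilde N\otimes A_{\infty,R}[\tfrac{1}{u}])\simeq \pi_0(M)$ as $\varphi$-modules, so Lemma \ref{lift lemma} promotes this to an isomorphism in $\mathrm{Ho}(\Vect(A_{\infty,R}[\tfrac{1}{u}])^{\varphi=1})$, giving the desired essential surjectivity.

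The main obstacle is the Postnikov lifting step for essential surjectivity: the fully faithfulness is a straightforward extension of the classical argument, but producing a $\varphi$-equivariant lift through the tower requires a careful verification that, at each stage of the Postnikov tower, the Frobenius on the $\tau_{\leq k}$-truncation of $\frakS_R[\tfrac{1}{u}]$ is genuinely inherited from the square-zero extension. Once this $\varphi$-equivariance is in hand, the lifting of the finite projective module itself is formal, and the matching of base changes on $\pi_0$ immediately promotes the construction to a derived isomorphism via Lemma \ref{lift lemma}.
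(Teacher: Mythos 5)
Your proposal is correct and follows essentially the same route as the paper: write both sides as finite totalisations over the Breuil--Kisin \v Cech nerve via Lemma \ref{truncated-tot}, get full faithfulness levelwise from Lemmas \ref{mapping space} and \ref{F-invariant} exactly as in Proposition \ref{finitetype}, and obtain essential surjectivity at level $0$ by combining the classical equivalence on $\pi_0$ with the $\varphi$-equivariant Postnikov/square-zero lifting from the proof of Proposition \ref{geometrically finite type} and Lemma \ref{lift lemma}. The paper merely packages your Postnikov-lifting step as the statement that the reduction functors $l_2,l_3$ to $\pi_0$ are essentially surjective and then runs the same $2\times 2$ diagram chase, so the two arguments coincide.
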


\begin{proof}
   As usual, using the Breuil--Kisin prism $(\frakS,(E))$ and its perfection $(A_{\infty},(E))$, we have 
   \[
   \Vect((\calO_K)_{\Prism},\calO_{\Prism,R}[\frac{1}{\calI_{\Prism}}])^{\varphi=1,\simeq}\simeq \varprojlim \Vect(\frakS_{R}[\frac{1}{E}])^{\varphi=1,\simeq}\rightrightarrows\Vect(\frakS_{R}^1[\frac{1}{E}])^{\varphi=1,\simeq}\cdots\Vect(\frakS_{R}^{n+2}[\frac{1}{E}])^{\varphi=1,\simeq},
   \]
   and
   \[
   \Vect((\calO_K)^{\perf}_{\Prism},\calO_{\Prism,R}[\frac{1}{\calI_{\Prism}}])^{\varphi=1,\simeq}\simeq \varprojlim \Vect(A_{\infty,R}[\frac{1}{E}])^{\varphi=1,\simeq}\rightrightarrows\Vect(A_{\infty,R}^1[\frac{1}{E}])^{\varphi=1,\simeq}\cdots\Vect(A_{\infty,R}^{n+2}[\frac{1}{E}])^{\varphi=1,\simeq}.
   \]

   By using the same arguments as in the proof of Theorem \ref{finitetype}, we see that \[\Vect(\frakS^i_{R}[\frac{1}{E}])^{\varphi=1,\simeq}\to \Vect(A^i_{\infty,R}[\frac{1}{E}])^{\varphi=1,\simeq}\] is fully faithful for every $0\leq i\leq n+2$. We now want to prove $\Vect(\frakS_{R}[\frac{1}{E}])^{\varphi=1,\simeq}\to \Vect(A_{\infty,R}[\frac{1}{E}])^{\varphi=1,\simeq}$ is moreover essentially surjective.

Note that we have a commutative diagram
\begin{equation*}
    \xymatrix@=0.6cm{
    {\rm Ho}(\Vect(\frakS_{R}[\frac{1}{E}])^{\varphi=1,\simeq})\ar[rr]^{l_1}\ar[d]^{l_2}&&{\rm Ho}(\Vect(A_{\infty,R}[\frac{1}{E}])^{\varphi=1,\simeq})\ar[d]^{l_3}\\
    \Vect(\pi_0(\frakS_{R})[\frac{1}{E}])^{\varphi=1,\simeq}\ar[rr]^{l_4}&& \Vect(\pi_0(A_{\infty,R})[\frac{1}{E}])^{\varphi=1,\simeq}.
    }
\end{equation*}
  We know that $l_1$ is fully faithful and $l_2,l_3$ are essentially surjective (by the same arguments as in the proof of Proposition \ref{geometrically finite type}). Moreover by \cite[Proposition 2.6.12]{EG22}, $l_4$ is an equivalence. Then by Lemma \ref{lift lemma}, we see that $l_1$ is also essentially surjective. Finally by Lemma \ref{tot}, we are done.
\end{proof}
\begin{rmk}
    Proposition \ref{derivedperf} recovers Proposition \ref{finitetype} as finite type $\bZ_p$-algebras are discrete Noetherian animated rings over $\bZ_p$.
\end{rmk}

\subsubsection{Derived representations with finite coefficients}
In this subsection, we will relate derived $F$-crystals on the perfect site to derived Galois representations. Let us first recall the construction of derived local Galois deformation functor in \cite[Definition 5.4]{GV18}.

\begin{cons}[Unframed derived local Galois deformation functor]
    Let $\bar \rho:G_K\to\GL_d(k_f)$ be a residual representation. We can define a functor $\calF_{K,\GL_d}:{\textbf{Art}}\to \textbf{Ani}$ as
    \[
    \calF_{K,\GL_d}(R):=\Map_{\textbf{Ani}}(|G_K|,|\GL_d(R)|):=\varinjlim_i\Map_{\textbf{Ani}}(|G_i|,|\GL_d(R)|)
    \]
    where $G_K=\varprojlim_iG_i$ is the inverse limit of finite quotient groups and $|\cdot|$ means geometric realization of the corresponding simplicial anima, i.e. $|G_i|=\varinjlim_jG_i^j$ and $|\GL_d(R)|=\varinjlim_j\GL_d^j(R)=\varinjlim_j\Map(\calO(\GL_d)^{\otimes j},R))$ . Then the unframed derived Galois deformation functor is $\calF_{K,\GL_d,\bar\rho}:{\textbf{Art}_{/k_f}}\to \textbf{Ani}$ is defined as
    \[
   \calF_{K,\GL_d,\bar\rho}(R):={\rm Fib}_{\bar\rho}( \calF_{K,\GL_d}(R)\to  \calF_{K,\GL_d}(k_f))
    \]
\end{cons}

\begin{prop}\label{derivedrep-laurent}
    Let $R$ be an Artinian local ring in $\textbf{Nilp}_{\bZ_p}^{\leq n}$ for some $n$. There is an equivalence
    \[
    \Vect((\calO_K)_{\Prism}^{\perf},\calO_{\Prism,R}[\frac{1}{\calI_{\Prism}}])^{\varphi=1,\simeq}\simeq \calF_{K,\GL_d}(R). 
    \]
\end{prop}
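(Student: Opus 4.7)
The plan is to use the perfect $A_{\inf}$-prism $(A_{\inf},(\xi))$ with $A_{\inf}=W(\calO_{\bar K}^{\flat})$ as a cover of the final object in the perfect prismatic topos of $\calO_K$, and to identify its \v Cech nerve with continuous cochains on the absolute Galois group $G_K$. This is natural in view of the remark following Proposition \ref{perfect}: already classically this cover leads to \'etale $(\varphi,G_K)$-modules rather than $(\varphi,\Gamma)$-modules. First, by descent, write
\[
\Vect((\calO_K)_{\Prism}^{\perf},\calO_{\Prism,R}[\tfrac{1}{\calI_{\Prism}}])^{\varphi=1,\simeq} \simeq \varprojlim_{[i]\in\Delta}\Vect(A^i_{\inf,R}[\tfrac{1}{\xi}])^{\varphi=1,\simeq},
\]
where $A^i_{\inf,R}:=(A_{\inf}^i\otimes^{\bL}_{\bZ_p}R)^{\wedge}_{\xi}$ is the \v Cech nerve base-changed to $R$. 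Since $R\in \textbf{Nilp}_{\bZ_p}^{\leq n}$, each core $\Vect(A^i_{\inf,R}[\tfrac{1}{\xi}])^{\varphi=1,\simeq}$ is truncated, so Lemma \ref{truncated-tot} reduces the totalization to one over $\Delta_{\leq n+2}$.

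The next step is to prove an animated analog of Lemma \ref{Continuous functions}: for every $i\geq 0$,
\[
A^i_{\inf,R}[\tfrac{1}{\xi}] \simeq C(G_K^i, A_{\inf,R}[\tfrac{1}{\xi}])
\]
as $\varphi$-equivariant $\bE_{\infty}$-algebras. For classical finite-type coefficients this is (a variant of) \cite[Lemma 5.3]{Wu21} with $A_{\inf}$ replacing $A_{\cyc}$, together with the argument of Lemma \ref{Continuous functions}. To extend this to truncated animated Artinian $R$, I would write $R$ as a finite iterated square-zero extension of discrete pieces along its Postnikov tower and argue inductively, using the flatness of $A^i_{\inf}/p^a$ over $\bZ/p^a$ together with Lemma \ref{pi-complete} to interchange $\pi_j$ with the derived $\xi$-completion, thereby reducing each homotopy group to the classical identification.

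Then, since Frobenius is an automorphism on $A_{\inf}$ (as $\calO_{\bar K}^{\flat}$ is perfect) and acts only on the $A_{\inf}$-factor of $A_{\inf,R}$, the $\varphi$-invariants functor induces an equivalence
\[
\Vect(A_{\inf,R}[\tfrac{1}{\xi}])^{\varphi=1,\simeq}\simeq \Vect(R)^{\simeq,\,\rk=d}\simeq |\GL_d(R)|,
\]
using that $(A_{\inf}[\tfrac{1}{\xi}])^{\varphi=1}=\bZ_p$ and base-changing to $R$ via Lemma \ref{F-invariant}. Combined with the previous step and the fact that continuous maps on a profinite space commute with the formation of finite-rank vector bundle cores, this gives, for each $i$,
\[
\Vect(A^i_{\inf,R}[\tfrac{1}{\xi}])^{\varphi=1,\simeq}\simeq \Map(G_K^i,|\GL_d(R)|).
\]
Using $|G_K|=\mathrm{colim}_{\Delta^{op}}G_K^{\bullet}$ (as a colimit over the bar construction, compatible with the definition of $|G_K|$ as a colimit over finite quotients), the totalization becomes
\[
\varprojlim_{[i]\in\Delta}\Map(G_K^i,|\GL_d(R)|)\simeq \Map(|G_K|,|\GL_d(R)|)=\calF_{K,\GL_d}(R).
\]

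The main obstacle is the animated \v Cech-nerve identification in Step~2: the classical arguments use derived $\xi$-completion tensored with discrete finite-type rings in an essential way, and transporting this to the animated truncated setting requires careful bookkeeping of how derived completion interacts with animated tensor products in bounded degrees. A secondary subtlety is upgrading the equivalence $\Vect(A_{\inf,R}[\tfrac{1}{\xi}])^{\varphi=1,\simeq}\simeq |\GL_d(R)|$ from the homotopy-category level to a full $\infty$-categorical equivalence; this should follow from Lemma \ref{F-invariant}, combined with an essential-surjectivity argument in the style of Lemma \ref{lift lemma} to lift classical identifications through the Postnikov tower.
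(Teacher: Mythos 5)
Your proposal follows essentially the same route as the paper's proof: descend along the perfect Fontaine prism $(A_{\inf},(\xi))$, identify its \v Cech nerve with continuous cochains $C(G_K^{\bullet},-)$, pass to $\varphi$-invariants to land in $\bZ/p^a$-coefficients, use that finite projective modules over an Artinian local animated ring are free to identify the resulting cores with $|\GL_d(-)|$, and reassemble the (truncated, by Lemma \ref{truncated-tot}) totalization as $\Map(|G_K|,|\GL_d(R)|)$. The comparison with the paper is therefore mostly a matter of which steps are spelled out.

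One step, however, is under-justified in a way that hides the technical heart of the argument. Your passage from $\Vect(C(G_K^i,A_{\inf,R}[\frac{1}{\xi}]))^{\varphi=1,\simeq}$ to $\Map(G_K^i,|\GL_d(R)|)$ for $i\geq 1$ is not a consequence of ``continuous maps commuting with vector bundle cores'' together with $(A_{\inf}[\frac{1}{\xi}])^{\varphi=1}=\bZ_p$. What is needed is full faithfulness of the base change from $\Vect(C(G_K^i,\bZ/p^a)\otimes R)$ to $\Vect(C(G_K^i,A_{\inf}[\frac{1}{\xi}]/p^a)\otimes R)^{\varphi=1}$, and by Lemma \ref{mapping space} this amounts to the exactness of
\[
0\to C(G_K^i,\bZ/p^a)\to C(G_K^i,A_{\inf}[\tfrac{1}{\xi}]/p^a)\xrightarrow{\varphi-1}C(G_K^i,A_{\inf}[\tfrac{1}{\xi}]/p^a)\to 0,
\]
i.e.\ the surjectivity of $\varphi-1$ on continuous $W(C^{\flat})$-valued functions. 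This is where the paper spends most of its effort (an Artin--Schreier argument over $\calO_C^{\flat}/\varpi^n$, vanishing of $R^1\varprojlim$, and derived Nakayama), and it does not follow from Lemma \ref{F-invariant}, which concerns colimits in the perfection direction rather than cohomology of $\varphi-1$ with profinite-function coefficients. Two smaller points: the freeness of finite projectives over Artinian local animated rings is exactly where the Artinian hypothesis enters and should be invoked explicitly (it lets you replace $\Vect$ by the free locus $|\GL_d(-)|$ throughout the cosimplicial diagram); and the final identification $\varprojlim_{\Delta}\Map(G_K^{\bullet},|\GL_d(R)|)\simeq\Map(|G_K|,|\GL_d(R)|)$ should be routed, as in the paper, through $\GL_d^m(C(G_i^j,\bZ/p^a)\otimes R)=\GL_d^m(R^{G_i^j})$ (finite type of $\GL_d^m$) and the commutation of the finite limit over $\Delta_{\leq n+2}$ with the filtered colimits over $m$ and over finite quotients $G_i$, rather than through a bar-construction identity for the profinite group $G_K$ itself.
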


\begin{proof}
Assume $R$ is a $\bZ/p^a$-algebra for some $a$. We first define an anima $\Vect(R,G_K)^{\simeq}$ as follows 
\[
\Vect(R,G_K)^{\simeq}:=\varprojlim_j\Vect(R)^{\simeq}\rightrightarrows\Vect(C(G_K,\bZ/p^a)\otimes R)^{\simeq}\rightthreearrow\cdots\Vect(C(G_K^{n+1},\bZ/p^a)\otimes R)^{\simeq}
\]
where the cosimplicial animated rings $C(G_K^{\bullet},\bZ/p^a)\otimes R$ is defined using the trivial $G_K$-action on $\bZ/p^a$.
We will then compare both $\Vect((\calO_K)_{\Prism}^{\perf},\calO_{\Prism,R}[\frac{1}{\calI_{\Prism}}])^{\varphi=1,\simeq}$ and $ \calF_{K,\GL_d}(R)$ with $\Vect(R,G_K)^{\simeq}$ . 

Let us start with $ \calF_{K,\GL_d}(R)$. Note that any finite projective module over $R$ is finite free as $R$ is an Artinian local animated ring (see the first paragraph of the proof of \cite[Proposition 2.5.3]{Lur09}). So $\pi_0(\Vect(R)^{\simeq})=\pi_0(B\GL_d(R))=\{*\}$, which is represented by $R^d$. Now we claim $\Vect(R)^{\simeq}$ is equivalent to $|\GL_d(R)|=\varinjlim_m\GL^m(R)$. 

 As $B\GL_d$ is indeed the sheafification of the functor $|\GL_d(-)|$, we get a sequence $*\to |\GL_d(R)|\to B\GL_d(R)$. Note that $\pi_0(|\GL_d(R)|)=\{*\}$. Then it suffices to compare the mapping space $\Map(*,*)$ in both anima. By \cite[Lemma 5.2]{GV18}, we know that $\Map(*,*)$ in $|\GL_d(R)|$ is $\GL_d(R)$. By the homotopy pullback diagram
\begin{equation*}
    \xymatrix@=0.6cm{
    \GL_d\ar[r]\ar[d]& {*}\ar[d]\\
    {*} \ar[r]& B\GL_d,
    }
\end{equation*}
we see that $\Map(*,*)$ in $B\GL_d(R)$ is also $\GL_d(R)$. So $|\GL_d(R)|\simeq \Vect(R)^{\simeq}$.

As $\Vect(R)^{\simeq}$ has only one object $R^n$, we can rewrite $\Vect(R,G_K)^{\simeq}$ as follows
\[
\Vect(R,G_K)^{\simeq}:=\varprojlim_j{\rm Free}(R)^{\simeq}\rightrightarrows{\rm Free}(C(G_K,\bZ/p^a)\otimes R)^{\simeq}\rightthreearrow\cdots{\rm Free}(C(G_K^{n+1},\bZ/p^a)\otimes R)^{\simeq}
\]
where ${\rm Free}(C(G_K^{\bullet},\bZ/p^a)\otimes R)^{\simeq}$ means the full subcategory of  $\Vect(C(G_K^{\bullet},\bZ/p^a)\otimes R)^{\simeq}$ spanned by $C(G_K^{\bullet},\bZ/p^a)\otimes R^d$. In particular, ${\rm Free}(C(G_K^{j},\bZ/p^a)\otimes R)^{\simeq}\simeq |\GL_d(C(G_K^{j},\bZ/p^a)\otimes R)|$ for each $j$. Then we have 
\[
\Vect(R,G_K)^{\simeq}=\varprojlim_j |\GL_d(C(G_K^j,\bZ/p^a)\otimes R)|
\]
By the definition of $|\cdot|$ as geometric realization, we see that
\[
|\GL_d(C(G_K^j,\bZ/p^a)\otimes R|\simeq \varinjlim_m \GL_d^m(C(G_K^j,\bZ/p^a)\otimes R).
\]
For each $m$, we also have
\[
\GL_d^m(C(G_K^j,\bZ/p^a)\otimes R)=\varinjlim_i\GL_d^m(C(G_i^j,\bZ/p^a)\otimes R),
\]
as $\GL_d^m$ is of finite type. So we get
\[
\Vect(R,G_K)^{\simeq}\simeq \varprojlim_j\varinjlim_{m,i}\GL_d^m(C(G_i^j,\bZ/p^a)\otimes R)\simeq \varprojlim_j\varinjlim_{m,i}\GL_d^m(R^{G^j_i}).
\]
On the other hand, for $\calF_{K,\GL_d}(R)$, we have
\[
\calF_{K,\GL_d}(R)=\varinjlim_i\Map_{\textbf{Ani}}(|G_i|,|\GL_d(R)|)\simeq \varinjlim_i\varprojlim_j|\GL_d(R)|^{G_i^j}\simeq\varinjlim_i\varprojlim_j\varinjlim_m\GL_d^m(R^{G_i^j}).
\]
As the inverse limit $\varprojlim_j$ is in fact a finite limit which commutes with filtered colimit $\varinjlim_i$, we then get
\[
\Vect(R,G_K)^{\simeq}\simeq \calF_{K,\GL_d}(R).
\]

Next we come to compare $\Vect(R,G_K)^{\simeq}$ with  $\Vect((\calO_K)_{\Prism}^{\perf},\calO_{\Prism,R}[\frac{1}{\calI_{\Prism}}])^{\varphi=1,\simeq}$. By choosing the Fontaine prism $(A_{\inf},(\xi))$, we can write $\Vect((\calO_K)_{\Prism}^{\perf},\calO_{\Prism,R}[\frac{1}{\calI_{\Prism}}])^{\varphi=1,\simeq}$ as
\[
\varprojlim_j\Vect(A_{\inf}[\frac{1}{\xi}]/p^a\otimes R)^{\varphi=1,\simeq}\rightrightarrows\Vect(C(G_K,A_{\inf}[\frac{1}{\xi}]/p^a)\otimes R)^{\varphi=1,\simeq}\cdots \Vect(C(G_K^{n+1},A_{\inf}[\frac{1}{\xi}]/p^a)\otimes R)^{\varphi=1,\simeq}.
\]
Then there is a natural base change map 
\[
\Vect(R,G_K)^{\simeq}\to\Vect((\calO_K)_{\Prism}^{\perf},\calO_{\Prism,R}[\frac{1}{\calI_{\Prism}}])^{\varphi=1,\simeq}.
\]

For each $j\in[0,n+1]$, we claim the base change functor $\Vect(C(G_K^j,\bZ/p^a)\otimes R)^{\simeq}\to \Vect(C(G_K^j,A_{\inf}[\frac{1}{\xi}]/p^a)\otimes R)^{\varphi=1,\simeq}$ is fully faithful. This means for every $M\in \Vect(C(G_K^j,\bZ/p^a)\otimes R)^{\simeq}$, we need to prove $M\simeq (M\otimes C(G_K^j,A_{\inf}[\frac{1}{\xi}]/p^a))^{\varphi=1}$. This follows from the short exact sequence
\begin{equation}\label{key-short}
0\to C(G_K^j,\bZ/p^a)\to C(G_K^j,A_{\inf}[\frac{1}{\xi}]/p^a)\xrightarrow{\varphi-1}C(G_K^j,A_{\inf}[\frac{1}{\xi}]/p^a)\to 0.
\end{equation}
It is easy to see the sequence \ref{key-short} is left exact. To see it is right exact, we first prove $C(G_K^j,\calO_C^{\flat})\xrightarrow{\varphi-1}C(G_K^j,\calO_C^{\flat})$ is surjective. Let $\varpi$ be a pesudo-uniformiser of $\calO_C^{\flat}$. As $C^{\flat}$ is algebraically closed, we see that $\calO_C^{\flat}\xrightarrow{\varphi-1}\calO_C^{\flat}$ is surjective. Then so is $\calO_C^{\flat}/{\varpi^n}\xrightarrow{\varphi-1}\calO_C^{\flat}/{\varpi^n}$ for each $n$. This implies $C(G_K^j,\calO_C^{\flat}/{\varpi^n})\xrightarrow{\varphi-1}C(G_K^j,\calO_C^{\flat}/{\varpi^n})$ is surjective. Then by the Artin--Schreier sequence, we have a short exact sequence
\[
0\to \underline{\bF_p}({\rm Spec}(C(G_K^j,\calO_C^{\flat}/{\varpi^n})))\to C(G_K^j,\calO_C^{\flat}/{\varpi^n})\xrightarrow{\varphi-1}C(G_K^j,\calO_C^{\flat}/{\varpi^n})\to 0.
\]
Note that $R^1\varprojlim_n \underline{\bF_p}({\rm Spec}(C(G_K^j,\calO_C^{\flat}/{\varpi^n})))=0$. Then by taking limit with respect to $n$, we get a short exact sequence
\[
0\to C(G_K^j,\bF_p)\to C(G_K^j,\calO_C^{\flat})\xrightarrow{\varphi-1}C(G_K^j,\calO_C^{\flat})\to 0.
\]
So we also have $C(G_K^j,C^{\flat})\xrightarrow{\varphi-1}C(G_K^j,C^{\flat})$ is surjective. Now consider the following exact triangle
\[
C(G_K^j,W(C^{\flat}))\xrightarrow{\varphi-1}C(G_K^j,W(C^{\flat}))\to K.
\]
We have $H^{-1}(K)=C(G_K^j,\bZ_p)$ and $H^{-1}(K\otimes_{\bZ_p} \bF_p)=C(G_K^j,\bF_p)$. Since there is an exact sequence
\[
0\to H^{-1}(K)/p\to H^{-1}(K\otimes_{\bZ_p}\bF_p)\to H^0(K)[p]\to 0,
\]
we see that $H^0(K)[p]=0$. Now considering the following short exact sequence
\[
0\to H^0(K)/p\to H^0(K\otimes_{\bZ_p}\bF_p)\to H^1(K)[p]=0\to 0,
\]
we get $H^0(K)/p=H^0(K\otimes_{\bZ_p}\bF_p)=0$. As $H^0(K)$ is derived $p$-complete, we have $H^0(K)=0$ by derived Nakayama lemma. This means $C(G_K^j,W(C^{\flat}))\xrightarrow{\varphi-1}C(G_K^j,W(C^{\flat}))$ is surjective. Then so is $C(G_K^j,W(C^{\flat})/p^n)\xrightarrow{\varphi-1}C(G_K^j,W(C^{\flat})/p^n)$, which finally shows that the sequence \ref{key-short} is exact.

Now we claim that for $j=0$, the base change functor $\Vect(R)^{\simeq}\to \Vect( A_{\inf}[\frac{1}{\xi}]/p^a\otimes R)^{\varphi=1,\simeq}$ is essentially surjective. Note that we have the following commutative diagram
\begin{equation*}
    \xymatrix@=0.6cm{
    {\rm Ho}(\Vect( R)^{\simeq})\ar[rrrr]^{l_1}\ar[d]^{l_2}&&&&{\rm Ho}(\Vect(A_{\inf}[\frac{1}{\xi}]/p^a\otimes R)^{\varphi=1,\simeq})\ar[d]^{l_3}\\
    \Vect(\pi_0(R))^{\simeq}\ar[rrrr]^{l_4}&&&& \Vect(A_{\inf}[\frac{1}{\xi}]/p^a\otimes \pi_0(R))^{\varphi=1,\simeq}.
    }
\end{equation*}
We know that $l_1$ is fully faithful and $l_2,l_3$ are essentially surjective (by the same arguments as in the proof of Proposition \ref{geometrically finite type}). In fact, $l_2$ is indeed an equivalence. As $R$ is Artinian, the functor $l_4$ is also an equivalence. By Lemma \ref{lift lemma}, we must have $l_1$ is an equivalence. This implies the equivalence
\[
\Vect(R,G_K)^{\simeq}\xrightarrow{\simeq}\Vect((\calO_K)_{\Prism}^{\perf},\calO_{\Prism,R}[\frac{1}{\calI_{\Prism}}])^{\varphi=1,\simeq}.
\]

So we finally get the equivalence
\[
\Vect((\calO_K)_{\Prism}^{\perf},\calO_{\Prism,R}[\frac{1}{\calI_{\Prism}}])^{\varphi=1,\simeq}\simeq \calF_{K,\GL_d}(R).
\]
\end{proof}

\subsubsection{Derived representations with general coefficients}

One consequence of Proposition \ref{derivedperf} and Proposition \ref{derivedrep-laurent} is that we get an equivalence of functors $\calX_{\bar \rho}\simeq \calF_{K,\GL_d,\bar\rho}: {\textbf{Art}}_{/k_f}\to \textbf{Ani}$. Then in order to prove $(\Lan\calX_{\rm EG})^{\#}_{\bar \rho}\simeq \calX_{\bar \rho}$, it remains to prove the equivalence between $(\Lan\calX_{\rm EG})^{\#}_{\bar \rho}$ and $\calF_{K,\GL_d,\bar\rho}$. As a preparation, we recall some results in \cite{Zhu20} concerning derived representations with general coefficients.

\begin{dfn}[{\cite[Definition 2.4.3]{Zhu20}}]
    We define the derived prestack of framed $\GL_d$-valued continuous representations of $G_K$ over $\bZ_p$ as 
    \[
    \calR_{G_K,\GL_d}^{c}:{\textbf{Nilp}_{\bZ_p}}\to {\textbf{Ani}}, \ \ \ A\mapsto \varinjlim_r\Map_{{\textbf{Nilp}_{\bZ_p}^{\Delta}}}(\bZ/p^r[\GL_d^{\bullet}],C_{\rm cts}(G_K^{\bullet},A)).
    \]
    We can also define the derived prestack $ \calR_{G_K,\GL_d/\GL_d}^{c}$\footnote{Different to the definition in \cite{Zhu20}, we do not take sheafification here.} of unframed $\GL_d$-valued continuous representations of $G_K$ over $\bZ_p$ as the geometric realization of 
    \begin{equation}\label{unframed}
        \xymatrix{
        \cdots\ar@<-.9ex>[r]\ar@<-.3ex>[r]\ar@<.3ex>[r]\ar@<.9ex>[r]&\GL_d\times \GL_d\times  \calR_{G_K,\GL_d}^{c}\ar@<-.6ex>[r]\ar@<.0ex>[r]\ar@<.6ex>[r]&\GL_d\times  \calR_{G_K,\GL_d}^{c}\ar@<-.3ex>[r]\ar@<.3ex>[r]&\calR_{G_K,\GL_d}^{c}
        }
    \end{equation}
    in the $\infty$-category $\PreStk$.
\end{dfn}

Still write $G_K=\varprojlim_iG_i$ with $G_i$'s finite quotient groups. 
\begin{prop}[{\cite[Proposition 2.4.7]{Zhu20}}]\label{Zhu-colimit}
    If $A$ is a truncated animated ring in ${\textbf{Nilp}_{\bZ_p}}$, then
    \[
    \calR_{G_K,\GL_d}^{c}(A)\simeq \varinjlim_i\calR_{G_i,\GL_d}^{c}(A)
    \]
\end{prop}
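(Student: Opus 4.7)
The plan is to unwind the definitions and commute a filtered colimit past a totalization. For a fixed $r$ the outer colimit $\varinjlim_r$ on both sides commutes with $\varinjlim_i$, so it suffices to prove, for each $r$, that
\[
\Map_{\textbf{Nilp}_{\bZ_p}^{\Delta}}(\bZ/p^r[\GL_d^{\bullet}],\, C_{\rm cts}(G_K^{\bullet},A)) \;\simeq\; \varinjlim_i \Map_{\textbf{Nilp}_{\bZ_p}^{\Delta}}(\bZ/p^r[\GL_d^{\bullet}],\, C(G_i^{\bullet},A)).
\]

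First I would establish the cosimplicial-degreewise identity $C_{\rm cts}(G_K^{m},A)\simeq \varinjlim_i C(G_i^{m},A)$ in $\textbf{Nilp}_{\bZ_p}$. Since $A$ is truncated (say $n$-truncated) and $p$-nilpotent, its underlying anima has bounded homotopy and the natural topology is essentially discrete; any continuous cochain from the profinite set $G_K^m=\varprojlim_i G_i^m$ into $A$ factors through a finite quotient. This realises $C_{\rm cts}(G_K^{m},A)$ as the filtered colimit of $C(G_i^{m},A)$, cosimplicially in $m$ and compatibly with the $\bZ_p$-algebra structures.

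Next I would replace the infinite totalization by a finite one. For each fixed $m$, the mapping space $\Map(\bZ/p^r[\GL_d^{m}],C(G_i^{m},A))$ embeds into a finite product of copies of $A$ (via evaluation on the generators $x_1,\dots,x_{d^2m}$ together with invertibility of the determinant), and is therefore $n$-truncated whenever $A$ is. By Lemma \ref{truncated-tot} the totalization over $\Delta$ collapses to the finite limit over $\Delta_{\leq n+1}$, both for $C_{\rm cts}(G_K^{\bullet},A)$ and for each $C(G_i^{\bullet},A)$. Since finite limits in $\textbf{Ani}$ commute with filtered colimits, and since $\bZ/p^r[\GL_d^{m}]$ is a compact object in $\textbf{Nilp}_{\bZ_p}^{\leq n}$ (being of finite presentation over $\bZ/p^r$, so that $\Map(\bZ/p^r[\GL_d^m],-)$ preserves filtered colimits when restricted to $n$-truncated animated rings), one obtains
\[
\lim_{\Delta_{\leq n+1}} \varinjlim_i \Map(\bZ/p^r[\GL_d^{m}],C(G_i^{m},A)) \;\simeq\; \varinjlim_i \lim_{\Delta_{\leq n+1}} \Map(\bZ/p^r[\GL_d^{m}],C(G_i^{m},A)),
\]
and chaining these equivalences with the previous two steps yields the claim.

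The main obstacle is the first step: rigorously identifying $C_{\rm cts}(G_K^{m},A)$ with $\varinjlim_i C(G_i^{m},A)$ as cosimplicial animated rings. One must pin down the continuous-cochain construction used in \cite{Zhu20} (for a profinite group valued in a truncated animated ring), check that the cosimplicial face and degeneracy maps are compatible with the colimit presentation, and verify that truncatedness of $A$ is precisely what forces every cochain to factor through a finite quotient of $G_K$. Once this structural compatibility is in place, the remaining arguments are formal manipulations with filtered colimits, compactness, and truncated totalizations.
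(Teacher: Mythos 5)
The paper does not actually prove this proposition: it is quoted verbatim from \cite[Proposition 2.4.7]{Zhu20}, so there is no in-paper argument to compare yours against. Your reconstruction follows what is essentially the standard (and, as far as I can tell, Zhu's own) line: degreewise identify continuous cochains on $G_K^m$ with the filtered colimit of cochains on $G_i^m$, use truncatedness of $A$ to replace the infinite limit computing the mapping space of cosimplicial rings by a finite one, and then commute the finite limit and the compactness of $\bZ/p^r[\GL_d^m]$ past the filtered colimit. The compactness step is fine ($\GL_d^m$ is affine of finite presentation over $\bZ/p^r$, so $\bZ/p^r[\GL_d^m]$ is compact in $\textbf{Nilp}_{\bZ_p}^{\leq n}$ by the criterion recalled in the paper), and you are right to single out the identification $C_{\rm cts}(G_K^m,A)\simeq\varinjlim_i C(G_i^m,A)$ as the point where one must open \cite{Zhu20}: for truncated $p$-nilpotent coefficients this is essentially how Zhu sets up continuous cochains, but it is a definitional input, not something you can derive without fixing that definition.

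The one genuine technical gap is your appeal to Lemma \ref{truncated-tot}. That lemma concerns $\varprojlim_{\Delta}$ of a cosimplicial \emph{anima}, whereas $\Map_{\textbf{Nilp}_{\bZ_p}^{\Delta}}(F^{\bullet},G^{\bullet})$ is an end, i.e.\ a limit over the twisted arrow category $\mathrm{Tw}(\Delta)$ (equivalently, the totalization of a cosimplicial replacement whose $m$-th term is an \emph{infinite} product of levelwise mapping spaces indexed by chains in $\Delta$). Infinite products do not commute with filtered colimits, so you cannot simply truncate the cosimplicial replacement and then pull $\varinjlim_i$ through. What you need instead is the analogous truncation statement for functor categories: if all levelwise mapping spaces are $n$-truncated, then restriction along $\Delta_{\leq n+1}\hookrightarrow\Delta$ induces an equivalence $\Map_{\textbf{Nilp}^{\Delta}}(F^{\bullet},G^{\bullet})\xrightarrow{\simeq}\Map_{\textbf{Nilp}^{\Delta_{\leq n+1}}}(F^{\bullet}|,G^{\bullet}|)$, the target now being a limit over the finite category $\mathrm{Tw}(\Delta_{\leq n+1})$, which does commute with filtered colimits. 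This variant is true and provable by the same connectivity estimates on the skeletal tower as in \cite[Proposition A.1]{HP22}, but it is not the statement of Lemma \ref{truncated-tot} as written, and your proof should either prove it or cite it separately. With that repair, the rest of the argument goes through.
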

In particular, each $\calR_{G_i,\GL_d}^{c}$\footnote{As $G_i$ is finite, we simply have $\calR_{G_i,\GL_d}^{c}=\calR_{G_i,\GL_d}$, i.e. all representations are already continuous. For the definition of $\calR_{G_i,\GL_d}$, see \cite[Definition 2.2.1]{Zhu20}.} turns out to be well-behaved.
\begin{prop}[{\cite[Proposition 2.2.3 and 2.2.11]{Zhu20}}]\label{Zhu}
    $\calR_{G_i,\GL_d}^{c}$ is a derived affine scheme almost of finite presentation over $\bZ_p$.
\end{prop}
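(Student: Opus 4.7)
The plan is to produce an explicit derived affine model for $\calR_{G_i,\GL_d}^{c}$ as a homotopy fibre product of finitely many copies of $\GL_d$, from which both the affineness and the almost finite presentation will fall out.

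First, I would unwind the cosimplicial mapping space in the definition. For a finite group $G_i$ the continuous cochains reduce to ordinary cochains: $C_{\rm cts}(G_i^{\bullet},A)=A^{G_i^{\bullet}}$. If $A\in\textbf{Nilp}_{\bZ_p}$ is $p^a$-torsion, then so is $A^{G_i^{\bullet}}$, so the filtered colimit $\varinjlim_r$ in the definition stabilises at any $r\ge a$. By the $\bZ/p^r[-] \dashv (-)$-adjunction, a map of cosimplicial $\bZ/p^r$-algebras $\bZ/p^r[\GL_d^{\bullet}]\to A^{G_i^{\bullet}}$ in degree $k$ is a map of sets $G_i^k\to \GL_d^k(A)$, and the cosimplicial compatibility forces the whole datum to be a simplicial map from the nerve of $G_i$ into the nerve of $\GL_d(A)$, i.e.\ a (derived) group homomorphism $G_i\to \GL_d(A)$.

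Second, I would choose a finite presentation $G_i=\langle g_1,\dots,g_n\mid w_1,\dots,w_m\rangle$, which exists because $G_i$ is finite. Specifying a derived group homomorphism then amounts to choosing $n$ elements of $\GL_d(A)$ at which the $m$ relations vanish derivedly, which gives a natural equivalence
\[
\calR_{G_i,\GL_d}^{c}\;\simeq\;\GL_d^{n}\,\times^{h}_{\GL_d^{m}}\,\{e\},
\]
where $w:\GL_d^{n}\to \GL_d^{m}$ is the word map $(x_1,\dots,x_n)\mapsto(w_1(\vec x),\dots,w_m(\vec x))$ and $\{e\}\hookrightarrow\GL_d^m$ is the identity section. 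The fact that this candidate model has the correct functor of points is checked by matching it with the $\Tot$ of the cosimplicial mapping space through the bar/cobar adjunction; on truncated animated rings, Lemma \ref{truncated-tot} lets one reduce the $\Tot$ to a finite limit, which tames the bookkeeping.

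Once this identification is in place, both conclusions are essentially formal: the fibre product is $\Spec$ of the derived tensor product $\calO(\GL_d)^{\otimes n}\otimes^{\bL}_{\calO(\GL_d)^{\otimes m}}\bZ_p$, hence derived affine; and since $\GL_d$ is smooth of finite presentation over $\bZ_p$, each tensor factor is of finite presentation over $\bZ_p$, so the derived tensor product is almost of finite presentation, equivalently its functor preserves filtered colimits of truncated animated rings. The main obstacle is the second step above: one must carefully justify that the cosimplicial mapping space model really does compute the same derived space as the fibre product attached to a finite presentation of $G_i$. The cleanest route I see is to avoid choosing a presentation and instead argue directly with the cosimplicial $\Tot$, using that each $G_i^k$ is a finite set (so each term $A^{G_i^k}$ is a finite product of copies of $A$, i.e.\ a finitely presented affine derived scheme over $A$), that the structure maps are determined by the finite group law, and that by Lemma \ref{truncated-tot} the $\Tot$ on truncated animated rings is a finite limit, hence a derived affine scheme almost of finite presentation.
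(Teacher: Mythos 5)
The paper offers no proof of this statement (it is imported from \cite{Zhu20}), so your argument has to stand on its own, and its central step does not. The equivalence $\calR_{G_i,\GL_d}^{c}\simeq \GL_d^{n}\times^{h}_{\GL_d^{m}}\{e\}$ attached to a finite presentation of $G_i$ computes the derived representation scheme of the presentation $2$-complex $X$ of $G_i$, not of $G_i$ itself. Unwinding the cosimplicial definition (as this paper does in Proposition \ref{derivedrep-laurent} and in Step 3 of the proof of Proposition \ref{X-F}, following Zhu's Remark 2.2.2), one finds $\calR_{G_i,\GL_d}^{c}(A)\simeq \Map_{*}(BG_i,\, B\GL_d(A))$, whereas your fibre product is $\Map_{*}(X,\, B\GL_d(A))$. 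These agree only when $X\to BG_i$ is an equivalence, i.e.\ when $G_i$ has cohomological dimension at most $2$; a nontrivial finite group has infinite cohomological dimension, so $\pi_2(X)\neq 0$ and the two functors genuinely differ. Concretely, the tangent complex of your model at a representation $\rho$ over $k_f$ is the two-term complex $\frakg\frakl_d(k_f)^{n}\to\frakg\frakl_d(k_f)^{m}$, concentrated in cohomological degrees $[0,1]$, while the tangent complex of $\calR^{c}_{G_i,\GL_d}$ is built from the full group cochain complex $C^{*}(G_i,\mathrm{ad}\,\rho)$, whose cohomology is nonzero in infinitely many degrees once $p$ divides $|G_i|$. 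This mismatch also explains why the proposition claims only \emph{almost} finite presentation: your candidate $\calO(\GL_d)^{\otimes n}\otimes^{\bL}_{\calO(\GL_d)^{\otimes m}}\bZ_p$ would be of genuine finite presentation (even of finite Tor-amplitude), which is strictly stronger than the stated conclusion and is false here.

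Your fallback --- avoid choosing a presentation, work directly with the cosimplicial mapping space, use that each $G_i^{k}$ is a finite set and that Lemma \ref{truncated-tot} cuts the relevant totalization down to a finite limit over $\Delta_{\leq n+1}$ on $n$-truncated inputs --- is the correct route and is essentially how the result is proved, but as written it stops one step short. For each $n$ you obtain a finite limit of finitely presented derived affines, hence a finitely presented corepresenting animated ring $B_n$ \emph{relative to $n$-truncated rings}; you must still assemble the $B_n$ into a single animated ring corepresenting the functor on all of $\textbf{Nilp}_{\bZ_p}$ (this is where corepresentability, via an adjoint-functor or inverse-limit argument, has to be addressed), and you should record that the finite limit grows with $n$ --- which is precisely why the conclusion is ``almost of finite presentation'' and not ``of finite presentation''.
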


Now we are ready to prove our main result.

\begin{prop}\label{X-F}
    There is an equivalence of functors
    \[
    (\Lan\calX_{\rm EG})^{\#}_{\bar \rho}\simeq\calF_{K,\GL_d,\bar\rho}.
    \]
\end{prop}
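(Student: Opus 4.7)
The plan is to compare both sides via their framed avatars, using Zhu's derived framed representation scheme $\calR^c_{G_K,\GL_d}$ together with Theorem \ref{BIP} as a bridge.

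Near $\bar\rho$, the classical Emerton--Gee stack admits a framing presentation
\[
\calX_{\rm EG,\bar\rho}\simeq |\widehat{\GL}_d^{\bullet}\times \Spf R^{\square}_{\bar\rho}|
\]
in $^{\rm cl}\PreStk$, where $\widehat{\GL}_d$ is the formal completion of $\GL_d$ at the identity and $R^{\square}_{\bar\rho}$ is the classical framed deformation ring. Since $\Lan$ is a left adjoint (to restriction along $\text{Nilp}_{\bZ_p}\hookrightarrow \textbf{Nilp}_{\bZ_p}$), it commutes with all colimits and in particular with geometric realizations, yielding
\[
(\Lan\calX_{\rm EG})_{\bar\rho}\simeq |\widehat{\GL}_d^{\bullet}\times \Lan(\Spf R^{\square}_{\bar\rho})|.
\]
On the other hand, restricting the defining diagram \eqref{unframed} of $\calR^c_{G_K,\GL_d/\GL_d}$ to the fiber at $\bar\rho$ gives $\calF_{K,\GL_d,\bar\rho}\simeq |\widehat{\GL}_d^{\bullet}\times \calR^c_{G_K,\GL_d,\bar\rho}|$ on $\textbf{Art}_{/k_f}$. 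So it suffices to identify $\Lan(\Spf R^{\square}_{\bar\rho})$ with $\calR^c_{G_K,\GL_d,\bar\rho}$ on $\textbf{Art}_{/k_f}$.

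For $A\in \textbf{Art}_{/k_f}$ the maximal ideal is nilpotent, so any map $R^{\square}_{\bar\rho}\to A$ factors through some $R^{\square}_{\bar\rho}/\mathfrak{m}^n$, which is of finite presentation over $\bZ_p$. Hence $\Lan(\Spf R^{\square}_{\bar\rho})(A)$ computes the derived mapping space $\Map_{k_f}(R^{\square}_{\bar\rho},A)$. On the Zhu side, Propositions \ref{Zhu-colimit} and \ref{Zhu} show $\calR^c_{G_K,\GL_d,\bar\rho}$ is pro-corepresented on $\textbf{Art}_{/k_f}$ by the derived framed deformation ring $\widetilde R^{\square}_{\bar\rho}$, and Theorem \ref{BIP} together with \cite{GV18} shows that the local complete intersection property of $R^{\square}_{\bar\rho}$ forces $\widetilde R^{\square}_{\bar\rho}\simeq R^{\square}_{\bar\rho}$ (with no higher homotopy). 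Hence both sides coincide with $\Map_{k_f}(R^{\square}_{\bar\rho},A)$.

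The main obstacle will be the first step: establishing the framing presentation of $\calX_{\rm EG,\bar\rho}$ as an honest colimit in $^{\rm cl}\PreStk$ so that $\Lan$ intertwines with the $\widehat{\GL}_d$-quotient, and checking that taking the fiber at $\bar\rho$ commutes with both the LKE and the geometric realization. The formal ind-nature of $\Spf R^{\square}_{\bar\rho}$ is manageable because on $\textbf{Art}_{/k_f}$ every test object only detects finitely many Artinian quotients of $R^{\square}_{\bar\rho}$, reducing the comparison to finite-presentation situations where Proposition \ref{Zhu} directly applies.
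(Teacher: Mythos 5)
Your outline correctly identifies two of the three ingredients the paper uses: the input of Theorem \ref{BIP} together with \cite{GV18} to identify the classical framed deformation ring with its derived avatar (this is the paper's Step 2), and the passage from the framed to the unframed object by realizing a $\GL_d$-action (the paper's Step 3). However, there is a genuine gap in your first step, and it is precisely the hard point of the proposition. You present $\calX_{\rm EG,\bar\rho}$ as $|\widehat{\GL}_d^{\bullet}\times \Spf R^{\square}_{\bar\rho}|$ and then apply $\Lan$; but this computes $\Lan(\calX_{\rm EG,\bar\rho})$, the left Kan extension of the \emph{fiber} along ${\rm Art}_{/k_f}\hookrightarrow\textbf{Art}_{/k_f}$, whereas the proposition concerns $(\Lan\calX_{\rm EG})_{\bar\rho}$, the fiber of the left Kan extension of the \emph{global} stack along ${\rm Nilp}_{\bZ_p}\hookrightarrow\textbf{Nilp}_{\bZ_p}$. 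For an animated Artinian test ring $R$, the latter is a colimit of $\calX_{\rm EG}(B)$ over \emph{all} discrete rings $B$ mapping to $R$, not only Artinian ones, and there is no a priori reason these two Kan extensions agree. This is exactly the difficulty the introduction flags (``two different left Kan extensions''), and the paper's Step 1 is devoted to resolving it: one uses that $\calX_{\rm EG}$ is limit-preserving to write $\Lan\calX_{\rm EG}$ as a colimit of $h_A$ over finite type $A$, decomposes each $h_{A,\bar\rho}$ into pieces indexed by maximal ideals, shows $h_{A,m_\beta}\simeq h_{\hat A_{(m_\beta)},m_\beta}$ by a cotangent-complex computation, and then proves the comparison map $\iota$ between the colimit over ${\rm Art}^{\rm op}_{\bZ_p}{}_{/\calX_{\rm EG}}$ and the colimit over ${\rm Nilp}_{\bZ_p,\rm ft}^{\rm op}{}_{/\calX_{\rm EG}}$ is an equivalence. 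You acknowledge this interchange as ``the main obstacle'' but do not supply an argument for it, so the proof is incomplete as it stands.

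Two smaller cautions. First, the presentation $\calX_{\rm EG,\bar\rho}\simeq|\widehat{\GL}_d^{\bullet}\times\Spf R^{\square}_{\bar\rho}|$ only makes sense on Artinian local test rings (it is a statement about the formal completion), so it cannot be asserted ``in $^{\rm cl}\PreStk$'' and then fed to the global $\Lan$; moreover geometric realizations of group actions in prestacks require a sheafification to give the expected quotient, which is why the paper needs Lemma \ref{Keykey-lemma}. Second, $\Lan(\Spf R^{\square}_{\bar\rho})(A)$ is $\varinjlim_n\Map(R^{\square}_{\bar\rho}/\mathfrak m^n,A)$ with the mapping space taken in animated rings; identifying this with the derived framed deformation functor is not automatic and is exactly where Propositions \ref{Zhu-colimit}, \ref{Zhu} and the pro-corepresentability argument of the paper's Step 2 enter. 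Your appeal to these results is in the right spirit, but the chain of identifications needs to be run through the classical framed deformation functor $^{\rm cl}\calF^{\square}_{K,\GL_d,\bar\rho}$ as the paper does.
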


\begin{proof}
By Lemma \ref{Keykey-lemma}, we have $(\Lan\calX_{\rm EG})^{\#}_{\bar \rho}\simeq(\Lan\calX_{\rm EG})_{\bar \rho}$. So we just need to focus on proving $(\Lan\calX_{\rm EG})_{\bar \rho}\simeq\calF_{K,\GL_d,\bar\rho}$.
We will proceed in three steps
\begin{enumerate}
    \item prove $(\Lan\calX_{\rm EG})_{\bar \rho}\simeq (\Lan ^{\rm cl}\calR_{G_K,\GL_d/\GL_d}^{c})_{\bar \rho}$;
    \item prove $(\Lan^{\rm cl}\calR_{G_K,\GL_d/\GL_d}^{c})_{\bar \rho}\simeq\calR_{G_K,\GL_d/\GL_d,\bar\rho}^{c}$;
    \item prove $\calR_{G_K,\GL_d/\GL_d,\bar\rho}^{c}\simeq \calF_{K,\GL_d,\bar\rho}$.
\end{enumerate}

\textbf{Step 1}: let us begin with the first step. Let ${\rm Nilp_{\bZ_p,\rm ft}}$ denote the full subcategory of ${\rm Nilp}_{\bZ_p}$ spanned by finite type $\bZ_p$-algebras. Recall that $\calX_{\rm EG}$ is limit-preserving, or geometrically speaking locally of finite type. Then by \cite[Chapter 2, Section 1.6]{GR19} and \cite[Proposition 5.1.2]{CS23} we can write
\[
\calX_{\rm EG}\simeq\varinjlim_{(\Spec(A)\to \calX_{\rm EG})\in{\rm Nilp}_{\bZ_p}^{\rm op}{}_{/{\calX_{\rm EG}}}}h_A\simeq \varinjlim_{(\Spec(A)\to \calX_{\rm EG})\in{\rm Nilp}_{\bZ_p,\rm ft}^{\rm op}{}_{/{\calX_{\rm EG}}}} h_A,
\]
where ${\rm Nilp}_{\bZ_p}^{\rm op}{}_{/{\calX_{\rm EG}}}$, ${\rm Nilp}_{\bZ_p,\rm ft}^{\rm op}{}_{/{\calX_{\rm EG}}}$ are full subcategories of the overcategory ${\rm PreStk}{}_{/{\calX_{\rm EG}}}$,   the prestack $h_A$ is representable by $A$ and the colimit is taken in the $\infty$-category $^{\rm cl}\rm PreStk$. Then we have  
\[
\Lan\calX_{\rm EG}\simeq \varinjlim_{{\rm Nilp}_{\bZ_p,\rm ft}^{\rm op}{}_{/{\calX_{\rm EG}}}} h_A  
\]
where the colimit is taken in the $\infty$-category $\PreStk$ of derived prestacks, as the left Kan extension along ${\rm Nilp}_{\bZ_p}\to \textbf{Nilp}_{\bZ_p}$ sends $h_A$ to $h_A$. Now the functor $(\Lan\calX_{\rm EG})_{\bar \rho}:\textbf{Art}_{/k_f}\to \textbf{Ani}$ is defined as 
\[
(\Lan\calX_{\rm EG})_{\bar \rho}(R\to k_f):=\Lan\calX_{\rm EG}(R)\times_{\calX_{\rm EG}(k_f),\bar\rho}*\simeq (\varinjlim_{{\rm Nilp}_{\bZ_p,\rm ft}^{\rm op}{}_{/{\calX_{\rm EG}}}}h_A(R))\times_{\calX_{\rm EG}(k_f)}*_{\bar \rho}.
\]

Let $H$ be the set of connected components of $\calX_{\rm EG}(k_f)$ and choose a representative $*_{\alpha}$ from each connected component. Then we can write $\calX_{\rm EG}(k_f)\simeq\bigsqcup_{\alpha\in H}[*_{\alpha}/{{\rm Aut}(*_{\alpha})}]$. For any $\bar \rho_A:h_A\to \calX_{\rm EG}$ and $(R\to k_f)\in \textbf{Art}_{/k_f}$, we then can write
\[
h_A(R)\simeq\bigsqcup_{\alpha\in H}h_{A,\alpha}(R),
\]
where $h_{A,\alpha}(R)$ is the preimage of $[*_{\alpha}/{{\rm Aut}(*_{\alpha})}]$. In particular, there is $h_{A,\bar\rho}(R)$ corresponding to the fixed residual representation $\bar\rho$. As a result, we see that
\[
(\Lan\calX_{\rm EG})_{\bar \rho}(R\to k_f)\simeq (\bigsqcup_{\alpha\in H}\varinjlim_{{\rm Nilp}_{\bZ_p,\rm ft}^{\rm op}{}_{/{\calX_{\rm EG}}}}h_{A,\alpha}(R))\times_{\calX_{\rm EG}(k_f)}*_{\bar \rho}\simeq (\varinjlim_{{\rm Nilp}_{\bZ_p,\rm ft}^{\rm op}{}_{/{\calX_{\rm EG}}}}h_{A,\bar\rho}(R))\times_{\calX_{\rm EG}(k_f)}*_{\bar \rho}.
\]

Now we want to further study each $h_{A,\bar\rho}(R)$. We first look at $h_{A,\bar\rho}(k_f)$. Note that each element in $h_{A,\bar\rho}(k_f)$ determines a maximal ideal of $A$ as $k_f$ is a finite field. Let $T_{\bar\rho_A}$ denote the set of maximal ideals appearing  in this way and we can write $h_{A,\bar\rho}(k_f)=\bigsqcup_{m_\beta\in T_{\bar\rho_A}}h_{A,m_\beta}(k_f)$, such that all elements in $h_{A,m_\beta}(k_f)$ induce the same maximal ideal $m_{\beta}$. For each $m_{\beta}$, let $A_{(m_{\beta})}$ be the localization of $A$ at the maximal ideal $m_{\beta}$ and $\hat A_{(m_{\beta})}$ be its completion. Then we have $h_{A,m_\beta}(k_f)=h_{A_{(m_{\beta})},m_\beta}(k_f)=h_{\hat A_{(m_{\beta})},m_\beta}(k_f)$.

Now for general $(R\to k_f)\in \textbf{Art}_{/k_f}$, we have $h_{A,\bar \rho}(R)\simeq \bigsqcup_{m_{\beta}\in T_{\bar\rho_A}}h_{A,m_{\beta}}(R)$, which is induced by the stratification $h_{A,\bar\rho}(k_f)=\bigsqcup_{m_\beta\in T_{\bar\rho_A}}h_{A,m_\beta}(k_f)$ and the natural map $h_{A,\bar\rho}(R)\to h_{A,\bar\rho}(k_f)$. Now we claim that $h_{A,m_{\beta}}(R)\simeq h_{A_{(m_{\beta})},m_{\beta}}(R)\simeq h_{\hat A_{(m_{\beta})},m_{\beta}}(R)$. Let $\gamma:A\to k_f$ be a map in $h_{A,m_{\beta}}(k_f)$. Then we can define a functor $h_{A,\gamma}:{\textbf{Art}}_{/k_f}\to \textbf{Ani}$ by
\[
h_{A,\gamma}(R):={\rm Fib}_{\gamma}(h_A(R)\to h_A(k_f)).
\]
Similarly, we can define $h_{A_{(m_{\beta})},\gamma}$ and $h_{\hat A_{(m_{\beta})},\gamma}$. By \cite[Remark 6.2.9]{Lur09}, if we want to prove $h_{\hat A_{(m_{\beta})},\gamma}\simeq h_{A_{(m_{\beta})},\gamma}\simeq h_{A,\gamma}$, we just need to compare their tangent complexes. By \cite[Proposition 6.2.10 (2)]{Lur09}, the tangent complexes are the duals of $L_{\hat A_{(m_{\beta})}/\bZ}\otimes^{\bL}_{\hat A_{(m_{\beta})}}k_f$, $L_{ A_{(m_{\beta})}/\bZ}\otimes^{\bL}_{A_{(m_{\beta})}}k_f$, $L_{A/\bZ}\otimes^{\bL}_{A}k_f$ respectively. Now consider the natural maps $\bZ\to A\to A_{(m_{\beta})}$. There is an associated exact triangle
\[
L_{A/\bZ}\otimes^{\bL}_AA_{(m_{\beta})}\to L_{A_{(m_{\beta})}/\bZ}\to L_{A_{(m_{\beta})}/A},
\]
which induces an exact triangle
\[
L_{A/\bZ}\otimes^{\bL}_Ak_f\to L_{A_{(m_{\beta})}/\bZ}\otimes^{\bL}_{A_{(m_{\beta})}}k_f\to L_{A_{(m_{\beta})}/A}\otimes^{\bL}_{A_{(m_{\beta})}}k_f.
\]
Now we consider the exact triangle induced by $A\to A_{(m_{\beta})}\to k_f$
\[
L_{A_{(m_{\beta})}/A}\otimes^{\bL}_{A_{(m_{\beta})}}k_f\to L_{k_f/A}\to L_{k_f/A_{(m_{\beta})}}.
\]
As $A\to A_{(m_{\beta})}$ is flat, we then have $L_{k_f/A}\otimes^{\bL}_AA_{(m_{\beta})}\simeq L_{k_f\otimes_AA_{(m_{\beta})}/A_{(m_{\beta})}}$. Note that $k_f\otimes_AA_{(m_{\beta})}\simeq k_f$ and $L_{k_f/A}\otimes^{\bL}_AA_{(m_{\beta})}\simeq L_{k_f/A}\otimes^{\bL}_{k_f}k_f\otimes^{\bL}_AA_{(m_{\beta})}\simeq L_{k_f/A}$. Then we have $L_{k_f/A}\simeq L_{k_f/A_{(m_{\beta})}}$, which implies $L_{A_{(m_{\beta})}/A}\otimes^{\bL}_{A_{(m_{\beta})}}k_f\simeq 0$. So we get $L_{A/\bZ}\otimes^{\bL}_Ak_f\simeq L_{A_{(m_{\beta})}/\bZ}\otimes^{\bL}_{A_{(m_{\beta})}}k_f$. By using the facts that $A_{(m_{\beta})}\to \hat A_{(m_{\beta})}$ is flat and $k_f\otimes_{A_{(m_{\beta})}}\hat A_{(m_{\beta})}\simeq k_f$, we can run the same argument to deduce $L_{ A_{(m_{\beta})}/\bZ}\otimes^{\bL}_{A_{(m_{\beta})}}k_f\simeq L_{\hat A_{(m_{\beta})}/\bZ}\otimes^{\bL}_{\hat A_{(m_{\beta})}}k_f$. So we have proved $h_{\hat A_{(m_{\beta})},\gamma}\simeq h_{A_{(m_{\beta})},\gamma}\simeq h_{A,\gamma}$. This implies $h_{\hat A_{(m_{\beta})},m_{\beta}}(R)\simeq h_{A_{(m_{\beta})},m_{\beta}}(R)\simeq h_{A,m_{\beta}}(R)$.

Now we have a decomposition
\[
h_{A,\bar\rho}(R)\simeq \bigsqcup_{m_{\beta}\in T_{\bar \rho_A}}h_{\hat A_{(m_{\beta})},m_{\beta}}(R).
\]
Note that $\hat A_{(m_{\beta})}$ is a local ring. So we can also write
\[
h_{A,\bar\rho}(R)\simeq \bigsqcup_{m_{\beta}\in T_{\bar \rho_A}}h_{\hat A_{(m_{\beta})},\bar \rho}(R).
\]

Let $\hat A_{(m_{\beta}),n}$ be the quotient $\hat A_{(m_{\beta})}/m_{\beta}^n$ for any $n\geq 1$. We then have
\[
h_{\hat A_{(m_{\beta})},\bar \rho}(R)\simeq \varinjlim_nh_{\hat A_{(m_{\beta}),n},\bar \rho}(R) \ \ \text{and} \ \ h_{A,\bar \rho}(R)\simeq \bigsqcup_{m_{\beta}\in T_{\bar \rho_A}}\varinjlim_nh_{\hat A_{(m_{\beta}),n},\bar \rho}(R) .
\]

Now we want to investigate $\varinjlim_{{\rm Nilp}_{\bZ_p,\rm ft}^{\rm op}{}_{/{\calX_{\rm EG}}}}h_{A,\bar\rho}(R)$. Let $\bar \rho_A,\bar\rho_B\in {\rm Nilp}_{\bZ_p,\rm ft}^{\rm op}{}_{/{\calX_{\rm EG}}}$ be two elements in the overcategory and $f:\bar\rho_A\to \bar\rho_B$ be a 1-morphism. Then we have an induced map $f: h_{A,\bar\rho}(k_f)\to h_{B,\bar\rho}(k_f)$. In particular, this gives a map of sets of maximal ideals $f^{-1}: T_{\bar\rho_A}\to T_{\bar\rho_B}$. For each $m_{\beta,A}\in T_{\bar\rho_A}$, we have induced maps $f_{m_{\beta,A},n}:\hat B_{f^{-1}(m_{\beta,A}),n}\to \hat A_{m_{\beta,A},n}$. So the map $f:h_{A,\bar\rho}(R)\to h_{B,\bar\rho}(R)$ is actually induced by each $f_{m_{\beta,A},n}$, i.e.
\begin{equation}\label{decomposition of maps}
f=\bigsqcup_{m_{\beta,A}\in T_{\bar\rho_A}}\varinjlim_n f_{m_{\beta,A},n}:\bigsqcup_{m_{\beta,A}\in T_{\bar\rho_A}}\varinjlim_nh_{\hat A_{(m_{\beta,A}),n},\bar\rho}(R)\to \bigsqcup_{m_{\beta,B}\in T_{\bar\rho_B}}\varinjlim_nh_{\hat B_{(m_{\beta,B}),n},\bar \rho}(R). 
\end{equation}

Let ${\rm Art}^{\rm op}_{\bZ_p}{}_{/{\calX_{\rm EG}}}$ be the subcategory of ${\rm Nilp}_{\bZ_p,\rm ft}^{\rm op}{}_{/{\calX_{\rm EG}}}$ spanned by finite Artinian local $\bZ_p$-algebras. So we have a natural map
\[
\iota:\varinjlim_{(\Spec(C)\to\calX_{\rm EG})\in{\rm Art}^{\rm op}_{\bZ_p}{}_{/{\calX_{\rm EG}}}}h_{C,\bar\rho}(R)\to \varinjlim_{(\Spec(A)\to\calX_{\rm EG})\in{\rm Nilp}_{\bZ_p,\rm ft}^{\rm op}{}_{/{\calX_{\rm EG}}}}h_{A,\bar\rho}(R).
\]

We claim that $\iota$ is an equivalence. We prove this by constructing its quasi-inverse. For any $\bar \rho_A:h_A\to \calX_{\rm EG}$ in ${\rm Nilp}_{\bZ_p,\rm ft}^{\rm op}{}_{/{\calX_{\rm EG}}}$, we know that $h_{A,\bar\rho}(R)\simeq \bigsqcup_{m_{\beta}\in T_{\bar\rho_A}}\varinjlim_nh_{\hat A_{(m_{\beta}),n},\bar\rho}(R)$. Moreover as each $\hat A_{(m_{\beta}),n}$ is a finite Artinian local $\bZ_p$-algebra, there is a natural map 
\[
\bigsqcup_{m_{\beta}\in T_{\bar\rho_A}}\varinjlim_nh_{\hat A_{(m_{\beta}),n},\bar\rho}(R)\to \varinjlim_{(\Spec(C)\to\calX_{\rm EG})\in{\rm Art}^{\rm op}_{\bZ_p}{}_{/{\calX_{\rm EG}}}}h_{C,\bar\rho}(R)
\]
which is unique up to a contractible space of choices. Then we get a natural map $i_A: h_{A,\bar\rho}(R)\to \varinjlim_{{\rm Art}^{\rm op}_{\bZ_p}{}_{/{\calX_{\rm EG}}}}h_{C,\bar\rho}(R)$. By \ref{decomposition of maps}, this fits into a well-defined map
\[
\tilde\iota:\varinjlim_{(\Spec(A)\to\calX_{\rm EG})\in{\rm Nilp}_{\bZ_p,\rm ft}^{\rm op}{}_{/{\calX_{\rm EG}}}}h_{A,\bar\rho}(R)\to \varinjlim_{(\Spec(C)\to\calX_{\rm EG})\in{\rm Art}^{\rm op}_{\bZ_p}{}_{/{\calX_{\rm EG}}}}h_{C,\bar\rho}(R).
\]
It is easy to see that $\tilde\iota\circ \iota\simeq id$ by the construction of $\tilde\iota$. Now we consider the composite $\iota\circ \tilde\iota$. Let $j_A$ denote the natural map $h_{A,\bar\rho}(R)\to \varinjlim_{{\rm Nilp}_{\bZ_p,\rm ft}^{\rm op}{}_{/{\calX_{\rm EG}}}}h_{A,\bar\rho}(R)$. Then we need to see that $\iota\circ\tilde\iota\circ j_A\simeq j_A$ functorially. Note that $\tilde\iota\circ j_A$ is just $\bigsqcup_{m_{\beta}\in T_{\bar\rho_A}}\varinjlim_n i_{\hat A_{(m_{\beta}),n}}$. Then we see that $\iota\circ\tilde\iota\circ j_A$ is just $\bigsqcup_{m_{\beta}\in T_{\bar\rho_A}}\varinjlim_n j_{\hat A_{(m_{\beta}),n}}$, which is the same as $j_A$ up to a contractible space of choices. We then obtain $\iota\circ\tilde\iota\simeq id$. So the natural map $\iota$ is an equivalence.

Now go back to the goal of Step 1. As $^{\rm cl}\calR_{G_K,\GL_d/\GL_d}^{c}$ is also limit preserving, for any $(R\to k_f)\in \textbf{Art}_{/k_f}$, we also have
\[
(\Lan ^{\rm cl}\calR_{G_K,\GL_d/\GL_d}^{c})_{\bar \rho}(R\to k_f)=(\varinjlim_{{\rm Art}^{\rm op}_{\bZ_p}{}_{/{^{\rm cl}\calR_{G_K,\GL_d/\GL_d}^{c}}}}h_{C,\bar\rho}(R))\times_{^{\rm cl}\calR_{G_K,\GL_d/\GL_d}^{c}(k_f)}*_{\bar \rho}
\]
As for finite coefficient rings, the \'etale $(\varphi,\Gamma)$-modules are the same as Galois representations, we have 
\[
{\rm Art}^{\rm op}_{\bZ_p}{}_{/{^{\rm cl}\calR^{c}_{G_K,\GL_d/\GL_d}}}\simeq {\rm Art}^{\rm op}_{\bZ_p}{}_{/{\calX_{\rm EG}}}.
\]
by \cite[Lemma 2.4.4]{Zhu20}. So we are done.

\textbf{Step 2}: now we carry out the second step. We first claim that $(\Lan ^{\rm cl}\calR_{G_K,\GL_d}^{c})_{\bar \rho}\simeq\calR_{G_K,\GL_d,\bar\rho}^{c}$. By Proposition \ref{Zhu-colimit} and \ref{Zhu}, we see that $^{\rm cl}\calR_{G_K,\GL_d}^{c}\simeq \varinjlim_ih_{R_i}$ where each $R_i$ is a finite type $\bZ_p$-algebra. So $\Lan ^{\rm cl}\calR_{G_K,\GL_d}^{c}\simeq \varinjlim_ih_{R_i}$ where the colimit is taken in $\PreStk$. For any $(R\to k_f)\in \textbf{Art}_{/k_f}$, we then have
\[
(\Lan ^{\rm cl}\calR_{G_K,\GL_d}^{c})_{\bar \rho}(R\to k_f)\simeq \varinjlim_ih_{R_i}(R)\times_{\varinjlim_ih_{R_i}(k_f)}*_{\bar \rho}\simeq \varinjlim_ih_{R_i,\bar \rho}(R).
\]
By the same argument as in Step 1, we have $h_{R_i,\bar\rho}(R)\simeq h_{\hat R_{i,(m_i)},\bar\rho}(R)$ where $\hat R_{i,(m_i)}$ is the completion of the localization of $R_i$ at the maximal ideal determined by $\bar\rho$. The functor $h_{\hat R_{i,(m_i)},\bar\rho}$ is then pro-corepresentable by $\{\hat R_{i,(m_i),n}:=\hat R_{i,(m_i)}/m_i^n\}_n$. So we get the functor $(\Lan ^{\rm cl}\calR_{G_K,\GL_d}^{c})_{\bar \rho}$ is pro-corepresentable by $\{\hat R_{i,(m_i),n}\}_{i,n}$. In particular, $(\Lan ^{\rm cl}\calR_{G_K,\GL_d}^{c})_{\bar \rho}$ is the left Kan extension of $\varinjlim_{i,n}h_{\hat R_{i,(m_i),n}}: {\rm Art}_{/k_f}\to \textbf{Ani}$ along the inclusion ${\rm Art}_{/k_f}\to \textbf{Art}_{/k_f}$. Note that $\varinjlim_{i,n}h_{\hat R_{i,(m_i),n}}: {\rm Art}_{/k_f}\to \textbf{Ani}$ is just the classical framed deformation functor by \cite[Lemma 2.4.4]{Zhu20}, which is then equivalent to $^{\rm cl}\calF_{K,\GL_d,\bar\rho}^{\square}$. By Theorem \ref{BIP} and \cite[Lemma 7.5]{GV18}, we see that $\calF_{K,\GL_d,\bar\rho}^{\square}$ is also the left Kan extension of $^{\rm cl}\calF_{K,\GL_d,\bar\rho}^{\square}$ along the inclusion ${\rm Art}_{/k_f}\to \textbf{Art}_{/k_f}$. This implies we have $(\Lan ^{\rm cl}\calR_{G_K,\GL_d}^{c})_{\bar \rho}$ is equivalent to $\calF_{K,\GL_d,\bar\rho}^{\square}$. The latter by definition is just $\calR_{G_K,\GL_d,\bar\rho}^{c}$ (cf. \cite[Remark 2.2.2]{Zhu20}). By Lemma \ref{Keykey-lemma} and (the same proof of ) Lemma \ref{reduce to finite field}, we have $(\Lan ^{\rm cl}\calR_{G_K,\GL_d}^{c})^{\#,\nil}\simeq \calR_{G_K,\GL_d}^{c,\nil}$.

Now we are going to prove $(\Lan ^{\rm cl}\calR_{G_K,\GL_d/\GL_d}^{c})_{\bar \rho}\simeq \calR_{G_K,\GL_d/\GL_d,\bar \rho}^{c}$. By the fact that left Kan extension commutes with colimits and the definition of $\calR_{G_K,\GL_d/\GL_d}^{c}$ as the geometric realization of the following simplicial prestacks

\begin{equation}\label{classical unframed}
        \xymatrix{
        \cdots\ar@<-.9ex>[r]\ar@<-.3ex>[r]\ar@<.3ex>[r]\ar@<.9ex>[r]&\GL_d\times \GL_d\times  \calR_{G_K,\GL_d}^{c}\ar@<-.6ex>[r]\ar@<.0ex>[r]\ar@<.6ex>[r]&\GL_d\times \calR_{G_K,\GL_d}^{c}\ar@<-.3ex>[r]\ar@<.3ex>[r]&\calR_{G_K,\GL_d}^{c},
        }
    \end{equation}
It suffices to prove for any $(R\to k_f)\in \textbf{Art}_{/k_f}$, we have $(\Lan ^{\cl}\calR_{G_K,\GL_d}^{c})(R)\simeq \calR_{G_K,\GL_d}^{c}(R)$. But this follows from $(\Lan ^{\cl}\calR_{G_K,\GL_d}^{c})_{\bar \rho}\simeq \calR_{G_K,\GL_d,\bar \rho}^{c}$ and the same arguments in the first two paragraphs of Lemma \ref{Keykey-lemma}.
    


\textbf{Step 3}. Now it remains to prove the third step: $\calR_{G_K,\GL_d/\GL_d,\bar\rho}^{c}\simeq \calF_{K,\GL_d,\bar\rho}$. This follows directly from the definition of both sides. More precisely, the framed deformation functor $\calF^{\square}_{K,\GL_d}$ (cf. \cite[Definition 5.4(i)]{GV18}) is equivalent to the framed deformation functor $\calR^{c}_{G_K,\GL_d}$ by \cite[Remark 2.2.2]{Zhu20}. By \cite[Definition 5.4(i)]{GV18}, for any animated ring $R$, we have the pullback diagram
\begin{equation*}
    \xymatrix@=0.6cm{
    \calF^{\square}_{K,\GL_d}(R)\ar[r]^{}\ar[d]^{}&{*}\ar[d]^{}\\
    \calF_{K,\GL_d}(R)\ar[r]^{}&|\GL_d(R)|.
    }
\end{equation*}
The base change of the simplicial anima $\GL_d^{\bullet}(R)$ along $\calF_{K,\GL_d}\to|\GL_d(R)|$ gives a simplial anima, which is the same as the simplicial anima in \cite[Definition 2.2.14]{Zhu20}. The homotopy colimit of this simplicial anima is just $\calF_{K,\GL_d}(R)$ by the fact that the colimit in the $\infty$-topos $\textbf{Ani}$ is universal. This shows $\calF_{K,\GL_d}(R)$ is equivalent to $\calR^{c}_{G_K,\GL_d/\GL_d}(R)$ (see also \cite[Remark 2.2.15]{Zhu20}.

\end{proof}

The following lemma has been used above and enables us to deal with sheafification.

\begin{lem}\label{Keykey-lemma}
    Let $\calY$ be a derived prestack which satisifies the \'etale sheaf property when restricted to $\textbf{Nilp}_{\bZ_p}^{<\infty}
    $. Assume $\calY$ admits a deformation theory and for all points $\bar \rho:\Spec(k_f)\to \calY$ with $k_f$ a finite field, there is an equivalence of functors $(\Lan^{\rm cl}\calY)_{\bar\rho}\simeq \calY_{\bar \rho}$. Then we have $(\Lan^{\rm cl}\calY)^{\#}_{\bar\rho}\simeq \calY_{\bar \rho}$ for all $\bar \rho$.
\end{lem}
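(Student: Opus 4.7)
The plan is to reduce directly to the hypothesis by computing $(\Lan^{\rm cl}\calY)^{\#}_{\bar\rho}(R)$ via the sheafification formula for $R\in\textbf{Art}_{/k_f}$. Since $\calY$ is already an \'etale sheaf on $\textbf{Nilp}_{\bZ_p}^{<\infty}$, the natural transformation $\Lan^{\rm cl}\calY\to\calY$ factors through the sheafification by its universal property, yielding $\Lan^{\rm cl}\calY\to (\Lan^{\rm cl}\calY)^{\#}\to \calY$. Taking fibers at $\bar\rho$ gives
\[
(\Lan^{\rm cl}\calY)_{\bar\rho}\to (\Lan^{\rm cl}\calY)^{\#}_{\bar\rho}\to \calY_{\bar\rho},
\]
whose composite is the equivalence provided by the hypothesis, so it suffices to prove the second map is an equivalence on every $R\in\textbf{Art}_{/k_f}$.

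Write $\calG:=\Lan^{\rm cl}\calY$. For $R\in\textbf{Art}_{/k_f}$, the truncation $\pi_0(R)$ is a classical Artinian local ring with residue field $k_f$, hence henselian, so every \'etale $R$-algebra lifts uniquely from an \'etale $\pi_0(R)$-algebra and decomposes as a finite product of Artinian local rings whose residue fields are finite separable extensions of $k_f$. The covers $R\to R_{k'}$, with $k'/k_f$ finite separable and $R_{k'}$ the unique (animated) Artinian local \'etale $R$-algebra with residue field $k'$, are therefore cofinal among \'etale covers of $\Spec(R)$, and the sheafification formula gives
\[
\calG^{\#}(R)\simeq \colim_{k'/k_f}\Tot\bigl(\calG(R_{k'}^{\bullet})\bigr),
\]
where $R_{k'}^{\bullet}$ is the \v{C}ech nerve of $R\to R_{k'}$. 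Each term $R_{k'}^n$ is a finite product of Artinian local animated rings with finite residue fields, so decomposing along residue components and applying the hypothesis to each yields $\calG_{\bar\rho'}(R_{k'}^n)\simeq \calY_{\bar\rho'}(R_{k'}^n)$ for the induced finite-field points $\bar\rho'$.

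Now ${\rm Fib}_{\bar\rho}\bigl(-(R)\to -(k_f)\bigr)$ is a finite limit in $\textbf{Ani}$, so it commutes with filtered colimits and with totalizations; combining this with the previous step yields
\[
\calG^{\#}_{\bar\rho}(R)\simeq \colim_{k'/k_f}\Tot\bigl(\calG_{\bar\rho'}(R_{k'}^{\bullet})\bigr)\simeq \colim_{k'/k_f}\Tot\bigl(\calY_{\bar\rho'}(R_{k'}^{\bullet})\bigr)\simeq \calY^{\#}_{\bar\rho}(R)\simeq \calY_{\bar\rho}(R),
\]
the last equivalence because $\calY$ is itself an \'etale sheaf on truncated animated rings. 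The main obstacle will be rigorously justifying the cofinality of the covers $\{R\to R_{k'}\}$ in the $\infty$-categorical hypercover colimit defining sheafification (one either restricts to \v{C}ech sheafification, which is legitimate given the finite cohomological dimension of a finite field, or invokes a hypercompleteness input), together with confirming that the fiber over $\bar\rho$ genuinely distributes through the sheafification colimit at the level of derived prestacks valued in $\textbf{Ani}$ rather than in sets. Once these points are settled, the calculation above is essentially formal.
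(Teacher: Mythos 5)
Your strategy differs from the paper's: you compute $(\Lan^{\rm cl}\calY)^{\#}(R)$ explicitly via the \v Cech formula for sheafification, whereas the paper shows that $\Lan^{\rm cl}\calY$ already agrees with the sheaf $\calY$ on the full subcategory of finite products of Artinian local animated rings with finite residue fields (a subcategory closed under \'etale covers and their fiber products), so that sheafification changes nothing at such rings and no explicit formula --- and hence no \v Cech-versus-hypercover discussion --- is needed. The route you propose could be made to work, but as written it has a genuine gap at exactly the point where the paper does its real work: the sentence ``decomposing along residue components and applying the hypothesis to each yields $\calG_{\bar\rho'}(R_{k'}^n)\simeq\calY_{\bar\rho'}(R_{k'}^n)$'' conceals two nontrivial claims. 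First, the hypothesis only gives an equivalence of \emph{fibers} over finite-field points, while your termwise comparison of totalizations needs an equivalence of the \emph{full values} $\calG(S)\simeq\calY(S)$ for $S$ Artinian local with finite residue field $k'$; this is obtained by reassembling both sides from their fibers over all points of $\calY(k')\simeq\bigsqcup_\alpha[*_\alpha/\Aut(*_\alpha)]$ using universality of colimits in $\textbf{Ani}$, which is the content of the paper's first paragraph and which you skip. Second, and more seriously, to ``decompose along residue components'' of a \v Cech term $R_{k'}^n\simeq\prod_jS_j$ you need $\calG(\prod_jS_j)\simeq\prod_j\calG(S_j)$. A left Kan extension of a presheaf does \emph{not} preserve finite products in general; the paper proves this for $\Lan^{\rm cl}\calY$ by observing that $^{\rm cl}\calY$, being product-preserving, is a sifted colimit of representables, hence so is its left Kan extension. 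Without this step your termwise identification on the \v Cech nerve is unjustified.

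There is also a typing problem in your displayed computation: $\calG_{\bar\rho'}(R_{k'}^{n})$ is not defined, since the \v Cech terms $R_{k'}^{n}$ carry no map to $k_f$ (they map to products of extensions of $k_f$), and the fiber over $\bar\rho\in\calG(k_f)$ cannot be distributed termwise over the totalization because the cosimplicial diagram does not map compatibly to $\calG(k_f)$. The clean repair, which is what the paper does, is to prove the equivalence of full values $\Lan^{\rm cl}\calY(S)\simeq\calY(S)$ on all finite products of Artinian local animated rings with finite residue fields, deduce $(\Lan^{\rm cl}\calY)^{\#}(R)\simeq\Lan^{\rm cl}\calY(R)\simeq\calY(R)$ for $R\in\textbf{Art}_{/k_f}$ since sheafification only interrogates values on that subcategory, and only then pass to fibers over $\bar\rho$. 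Your cofinality claim for the covers $R\to R_{k'}$ and the commutation of filtered colimits with finite limits are fine, but they become unnecessary once the argument is organized this way.
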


\begin{proof}
    Write $\calY(k_f)$ as $\bigsqcup_{\alpha\in H}[*_{\alpha}/({\rm Aut}(*_{\alpha})]$ where $H$ is the set of connected components of $\calY(k_f)$. By the assumption, for any $(R\to k_f)\in {{\textbf{Art}}_{/k_f}}$, we have $(\Lan^{\rm cl}\calY)_{*_{\alpha}}(R\to k_f)\simeq \calY_{*_\alpha}(R\to k_f)$. Consider the following pullback diagram
    \begin{equation*}
    \xymatrix@=0.6cm{
    \calY_{*\alpha}(R)\ar[r]^{}\ar[d]^{}& [*_{\alpha}/({\rm Aut}(*_{\alpha})]\ar[d]^{}\\
    \calY(R)\ar[r]^{}& \calY(k_f).
    }
\end{equation*}

Note that $[*_{\alpha}/({\rm Aut}(*_{\alpha})]$ is the geometric realization (i.e. the homotopy colimit) of simplicial anima corresponding to the automorphism group ${\rm Aut}(*_{\alpha})$. We write $[*_{\alpha}/({\rm Aut}(*_{\alpha})]=\varinjlim{\rm Aut}(*_{\alpha})^{\bullet}$. Base changing along $\calY_{*\alpha}(R)\to [*_{\alpha}/({\rm Aut}(*_{\alpha})]$, we get another simplicial anima $\calY_{*_\alpha}(R)^{\bullet}$. As the colimit in $\infty$-topos $\textbf{Ani}$ is universal, i.e. pullback preserves colimits, we have $\calY_{*_{\alpha}}(R)\simeq \varinjlim \calY_{*\alpha}(R)^{\bullet}$. The same is true for $\Lan^{\rm cl}\calY$. In particular, we get $\Lan^{\rm cl}\calY(R)\simeq \calY(R)$ for any Artinian local ring with finite residue field. 

Now consider any \'etale map $R_1\to R_2$ with $R_1$ an Artinian local ring with finite residue field $k_1$. Then $\pi_0(R_2)$ is also an Artinian ring and $\pi_i(R_2)=\pi_0(R_2)\otimes_{\pi_0(R_1)}\pi_i(R_1)$ is a finite $\pi_0(R_2)$-module for each $i$. By definition, we know $R_2$ is also Artinian. In particular, $R_2$ can be written uniquely as a finite product $\prod_jR_{2j}$ of Artinian local rings with finite residue fields (see  \cite[Section 6.2]{Lur09}). As $\calY$ is an \'etale stack when restricted to truncated animated rings, we have $\calY(R_2)=\prod_j\calY(R_{2j})$. We claim that this is also true for $\Lan^{\rm cl}\calY$. In fact, as $^{\rm cl}\calY$ is an \'etale stack, we have $^{\rm cl}\calY:{\rm Nilp}_{\bZ_p}\to \textbf{Ani}$ preserves finite products. This is equivalent to that $^{\rm cl}\calY$ can be written as a sifted colimit of representable sheaves, i.e. $^{\rm cl}\calY\simeq \varinjlim_ih_{A_i}$. As left Kan extension preserves representable sheaves, we have $\Lan^{\rm cl}\calY\simeq \varinjlim_ih_{A_i}$ where the colimit is taken in $\PreStk$. Again, by using that a functor preserving finite products is equivalent to that it is a sifted colimit of representables, we conclude $\Lan^{\rm cl}\calY$ preserves finite products. Hence we have $\Lan^{\rm cl}\calY(R_2)\simeq \calY(R_2)$.

Now by the description of the sheafification functor \cite[Remark 6.2.2.12]{Lur09a} (and the proof of \cite[Proposition 6.2.2.7]{Lur09a}), we have $(\Lan^{\rm cl}\calY)^{\#}(R)\simeq (\Lan^{\rm cl}\calY)(R)$ for any Artinian local rings $R$ with finite residue fields as the sheafification process only involves \'etale coverings of such rings (see also \cite[Chapter 2, Section 2.5.2]{GR19}). In particular, this means $(\Lan^{\rm cl}\calY)^{\#}_{\bar\rho}\simeq \calY_{\bar \rho}$.
\end{proof}

Finally, we can prove Theorem \ref{main2}.

\begin{proof}[Proof of Theorem \ref{main2}]
    Just combine Proposition \ref{derivedperf}, Proposition \ref{derivedrep-laurent}, Proposition \ref{X-F}, Lemma \ref{Keykey-lemma} and Lemma \ref{reduce to finite field}.
\end{proof}

\subsection{Derived \'etale $(\varphi,\Gamma)$-modules}\label{3.4}

Throughout this section, we assume $K$ is absolutely unramified. We will give an ad-hoc definition of derived \'etale $(\varphi,\Gamma)$-modules and prove they are equivalent to derived Laurent $F$-crystals.

Recall there is the cyclotomic prism $(\bar A_K^+,([p]_q)$ (cf. Definition \ref{cyl-prism}) in $(\calO_K)_{\Prism}$, which is equipped with compatible $\varphi$-action and $\Gamma$-action. For each $n\geq 0$, we define $\bar A_K^{+,n}/p^a:=C(\Gamma^n,\bar A_K^+/p^a)$. In particular, we get a natural cosimplicial ring $\bar A_K^{+,\bullet}/p^a$ by using the $\Gamma$-action on $\bar A_K^{+}/p^a$.

\begin{dfn}
   Let $R$ be a truncated animated ring which is of finite type over $\bZ/p^a$. We define the $\infty$-category $\Mod^{\varphi,\Gamma}(A_{K,R})$ of \'etale $(\varphi,\Gamma)$-modules with coefficient in $R$ to be the limit of the following cosimplicial $\infty$-category
   
  \begin{equation*}
      \xymatrix{
  \Vect(\bar A_K^{+}/p^a\widehat\otimes R[\frac{1}{[p]_q}])^{\varphi=1}\ar@<-.4ex>[r]\ar@<.4ex>[r]&\Vect(\bar A_K^{+,1}/p^a\widehat\otimes R[\frac{1}{[p]_q}])^{\varphi=1}\cdots\Vect(\bar A_K^{+,i}/p^a\widehat\otimes R[\frac{1}{[p]_q}])^{\varphi=1}\cdots
      }
  \end{equation*}
\end{dfn}

\begin{dfn}
    Let $\tilde\calX:{\textbf{ Nilp}_{\bZ_p\rm ft}^{<\infty}}\to \textbf{Ani}$ be the functor sending each $R\in {\textbf{Nilp}_{\bZ_p,\rm ft}^{<\infty}}$ to the anima $\Mod^{\varphi,\Gamma}(A_{K,R})^{\simeq}$. By abuse of notation, we still use $\tilde\calX$ to denote the nilcompletion of its left Kan extension along ${\textbf{ Nilp}_{\bZ_p,\rm ft}^{<\infty}}\to {\textbf{ Nilp}_{\bZ_p}^{<\infty}}$, which is called the derived prestack of \'etale $(\varphi,\Gamma)$-modules.
\end{dfn}

Now we proceed to prove $\tilde\calX\simeq \calX^{\nil}$. As both of them are locally almost of finite type, we just need to compare their evaulation on ${\textbf{ Nilp}_{\bZ_p,\rm ft}^{<\infty}}$. 

\begin{lem}\label{derived-cyclo}
    Let $R$ be an animated ring over $\bZ/p^a$. Then for each $n$, we have
    \[
    (\varinjlim_{\varphi}\bar A_{K}^{+,n}\widehat\otimes R)^{\wedge}_{[p]_q,\rm derived}\simeq C(\Gamma^n,A_{\cyc}/p^a)\widehat\otimes R.
    \]
\end{lem}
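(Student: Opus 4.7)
The plan is to prove the equivalence in two stages: first reduce to the case $R = \bZ/p^a$, then establish that case using the standard description of the perfection of the cyclotomic prism together with the fact that $C(\Gamma^n,-)$ is a cofiltered limit of finite products.

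For the reduction to $R=\bZ/p^a$: since $\bar A_K^{+,n}$ is already killed by $p^a$, the completed tensor product $\bar A_K^{+,n}\widehat\otimes R$ is simply $\bar A_K^{+,n}\otimes^{\bL}_{\bZ/p^a}R$, with no further completion needed. Filtered colimits commute with derived tensor products, and derived $[p]_q$-completion commutes with base change along $\bZ/p^a\to R$ in the appropriate sense (both sides being $p^a$-torsion). Combining these yields $(\varinjlim_\varphi \bar A_K^{+,n}\widehat\otimes R)^{\wedge}_{[p]_q,\rm derived}\simeq ((\varinjlim_\varphi \bar A_K^{+,n})^{\wedge}_{[p]_q,\rm derived})\widehat\otimes R$, and the RHS transforms similarly, reducing the statement to $R = \bZ/p^a$.

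For the case $n=0$, I would invoke the analogue of \cite[Lemma 3.9]{BS22} applied to the cyclotomic prism (as was done for the Breuil--Kisin prism in the proof of Proposition \ref{finitetype}): the perfection of $(\bar A_K^+,[p]_q)$ is $(A_\cyc,(\xi))$, and more precisely $A_\cyc\simeq (\varinjlim_\varphi \bar A_K^+)^{\wedge}_{(p,[p]_q)}$. Reducing modulo $p^a$ and applying derived Nakayama then gives the desired $A_\cyc/p^a\simeq (\varinjlim_\varphi \bar A_K^+/p^a)^{\wedge}_{[p]_q,\rm derived}$.

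For general $n$, I would write $C(\Gamma^n,-)=\varprojlim_U(-)^{(\Gamma/U)^n}$ as a cofiltered limit indexed by the open normal subgroups $U$ of $\Gamma$. Each $(-)^{(\Gamma/U)^n}$ is a finite product, hence commutes with the filtered colimit $\varinjlim_\varphi$ and with derived $[p]_q$-completion. Substituting the $n=0$ case pointwise gives $(A_\cyc/p^a)^{(\Gamma/U)^n}\simeq (\varinjlim_\varphi(\bar A_K^+/p^a)^{(\Gamma/U)^n})^{\wedge}_{[p]_q,\rm derived}$, so the target $C(\Gamma^n,A_\cyc/p^a)$ becomes $\varprojlim_U(\varinjlim_\varphi(\bar A_K^+/p^a)^{(\Gamma/U)^n})^{\wedge}_{[p]_q,\rm derived}$. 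There is a canonical comparison map from $(\varinjlim_\varphi\varprojlim_U(\bar A_K^+/p^a)^{(\Gamma/U)^n})^{\wedge}_{[p]_q,\rm derived}$ to this limit, induced by the universal property of colimits combined with the map $\bar A_K^+/p^a\to A_\cyc/p^a$. The task is to show this map is an equivalence.

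The main obstacle is precisely this last step: commuting the cofiltered inverse limit $\varprojlim_U$ with the Frobenius colimit $\varinjlim_\varphi$ after derived $[p]_q$-completion. I would attack it by reducing modulo $[p]_q$ and invoking derived Nakayama (both sides are derived $[p]_q$-complete by construction). Modulo $[p]_q$ the Frobenius on $\bar A_K^+/([p]_q,p^a)$ is eventually highly divisible by $[p]_q$ — as exploited in Remark \ref{cyclo}, there exists $m$ with $\varphi([p]_q^m)\equiv[p]_q^{pm}\bmod p^a$, so on any $[p]_q$-adic quotient the Frobenius colimit collapses in a controlled fashion in the spirit of Lemma \ref{F-invariant}. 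Careful handling of the $R^1\varprojlim$ contributions, parallel to the argument in the proof of Lemma \ref{Continuous functions}, should then yield the desired interchange.
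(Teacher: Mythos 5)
Your reduction to $R=\bZ/p^a$ and your treatment of the $n=0$ case are fine and match the paper: both sides are derived $[p]_q$-complete, so derived Nakayama reduces everything to an isomorphism modulo $[p]_q$, where the known perfection statement $A_{\cyc}\simeq(\varinjlim_\varphi\bar A_K^+)^{\wedge}_{(p,[p]_q)}$ does the work.

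The general-$n$ step, however, contains a genuine error: the identification $C(\Gamma^n,-)=\varprojlim_U(-)^{(\Gamma/U)^n}$ is backwards. For a \emph{discrete} target $A$, a continuous function on the compact group $\Gamma^n$ is locally constant and factors through some finite quotient, so $C(\Gamma^n,A)=\varinjlim_U A^{(\Gamma/U)^n}$ is a \emph{filtered colimit} over the open normal subgroups $U$ (the transition maps being the pullback inclusions); the cofiltered limit you wrote down along those same inclusions computes only the constant functions. For a $[p]_q$-adically complete target the inverse limit that does appear is over the powers of $[p]_q$, i.e.\ $C(\Gamma^n,-)$ is the $[p]_q$-completion of the colimit over $U$, not a limit over $U$. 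Once this is corrected, the ``main obstacle'' you isolate — commuting $\varprojlim_U$ with $\varinjlim_\varphi$ — disappears entirely: after reducing modulo $(p^a,[p]_q)$ by derived Nakayama everything in sight is discrete, and you are merely commuting two filtered colimits ($\varinjlim_U$ and $\varinjlim_\varphi$), which is automatic. This is exactly what the paper's one-line appeal to the compactness of $\Gamma^n$ means. Your proposed rescue of the interchange via divisibility of $\varphi([p]_q^m)$ is therefore aimed at a problem that should not arise, and as written it is too vague to certify; replace the cofiltered limit by the correct filtered colimit and the argument closes immediately.
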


\begin{proof}
    It suffices to prove
    \[
    (\varinjlim_{\varphi}\bar A_{K}^{+,n})/[p]_q\cong C(\Gamma^n,A_{\cyc}/p^a)/[p]_q.
    \]
    This follows from $(\varinjlim_{\varphi}\bar A_K^+)/(p^a,[p]_q)\cong A_{\cyc}/(p^a,[p]_q)$ and that $\Gamma^n$ is compact.
\end{proof}
\begin{prop}\label{fully-faithful-cylco}
    Let $R$ be a $n$-truncated animated ring which is finite type over $\bZ/p^a$.  Let $\calF_R:\Delta_{\leq n+2}\to \textbf{Ani}$ be the natural functor sending each $[i]$ to the anima $\Vect(C(\Gamma^i,A_{\cyc}/p^a)\widehat\otimes R[\frac{1}{[p]_q}])^{\varphi=1,\simeq}$ and $\tilde\calX^{\rm perf}(R)=\varprojlim_{\Delta_{\leq n+1}}\calF_R$. Then the natural functor $\tilde\calX(R)\to \tilde\calX^{\rm perf}(R)$ by base change is fully faithful.
\end{prop}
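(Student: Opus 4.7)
The plan is to parallel the argument of Proposition \ref{finitetype}, replacing the Breuil--Kisin prism by the cyclotomic prism $(\bar A_K^+, ([p]_q))$ and its perfection $(A_{\cyc}, (\xi))$. First I would reduce the fully faithfulness of $\tilde\calX(R) \to \tilde\calX^{\rm perf}(R)$, which is a map of (truncated) totalizations, to level-wise fully faithfulness of the base-change functor
\[
{\rm Res}^i: \Vect(\bar A_K^{+,i}/p^a \widehat\otimes R[\tfrac{1}{[p]_q}])^{\varphi=1} \longrightarrow \Vect(C(\Gamma^i, A_{\cyc}/p^a)\widehat\otimes R[\tfrac{1}{[p]_q}])^{\varphi=1}
\]
for each $0 \le i \le n+1$, via the $\infty$-categorical analog of Lemma \ref{tot}. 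Since $R$ is $n$-truncated, the relevant truncation of the cosimplicial anima is controlled by Lemma \ref{truncated-tot}, so only finitely many cosimplicial degrees contribute.

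For level-wise fully faithfulness, I would apply Lemma \ref{mapping space} to rewrite each mapping space as $\tau_{\geq 0}(M^\vee \otimes N)^{\varphi=1}$ of the internal Hom. Choosing a $\varphi$-stable lattice $L \subset M^\vee \otimes N$ over $\bar A_K^{+,i}/p^a \widehat\otimes R$ (by reducing to the finite free case via \cite[Lemma 5.2.14]{EG21} and clearing denominators in a basis of $\varphi_M$), I would invoke Lemma \ref{F-invariant} with $B = \bar A_K^{+,i}/p^a \widehat\otimes R$ and $t$ the image of $[p]_q^m$ for $m$ sufficiently large that $\varphi(t) \equiv t^p \pmod{p^a}$, as arranged in Remark \ref{cyclo}; then $\varphi(t) \in t^2 B$, so the lemma yields
\[
(L[\tfrac{1}{t}])^{\varphi=1} \simeq \bigl(L \otimes_{B} (\varinjlim_\varphi B)^\wedge_{t,\text{derived}}[\tfrac{1}{t}]\bigr)^{\varphi=1}.
\]
Lemma \ref{derived-cyclo} then identifies $(\varinjlim_\varphi B)^\wedge_{t,\text{derived}}$ with $C(\Gamma^i, A_{\cyc}/p^a)\widehat\otimes R$ (using that the derived $t$-adic and derived $[p]_q$-adic completions agree, as $\sqrt{(t)} = \sqrt{([p]_q)}$), closing the level-wise fully faithfulness.

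The main obstacle I anticipate is the lattice-selection step: the ring $\bar A_K^{+,i}/p^a \widehat\otimes R$ has a considerably less transparent structure than its Breuil--Kisin analog $\frakS^i_R$, both because $\bar A_K^+$ itself is not a power-series ring and because $R$ is animated. Verifying that \cite[Lemma 5.2.14]{EG21} (or a suitable generalization) still yields a $\varphi$-stable lattice in this setting requires some care. A secondary technical point is extending Lemma \ref{F-invariant} to the case where $B$ is an animated $\bZ_p$-algebra; this should be routine since the proof of Lemma \ref{F-invariant} only uses that $\varphi$ acts topologically nilpotently on the kernel of $B \to B/t$, a property which is insensitive to the derived nature of $B$.
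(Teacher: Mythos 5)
Your proposal is correct and follows essentially the same route as the paper, whose proof simply says to repeat the argument of Proposition \ref{finitetype} using Lemma \ref{derived-cyclo} and the element $t=[p]_q^m$ from Remark \ref{cyclo} so that Lemma \ref{F-invariant} applies to $\bar A_K^+$. The technical caveats you flag (lattice selection over the animated base and extending Lemma \ref{F-invariant}) are real but are implicitly handled the same way elsewhere in the paper (e.g.\ in Propositions \ref{geometrically finite type} and \ref{derivedperf}), so nothing in your outline diverges from the intended argument.
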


\begin{proof}
    The argument is the same as that in the proof of Proposition \ref{finitetype}. In particular, we need to use Lemma \ref{derived-cyclo} and Remark \ref{cyclo} so that we can apply Lemma \ref{F-invariant} to $\bar A_K^+$.
\end{proof}

Now in order to prove $\tilde\calX(R)\simeq \tilde\calX^{\rm perf}(R)$, we need to prove the following result.

\begin{prop}\label{essential-surj}
    Let $R$ be a $n$-truncated animated ring which is finite type over $\bZ/p^a$. Then the following base change functor is an equivalence
    \[
    \Vect(\bar A_K^+\widehat\otimes R[\frac{1}{[p]_q}])^{\varphi=1}\xrightarrow{\simeq} \Vect(A_{\cyc}\widehat\otimes R[\frac{1}{[p]_q}])^{\varphi=1}.
    \]
\end{prop}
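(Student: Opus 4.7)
The plan is to mirror the proof of Proposition \ref{finitetype} verbatim, replacing the Breuil--Kisin prism $(\frakS,(E))$ by the cyclotomic prism $(\bar A_K^+,([p]_q))$ and its perfection $(A_\infty,(E))$ by the perfect cyclotomic prism $(A_{\cyc},(\xi))$. As was already flagged in Remark \ref{cyclo} in the classical setting, the main technical input (Lemma \ref{F-invariant}) applies just as well here, and Lemma \ref{derived-cyclo} provides the cyclotomic counterpart of the identification $A_{\infty,R}^{i}\simeq (\varinjlim_\varphi \frakS_{R}^i)^\wedge_{u,\mathrm{derived}}$ that was crucial in the Breuil--Kisin case.

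First I would establish full faithfulness. By Remark \ref{cyclo}, for every fixed $a$ there exists $m$ with $\varphi([p]_q^m)\in [p]_q^{2m}\cdot\bar A_K^+/p^a$, so Lemma \ref{F-invariant} applies to $(\bar A_K^+/p^a,[p]_q^m)$. Given $(M,\varphi_M)\in \Vect(\bar A_K^+\widehat\otimes R[\tfrac{1}{[p]_q}])^{\varphi=1}$, the splitting trick of \cite[Lemma 5.2.14]{EG21} reduces us to the finite free case, in which a $\varphi$-stable lattice $M_0$ over $\bar A_K^+\widehat\otimes R$ exists. Combining Lemma \ref{F-invariant} with Lemma \ref{derived-cyclo} then gives
\[
M^{\varphi=1}\simeq \varinjlim_n ((\varphi^n)^*M)^{\varphi=1}\simeq (A_{\cyc}\widehat\otimes R\otimes_{\bar A_K^+\widehat\otimes R} M_0[\tfrac{1}{[p]_q}])^{\varphi=1},
\]
and Lemma \ref{mapping space} upgrades this to full faithfulness of the base change functor.

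Second, for essential surjectivity I would set up the diagram of base change functors on homotopy categories
\[
\xymatrix@=0.5cm{
{\rm Ho}(\Vect(\bar A_K^+\widehat\otimes R[\tfrac{1}{[p]_q}])^{\varphi=1,\simeq})\ar[rr]^{l_1}\ar[d]^{l_2}&&{\rm Ho}(\Vect(A_{\cyc}\widehat\otimes R[\tfrac{1}{[p]_q}])^{\varphi=1,\simeq})\ar[d]^{l_3}\\
\Vect(\pi_0(\bar A_K^+\widehat\otimes R)[\tfrac{1}{[p]_q}])^{\varphi=1,\simeq}\ar[rr]^{l_4}&&\Vect(\pi_0(A_{\cyc}\widehat\otimes R)[\tfrac{1}{[p]_q}])^{\varphi=1,\simeq}.
}
\]
The top arrow $l_1$ is fully faithful by the previous step; the bottom arrow $l_4$ is an equivalence by the classical (finite type, discrete) analog of Proposition \ref{finitetype} for the cyclotomic prism, which holds by Remark \ref{cyclo} together with Proposition \ref{perfect} and \cite[Proposition 2.7.8]{EG22}. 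The vertical maps $l_2, l_3$ are essentially surjective by the square-zero extension / Postnikov tower argument used in the proof of Proposition \ref{geometrically finite type}: tensoring the pullback square of Fact \ref{truncation} with $\bar A_K^+/[p]_q^n$ (resp.\ $A_{\cyc}/\xi^n$), taking the limit in $n$ and inverting $[p]_q$ produces a $\varphi$-compatible homotopy pullback analogous to Diagram (3.6), so Lemma \ref{lift lemma} permits lifting any $\varphi$-module from $\pi_0$. A diagram chase together with one more application of Lemma \ref{lift lemma} then yields essential surjectivity of $l_1$, which completes the proof.

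The hard part will be bookkeeping the flatness inputs needed to port the Postnikov argument of Proposition \ref{geometrically finite type} to the cyclotomic setup. Specifically, one must check that $\bar A_K^+/p^a$ is $[p]_q$-adically flat over $\bZ/p^a$ (so tensoring preserves the relevant homotopy pullbacks) and that $\pi_i((\bar A_K^+/p^a\otimes^{\bL}_{\bZ/p^a}R)^\wedge_{[p]_q})\cong \bar A_K^+/p^a\widehat\otimes \pi_i(R)$; the latter is precisely Lemma \ref{pi-complete} applied to $A=\bar A_K^+/p^a$ and $d=[p]_q$, and its hypotheses are satisfied because $[p]_q$ is a non-zero-divisor in the prism $(\bar A_K^+,([p]_q))$. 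Once these flatness facts are established, the entire argument formally parallels Propositions \ref{finitetype} and \ref{geometrically finite type}.
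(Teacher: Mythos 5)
Your overall architecture is exactly the paper's: full faithfulness via Remark \ref{cyclo}, Lemma \ref{F-invariant} and Lemma \ref{derived-cyclo} (this is precisely Proposition \ref{fully-faithful-cylco}), then essential surjectivity via the $l_1,l_2,l_3,l_4$ square, the Postnikov/square-zero argument for $l_2,l_3$, and Lemma \ref{lift lemma} to conclude. The flatness bookkeeping you flag at the end (Lemma \ref{pi-complete} applied to $\bar A_K^+/p^a$ and $[p]_q$) is indeed what the paper uses to identify $\pi_0(\bar A_K^+\widehat\otimes R)$ with $\bar A_K^+\widehat\otimes\pi_0(R)$.

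The gap is in your justification that $l_4$ is an equivalence. What you need is the \emph{level-zero}, $\Gamma$-free statement that $\Vect(\bar A_K^+\widehat\otimes \pi_0(R)[\frac{1}{[p]_q}])^{\varphi=1}\to \Vect(A_{\cyc}\widehat\otimes \pi_0(R)[\frac{1}{[p]_q}])^{\varphi=1}$ is essentially surjective for a discrete finite type $\bZ/p^a$-algebra. Proposition \ref{perfect} and \cite[Proposition 2.7.8]{EG22} only give equivalences of the \emph{totalized} categories of $(\varphi,\Gamma)$-modules (equivalently, of descent data over the full \v Cech nerve), and an equivalence of totalizations does not descend to an equivalence at cosimplicial level zero. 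Remark \ref{cyclo} does not rescue this either: it only explains how to choose $t=[p]_q^m$ so that Lemma \ref{F-invariant} applies, i.e.\ it addresses full faithfulness, while the essential surjectivity of ${\rm Res}^0$ in the proof of Proposition \ref{finitetype} was supplied by the external input \cite[Proposition 2.6.12]{EG22}, whose cyclotomic analogue is exactly what is at stake here. So your citation chain is circular at the one point where new content is needed. The paper fills this by proving the discrete case directly: it establishes Lemma \ref{prep-discrete} (that $\bar A_{K,R}^+\to A_{\cyc,R}$ is faithfully flat, and that $A_{\cyc,R}$ is the $[p]_q$-adic completion of $\varinjlim_{\varphi}\bar A_{K,R}^+$) and then runs the argument of \cite[Proposition 2.6.9]{EG22} with $T=[p]_q^m$. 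You should insert this step; the remainder of your proof then goes through as written.
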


Before proving Proposition \ref{essential-surj}, we need to make some preparations when $R$ is discrete.

\begin{lem}\label{prep-discrete}
   Let $R$ be a discrete finite type $\bZ/p^a$-algebra for some $a$. Then we have the following results.
   \begin{enumerate}
       \item The natural map $\bar A_{K}^+/[p]_q^n\otimes R\to A_{\cyc}/[p]_q^n\otimes R$ is faithfully flat for each $n$. In particular, $\bar A_{K,R}^+:=\bar A_{K}^+\widehat\otimes R\to A_{\cyc,R}$ is faithfully flat.

       \item $A_{\cyc,R}$ is exactly the (derived) $[p]_q$-adic completion of $\varinjlim_{\varphi}\bar A_{K,R}^+$.
      
   \end{enumerate}
\end{lem}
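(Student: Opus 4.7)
The plan is to treat (2) as essentially a restatement of Lemma \ref{derived-cyclo} at $n = 0$, and to reduce (1) to the $(p,[p]_q)$-completely faithful flatness of the perfection map $\bar A_K^+ \to A_{\cyc}$ together with a standard base change argument.

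For (2), I would specialize Lemma \ref{derived-cyclo} to $n = 0$. Since $\bar A_K^{+,0} = \bar A_K^+/p^a$, $\Gamma^0$ is the trivial group, and $C(\Gamma^0, A_{\cyc}/p^a) = A_{\cyc}/p^a$, the lemma directly yields
\[
A_{\cyc,R} \;\simeq\; (\varinjlim_{\varphi}\bar A_K^+ \widehat\otimes R)^{\wedge}_{[p]_q,{\rm derived}}.
\]
Unpacking notation, $\varinjlim_\varphi \bar A_K^+ \widehat\otimes R$ is exactly $\varinjlim_\varphi \bar A_{K,R}^+$, and this is the statement of (2). Concretely one just observes that derived $[p]_q$-completion is insensitive to whether one completes each term of the colimit before or after taking the colimit.

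For (1), I would first note that faithful flatness is preserved under the base change $-\otimes_{\bZ_p}R$, so it suffices to establish faithful flatness of $\bar A_K^+/[p]_q^n \to A_{\cyc}/[p]_q^n$ modulo $p^a$. The key step is to establish $(p,[p]_q)$-completely faithful flatness of $\bar A_K^+ \to A_{\cyc}$: this is essentially the content of the identification $(\varinjlim_\varphi \bar A_K^+)/(p,[p]_q) \cong A_{\cyc}/(p,[p]_q)$ used inside the proof of Lemma \ref{derived-cyclo}, provided one also knows the colimit is flat over each term (equivalently, $\bar A_K^+$ is a transversal prism whose perfection is flat modulo $(p,[p]_q)$). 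From $(p,[p]_q)$-complete faithful flatness, a standard filtration argument using that both $p$ and $[p]_q$ are non-zero-divisors in $\bar A_K^+$ and $A_{\cyc}$ upgrades this to honest faithful flatness of the quotient maps $\bar A_K^+/(p^a,[p]_q^n) \to A_{\cyc}/(p^a,[p]_q^n)$. Tensoring with the discrete $\bZ/p^a$-algebra $R$ then gives the first sentence of (1), and the "in particular" clause about $\bar A_{K,R}^+ \to A_{\cyc,R}$ follows by $[p]_q$-adic passage to the limit (both sides are $[p]_q$-adically complete, and $[p]_q$-completely faithful flatness on the truncations transfers to the completions).

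The main obstacle will be a careful verification of the $(p,[p]_q)$-complete faithful flatness of $\bar A_K^+ \to A_{\cyc}$ in the ramified case, where the ring $\bar A_K^+$ from Definition 2.19 of \cite{Wu21} is more intricate than in the absolutely unramified setting; in particular one needs to see that the Frobenius colimit $\varinjlim_\varphi \bar A_K^+/(p,[p]_q)$ is not only equal to $A_{\cyc}/(p,[p]_q)$ but is also flat over $\bar A_K^+/(p,[p]_q)$, i.e. that the relative Frobenius is flat on this semiperfectoid-type quotient. Once this flatness input is secured, the rest of (1) is formal, and (2) is immediate from Lemma \ref{derived-cyclo}.
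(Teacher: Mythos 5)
Your treatment of part (2) is a legitimate shortcut: Lemma \ref{derived-cyclo} is proved earlier and independently, so specializing it to $n=0$ does give that $A_{\cyc,R}$ is the \emph{derived} $[p]_q$-completion of $\varinjlim_\varphi\bar A_{K,R}^+$, and the observation that derived completion commutes with the Frobenius colimit is correct. Note, however, that the paper proves more: it first shows $A_{\cyc,R}[[p]_q]=0$ (via faithfully flat maps to $A_{\inf,R}$ and from $\frakS_R$) and uses part (1) to get injectivity of $\varinjlim_\varphi\bar A_{K,R}^+\to A_{\cyc,R}$, so that the derived and classical $[p]_q$-adic completions agree. If (2) is to be read as a statement about the honest $[p]_q$-adic completion — which is how it is used in the Emerton--Gee-style argument of Proposition \ref{essential-surj} — your route does not deliver that, and in the paper (2) genuinely depends on (1).

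For part (1) there is a real gap. You reduce everything to the $(p,[p]_q)$-complete faithful flatness of $\bar A_K^+\to A_{\cyc}$, i.e. to flatness of the relative Frobenius on the quotient $\bar A_K^+/(p,[p]_q)$, and then explicitly flag this as "the main obstacle" without resolving it. But this \emph{is} the entire content of (1); everything after it is, as you say, formal. Worse, the reduction you choose (flatness of Frobenius on the Artinian quotient $\bar A_K^+/(p,[p]_q)$) is more delicate than necessary. The paper's proof supplies the missing input in one line: $\bar A_K^+/p\cong E_K^+$ is a complete discrete valuation ring (\cite[Lemma 2.5]{Wu21}), and $A_{\cyc}/p=\calO^{\flat}_{\hat K_{\cyc}}$ is an integral domain into which $E_K^+$ injects, hence a torsion-free and therefore flat $E_K^+$-module, with faithfulness because the uniformizer stays a non-unit. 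The dévissage from this mod-$p$ statement to $\bar A_K^+/[p]_q^n\otimes R\to A_{\cyc}/[p]_q^n\otimes R$ is then delegated to the proof of \cite[Proposition 2.2.12]{EG22}. To repair your argument you would either need to make this DVR observation yourself, or actually verify the flatness of $\varphi$ on $\bar A_K^+$ (which again comes down to $\bar A_K^+/p$ being $k_\infty[[t]]$ with $k_\infty$ perfect); as written, the key step is asserted rather than proved.
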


\begin{proof}
    For item (1), note that $\bar A_K^+/p\cong E_K^+\to A_{\cyc}/p$ is faithfully flat as $E_K^+$ is a complete discrete valuation ring (cf. \cite[Lemma 2.5]{Wu21}) and $A_{\cyc}/p$ is an integral domain. Then item (1) follows from the proof of \cite[Proposition 2.2.12]{EG22}.

    For item (2), we first note that $A_{\cyc,R}[[p]_q]=0$. In fact, this follows from $A_{\inf,R}[[p]_q]=0$ as the natural map $A_{\cyc,R}\to A_{\inf,R}$ is faithfully flat. To see $A_{\inf,R}[[p]_q]=0$, we use  the faithfully flat map $\frakS_R=(W(k)\otimes R)[[T]]\to A_{\inf,R}$ and the fact that $A_{\inf,R}[[p]_q]=0$ if and only if $A_{\inf,R}[T]=0$. Now by item (1), we know that $\bar A_{K,R}^+\to A_{\cyc,R}$ is injective and so is $\varinjlim_{\varphi}\bar A_{K,R}^+\to A_{\cyc,R}$ as $\varphi$-action on $A_{\cyc,R}$ is an isomorphism. In particular, $(\varinjlim_{\varphi}\bar A_{K,R}^+)[[p]_q]=0$ and the derived $[p]_q$-adic completion of $\varinjlim_{\varphi}\bar A_{K,R}^+$ is the same as the classical $[p]_q$-adic completion. By derived Nakayama lemma, it suffices to prove $(\varinjlim_{\varphi}\bar A_{K,R}^+)/[p]_q\to A_{\cyc,R}/[p]_q$ is an isomorphism, which follows from $(\varinjlim_{\varphi}\bar A_K^+)/(p^a,[p]_q)\cong A_{\cyc}/(p^a,[p]_q)$.

\end{proof}

\begin{proof}[Proof of Proposition \ref{essential-surj}]
    By Proposition \ref{fully-faithful-cylco}, we only need to prove this functor is essentially surjective.

    We first deal with the case that $R$ is discrete. In fact, with the help of Lemma \ref{prep-discrete}, we can argue in the same way as \cite[Proposition 2.6.9]{EG22}. In particular, as $\bar A_K^+$ is $\varphi$-stable, our situation is less involved. We could simply take $T$ in loc.cit to be $[p]_q^m$ for some large enough $m$ as $\varphi([p]_q)=[p]_q^p+p\delta([p]_q)$ and $p^a=0$.

    In general, we have the following commutative diagram\begin{equation*}
        \xymatrix{
        {\rm Ho}(\Vect(\bar A_K^+\widehat\otimes R[\frac{1}{[p]_q}])^{\varphi=1})\ar[r]^{l_1}\ar[d]^{l_2}&{\rm Ho}(\Vect(A_{\cyc}\widehat\otimes R[\frac{1}{[p]_q}])^{\varphi=1})\ar[d]^{l_3}\\
        \Vect(\bar A_K^+\widehat\otimes \pi_0(R)[\frac{1}{[p]_q}])^{\varphi=1}\ar[r]^{l_4}&\Vect(A_{\cyc}\widehat\otimes \pi_0(R)[\frac{1}{[p]_q}])^{\varphi=1}
        }
    \end{equation*}

    Note that $\pi_0(A\widehat \otimes R)=A\widehat\otimes \pi_0(R)$ for $A=\bar A_K^+, A_{\cyc}$ by Lemma \ref{pi-complete}. Now by the discrete case, we know the functor $l_4$ is an equivalence. Also by the proof of Proposition \ref{geometrically finite type}, we see that $l_2$ and $l_3$ are both essentially surjective. Then by Lemma \ref{lift lemma}, we can conclude $l_1$ is essentially surjective. So we are done.
\end{proof}

\begin{cor}\label{varphi-gamma-perf}
    Let $R$ be a $n$-truncated animated ring which is finite type over $\bZ/p^a$. Then we have an equivalence $\tilde\calX(R)\simeq \tilde\calX^{\rm perf}(R)$.
\end{cor}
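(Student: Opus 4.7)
The plan is to deduce the corollary by combining the two preceding propositions via Lemma \ref{tot}, in complete analogy with the argument used in the proof of Proposition \ref{finitetype}. Concretely, the base change map along $\bar A_K^+ \to A_{\cyc}$ induces, for each $i$, a functor
\[
F_i:\Vect(C(\Gamma^i,\bar A_K^+/p^a)\widehat\otimes R[\tfrac{1}{[p]_q}])^{\varphi=1}\longrightarrow \Vect(C(\Gamma^i,A_{\cyc}/p^a)\widehat\otimes R[\tfrac{1}{[p]_q}])^{\varphi=1},
\]
and passing to cores and to totalizations of the resulting cosimplicial diagrams produces the natural map $\tilde\calX(R)\to \tilde\calX^{\rm perf}(R)$. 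So it suffices to verify the two hypotheses of Lemma \ref{tot}: $F_i$ is fully faithful for every $i$, and $F_0$ is an equivalence.

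First I would address the truncation issue so that Lemma \ref{tot} can be applied to finite cosimplicial diagrams. Since $R$ is $n$-truncated, each $\Vect(\cdot)^{\varphi=1,\simeq}$ occurring in the definitions of $\tilde\calX(R)$ and $\tilde\calX^{\rm perf}(R)$ is a truncated anima with bound depending only on $n$, so by Lemma \ref{truncated-tot} both totalizations may be computed over some finite $\Delta_{\leq N}$. In particular $\tilde\calX^{\rm perf}(R)$, as already formulated in Proposition \ref{fully-faithful-cylco}, agrees with the totalization of $\calF_R$ over all of $\Delta$.

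Next I would invoke Proposition \ref{fully-faithful-cylco}: its proof, which runs parallel to that of Proposition \ref{finitetype}, produces level-wise full faithfulness of $F_i$ by applying Lemma \ref{F-invariant} to the ring $\bar A_K^+/p^a$ (with the element $t=[p]_q^m$ as explained in Remark \ref{cyclo}) together with Lemma \ref{derived-cyclo}, which identifies $C(\Gamma^i,A_{\cyc}/p^a)\widehat\otimes R$ as the derived $[p]_q$-adic completion of $\varinjlim_\varphi \bar A_K^{+,i}\widehat\otimes R$. The argument of \emph{Lemma \ref{mapping space}} then upgrades the invariants statement to full faithfulness at the level of $\infty$-categories of étale $\varphi$-modules. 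Thus $F_i$ is fully faithful for every $i\geq 0$.

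Finally, the base case $F_0$ is handled by Proposition \ref{essential-surj}, which already states that $F_0$ is an equivalence of $\infty$-categories. Taking cores preserves limits and equivalences, so Lemma \ref{tot} applies to the cosimplicial functor $\{F_i^{\simeq}\}$ and yields the asserted equivalence $\tilde\calX(R)\simeq\tilde\calX^{\rm perf}(R)$. There is essentially no remaining obstacle here beyond bookkeeping; all genuine content — the descent argument along Frobenius, the identification of the perfection, and the essential surjectivity in degree $0$ via the discrete model together with Lemma \ref{lift lemma} — has already been absorbed into Propositions \ref{fully-faithful-cylco} and \ref{essential-surj}.
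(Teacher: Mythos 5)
Your argument is exactly the paper's proof: the corollary is obtained by combining Proposition \ref{fully-faithful-cylco} (level-wise full faithfulness), Proposition \ref{essential-surj} (equivalence in cosimplicial degree $0$), and Lemma \ref{tot}, with the truncation bookkeeping handled by Lemma \ref{truncated-tot}. The proposal is correct and takes essentially the same route.
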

\begin{proof}
    Just combine Proposition \ref{fully-faithful-cylco}, Proposition \ref{essential-surj} and Lemma \ref{tot}.
\end{proof}

Next we will compare $\tilde\calX^{\rm perf}(R)$ with $\calX(R)$.
\begin{lem}\label{continuous function-ani}
    Let $R$ be a $n$-truncated animated ring which is finite type over $\bZ/p^a$. Then we have 
    \[
    A_{\cyc,R}^n[\frac{1}{[p]_q}]\simeq C(\Gamma^n,A_{\cyc}/p^a)\widehat\otimes R[\frac{1}{[p]_q}].
    \]
\end{lem}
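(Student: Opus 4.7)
The plan is to lift the discrete-ring argument from the proof of Lemma \ref{Continuous functions} to the animated setting by running the same comparison at the derived level. The key coefficient-independent input is already contained in that earlier proof: the cofiber (in $\calD(\bZ/p^a)$) of the natural map
\[
\iota_n : A_{\cyc}^n/p^a \longrightarrow C(\Gamma^n, A_{\cyc}/p^a)
\]
is annihilated by $\xi^s$ for some $s \geq 0$, by the argument invoking \cite[Theorem 2.3]{Bha19}. Since the elements $\xi$ and $[p]_q$ generate the same prismatic ideal in the perfect prism $A_{\cyc}$ up to Frobenius (so that inverting either gives the same localization, compatibly with Remark \ref{cyclo}), the cofiber $K_n := \mathrm{cofib}(\iota_n)$ is likewise killed by some power of $[p]_q$.

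Given $R$ as in the lemma, I would then derive-tensor the fiber sequence $K_n \to A_{\cyc}^n/p^a \to C(\Gamma^n, A_{\cyc}/p^a)$ with $R$ over $\bZ/p^a$, apply derived $[p]_q$-adic completion, and finally invert $[p]_q$. Both the completion and the localization preserve fiber sequences in the stable $\infty$-category $\calD(\bZ/p^a)$: the former is a sequential $R\varprojlim$ and the latter is a filtered colimit, both of which are exact in the stable setting. This produces a fiber sequence
\[
(K_n \otimes^{\bL}_{\bZ/p^a} R)^{\wedge}_{[p]_q}\bigl[\tfrac{1}{[p]_q}\bigr] \longrightarrow A_{\cyc,R}^n\bigl[\tfrac{1}{[p]_q}\bigr] \longrightarrow C(\Gamma^n, A_{\cyc}/p^a) \widehat\otimes R\bigl[\tfrac{1}{[p]_q}\bigr].
\]
Because $[p]_q^s$ annihilates $K_n$, it annihilates $K_n \otimes^{\bL}_{\bZ/p^a} R$ and hence also its derived $[p]_q$-completion; inverting $[p]_q$ then kills the fiber term, so the right-hand map is an equivalence, which is the desired statement.

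The main (and essentially only) obstacle is the bookkeeping in the first step, namely ensuring the $[p]_q$-nilpotence of $K_n$ is actually available. One can either deduce it from $\xi$-nilpotence via the relation $[p]_q = \varphi(\xi)$ on $A_{\cyc}$ (where $\varphi$ is an automorphism), or re-run the argument of Lemma \ref{Continuous functions} verbatim with $[p]_q$ in place of $\xi$, which is possible since $A_{\cyc}/p^a$ is also derived $[p]_q$-complete and $[p]_q$ is a nonzerodivisor modulo $p^a$, so that \cite[Theorem 2.3]{Bha19} still applies. Once this is settled, no additional difficulty arises from the animated nature of $R$; the remainder of the proof is a formal exactness manipulation in the derived $\infty$-category, and in particular does not require identifying the completed tensor product $C(\Gamma^n, A_{\cyc}/p^a) \widehat\otimes R$ with functions into $A_{\cyc,R}[\tfrac{1}{[p]_q}]$, which is where the discrete case of Lemma \ref{Continuous functions} crucially used that $R$ was a classical ring.
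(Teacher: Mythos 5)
Your argument is correct, and it takes a genuinely different route from the paper's. The paper reduces to homotopy groups: since $R$ is truncated and of finite type, $\pi_*(R)$ is a finite type $\bZ/p^a$-algebra, so the discrete statement (the argument of Lemma \ref{Continuous functions}) applies to it and hence to each $\pi_m(R)$; Lemma \ref{pi-complete} (flatness of $A^n_{\cyc}/[p]_q^m$ over $\bZ/p^a$) then identifies $\pi_m$ of both sides with the corresponding discrete objects for $\pi_m(R)$, and the map is an equivalence because it is one on all homotopy groups. You instead stay at the derived level, isolating the single coefficient-independent input (bounded $[p]_q$-torsion of the cofiber $K_n$) and then using only exactness of derived completion and localization. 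This buys you something: your proof needs neither truncatedness nor finite-typeness of $R$, because $K_n$ is a two-term complex whose cohomology groups are killed by $[p]_q^s$, so the endomorphism $[p]_q^{2s}$ of $K_n$ is null-homotopic and remains so after $\otimes^{\bL}_{\bZ/p^a}R$ for arbitrary animated $R$ --- this replaces the presentation argument in Lemma \ref{Continuous functions} (the source of the bound $\xi^{2ts}$), which was the only place finite-typeness entered. Do make that null-homotopy step explicit, since ``killed on cohomology'' does not by itself pass through a derived tensor product. One further point to tighten: $(\xi)$ and $([p]_q)=(\varphi(\xi))$ are genuinely different ideals of $A_{\cyc}$ (e.g.\ $\theta([p]_q)=p\neq 0$), so ``the same prismatic ideal up to Frobenius'' should not be read as an equality of ideals; what is true, and what both you and the paper implicitly use in passing between Lemma \ref{Continuous functions} (stated with $\xi$) and the present statement (with $[p]_q$), is that modulo $p$ both elements have positive valuation in $\calO^{\flat}_{\hat K_{\cyc}}$, so for $A_{\cyc}/p^a$-modules the two localizations agree and derived completeness with respect to either element is the same condition. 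Granting that, your second suggested fix --- rerunning the torsion bound directly with $[p]_q$, as in Remark \ref{cyclo} --- is the cleanest. Your closing observation is also right: the lemma only concerns the completed tensor product, so the identification with continuous functions (where discreteness of $R$ was essential) is not needed here.
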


\begin{proof}
    Note that by assumption, $\pi_*(R)=\oplus_n\pi_n(R)$ is a finite type $\bZ_p$-algebra. In the setting of Lemma \ref{Continuous functions}, we have $K\otimes^{\bL}_{\bZ/p^a}\pi_*(R)$ is killed by some bounded power of $\xi$. This means for each $n$, $K\otimes^{\bL}_{\bZ/p^a}\pi_n(R)$ is also killed by some bounded power of $\xi$. In particular, we have
     \[
    A_{\cyc,\pi_n(R)}^n[\frac{1}{[p]_q}]\simeq C(\Gamma^n,A_{\cyc}/p^a)\widehat\otimes \pi_n(R)[\frac{1}{[p]_q}].
    \]

    By Lemma \ref{pi-complete}, we know that $\pi_n(A_{\cyc,R}^n[\frac{1}{[p]_q}])\cong A_{\cyc,\pi_n(R)}^n[\frac{1}{[p]_q}]$ and \[
    \pi_n(C(\Gamma^n,A_{\cyc}/p^a)\widehat\otimes R[\frac{1}{[p]_q}])\cong C(\Gamma^n,A_{\cyc}/p^a)\widehat\otimes \pi_n(R)[\frac{1}{[p]_q}]\]. So we are done.
\end{proof}

As an immediate corollary, we can compare $\tilde\calX^{\rm perf}(R)$ with $\calX(R)$ now.
\begin{cor}\label{perf-crystal}
    Let $R$ be a $n$-truncated animated ring which is finite type over $\bZ/p^a$. Then we have 
    \[
    \tilde\calX^{\rm perf}(R)\simeq \calX(R)
    \]
\end{cor}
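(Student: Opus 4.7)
The strategy is to massage both sides into a common description as a finite totalization. First, I would apply Proposition \ref{derivedperf} to $\calX(R)$: since $R$ lies in $\textbf{Nilp}_{\bZ_p}^{\leq n}$ and is of finite type over $\bZ_p$, that proposition yields
\[
\calX(R)\simeq\Vect((\calO_K)_{\Prism}^{\rm perf},\calO_{\Prism,R}[\tfrac{1}{\calI_{\Prism}}])^{\varphi=1,\simeq}.
\]
Next I would invoke that the perfect cyclotomic prism $(A_{\cyc},(\xi))$ covers the final object of the topos $\Sh((\calO_K)_{\Prism}^{\rm perf})$ (as used in Proposition \ref{perfect}); $\xi$-completely faithfully flat descent (Theorem \ref{Thm-descent}) then presents the perfect-site category as the totalization
\[
\varprojlim_{[i]\in\Delta}\Vect(A_{\cyc,R}^i[\tfrac{1}{\xi}])^{\varphi=1,\simeq}
\]
of the \v Cech nerve, where $A_{\cyc,R}^i:=(A_{\cyc}^i\otimes^{\bL}_{\bZ_p}R)^{\wedge}_{\xi}$.

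The second step is to truncate the totalization. Since each $A_{\cyc}^i$ is flat over $\bZ_p$, the ring $A_{\cyc,R}^i[\tfrac{1}{\xi}]$ is $n$-truncated; by Lemma \ref{mapping space} the mapping spaces in $\Vect(A_{\cyc,R}^i[\tfrac{1}{\xi}])^{\varphi=1}$ are then $n$-truncated, so its core is $(n+1)$-truncated. Lemma \ref{truncated-tot} lets me restrict the totalization to $\Delta_{\leq n+1}$. Finally, Lemma \ref{continuous function-ani} identifies $A_{\cyc,R}^i[\tfrac{1}{[p]_q}]$ with $C(\Gamma^i,A_{\cyc}/p^a)\widehat\otimes R[\tfrac{1}{[p]_q}]$, and one checks that modulo $p^a$ inverting $\xi$ and inverting $[p]_q$ yield the same localization (since $\varphi(\xi)=[p]_q$ up to a unit and $\varphi(\xi)\equiv\xi^p\pmod{p}$ in $A_{\cyc}$, so $(\xi)$ and $([p]_q)$ have the same radical in $A_{\cyc}/p^a$). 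Assembling these identifications presents the perfect-site version of $\calX(R)$ as
\[
\varprojlim_{\Delta_{\leq n+1}}\Vect\bigl(C(\Gamma^i,A_{\cyc}/p^a)\widehat\otimes R[\tfrac{1}{[p]_q}]\bigr)^{\varphi=1,\simeq},
\]
which is exactly $\tilde\calX^{\rm perf}(R)$ by definition.

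Essentially all the work has been packaged into the earlier lemmas, so this corollary is mostly a bookkeeping assembly. The only mild technical care is verifying the $(n+1)$-truncation bound to justify Lemma \ref{truncated-tot}, and reconciling the $\xi$-versus-$[p]_q$ notation when passing between the descriptions supplied by descent on the perfect site and by Lemma \ref{continuous function-ani}; neither is a real obstacle, and I would not anticipate any genuinely hard step here.
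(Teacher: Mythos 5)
Your proposal is correct and follows the paper's route exactly: the paper's proof is the one-line combination of Proposition \ref{derivedperf} (to pass from $\calX(R)$ to the perfect site) with Lemma \ref{continuous function-ani} (to identify the terms of the \v Cech totalization for the perfect cyclotomic prism with those defining $\tilde\calX^{\rm perf}(R)$), which is precisely what you assemble. Your extra remarks on the truncation bound for Lemma \ref{truncated-tot} and on reconciling $\xi$ with $[p]_q$ are sound points of care that the paper leaves implicit.
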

\begin{proof}
    This follows from Lemma \ref{continuous function-ani} and Proposition \ref{derivedperf}.
\end{proof}

Finally we come to the main theorem of this subsection.

\begin{thm}
    There is an equivalence of derived prestacks $\calX^{\rm nil}\simeq\tilde\calX$.
\end{thm}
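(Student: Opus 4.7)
The plan is to reduce everything to the comparison on truncated finite type algebras that has already been established, and then propagate the equivalence to all of $\textbf{Nilp}_{\bZ_p}$ via left Kan extension and nilcompletion. Concretely, Corollary \ref{varphi-gamma-perf} gives $\tilde\calX(R)\simeq \tilde\calX^{\rm perf}(R)$ and Corollary \ref{perf-crystal} gives $\tilde\calX^{\rm perf}(R)\simeq \calX(R)$ for every $R\in\textbf{Nilp}_{\bZ_p,\rm ft}^{<\infty}$; both identifications are induced by the obvious base change functors, hence natural in $R$, so they assemble into an equivalence of functors
\[
\tilde\calX|_{\textbf{Nilp}_{\bZ_p,\rm ft}^{<\infty}}\xrightarrow{\simeq} \calX|_{\textbf{Nilp}_{\bZ_p,\rm ft}^{<\infty}}.
\]
Since $\calX(R)\simeq \calX^{\rm nil}(R)$ whenever $R$ is truncated (as $\tau_{\leq n}R=R$ eventually), this also gives $\tilde\calX|_{\textbf{Nilp}_{\bZ_p,\rm ft}^{<\infty}}\simeq \calX^{\rm nil}|_{\textbf{Nilp}_{\bZ_p,\rm ft}^{<\infty}}$.

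Next I would upgrade this to an equivalence on all truncated animated rings. By Proposition \ref{geometrically finite type}, $\calX^{\rm nil}$ is locally almost of finite type, so its restriction to $\textbf{Nilp}_{\bZ_p}^{\leq n}$ is the left Kan extension of its restriction to $\textbf{Nilp}_{\bZ_p,\rm ft}^{\leq n}$ for every $n$. On the other hand, $\tilde\calX$ is by definition the nilcompletion of a left Kan extension along $\textbf{Nilp}_{\bZ_p,\rm ft}^{<\infty}\hookrightarrow \textbf{Nilp}_{\bZ_p}^{<\infty}$; a standard cofinality argument (any map $A\to R$ with $A\in\textbf{Nilp}_{\bZ_p,\rm ft}^{<\infty}$ and $R\in\textbf{Nilp}_{\bZ_p}^{\leq n}$ factors canonically through the still-finite-type truncation $\tau_{\leq n}A$) shows that its restriction to $\textbf{Nilp}_{\bZ_p}^{\leq n}$ is likewise the left Kan extension from $\textbf{Nilp}_{\bZ_p,\rm ft}^{\leq n}$. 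Because left Kan extension is functorial in the input, the equivalence established on $\textbf{Nilp}_{\bZ_p,\rm ft}^{\leq n}$ propagates to an equivalence
\[
\tilde\calX|_{\textbf{Nilp}_{\bZ_p}^{\leq n}}\xrightarrow{\simeq}\calX^{\rm nil}|_{\textbf{Nilp}_{\bZ_p}^{\leq n}}
\]
for every $n$, and hence on $\textbf{Nilp}_{\bZ_p}^{<\infty}$.

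Finally, I would extend to arbitrary $R\in\textbf{Nilp}_{\bZ_p}$ using nilcompleteness of both sides. By definition $\calX^{\rm nil}(R)\simeq \varprojlim_n \calX(\tau_{\leq n}R)\simeq \varprojlim_n \calX^{\rm nil}(\tau_{\leq n}R)$, and similarly $\tilde\calX(R)\simeq \varprojlim_n \tilde\calX(\tau_{\leq n}R)$; the previous step identifies the two cosimplicial diagrams termwise and naturally, so the limits agree.

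The only genuinely delicate point is checking that the equivalence on finite type truncated rings is natural enough to assemble through the left Kan extension, but since every equivalence used (in Corollary \ref{varphi-gamma-perf}, Corollary \ref{perf-crystal}, and the passage $\calX^{\rm nil}\rightsquigarrow\calX$ on truncated rings) is implemented by base change along a map of sheaves of rings, this naturality is automatic. No new geometric input is required; the theorem is a formal consequence of the previously proven results together with the local-almost-of-finite-type and nilcomplete properties.
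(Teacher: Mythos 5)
Your proposal is correct and follows essentially the same route as the paper: the paper's own proof is precisely "both prestacks are locally almost of finite type and nilcomplete, so the theorem follows from Corollary \ref{varphi-gamma-perf} and Corollary \ref{perf-crystal}," and your argument is an unpacking of that one-liner (equivalence on truncated finite type rings, propagation by left Kan extension using the laft property, then nilcompletion). The extra details you supply — the cofinality of $\textbf{Nilp}_{\bZ_p,\rm ft}^{\leq n}$ in $\textbf{Nilp}_{\bZ_p,\rm ft}^{<\infty}$ over an $n$-truncated ring, and the naturality of the base-change equivalences — are exactly the points the paper leaves implicit.
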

\begin{proof}
    As both of them are locally almost of finite type and nilcomplete, this theorem follows from Corollary \ref{varphi-gamma-perf} and Corollary \ref{perf-crystal}.
\end{proof}



\begin{appendices}
\renewcommand{\thesection}{Appendix \Roman{section}}

\section{A lemma on algebraic cotangent complex}\label{appendix}
\renewcommand{\thesection}{\Roman{section}}
The derived deformation theory in \cite[Chapter 1]{GR17} is only established in the characteristic $0$ case. However, almost all results in \cite[Chapter 1]{GR17} that we need apply to animated rings straightforwardly. The only place one needs to be careful about is \cite[Chapter 1, Lemma 5.4.3]{GR17}, which concerns the topological cotangent complex of $\bE_{\infty}$-rings. The goal of this appendix is to establish this result in the setting of animated rings, i.e. consider the algebraic cotangent complex of animated rings.

\begin{lem}\label{lemma-animated}
    Let $i:A\to B$ be a morphism of animiated rings such that $H^0(A)\to H^0(B)$ is surjective. Consider the corresponding fiber sequence
    \[
    J\to A\to B.
    \]
    Then
    \begin{enumerate}
        \item $H^0(L_{B/A})=0$ and $H^{-1}(L_{B/A})=H^0(B\otimes_AJ)$;
        \item For $n\geq 0$, we have
        \[
        \tau^{\geq -n}(J)=0\Longrightarrow \tau^{\geq-n-1}(L_{B/A})=0.\footnote{\text{To keep compatible with \cite[Chapter 1]{GR17}, we use the cohomological convention here.}}
        \]
        In the latter case, $H^0(A)=H^0(B)$ and $H^{-n-2}(L_{B/A})=H^{-n-1}(J)$.
    \end{enumerate}
\end{lem}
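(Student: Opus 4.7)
The plan is to deduce both parts from the connectivity estimate for the algebraic cotangent complex of animated rings, namely \cite[Proposition 25.3.6.1]{Lur18}. For $i:A\to B$ with $H^0(A)\to H^0(B)$ surjective and $J$ connective, that proposition gives: (i) $L_{B/A}$ satisfies $H^0(L_{B/A}) = 0$, and (ii) there is a natural isomorphism $H^{-1}(L_{B/A}) \simeq H^0(B \otimes_A J)$. This is the animated-ring analogue, somewhat weaker than the sharp $\bE_\infty$-equivalence $L_{B/A} \simeq (B \otimes_A J)[1]$ of \cite[Theorem 7.4.3.1]{Lur17}, and as noted in the footnote of the main paper it is precisely what is needed to emulate \cite[Chapter 1, Lemma 5.4.3]{GR17}. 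Part (1) then follows at once, using in addition the classical identification $H^0(L_{B/A}) \simeq \Omega^1_{H^0(B)/H^0(A)}$, which vanishes by surjectivity.

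For part (2), I would argue by induction on $n$. At each step the hypothesis $\tau^{\geq -n}(J) = 0$, combined with the long exact sequence of the fiber sequence $J \to A \to B$, forces $H^j(A) \simeq H^j(B)$ for $-n \leq j \leq 0$; in particular $H^0(A) = H^0(B)$. The base case $n = 0$ follows directly from part (1): the vanishing $H^0(J) = 0$ gives $H^{-1}(L_{B/A}) \simeq H^0(B) \otimes_{H^0(A)} H^0(J) = 0$, so $\tau^{\geq -1}(L_{B/A}) = 0$. A second application of \cite[Proposition 25.3.6.1]{Lur18} to $i$, now with $J$ one step more connective, produces the next identification $H^{-2}(L_{B/A}) \simeq H^{-1}(J) \otimes_{H^0(A)} H^0(B)$, which simplifies to $H^{-1}(J)$ using $H^0(A) = H^0(B)$.

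The inductive step $n\to n+1$ is identical in structure. The new hypothesis $H^{-n-1}(J) = 0$ upgrades the identification $H^{-n-2}(L_{B/A}) \simeq H^{-n-1}(J)$ obtained at the previous step to the vanishing $\tau^{\geq -n-2}(L_{B/A}) = 0$. A further application of the connectivity estimate, with $J$ now one more step connective, yields the next identification $H^{-n-3}(L_{B/A}) \simeq H^{-n-2}(J)$, which is exactly the $(n+1)$-st case of part (2).

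The main obstacle is that \cite[Proposition 25.3.6.1]{Lur18} is strictly weaker than its $\bE_\infty$-counterpart, so one cannot read off the full cotangent complex in a single pass; instead one must iterate, gaining one degree of connectivity and one new identification per step. This is exactly what the induction on $n$ organises, and no further input beyond the proposition together with the long exact sequence of $J \to A \to B$ is required.
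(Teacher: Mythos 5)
Your proposal is correct and follows essentially the same route as the paper: both parts rest on the connectivity estimate for the fiber of the Hurewicz map $B\otimes_A \mathrm{cofib}(i)\to L_{B/A}$ from \cite[Proposition 25.3.6.1]{Lur18}, combined with the right-exactness identification $H^0(B\otimes_A J[-m])\cong H^0(B)\otimes_{H^0(A)}H^{-m}(J)$. The only difference is that you organize part (2) as an induction on $n$, whereas the paper obtains each case in a single application of the proposition with $J$ taken to be $(n+1)$-connective; since each of your inductive steps is itself exactly one such application, the induction is harmless but superfluous.
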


To prove this lemma, let's recall the following result concerning the Hurewicz map associated with $i:A\to B$.
\begin{prop}[{\cite[Proposition 25.3.6.1]{Lur18}}]\label{Hure}
    Let $i:A\to B$ be a morphism of animated rings. Then the Hurewicz map
    \[
    \epsilon_i: B\otimes_A {\rm cofib}(i)\to L_{B/A}
    \]
    is surjective on $H^0$. If the fiber of $i$ is connective, then the fiber of $\epsilon_i$ is $2$-connective. If the fiber of $i$ is $m$-connective for $m>0$, then the fiber of $\epsilon_i$ is $m+3$-connective.
\end{prop}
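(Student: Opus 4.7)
The plan is to apply Proposition \ref{Hure} to the Hurewicz map $\epsilon_i$ and read off both assertions from the long exact sequence of homotopy groups associated to the fiber sequence ${\rm fib}(\epsilon_i)\to B\otimes_A^{\bL}{\rm cofib}(i)\to L_{B/A}$.

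First, I will identify ${\rm cofib}(i)\simeq J[1]$ (in the stable $\infty$-category of $A$-modules), so that $\epsilon_i$ takes the form $(B\otimes_A^{\bL} J)[1]\to L_{B/A}$, and I will convert the cohomological hypotheses into homological connectivity statements. The surjectivity of $H^0(A)\to H^0(B)$ forces $\pi_{-1}(J)=0$ via the long exact sequence of $J\to A\to B$, so $J$ is connective; and the hypothesis $\tau^{\geq -n}(J)=0$ for $n\geq 0$ is equivalent to $J$ being $(n+1)$-connective. In particular $H^0(J)=0$ in case (2), which combined with the long exact sequence of the fiber sequence $J\to A\to B$ gives $\pi_0(A)\xrightarrow{\simeq}\pi_0(B)$, i.e.\ $H^0(A)=H^0(B)$.

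For part (1), Proposition \ref{Hure} shows ${\rm fib}(\epsilon_i)$ is $2$-connective, while $(B\otimes_A^{\bL} J)[1]$ is $1$-connective. The long exact sequence
\[
\pi_j({\rm fib}(\epsilon_i))\to\pi_j\bigl((B\otimes_A^{\bL}J)[1]\bigr)\to\pi_j(L_{B/A})\to\pi_{j-1}({\rm fib}(\epsilon_i))
\]
at $j=0$ yields $\pi_0(L_{B/A})=0$ (all neighbours vanish), and at $j=1$ yields $\pi_1(L_{B/A})\simeq \pi_1\bigl((B\otimes_A^{\bL}J)[1]\bigr)=\pi_0(B\otimes_A^{\bL}J)$. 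Translating back via $H^i=\pi_{-i}$ gives exactly $H^0(L_{B/A})=0$ and $H^{-1}(L_{B/A})=H^0(B\otimes_AJ)$.

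For part (2), Proposition \ref{Hure} with $m=n+1\geq 1$ gives that ${\rm fib}(\epsilon_i)$ is $(n+4)$-connective, while $(B\otimes_A^{\bL}J)[1]$ is $(n+2)$-connective. The same long exact sequence then produces $\pi_j(L_{B/A})=0$ for all $j\leq n+1$ (which rewrites as $\tau^{\geq -n-1}(L_{B/A})=0$), together with an isomorphism $\pi_{n+2}(L_{B/A})\simeq\pi_{n+1}(B\otimes_A^{\bL}J)$. The final step will be to identify the right-hand side with $\pi_{n+1}(J)$: the edge of the Tor spectral sequence for the $(n+1)$-connective module $J$ gives $\pi_{n+1}(B\otimes_A^{\bL}J)\simeq\pi_0(B)\otimes_{\pi_0(A)}\pi_{n+1}(J)$, and this simplifies to $\pi_{n+1}(J)$ via the already established $\pi_0(A)\simeq\pi_0(B)$, yielding $H^{-n-2}(L_{B/A})=H^{-n-1}(J)$. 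The principal difficulty is really bookkeeping: keeping the homological/cohomological conventions straight, correctly handling the shift between ${\rm cofib}(i)$ and $J$, and choosing the correct branch of Proposition \ref{Hure} in each case; the tensor-product edge computation is then a standard consequence of connectivity of $J$.
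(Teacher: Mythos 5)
Your proposal does not prove the statement in question. Proposition \ref{Hure} is itself the Hurewicz-type connectivity estimate for the map $\epsilon_i\colon B\otimes_A\mathrm{cofib}(i)\to L_{B/A}$: the claims to be established are the surjectivity of $\epsilon_i$ on $H^0$ and the $2$-connectivity (resp.\ $(m+3)$-connectivity) of $\mathrm{fib}(\epsilon_i)$. Your argument takes exactly these claims as given --- you invoke ``Proposition \ref{Hure}'' explicitly in both parts --- and uses them to deduce vanishing and identifications of the low-degree cohomology of $L_{B/A}$. What you have written is therefore a proof of Lemma \ref{lemma-animated}, not of Proposition \ref{Hure}; read as a proof of the latter it is circular. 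In the paper, Proposition \ref{Hure} is not proved at all: it is quoted from Lurie's treatment of the algebraic cotangent complex of animated rings, \cite[Proposition 25.3.6.1]{Lur18}. A genuine proof would have to construct $\epsilon_i$ and establish its connectivity from the definition of $L_{B/A}$ (e.g.\ by reduction to polynomial generators via transitivity and base change and a connectivity analysis of the resulting resolutions, as in Lurie); none of that appears in your proposal.

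For what it is worth, the argument you do give coincides with the paper's appendix proof of Lemma \ref{lemma-animated}: both translate the hypotheses into connectivity of $J=\mathrm{fib}(i)$, feed that into Proposition \ref{Hure} to bound the connectivity of $\mathrm{fib}(\epsilon_i)$, and read off $H^0(L_{B/A})=0$, $H^{-1}(L_{B/A})\cong H^0(B\otimes_A J)$ and $H^{-n-2}(L_{B/A})\cong H^{-n-1}(J)$ from the long exact sequence of the fiber sequence $\mathrm{fib}(\epsilon_i)\to B\otimes_A\mathrm{cofib}(i)\to L_{B/A}$; your edge-map identification $\pi_{n+1}(B\otimes_A^{\bL}J)\cong\pi_0(B)\otimes_{\pi_0(A)}\pi_{n+1}(J)\cong\pi_{n+1}(J)$ is the paper's computation $H^{-n-1}(B\otimes_AJ)=H^0(B)\otimes_{H^0(A)}H^{-n-1}(J)=H^{-n-1}(J)$. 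So the content is correct for that lemma, but it does not address the proposition you were asked to prove.
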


\begin{proof}[Proof of Lemma \ref{lemma-animated}]
\begin{enumerate}
    \item As $H^0(A)\to H^0(B)$ is surjective, we see that $H^0({\rm cofib}(i))=0$. Then $H^0(B\otimes_A{\rm cofib}(i))=0$. By Proposition \ref{Hure}, we see that $H^0(L_{B/A})=0$. As $J$ is connective, we have $H^{-1}(B\otimes_A{\rm cofib}(i))\cong H^{-1}(L_{B/A})$ by Proposition \ref{Hure}. Since $H^{-1}(B\otimes_A{\rm cofib}(i))$ is just $H^0(B\otimes_AJ)$, we get $H^0(B\otimes_AJ)\cong H^{-1}(L_{B/A})$.
    \item Suppose $\tau^{\geq -n}(J)=0$, i.e. $J$ is $(n+1)$-connective. Then by Proposition \ref{Hure}, we see that ${\rm fib}(\epsilon_i)$ is $(n+4)$-connective. Hence we have $H^i(B\otimes_A{\rm cofib}(i))=H^i(L_{B/A})=0$ for all $i\geq -(n+1)$. In particular, as $H^0(J)=0$, we see $H^0(A)=H^0(B)$ and \[H^{-n-2}(L_{B/A})=H^{-n-1}(B\otimes_AJ)=H^0(B\otimes_AJ[-n-1])=H^0(B)\otimes_{H^0(A)}H^0(J[-n-1])=H^{-n-1}(J).\]
\end{enumerate}

\end{proof}
\end{appendices}

\addcontentsline{toc}{section}{References}
\bibliographystyle{alpha}

\bibliography{main.bib}

\end{document}